\DeclareMathOperator{\Hom}{Hom}
\DeclareMathOperator{\F}{\mathbb{F}}
\DeclareMathOperator{\Z}{\mathbb{Z}}
\DeclareMathOperator{\Q}{\mathbb{Q}}
\DeclareMathOperator{\R}{\mathbb{R}}
\DeclareMathOperator{\Hbb}{\mathbb{H}}
\DeclareMathOperator{\Sym}{Sym}
\DeclareMathOperator{\Image}{Im}
\DeclareMathOperator{\Spinc}{Spin^c}
\DeclareMathOperator{\spinc}{spin^c}
\DeclareMathOperator{\Char}{Char}
\DeclareMathOperator{\Hplus}{\mathbb{H}^+}
\DeclareMathOperator{\Kplus}{\mathrm{K}^+}
\DeclareMathOperator{\rank}{rank}
\DeclareMathOperator{\gr}{gr}
\DeclareMathOperator{\Map}{Map}
\DeclareMathOperator{\coker}{coker}
\newcommand*{\isoarrow}[2]{\arrow[#1,"\rotatebox{90}{\(\sim\)}", "#2"']}
\newcommand*{\doubleisoarrow}[1]{\arrow[#1,leftrightarrow, swap,"\rotatebox{90}{\(\sim\)}"]}
\newtheorem{thm}{Theorem}[section]
\newtheorem*{thm*}{Theorem}
\newtheorem{cor}[thm]{Corollary}
\newtheorem*{cor*}{Corollary}
\newtheorem{prop}[thm]{Proposition}
\newtheorem{lem}[thm]{Lemma}
\theoremstyle{definition}
\newtheorem{defn}[thm]{Definition}
\newtheorem{defns}[thm]{Definitions}
\newtheorem{con}[thm]{Construction}
\newtheorem{exmp}[thm]{Example}
\newtheorem{exmps}[thm]{Examples}
\newtheorem{notn}[thm]{Notation}
\theoremstyle{remark}
\newtheorem{rem}[thm]{Remark}
\newtheorem{rems}[thm]{Remarks}
\let\c@equation\c@thm
\numberwithin{equation}{section}
\title{On the involutive Heegaard Floer homology of negative semi-definite plumbed 3-manifolds with $b_{1}=1$}
\author[P. Johnson]{Peter K. Johnson}
\address{Department of Mathematics, University of Virginia, Charlottesville, VA 22904-4137}
\email{\href{mailto:pkj4vj@virginia.edu}{pkj4vj@virginia.edu}}
\begin{document}

\begin{abstract}
In \cite{MR1957829}, Ozsváth and Szabó use Heegaard Floer homology to define numerical invariants $d_{1/2}$ and $d_{-1/2}$ for 3-manifolds $Y$ with $H_{1}(Y;\Z)\cong \Z$. We define involutive Heegaard Floer theoretic versions of these invariants analogous to the involutive $d$ invariants $\bar{d}$ and $\underline{d}$ defined for rational homology spheres by Hendricks and Manolescu in \cite{MR3649355} . We prove their invariance under spin integer homology cobordism and use them to establish spin filling constraints and $0$-surgery obstructions analogous to results by Ozsváth and Szabó for their Heegaard Floer counterparts $d_{1/2}$ and $d_{-1/2}$. We then apply calculation techniques of Dai and Manolescu developed in \cite{MR4021102} and Rustamov in \cite{Rustamov} to compute the involutive Heegaard Floer homology of some negative semi-definite plumbed 3-manifolds with $b_{1} =1$. By combining these calculations with the $0$-surgery obstructions, we are able to produce an infinite family of small Seifert fibered spaces with weight 1 fundamental group and first homology $\Z$ which cannot be obtained by $0$-surgery on a knot in $S^3$, extending a result of Hedden, Kim, Mark, and Park in \cite{MR4029676}. 
\end{abstract}

\maketitle

\section{Introduction}

Involutive Heegaard Floer homology is an extension of Heegaard Floer homology due to Hendricks and Manolescu (see \cite{MR3649355}). It is constructed by considering the mapping cone of a naturally arising involution on the Heegaard Floer chain complex associated to a given Heegaard diagram. For certain 3-manifolds, involutive Heegaard Floer homology contains more information than Heegaard Floer homology. In particular, it has had success illuminating the structure of the integer homology cobordism group. 

Over the past several years there has been significant progress in understanding how to calculate involutive Heegaard Floer homology. Some of the methods developed include the large surgery formula of Hendricks and Manolescu in \cite{MR3649355}, the results on almost rational negative definite plumbings by Dai and Manolescu in \cite{MR4021102}, the connected sum formula of Hendricks, Manolescu, and Zemke in \cite{MR3782421}, and most recently the involutive surgery exact triangle established by Hendricks, Hom, Stoffregen, and Zemke (see \cite{Surgery_triangle}).

To date, much of the focus of these calculation techniques and applications has been on rational homology 3-spheres. The goals of this paper are to (1) establish topological applications of involutive Heegaard Floer homology for 3-manifolds with $b_{1}=1$, and (2) to find an efficient way to compute the involutive Heegaard Floer homology of a certain class of such manifolds.  

For rational homology spheres, important topological information is encoded by the involutive $d$ invariants $\bar{d}$ and  $\underline{d}$ defined by Hendricks and Manolescu in \cite{MR3649355}. These are numerical invariants extracted from the plus (or equivalently minus) version of involutive Heegaard Floer homology with respect to a self-conjugate $\spinc$ structure. 

\newpage In this paper, we define analogous involutive $d$ invariants $\bar{d}_{-1/2}$, $\bar{d}_{1/2}$, $\underline{d}_{-1/2}$, and $\underline{d}_{1/2}$ for 3-manifolds $Y$ with $H_{1}(Y;\Z)\cong \Z$. These invariants are generalizations of the invariants $d_{-1/2}$ and $d_{1/2}$ defined by Ozsváth and Szabó in \cite{MR1957829} and also encode important topological information. In particular, they are spin integer homology cobordism invariants. Moreover, in section \ref{section:involutive}, we prove the following theorems which generalize \cite[Theorem 9.11]{MR1957829} and \cite[Proposition 4.11]{MR1957829} of Ozsváth and Szabó.

\begin{thm*}[A]\label{thm: A}
Suppose $X$ is a smooth oriented negative semi-definite spin 4-manifold with boundary a 3-manifold $Y$ with $H_{1}(Y;\Z)\cong \Z$.   
\begin{enumerate}
    \item\label{hyp: 1} If the restriction $H^{1}(X;\Z)\to H^{1}(Y;\Z)$ is trivial, then 
    \begin{align*}
        b_{2}(X)-3\leq 4\underline{d}_{-1/2}(Y)
    \end{align*}
    
    \item If the restriction $H^{1}(X;\Z)\to H^{1}(Y;\Z)$ is non-trivial, then
    \begin{align*}
        b_{2}(X)+2\leq 4\underline{d}_{1/2}(Y)
    \end{align*}
\end{enumerate}
\end{thm*}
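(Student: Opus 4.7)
The strategy is to adapt Ozsv\'ath-Szab\'o's proofs of \cite[Theorem 9.11]{MR1957829} and \cite[Proposition 4.11]{MR1957829} to the involutive setting. First, I would puncture $X$ at an interior point to obtain a negative semi-definite spin cobordism $W\colon S^3 \to Y$. The spin structure on $X$ restricts to a self-conjugate $\spinc$ structure $\mathfrak{s}$ on $W$ that further restricts to a self-conjugate $\spinc$ structure $\mathfrak{t}$ on $Y$. Because $\mathfrak{s}$ is self-conjugate, the Hendricks-Manolescu-Zemke construction refines $F^-_{W,\mathfrak{s}}$ to an involutive cobordism map
\begin{align*}
    F^I_{W,\mathfrak{s}}\colon HFI^-(S^3) \longrightarrow HFI^-(Y,\mathfrak{t})
\end{align*}
that homotopy-commutes with $\iota$.

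Next I would show that $F^I_{W,\mathfrak{s}}$ is nontrivial at the $HF^\infty$ level and that the image of the $\iota$-invariant $U$-tower generator of $HFI^-(S^3)$ lies in the appropriate $\iota$-invariant $U$-tower of $HFI^-(Y,\mathfrak{t})$. Since $b_1(Y) = 1$, the module $HF^\infty(Y,\mathfrak{t})$ splits into two towers, and the behavior of the restriction $H^1(X;\Z) \to H^1(Y;\Z)$ dictates which tower is hit: trivial restriction, as in case (1), forces the image into the ``bottom'' tower detected by $\underline{d}_{-1/2}$, while non-trivial restriction, as in case (2), forces it into the ``top'' tower detected by $\underline{d}_{1/2}$. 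This is the involutive analogue of the nontriviality statement that drives Ozsv\'ath-Szab\'o's arguments, and it is precisely where the hypothesis on $H^1$ enters.

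Once the target tower is identified, the rest reduces to grading arithmetic. Since $\mathfrak{s}$ is spin we have $c_1(\mathfrak{s})^2 = 0$, so the grading shift of $F^-_{W,\mathfrak{s}}$ reduces to $-\tfrac{2\chi(W) + 3\sigma(W)}{4}$. Substituting $\chi(W) = \chi(X) - 1$ and $\sigma(W) = \sigma(X)$, together with the relations between $\sigma(X)$ and $b_2(X)$ forced by negative semi-definiteness in each of the two cases, one extracts the grading of the image element. Comparing this grading with the definitions of $\underline{d}_{-1/2}(Y)$ and $\underline{d}_{1/2}(Y)$ as maximal gradings of $\iota$-invariant tower elements hit from $HFI^\infty$ then produces the stated inequalities, with the specific constants $-3$ and $+2$ emerging from the standard grading conventions together with the $\iota$-correction built into the definition of $\underline{d}$.

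The hard part will be the nontriviality step: verifying that $F^I_{W,\mathfrak{s}}$ is nonzero on the correct $HF^\infty$-tower with the required $\iota$-equivariance. The classical Ozsv\'ath-Szab\'o argument rests on a careful analysis of cohomological restriction maps to pin down which tower receives the image, and the involutive upgrade requires tracking this analysis through the mapping cone construction defining $HFI^-$ while preserving self-conjugacy of $\mathfrak{s}$. Once that compatibility is secured, the grading computation and comparison with the definitions of $\underline{d}_{\pm 1/2}$ are routine.
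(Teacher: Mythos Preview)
Your overall strategy matches the paper's, but two substantive technical points are glossed over.

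First, before puncturing, the paper surgers $X$ to reduce $b_1$: to zero in case (1), to one in case (2), while keeping the restriction hypothesis and the non-degenerate part of the intersection form intact. This is needed both for the Ozsv\'ath--Szab\'o identification of which $HF^\infty$ tower the cobordism map hits and for the degree shift to come out to exactly $(b_2(X)-3)/4$ or $(b_2(X)+2)/4$. Simply puncturing $X$ with arbitrary $b_1(X)$ and appealing to ``relations between $\sigma(X)$ and $b_2(X)$'' will not produce these constants.

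Second, your plan to show the image ``lies in the appropriate $\iota$-invariant $U$-tower'' is precisely where the argument is delicate, and the forward push you describe does not obviously work. The decomposition of $HFI^\infty(Y,\mathfrak{s}_0)$ into $Q$- and non-$Q$-towers is not canonical (Proposition~\ref{prop: HFI b1 standard}), so there is no direct way to verify that $F^{I,\infty}_{W,\mathfrak{s}}$ respects it. The paper instead runs a backward diagram chase: start from an element $y^+\in HFI^+(Y)$ realizing $\underline{d}_{-1/2}$, lift to $y^\infty\in HFI^\infty(Y)$, pass through the exact triangle of Proposition~\ref{prop: HFI exact triangle zero} to land in $HF^\infty(Y)$, pull back along the ordinary $F^\infty_{W,\mathfrak{s}}$ (which is an isomorphism onto the correct tower by Ozsv\'ath--Szab\'o), lift to $x^\infty\in HFI^\infty(S^3)$, and then push forward via $F^{I,\infty}_{W,\mathfrak{s}}$ to get $z^\infty$. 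The point is that $z^\infty$ need not equal $y^\infty$, but their difference lies in $\Image(g^\infty_Y)$, and the hypothesis $y^+\notin\Image(U^nQ)$ then forces $\pi^I_Y(z^\infty)\neq 0$; commutativity gives a nonzero $x^+\in HFI^+(S^3)$ with the right properties, yielding $\underline{d}(S^3)\leq \underline{d}_{-1/2}(Y)-\ell$. Your forward approach does not address this step. (A minor point: $\underline{d}_{\pm 1/2}$ is defined via a \emph{minimum} of gradings of elements in $\Image(U^n)\setminus\Image(U^nQ)$ in $HFI^+$; the phrase ``$\iota$-invariant tower elements'' does not match the definition.)
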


\begin{rem}
Hypothesis (\ref{hyp: 1}) implies $b_{2}(X)\geq 1$.
\end{rem}

\begin{thm*}[B]\label{thm: B}
Let $M$ be an oriented integer homology 3-sphere and let $Y$ and $M'$ be the 3-manifolds obtained via $0$ and $+1$ surgery respectively on a knot $K$ in $M$. Then, 
\begin{enumerate}
\item 
$\underline{d}(M)-\frac{1}{2}\leq \underline{d}_{-1/2}(Y)\hspace{2em}\text{and}\hspace{2em}\bar{d}(M)-\frac{1}{2}\leq \bar{d}_{-1/2}(Y)$

\item 
$\underline{d}_{1/2}(Y)-\frac{1}{2}\leq \underline{d}(M')\hspace{2.35em}\text{and}\hspace{2em}\bar{d}_{1/2}(Y)-\frac{1}{2}\leq \bar{d}(M')$
\end{enumerate}
\end{thm*}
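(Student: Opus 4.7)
The plan is to argue in parallel with the proof of \cite[Proposition 4.11]{MR1957829}, by upgrading each Heegaard Floer cobordism map appearing there to a morphism of $\iota$-complexes of the same degree, and then invoking the usual monotonicity of $\underline{d}$ and $\bar{d}$ under such morphisms. The geometric input is standard: each inequality is produced by a single $2$-handle cobordism arising in the Dehn surgery exact triangle for $(M,K)$.

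For inequality (1), consider the cobordism $W_{0}\colon M \to Y$ obtained from $M \times [0,1]$ by attaching a $0$-framed $2$-handle along $K \subset M \times \{1\}$. Then $\chi(W_{0}) = 1$ and $\sigma(W_{0}) = 0$, and there is a unique $\spinc$ structure $\mathfrak{t}_{0}$ on $W_{0}$ with $c_{1}(\mathfrak{t}_{0})$ torsion; it restricts to the torsion $\spinc$ structure $\mathfrak{s}_{0}$ on $Y$ and to the unique $\spinc$ structure on $M$. The grading shift formula gives
\[
\gr\bigl(F^{-}_{W_{0},\mathfrak{t}_{0}}\bigr) \;=\; \tfrac{1}{4}\bigl(c_{1}(\mathfrak{t}_{0})^{2} - 2\chi(W_{0}) - 3\sigma(W_{0})\bigr) \;=\; -\tfrac{1}{2}.
\]
Moreover, the Ozsv\'ath--Szab\'o argument for \cite[Proposition 4.11]{MR1957829} shows that the induced map $F^{\infty}_{W_{0},\mathfrak{t}_{0}}\colon HF^{\infty}(M) \to HF^{\infty}(Y,\mathfrak{s}_{0})$ is injective onto the $\F[U,U^{-1}]$-tower which carries the invariant $d_{-1/2}$.

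The involutive refinement is now essentially formal. Since $c_{1}(\mathfrak{t}_{0})$ is torsion we have $\bar{\mathfrak{t}}_{0} = \mathfrak{t}_{0}$, so by the naturality of the Heegaard Floer involution established in \cite{MR3649355} there is a chain homotopy $\iota_{Y}\circ F^{-}_{W_{0},\mathfrak{t}_{0}} \simeq F^{-}_{W_{0},\mathfrak{t}_{0}}\circ \iota_{M}$. This promotes $F^{-}_{W_{0},\mathfrak{t}_{0}}$ to a degree $-1/2$ morphism of $\iota$-complexes from $CF^{-}(M)$ to $CF^{-}(Y,\mathfrak{s}_{0})$ whose $HF^{\infty}$ image meets the $d_{-1/2}$-tower. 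The monotonicity of $\underline{d}$ and $\bar{d}$ under such morphisms (extending the arguments of \cite{MR3649355} for $\Q$HS cobordisms to the $b_{1} = 1$ setting) then yields
\[
\underline{d}(M) - \tfrac{1}{2} \leq \underline{d}_{-1/2}(Y),\qquad \bar{d}(M) - \tfrac{1}{2} \leq \bar{d}_{-1/2}(Y).
\]
Part (2) is handled in the same way using the cobordism $W_{1}\colon Y \to M'$ attached along a meridian of $K$, together with a self-conjugate $\spinc$ structure whose grading shift on the cobordism map is again $-1/2$; one checks that the induced $HF^{\infty}$-map carries the $d_{1/2}$-tower of $HF^{\infty}(Y,\mathfrak{s}_{0})$ onto the single tower of $HF^{\infty}(M')$, after which the same monotonicity argument applies.

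The main technical obstacle is pinning down, for each of the two cobordisms, precisely which of the two $\F[U,U^{-1}]$-towers of $HF^{\infty}(Y,\mathfrak{s}_{0})$ is hit by the cobordism map and verifying that the naturality homotopy for $\iota$ descends to the involutive complex in a manner that respects this tower decomposition. This subtlety is absent in the rational homology sphere setting of Hendricks--Manolescu, where there is a unique tower; once it is resolved, the rest of the argument reduces to the standard cobordism estimates for $\underline{d}$ and $\bar{d}$ applied in both directions of the surgery triangle.
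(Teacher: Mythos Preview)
Your approach is essentially the same as the paper's: both use the $0$-framed $2$-handle cobordism $W_{0}\colon M\to Y$ (and the analogous $W_{1}\colon Y\to M'$), identify which tower of $HF^{\infty}(Y,\mathfrak{s}_{0})$ the cobordism map hits, and then use that the cobordism map intertwines $\iota$ (since $\mathfrak{t}_{0}=\bar{\mathfrak{t}}_{0}$). The difference is one of packaging and completeness. You phrase the argument in terms of a degree-$(-1/2)$ morphism of $\iota$-complexes and then invoke an unstated ``monotonicity of $\underline{d}$ and $\bar{d}$ under such morphisms'' in the $b_{1}=1$ setting; the paper instead works directly with the induced maps $F^{I,\circ}_{W,\mathfrak{s},\alpha}$ on the involutive mapping cones and carries out an explicit diagram chase through the exact triangle relating $HF^{\circ}$ and $HFI^{\circ}$ (Proposition~\ref{prop: HFI exact triangle}) intertwined with the cobordism maps. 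That diagram chase \emph{is} the monotonicity statement you are invoking, so what you have written is a correct outline but defers the actual content of the proof to a lemma you do not prove. Your final paragraph correctly flags exactly the issue the paper resolves: tracking the tower decomposition through the involutive cobordism map is the new ingredient beyond the $\mathbb{Q}$HS case, and the paper handles it by direct computation rather than by appeal to a general principle.
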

\noindent As a consequence of these theorems, we obtain the following two corollaries:
\begin{cor*}[C]\label{cor: C}
Suppose $K$ is a knot in $S^3$ and $Y$ is the result of $0$-surgery on $K$. Then, 
\begin{enumerate}
\item $-\frac{1}{2}\leq \underline{d}_{-1/2}(Y)$
\item $\bar{d}_{1/2}(Y)\leq \frac{1}{2}$
\end{enumerate}
\end{cor*}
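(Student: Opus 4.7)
The plan is to deduce both inequalities by specializing Theorem B to $M = S^{3}$, handling part (2) via the mirror knot together with the behavior of the involutive $d$-invariants under orientation reversal.

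For part (1), apply Theorem B(1) with $M = S^{3}$ and the knot $K \subset S^{3}$, so that by hypothesis the $0$-surgery yields $Y$. Since the involutive Heegaard Floer complex of the standard $3$-sphere is the trivial $\iota$-complex, $\underline{d}(S^{3}) = \bar{d}(S^{3}) = 0$. The first inequality of Theorem B(1) then reads $-\tfrac{1}{2} \leq \underline{d}_{-1/2}(Y)$, which is exactly (1).

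For part (2), let $\bar{K} \subset S^{3}$ denote the mirror knot of $K$. Under the usual surgery conventions $S^{3}_{n}(\bar{K}) \cong -S^{3}_{-n}(K)$, so in particular $S^{3}_{0}(\bar{K}) \cong -Y$. Applying part (1) to $\bar{K}$ yields $-\tfrac{1}{2} \leq \underline{d}_{-1/2}(-Y)$. It remains to invoke the orientation-reversal identity
\[
    \underline{d}_{-1/2}(-Y) \;=\; -\bar{d}_{1/2}(Y),
\]
which is the $b_{1} = 1$ analog of the identity $\underline{d}(-Y) = -\bar{d}(Y)$ of Hendricks--Manolescu for rational homology spheres, and which should be recorded in Section \ref{section:involutive} alongside the definitions of $\bar{d}_{\pm 1/2}$ and $\underline{d}_{\pm 1/2}$. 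Substituting gives $\bar{d}_{1/2}(Y) \leq \tfrac{1}{2}$.

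The only genuine subtlety is verifying this orientation-reversal duality in the $b_{1} = 1$ setting; it follows by tracking how conjugation of a pointed Heegaard diagram, combined with the $\iota$-involution, exchanges the two self-conjugate $\spinc$-grading summands labelled $\pm 1/2$ and negates gradings, in direct analogy with the rational homology sphere case. No genuinely new input is required; the remainder of the corollary is a direct substitution.
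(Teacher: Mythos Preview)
Your proof is correct and follows essentially the same approach as the paper: part (1) is Theorem B(1) specialized to $M=S^3$ with $\underline{d}(S^3)=0$, and part (2) proceeds via the mirror knot $\bar{K}$ (so that $0$-surgery yields $-Y$) together with the duality $\underline{d}_{-1/2}(-Y)=-\bar{d}_{1/2}(Y)$, which is indeed recorded as Proposition~\ref{ref:involutive correction term properties}(2).
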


\begin{cor*}[D]\label{cor: D}
Suppose $Y$ is a closed oriented 3-manifold with $H_{1}(Y;\Z)\cong \Z$. If 
\begin{align*}
    \underline{d}_{-1/2}(Y)< -1/2\hspace{2em}\text{and}\hspace{2em}\underline{d}_{1/2}(Y)<1/2
\end{align*}
then $Y$ is not the boundary of any negative semi-definite spin manifold. 
\end{cor*}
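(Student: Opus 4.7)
The plan is to argue by contrapositive: assume $Y$ is the boundary of some negative semi-definite spin 4-manifold $X$, and deduce that at least one of the inequalities $\underline{d}_{-1/2}(Y) \geq -1/2$ or $\underline{d}_{1/2}(Y) \geq 1/2$ must hold. Since Theorem A already splits into two cases depending on whether the restriction $H^{1}(X;\Z) \to H^{1}(Y;\Z)$ is trivial or not, the argument naturally follows the same dichotomy.

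In the first case, when the restriction map is trivial, Theorem A(1) gives the bound $b_{2}(X) - 3 \leq 4\underline{d}_{-1/2}(Y)$. By the remark immediately following Theorem A, this hypothesis forces $b_{2}(X) \geq 1$, and substituting yields $4\underline{d}_{-1/2}(Y) \geq -2$, i.e., $\underline{d}_{-1/2}(Y) \geq -1/2$. In the second case, when the restriction is nontrivial, Theorem A(2) gives $b_{2}(X) + 2 \leq 4\underline{d}_{1/2}(Y)$; combined with the trivial observation that $b_{2}(X) \geq 0$ for any 4-manifold, this produces $\underline{d}_{1/2}(Y) \geq 1/2$. Either conclusion directly contradicts one of the two hypotheses of the corollary, completing the argument.

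Since all the content is packaged into Theorem A and its accompanying remark, there is essentially no technical obstacle in this step; the only care needed is to verify that the two cases of Theorem A together exhaust all possibilities for the restriction map $H^{1}(X;\Z) \to H^{1}(Y;\Z)$ (which they do, since trivial versus nontrivial is an exhaustive dichotomy), so that the bound from one of the two cases must always apply.
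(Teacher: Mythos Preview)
Your proposal is correct and follows essentially the same approach as the paper's proof. The only minor difference is that the paper spells out the justification for $b_{2}(X)\geq 1$ in the first case (via the long exact sequence of the pair $(X,Y)$ and the isomorphism $H^{2}(X,Y;\Z)\cong H_{2}(X;\Z)$), whereas you simply cite the remark following Theorem~A; the overall logic and dichotomy are identical.
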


To put the above results to use, we need a practical way to calculate $\bar{d}_{\pm 1/2}$ and $\underline{d}_{\pm 1/2}$. The approach we take to achieve this is to adapt existing methods for computing $\bar{d}$ and $\underline{d}$ for rational homology spheres to the setting of 3-manifolds with $b_{1} =1$. In \cite{MR4021102}, Dai and Manolescu provide a combinatorial method to compute the involutive Heegaard Floer homology of a certain class of negative definite plumbed 3-manifolds called almost rational (or AR) plumbed manifolds. In particular, their methods provide a way to compute the invariants $\bar{d}$ and $\underline{d}$ for rational homology spheres which admit such a plumbing. 

Their approach utilizes the framework of lattice cohomology and graded roots introduced by Némethi in \cite{MR2426357} and \cite{MR2140997}. Lattice cohomology itself builds upon earlier work by Ozsváth and Szabó in which they show how to combinatorially compute the Heegaard Floer homology of a subclass of almost rational plumbings, namely negative definite plumbings with at most one \hyperlink{bad vertex}{\textit{bad}} vertex (see \cite{MR1988284}). Rustamov later generalized this work of Ozsváth and Szabó to the case of negative semi-definite plumbings with $b_{1}=1$ and at most one bad vertex (see \cite{Rustamov}). To cohesively adapt and combine the Dai and Manolescu work with the work of Rustamov, we first recast Rustamov's results in the language of lattice cohomology and graded roots. This requires us to slightly modify Neméthi's original definition of lattice cohomology.   

After establishing the above computational approach, we carry out a specific calculation of the plus version of the involutive Heegaard Floer homology of an infinite family $\{N_{j}\}_{j\in \mathbb{N}}$ of small Seifert fiber spaces. For $j\in \mathbb{N}$, we let $N_{j}=S^2\left(-\frac{2}{1}, \frac{-8j+1}{1}, \frac{16j-2}{8j+1}\right)$. $N_{j}$ can also be realized as surgery on a 2-component link as follows:

\begin{figure}[h]
  \centering
  \includegraphics[scale=.7]{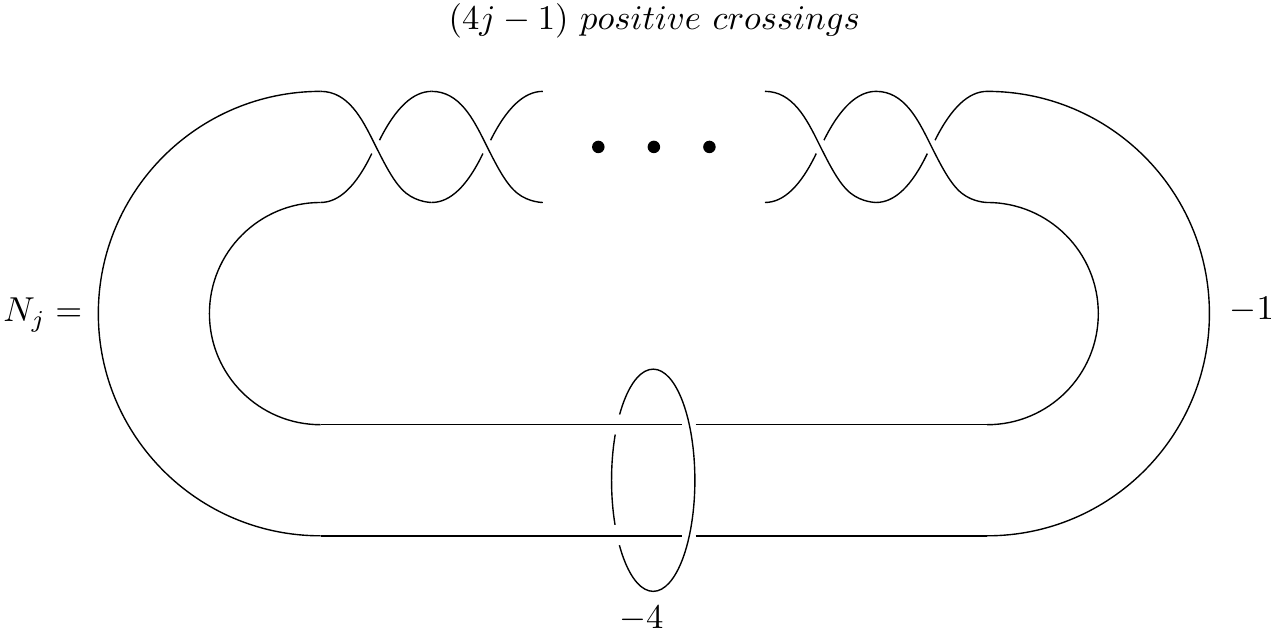}
  \end{figure}

The family $\{N_{j}\}_{j\in \mathbb{N}}$ was previously studied by Hedden, Kim, Mark, and Park in \cite{MR4029676}. The manifolds in this family all have first homology equal to $\Z$ and weight 1 fundamental groups, which are necessary conditions if said manifolds could be obtained by $0$-surgery on a knot in $S^3$. However, by using an obstruction in terms the Rokhlin invariant, Hedden, Kim, Mark, and Park proved that for all odd positive integers $j$, $N_{j}$ cannot be obtained by $0$-surgery on a knot in $S^3$ (see \cite[Theorem 7.3]{MR4029676}). In the same paper, they also show that if $Y$ is a 3-manifold that is homology cobordant to a Seifert fibered homology $S^1\times S^2$, then $Y$ automatically satisfies the same $d_{1/2}$ and $d_{-1/2}$ bounds as a manifold obtained by $0$-surgery on a knot in $S^3$ (see \cite[Theorem 5.2]{MR4029676}). In other words, the non-involutive version of \hyperref[cor: C]{Corollary (C)} (see \cite[Proposition 4.11]{MR1957829}) cannot obstruct a Seifert fibered homology $S^1\times S^2$ from being $0$-surgery on a knot in $S^3$. However, it turns out that the extra information contained in involutive Heegaard Floer homology can detect Seifert fibered $0$-surgery. In particular, as an application of \hyperref[cor: C]{Corollary (C)} and \hyperref[cor: D]{Corollary (D)},  we are able to prove the following extension of \cite[Theorem 7.3]{MR4029676}:
\begin{thm*}[E]\label{thm: E}
 For all positive integers $j$, $N_{j}$ cannot be obtained by $0$-surgery on a knot in $S^3$. In fact, $N_{j}$ is not the oriented boundary any smooth negative semi-definite spin 4-manifold.
\end{thm*}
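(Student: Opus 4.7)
The plan is to deduce Theorem (E) from \hyperref[cor: D]{Corollary (D)} by computing the involutive correction terms of each $N_j$ via the lattice cohomology and graded roots machinery developed earlier in the paper. Since the $0$-trace of $0$-surgery on a knot in $S^3$ is a smooth negative semi-definite spin $4$-manifold, the statement about $0$-surgeries follows as soon as we show the stronger conclusion that $N_j$ bounds no negative semi-definite spin $4$-manifold. By \hyperref[cor: D]{Corollary (D)}, it therefore suffices to verify the strict inequalities
\[
\underline{d}_{-1/2}(N_j) < -\tfrac{1}{2} \qquad \text{and} \qquad \underline{d}_{1/2}(N_j) < \tfrac{1}{2}
\]
for every positive integer $j$.

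The first step is to realize $N_j$ as a negative semi-definite star-shaped plumbed $3$-manifold with $b_{1}=1$ and at most one bad vertex, so that the Rustamov-type framework, recast in the graded-roots language introduced earlier in the paper, applies. Since $N_j$ is a small Seifert fibered space with three exceptional fibers, it has a natural star-shaped plumbing whose three arms come from the continued fraction expansions of the multiplicities $-\tfrac{2}{1}$, $\tfrac{-8j+1}{1}$, and $\tfrac{16j-2}{8j+1}$; negative semi-definiteness corresponds to vanishing Seifert Euler number, which agrees with $H_{1}(N_j;\Z)\cong \Z$. I would then verify directly that the central vertex is the only candidate bad vertex.

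Next, I would compute the graded root and the involution on it in the two self-conjugate $\spinc$ structures, which are the ones relevant for the $d_{\pm 1/2}$-type invariants. Combining Rustamov's calculation of $HF^{+}$ for negative semi-definite plumbings with the involution action on lattice cohomology described by Dai and Manolescu yields explicit $\iota$-complexes whose $\underline{d}$ and $\bar{d}$ invariants can be read off by locating the $U$-tower and identifying the lowest grading at which an $\iota$-invariant element becomes $U$-nontorsion. From this data one extracts $\underline{d}_{\pm 1/2}(N_j)$ directly.

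The main obstacle is carrying out this computation uniformly in $j$: the lattice and the weight function grow with $j$, so a direct case-by-case analysis is not viable and one must identify a stable combinatorial pattern in the graded root (and in which vertex of it is fixed by the involution) that persists as $j$ varies. Once this uniform description is in hand, verifying the two displayed inequalities reduces to an arithmetic check in each self-conjugate $\spinc$ structure, after which \hyperref[cor: D]{Corollary (D)} finishes the proof. As a sanity check, for odd $j$ the conclusion must be compatible with the Rokhlin obstruction used in \cite[Theorem 7.3]{MR4029676}; the new content of Theorem (E) is the even-$j$ case, where only the extra information in the involutive invariants is strong enough to obstruct semi-definite spin fillings.
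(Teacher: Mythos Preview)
Your overall strategy matches the paper's: compute the involutive correction terms of $N_j$ via the graded-root/lattice machinery adapted to the negative semi-definite setting, then apply Corollary~(D). The paper does exactly this, working with the plumbing $\Gamma_j$ for $N_j$ to compute $HFI^+(-N_j,\mathfrak{s}_0)$ explicitly (Theorem~\ref{thm: HFI N}), reading off $\underline{d}_{\pm 1/2}(-N_j)$ and $\bar{d}_{\pm 1/2}(-N_j)$, and then converting via $\underline{d}_{\pm 1/2}(Y)=-\bar{d}_{\mp 1/2}(-Y)$ to obtain $\underline{d}_{-1/2}(N_j)=-2j-\tfrac12$ and $\underline{d}_{1/2}(N_j)=-2j+\tfrac12$, both strictly below the thresholds in Corollary~(D).

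There is, however, a genuine conceptual error in your outline. You write that you will compute the graded root and the involution ``in the two self-conjugate $\spinc$ structures.'' When $H_1(Y;\Z)\cong\Z$ there is exactly \emph{one} torsion $\spinc$ structure $\mathfrak{s}_0$, and it is the unique self-conjugate one. Both invariants $\underline{d}_{1/2}$ and $\underline{d}_{-1/2}$ are extracted from the \emph{single} module $HFI^+(Y,\mathfrak{s}_0)$: they record the bottoms of the two towers (distinguished by grading mod $2$) coming from $HFI^\infty(Y,\mathfrak{s}_0)\cong \F[Q,U,U^{-1}]/(Q^2)\oplus\F[Q,U,U^{-1}]/(Q^2)$. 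Relatedly, the language of ``$\iota$-complexes'' is tailored to rational homology spheres; in the $b_1=1$ setting the structure is genuinely different (four towers rather than two, and the Rustamov isomorphism only identifies $HF^+_{odd}$ with the lattice side, with $HF^+_{even}$ a single tower on which $\iota_*$ is forced to be the identity). Once you correct this and work in the single $\spinc$ structure, the computation proceeds as you describe and your plan coincides with the paper's argument.
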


To provide further context for the above theorem, it is worth noting that there do exist small Seifert fiber spaces which are obtained by $0$-surgery on a knot in $S^3$. For example, by work of Moser (see \cite{MR383406}), $0$-surgery on torus knots are small Seifert fibered spaces. More recently, Ichihara, Motegi, and Song discovered an infinite family of hyperbolic knots $\{K_{n}\}_{n\in \Z-\{0,-1,-2\}}$ with small Seifert fibered $0$-surgery (see \cite{MR2499823}). These small Seifert manifolds are different from those obtained by $0$-surgery on torus knots. 

\newpage
Interestingly, as we describe in section \ref{subsection: comparison}, 
\begin{align*}
    HF^+(-N_{1}, \mathfrak{s}_{0})\cong HF^+(-S^3_{0}(K_{1}), \mathfrak{s}_{0})
\end{align*}
where $S^3_{0}(K_{1})$ denotes $0$-surgery on $K_{1}$ and, on each side of the equation, $\mathfrak{s}_{0}$ is the unique self-conjugate $\spinc$ structure. However, 
\begin{align*}
    HFI^+(-N_{1}, \mathfrak{s}_{0})\ncong HFI^+(-S^3_{0}(K_{1}), \mathfrak{s}_{0})
\end{align*}
This gives a very concrete example of how involutive Heegaard Floer homology detects Seifert fibered $0$-surgery whereas regular Heegaard Floer homology does not.

\subsection{Organization of the paper.}
In section \ref{section:involutive}, we review involutive Heegaard Floer homology and prove Theorems \hyperref[thm: spin boundary constraints]{(A)} and \hyperref[thm: involutive zero surgery inequalities]{(B)}. In section \ref{section: plumbings}, we review some basic facts about plumbed manifolds. In section \ref{section: homology of plumbings}, we define a slightly modified version of lattice cohomology for negative semi-definite plumbings and describe how it fits with prior work of Ozsváth and Szabó, Némethi, and Rustamov. In section \ref{section:computation method}, we adapt the computational techniques of Dai-Manolescu and Rustamov to describe a method for computing the involutive Heegaard Floer homology of certain negative semi-definite plumbings. In section \ref{section: examples}, we use this method to compute the involutive Heegaard Floer homology of the manifolds $\{N_{j}\}_{j\in \mathbb{N}}$ as well as $S^3_{0}(K_{1})$. In particular, these calculations together with the results of section \ref{section:involutive}, enable us to prove \hyperref[thm: nj not zero surgery]{Theorem (E)}. 

\subsection{Acknowledgements.} I thank my advisor Tom Mark for suggesting this project. I am deeply grateful for his guidance and countless helpful conversations. I was supported by NSF RTG grant DMS-1839968.

\newpage

\section{Involutive Heegaard Floer homology}\label{section:involutive}
In this section, we briefly review the construction of involutive Heegaard Floer homology. We then recall the involutive $d$ invariants, $\underline{d}$ and $\bar{d}$,  defined by Manolescu and Hendricks for rational homology spheres and define analogous invariants, $\underline{d}_{\pm 1/2}$ and $\bar{d}_{\pm 1/2}$, for closed oriented 3-manifolds with first homology $\Z$. We show that $\underline{d}_{\pm 1/2}$ and $\bar{d}_{\pm 1/2}$ are spin integer homology cobordism invariants and use them to establish constraints on the intersection forms of negative semi-definite spin 4-manifolds whose boundary is a 3-manifold with first homology $\Z$. Furthermore, we establish new obstructions to a 3-manifold being realized as $0$-surgery on a knot in an integer homology sphere. 

We assume the reader is familiar with Heegaard Floer homology (see for example: \cite{MR2113019}, \cite{MR2113020}, \cite{MR2249247}, \cite{MR2249248}).

\subsection{Notation/Conventions:} 
\begin{itemize}
\item We use $\F=\Z_{2}$ coefficients for all Heegaard Floer and involutive Heegaard Floer homology groups.

\item Given a graded $\F[U]$-module $\mathcal{A}$, we let $\mathcal{A}[r]$ be the graded $\F[U]$-module defined by $\mathcal{A}[r]_{k}= \mathcal{A}_{k+r}$. The subscripts denote the homogeneous elements of the corresponding grading. 

\item We let $\mathcal{T}^+=\F[U,U^{-1}]/(U\cdot \F[U])$ be the graded $\F[U]$-module where $\gr(U^n) = -2n$.

\item We let
$\mathcal{T}^+_{d}\coloneqq\mathcal{T}^+[-d]$. In other words, $\mathcal{T}^+_{d}$ is the $\F[U]$-module $\mathcal{T}^+$ with grading shifted so that the minimal non-zero grading level is $d$. 
\end{itemize}

\subsection{Review of involutive Heegaard Floer homology}
For complete details of the construction of involutive Heegaard Floer homology see \cite{MR3649355}.

Let $Y$ be any closed, connected, oriented 3-manifold. Fix a $\spinc$ structure $\mathfrak{s}$ on $Y$ and let $\overline{\omega} = \{\mathfrak{s},\bar{\mathfrak{s}}\}$ be the orbit of $\mathfrak{s}$ under the conjugation action. Let $\mathcal{H} = (H, J)$ be a Heegaard pair, i.e., $H = (\Sigma, \alpha, \beta, z)$ is a pointed Heegaard diagram for $Y$ admissible with respect to $\mathfrak{s}$ and $J$ is a generic family of almost complex structures on $\Sym^g(\Sigma)$. Given this setup, define 
\begin{align*}
CF^{\circ}(\mathcal{H},\overline{\omega}) = \bigoplus_{\mathfrak{t}\in\overline{\omega}}CF^{\circ}(\mathcal{H}, \mathfrak{t})
\end{align*}
where $CF^{\circ}(\mathcal{H}, \mathfrak{t})$ is the usual Heegaard Floer chain complex associated to $(\mathcal{H},\mathfrak{t})$.

We call $\overline{\mathcal{H}} = (\overline{H}, \bar{J})$ the conjugate Heegaard pair where $\overline{H}= (-\Sigma, \beta, \alpha, z)$ and where $\bar{J}$ is the corresponding conjugate family of almost complex structures. As shown by Ozsváth and Szabó in \cite[Theorem 2.4]{MR2113020}, there is a canonical isomorphism of chain complexes:
\begin{align*}
    \eta: CF^{\circ}(\mathcal{H}, \mathfrak{s})\to CF^{\circ}(\overline{\mathcal{H}}, \bar{\mathfrak{s}})
\end{align*}
Moreover, $H$ and $\overline{H}$ both represent the same 3-manifold $Y$; swapping the order of the $\alpha$ and $\beta$ curves and reversing the orientation of $\Sigma$ both have the effect of reversing the orientation on $Y$ and thus cancel each other out. One may think of $\overline{H}$ as being obtained from $H$ by flipping the handle decomposition corresponding to $H$ upside down. 

Using naturality results of Juhász, Thurston, and Zemke (see \cite{JTZ}), it is observed by Hendricks and Manolescu in \cite[Proposition 2.3]{MR3649355} that given two Heegaard pairs representing the same 3-manifold there is a chain homotopy equivalence between their respective Heegaard Floer chain complexes. Furthermore, these chain homotopy equivalences form a transitive system. In particular, since $\mathcal{H}$ and $\overline{\mathcal{H}}$ both represent $Y$, we get a chain homotopy equivalence:
\begin{align*}
    \Phi(\overline{\mathcal{H}}, \mathcal{H}): CF^{\circ}(\overline{\mathcal{H}}, \mathcal{\bar{\mathfrak{s}}})\to CF^{\circ}(\mathcal{H}, \mathcal{\bar{\mathfrak{s}}})
\end{align*}
Taking the composition of $\eta$ and $\Phi$, we obtain a map:
\begin{align*}
    \iota = \Phi(\overline{\mathcal{H}}, \mathcal{H})\circ\eta: CF^{\circ}(\mathcal{H},\mathfrak{s})\rightarrow CF^{\circ}(\mathcal{H},\bar{\mathfrak{s}})
\end{align*}
which is uniquely determined up to chain homotopy. By swapping the roles of $\mathfrak{s}$ and $\bar{\mathfrak{s}}$ in the above discussion, we get a second map going in the opposite direction which, by an abuse of notation, we again call $\iota$.
\begin{align*}
    \iota: CF^{\circ}(\mathcal{H},\bar{\mathfrak{s}})\rightarrow CF^{\circ}(\mathcal{H},\mathfrak{s})
\end{align*}
It is shown in \cite{MR3649355} that $\iota^2:CF^{\circ}(\mathcal{H},\mathfrak{s})\to CF^{\circ}(\mathcal{H},\mathfrak{s})$ is chain homotopic to the identity. 

By a further abuse of notation, we let $\iota$ also denote the direct sum of the two $\iota$ maps above, i.e., 
\begin{align*}
    \iota: CF^{\circ}(\mathcal{H},\overline{\omega})\rightarrow CF^{\circ}(\mathcal{H},\overline{\omega})
\end{align*}
We then define the involutive Heegaard Floer complex, $CFI^{\circ}(\mathcal{H},\overline{\omega})$, to be the mapping cone complex: 
\begin{align*}
CF^{\circ}(\mathcal{H}, \overline{\omega})\xrightarrow{Q(1+\iota)}Q\cdot CF^{\circ}(\mathcal{H}, \overline{\omega})[-1] 
\end{align*}
Here, $Q$ is a formal variable that shifts the grading down by $1$. Therefore, as graded $\F[U]$-modules, $Q\cdot CF^{\circ}(\mathcal{H}, \overline{\omega})[-1] \cong  CF^{\circ}(\mathcal{H}, \overline{\omega})$ (strictly, these are $\Z_{2}$-graded modules; there is only an absolute $\Q$-grading lifting the $\Z_{2}$-grading when $\mathfrak{s}$ is torsion, for example when $\mathfrak{s}$ is self-conjugate). Introducing the formal variable $Q$ gives $CFI^{\circ}(\mathcal{H},\overline{\omega})$ the extra structure of a $\F[Q,U]/(Q^2)$-module rather than just an $\F[U]$-module. The involutive Heegaard Floer homology, $HFI^{\circ}(\mathcal{H}, \overline{\omega})$, is then defined to be the homology of $CFI^{\circ}(\mathcal{H},\overline{\omega})$. It turns out that the isomorphism class of $HFI^{\circ}(\mathcal{H}, \overline{\omega})$ as a graded $\F[Q,U]/(Q^2)$-module is independent of the choice of auxiliary data $\mathcal{H}$. Therefore, we will write $HFI^{\circ}(Y, \overline{\omega})$ rather than $HFI^{\circ}(\mathcal{H}, \overline{\omega})$. If $\mathfrak{s}$ is self-conjugate ($\mathfrak{s}=\bar{\mathfrak{s}})$, we write $HFI^{\circ}(Y, \mathfrak{s})$. 

\begin{rem}
Since $HFI^{\circ}(Y, \overline{\omega})$ is currently only defined up to isomorphism, it is important to highlight that when one considers elements of (or maps on) $HFI^{\circ}$, one needs to make a choice of auxiliary data. It is not known whether canonical $\F[Q,U]/(Q^2)$-modules can be associated to each pair $(Y, \overline{\omega})$. For that, one would need higher order naturality results. See \cite[Section 2.4]{MR3649355} for more details about this issue. 
\end{rem}

\subsection{Involutive $d$ invariants}
In \cite[Section 5]{MR3649355}, Hendricks and Manolescu define involutive $d$ invariants, denoted $\bar{d}$ and $\underline{d}$, for self-conjugate $\spinc$ structures of rational homology spheres. Before recalling their definitions and generalizing them to 3-manifolds with $H_{1}=\Z$, we need to review a few basic properties. 

\newpage
\begin{prop}[{See \cite[Proposition 4.6]{MR3649355}}]\label{prop: HFI exact triangle}
Suppose $Y$ is a closed, connected, oriented 3-manifold and $\mathfrak{s}\in\Spinc(Y)$ with $\mathfrak{s}=\bar{\mathfrak{s}}$. Then, there exists an exact triangle of $\F[U]$-modules: 
\begin{center}
\begin{tikzcd}
HF^{\circ}(Y,\mathfrak{s})\arrow[rr,"Q(1+\iota_{*})"] && Q\cdot HF^{\circ}(Y,\mathfrak{s})[-1]\arrow[dl,"g"]\\
&HFI^{\circ}(Y,\mathfrak{s})\arrow[ul, "h"]&
\end{tikzcd}
\end{center}
where $h$ decreases grading by $1$ and the maps $Q(1+\iota_{*})$ and $g$ preserve grading. 
\end{prop}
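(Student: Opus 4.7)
The plan is to obtain this triangle directly from the mapping cone definition of $CFI^{\circ}$. Because $\mathfrak{s}=\bar{\mathfrak{s}}$, the orbit $\overline{\omega}$ is the singleton $\{\mathfrak{s}\}$ and hence $CF^{\circ}(\mathcal{H},\overline{\omega})=CF^{\circ}(\mathcal{H},\mathfrak{s})$. By construction $CFI^{\circ}(\mathcal{H},\mathfrak{s})$ is the mapping cone of the chain map $Q(1+\iota)\colon CF^{\circ}(\mathcal{H},\mathfrak{s})\to Q\cdot CF^{\circ}(\mathcal{H},\mathfrak{s})[-1]$. For any such mapping cone there is a canonical short exact sequence of chain complexes
\begin{equation*}
0\to Q\cdot CF^{\circ}(\mathcal{H},\mathfrak{s})[-1]\xrightarrow{\tilde{g}} CFI^{\circ}(\mathcal{H},\mathfrak{s})\xrightarrow{\tilde{h}} CF^{\circ}(\mathcal{H},\mathfrak{s})[1]\to 0,
\end{equation*}
where $\tilde{g}$ is the inclusion of the target summand and $\tilde{h}$ is projection onto the (shifted) source summand. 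Passing to homology and applying the standard fact that the connecting homomorphism in the resulting long exact sequence is (up to sign) the homology map induced by the defining map of the cone yields an exact triangle whose three edges are $Q(1+\iota_*)$, $g=\tilde{g}_*$, and $h=\tilde{h}_*$, as desired.

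Next, I would verify that the maps have the claimed gradings using the convention $\mathcal{A}[r]_k = \mathcal{A}_{k+r}$ fixed in the notation section. Multiplication by the formal variable $Q$ lowers grading by $1$, while the shift $[-1]$ raises grading by $1$, so $Q\cdot CF^{\circ}(\mathcal{H},\mathfrak{s})[-1]$ sits in the same absolute grading as $CF^{\circ}(\mathcal{H},\mathfrak{s})$; in particular $Q(1+\iota_*)$ preserves grading, since $1+\iota_*$ preserves grading. The inclusion $g$ likewise preserves grading by inspection of the cone. The projection $h$ lands in $CF^{\circ}(\mathcal{H},\mathfrak{s})[1]$, which when reidentified with $CF^{\circ}(\mathcal{H},\mathfrak{s})$ amounts to dropping grading by $1$, so $h$ decreases grading by $1$.

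Finally, I would argue that the whole triangle is well defined. Since $\iota$ is only defined up to chain homotopy, the map $Q(1+\iota)$ is only a well-defined chain map up to chain homotopy, so strictly speaking the mapping cone depends on a choice; however, homotopic chain maps yield chain-homotopy-equivalent cones, so the induced long exact sequence in homology, and hence the triangle, is canonical. There is no serious obstacle here; the content of the proposition is purely homological algebra applied to the cone construction, and the only point requiring care is the bookkeeping of the grading conventions around the formal variable $Q$ and the shift $[-1]$.
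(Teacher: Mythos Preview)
Your argument is correct and is the standard mapping-cone long exact sequence argument; there is no gap. Note, however, that the paper does not give its own proof of this proposition at all: it simply cites \cite[Proposition 4.6]{MR3649355} and moves on, so there is nothing to compare against beyond observing that your proof is precisely the one that appears (implicitly or explicitly) in the cited reference.
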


\begin{cor}\label{cor: vanishing of map}
With $(Y,\mathfrak{s})$ as in the previous proposition, if $HF^{\circ}_{r}(Y, \mathfrak{s}) \cong 0$ or $\F$, then the map $Q(1+\iota_{*}): HF^{\circ}_{r}(Y, \mathfrak{s})\rightarrow Q\cdot HF^{\circ}_{r}(Y, \mathfrak{s})[-1]$ is trivial.
\end{cor}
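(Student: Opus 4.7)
The plan is to argue directly at the level of the map $\iota_{*}$, without needing the full strength of the exact triangle. First I would note that since $\mathfrak{s}$ is self-conjugate, the chain map $\iota$ is an endomorphism $\iota : CF^{\circ}(\mathcal{H},\mathfrak{s}) \to CF^{\circ}(\mathcal{H},\mathfrak{s})$, built as a composition of the canonical grading-preserving isomorphism $\eta$ with the chain-homotopy equivalence $\Phi(\overline{\mathcal{H}},\mathcal{H})$ from the naturality package, which is also grading-preserving. Hence $\iota_{*} : HF^{\circ}(Y,\mathfrak{s}) \to HF^{\circ}(Y,\mathfrak{s})$ is a grading-preserving $\F$-linear map, and because $\iota^{2}$ is chain-homotopic to the identity, $\iota_{*}^{2} = \mathrm{id}$. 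In particular $\iota_{*}$ restricts to an involution of each graded piece $HF^{\circ}_{r}(Y,\mathfrak{s})$.

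With this in place, the argument splits into the two cases. If $HF^{\circ}_{r}(Y,\mathfrak{s}) = 0$, then $Q(1+\iota_{*})$ restricted to this grading is trivially zero. If instead $HF^{\circ}_{r}(Y,\mathfrak{s}) \cong \F$, then $\iota_{*}|_{HF^{\circ}_{r}}$ is an $\F$-linear involution of a one-dimensional $\F$-vector space. The only $\F$-linear endomorphisms of $\F$ are $0$ and $\mathrm{id}$, and only the latter is an involution, so $\iota_{*} = \mathrm{id}$ on $HF^{\circ}_{r}(Y,\mathfrak{s})$. Working over $\F = \F_{2}$, $1 + \mathrm{id} = 0$, so $(1+\iota_{*})$, and thus $Q(1+\iota_{*})$, vanishes on $HF^{\circ}_{r}(Y,\mathfrak{s})$.

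The main (mild) obstacle is just the bookkeeping: checking that both $\eta$ and the naturality equivalence $\Phi(\overline{\mathcal{H}}, \mathcal{H})$ respect the absolute $\Q$-grading induced by the self-conjugate (hence torsion) $\spinc$-structure, so that $\iota_{*}$ is genuinely a grading-preserving involution. Once that is recorded, the proof is a one-line application of the fact that $\mathrm{End}_{\F}(\F) = \{0,\mathrm{id}\}$.
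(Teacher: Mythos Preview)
Your proposal is correct and essentially identical to the paper's own proof: both argue that $\iota_{*}^{2}=\mathrm{id}$ forces $\iota_{*}$ to be the identity on any graded piece isomorphic to $0$ or $\F$, whence $1+\iota_{*}=0$ there. The only difference is that you spell out the grading-preservation of $\iota_{*}$ explicitly, which the paper takes for granted.
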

\begin{proof}
Since $\iota^2$ is chain homotopic to the identity, the induced map $\iota^2_{*} = 1$. In particular, $\iota_{*}$ is an automorphism. Since the only automorphisms of $\F$ or $0$ are the identity, if $r$ is a grading for which $HF^{\circ}_{r}(Y, \mathfrak{s}) \cong 0$ or $\F$, then $\iota_{*}$ is the identity. Thus, $Q(1+\iota_{*}) = Q(1+1) = 0$.
\end{proof}

Next we recall a structure result for the $\infty$-flavor of Heegaard Floer homology. To be consistent with the reference \cite{MR2113020}, we phrase the next theorem in terms of $\Z$-coefficients. However, we will only be concerned with the mod 2 reduction of this result.

\begin{thm}[{See \cite[section 10]{MR2113020}}]\label{thm - OS HF infinity}
Let $Y$ be a closed, connected, oriented 3-manifold. If $b_{1}(Y)\leq 2$, then there exists an equivalence class of orientation system over $Y$ such that for any torsion spin$^c$ structure $\mathfrak{s}$, we have:
\begin{align*}
HF^{\infty}(Y,\mathfrak{s}) \cong \Z[U, U^{-1}]\otimes_{\Z}\Lambda^*H^1(Y;\Z)
\end{align*}
as $\Z[U]\otimes_{\Z}\Lambda^*(H_{1}(Y;\Z)/\text{Tors})$-modules.
\end{thm}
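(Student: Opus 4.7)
The plan is to reduce to a direct computation on a model manifold of the same first Betti number, using the fact that triple cup products on $H^1$ vanish automatically when $b_1\leq 2$.

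First, I would pin down the model case. For $k\in\{0,1,2\}$, compute $HF^{\infty}(\#^{k}(S^1\times S^2),\mathfrak{s}_0)$ using an explicit genus-$k$ Heegaard diagram in which the $\beta$-curves are small Hamiltonian perturbations of the $\alpha$-curves. The intersection points are then naturally indexed by subsets of $\{1,\dots,k\}$, and after identifying these with monomials in $\Lambda^{*}H^{1}$ both the $U$-action and the geometric $H_{1}$-action can be read off from the combinatorics of the diagram. This yields the stated isomorphism $HF^{\infty}\cong \Z[U,U^{-1}]\otimes \Lambda^{*}H^{1}$ for the model, together with the correct module structure over $\Z[U]\otimes \Lambda^{*}(H_{1}/\text{Tors})$. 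For $k=2$ one can alternatively apply the Künneth-type connected sum formula for $HF^{\infty}$ to reduce to the $k=1$ case.

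Next, for a general $Y$ with $b_{1}(Y)\leq 2$ and a torsion $\mathfrak{s}$, the essential topological input is that the triple cup product form $\Lambda^{3}H^{1}(Y;\Z)\to H^{3}(Y;\Z)$ vanishes in this range, because $H^{3}$ has rank $b_{1}(Y)\leq 2<3$ on one side and the wedge power is zero on the other. The triple cup product is precisely what governs the higher differentials in the spectral sequence from the associated graded of the Heegaard Floer chain complex to $CF^{\infty}$; its vanishing forces the spectral sequence to degenerate at $E_{1}$. Thus $HF^{\infty}(Y,\mathfrak{s})$ agrees as a $\Z[U]\otimes \Lambda^{*}(H_{1}(Y;\Z)/\text{Tors})$-module with its $E_{1}$-page, which in turn is computed from the combinatorics of a standard Heegaard diagram and matches the model calculation above.

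The main obstacle is the choice of orientation system (twisted coefficients): a priori different torsion $\spinc$ structures could demand incompatible twistings, whereas the theorem asserts a single orientation system works for them all simultaneously. I would handle this by first passing to fully twisted coefficients $\underline{HF}^{\infty}(Y,\mathfrak{s};\Z[H^{1}(Y;\Z)])$, which is canonically a rank-one free module over an appropriate group ring, and then arguing that for $b_{1}\leq 2$ the ambiguity in untwisting — parametrized by $H^{2}(Y;\Z/2\Z)$ — is large enough that one can select a single cocycle delivering the stated isomorphism for every torsion $\spinc$ structure at once. Verifying this uniformity is the technical heart of the statement beyond the model computation.
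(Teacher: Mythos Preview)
The paper does not contain a proof of this statement: it is quoted as a background result from Ozsv\'ath and Szab\'o \cite[Section~10]{MR2113020}, with no argument given. So there is no ``paper's own proof'' to compare your proposal against.

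That said, a few comments on your sketch relative to the actual Ozsv\'ath--Szab\'o argument. Your model computation for $\#^{k}(S^{1}\times S^{2})$ is correct in spirit and is indeed carried out in their work. However, the mechanism you invoke for the general case --- a spectral sequence from an ``associated graded of the Heegaard Floer chain complex to $CF^{\infty}$'' whose higher differentials are governed by the triple cup product --- is not how Ozsv\'ath and Szab\'o proceed, and as stated is vague: you have not specified what filtration produces this spectral sequence or why its differentials are cup products. Their actual argument in \cite{MR2113020} runs through twisted coefficients: one shows that $\underline{HF}^{\infty}(Y,\mathfrak{s};\Z[H^{1}(Y;\Z)])\cong \Z[U,U^{-1}]$ for any torsion $\mathfrak{s}$ (via surgery exact sequences reducing to $S^{3}$), and then passes to untwisted coefficients via a universal coefficients spectral sequence. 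The vanishing of the triple cup product for $b_{1}\leq 2$ enters only implicitly, in that the relevant $\Tor$ groups are simple enough to force the answer. The later identification of the obstruction to standardness with the triple cup product in general (due to Lidman) is a separate and subsequent development.

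Your final paragraph on orientation systems identifies a genuine subtlety, but the resolution you propose --- that the ambiguity parametrized by $H^{2}(Y;\Z/2\Z)$ is ``large enough'' --- is not an argument, just a hope. The uniformity in the original proof comes from the fact that the twisted computation is independent of $\mathfrak{s}$ and the untwisting procedure is as well.
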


In Heegaard Floer terminology, $HF^{\infty}$ is said to be \textit{standard} if it satisfies the conclusion of the above theorem. In other words, Theorem \ref{thm - OS HF infinity} says that if $b_{1}(Y)\in\{0,1,2\}$, then $HF^{\infty}(Y, \mathfrak{s})$ is automatically standard. In particular, if $b_{1}(Y) = 0$, i.e., if $Y$ is a rational homology sphere, then $HF^{\infty}(Y,\mathfrak{s})\cong \F[U,U^{-1}]$. In this case, as graded $\F[U]$-modules, we have the following (non-canonical) splitting:
\begin{align*}
HF^+(Y,\mathfrak{s}) \cong \mathcal{T}^+_{d}\oplus HF^+_{\text{red}}(Y,\mathfrak{s})
\hspace{2em}
\end{align*}
where $d = d(Y, \mathfrak{s})$ is the usual $d$ invariant of $(Y, \mathfrak{s})$ and $\mathcal{T}^+_{d}$ corresponds to the image of $\pi_{*}:HF^{\infty}(Y,\mathfrak{s})\rightarrow HF^+(Y, \mathfrak{s})$. 

Similarly, if $H_{1}(Y;\Z)\cong \Z$ and $\mathfrak{s}_{0}$ is the unique torsion $\spinc$ structure on $Y$, then we have that $HF^{\infty}(Y,\mathfrak{s}_{0})\cong \F[U,U^{-1}]\oplus\F[U,U^{-1}]$ and we get the following (non-canonical) splitting:
\begin{align*}
HF^+(Y,\mathfrak{s}_{0})\cong \mathcal{T}^+_{d_{-1/2}}\oplus \mathcal{T}^+_{d_{1/2}}\oplus HF^+_{\text{red}}(Y,\mathfrak{s}_{0})
\end{align*}
where $d_{-1/2} = d_{-1/2}(Y,\mathfrak{s}_{0})$ and $d_{1/2} = d_{1/2}(Y, \mathfrak{s}_{0})$ are the two $d$ invariants for $(Y,\mathfrak{s}_{0})$ and $\mathcal{T}^+_{d_{-1/2}}\oplus \mathcal{T}^+_{d_{1/2}}$ corresponds to the $\Image(\pi_{*})$. Recall, $d_{\pm 1/2}\equiv \pm1/2\bmod 2$.

\begin{rem}
The previous paragraph applies more generally to $Y$ with $b_{1}(Y)=1$, not just $H_{1}(Y;\Z)\cong\Z$, but to simplify the exposition we will restrict to the case $H_{1}(Y;\Z)\cong\Z$. Ultimately, we are concerned with $0$-surgery applications, so this restriction suffices for our purposes. 
\end{rem}

\begin{prop}\label{prop: HFI exact triangle zero}
Let $Y$ be a closed, connected oriented 3-manifold with $b_{1}(Y)=0$ or $H_{1}(Y;\Z)\cong\Z$. If $\mathfrak{s}\in \Spinc(Y)$ with $\mathfrak{s}=\bar{\mathfrak{s}}$, then we get an exact triangle of $\F[U]$-modules:
\begin{center}
\begin{tikzcd}
HF^{\infty}(Y,\mathfrak{s})\arrow[rr,"0"] && Q\cdot HF^{\infty}(Y,\mathfrak{s})[-1]\arrow[dl,"g^{\infty}"]\\
&HFI^{\infty}(Y,\mathfrak{s})\arrow[ul, "h^{\infty}"]&
\end{tikzcd}
\end{center}
\end{prop}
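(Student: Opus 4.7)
The plan is to obtain the triangle by applying Proposition \ref{prop: HFI exact triangle} in the $\infty$ flavor and then showing that the map $Q(1+\iota_{*})$ actually vanishes. The exact triangle itself is not the hard part: it comes from the mapping cone defining $CFI^{\infty}$ and holds in complete generality, so one only needs to argue that $\iota_{*} = 1$ on $HF^{\infty}(Y,\mathfrak{s})$ in the two cases of interest. Once this is known, $Q(1+\iota_{*}) = Q(1+1) = 0$ in characteristic $2$, and the long exact sequence of the mapping cone immediately reduces to the stated triangle.

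For the case $b_{1}(Y)=0$, I would mimic the argument of Corollary \ref{cor: vanishing of map} verbatim at the $\infty$-level. Indeed, Theorem \ref{thm - OS HF infinity} gives $HF^{\infty}(Y,\mathfrak{s})\cong \F[U,U^{-1}]$, so every graded piece of $HF^{\infty}$ is either $0$ or $\F$. Because $\iota_{*}^{2}=1$, the map $\iota_{*}$ is an automorphism of each graded piece, and the only automorphism of $\F$ or $0$ is the identity. So $\iota_{*}=1$ on $HF^{\infty}(Y,\mathfrak{s})$ and the map $Q(1+\iota_{*})$ vanishes.

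The case $H_{1}(Y;\Z)\cong \Z$ is where the argument needs a little more work, because by Theorem \ref{thm - OS HF infinity} each graded piece is now either $0$ or $\F\oplus\F$ and the $\F$-vector space argument above does not immediately apply. Here I would use linear algebra together with the extra structure carried by $\iota_{*}$: namely, $\iota_{*}$ is $U$-equivariant and grading-preserving. Writing $HF^{\infty}(Y,\mathfrak{s}_{0})\cong \F[U,U^{-1}]\langle \alpha\rangle \oplus \F[U,U^{-1}]\langle \beta\rangle$ and using the splitting recorded below Theorem \ref{thm - OS HF infinity}, the generators $\alpha,\beta$ can be chosen so that $\deg\alpha - \deg\beta$ is odd (reflecting $d_{1/2}-d_{-1/2}\equiv 1 \bmod 2$). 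Any $U$-equivariant endomorphism is a $2\times 2$ matrix with entries in $\F[U,U^{-1}]$, and the grading constraint forces the diagonal entries to lie in $\F$ and the off-diagonal entries to lie in an odd-degree piece of $\F[U,U^{-1}]$; since $U$ has degree $-2$, the only such entry is $0$. Thus $\iota_{*}$ is diagonal with constant entries, and since it is an isomorphism (as $\iota_{*}^{2}=1$) over $\F$, the diagonal entries must be $1$. Hence $\iota_{*}=\mathrm{id}$ and again $Q(1+\iota_{*})=0$.

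The main obstacle is really the second case, and more precisely making the grading/$U$-equivariance bookkeeping careful enough to conclude that the off-diagonal entries of $\iota_{*}$ must vanish. Everything else — invocation of Proposition \ref{prop: HFI exact triangle}, exactness of the mapping cone triangle, and characteristic-2 cancellation — is essentially formal.
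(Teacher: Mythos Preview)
Your overall strategy is sound and your conclusion is correct, but you have a misconception in the $H_{1}(Y;\Z)\cong\Z$ case that makes your argument more elaborate than it needs to be. You write that ``each graded piece is now either $0$ or $\F\oplus\F$'', but this is false: the two towers $\F[U,U^{-1}]\langle\alpha\rangle$ and $\F[U,U^{-1}]\langle\beta\rangle$ are supported in gradings congruent to $+1/2$ and $-1/2\bmod 2$ respectively (this is exactly the content of $d_{\pm 1/2}\equiv\pm 1/2\bmod 2$), so they never overlap. Hence every nonzero graded piece of $HF^{\infty}(Y,\mathfrak{s}_{0})$ is still one-dimensional over $\F$, and Corollary~\ref{cor: vanishing of map} applies verbatim, exactly as in the $b_{1}=0$ case. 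This is precisely what the paper does: both cases are handled in one stroke by observing $HF^{\infty}_{r}\cong \F$ whenever it is nonzero.

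Your matrix argument is not wrong---indeed, the observation that the off-diagonal entries of a grading-preserving $U$-equivariant endomorphism would have to live in an odd-degree piece of $\F[U,U^{-1}]$, hence vanish, is essentially a rediscovery of the fact that the two towers sit in disjoint grading parities. So your route reaches the same destination, just by a detour caused by the mistaken premise about the dimension of the graded pieces.
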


\begin{proof}
By the above discussion, if $r$ is a grading for which $HF_{r}^{\infty}(Y,\mathfrak{s})\neq 0$, then $HF_{r}^{\infty}(Y,\mathfrak{s})\cong \F$. The proposition then follows immediately from Corollary \ref{cor: vanishing of map} and Proposition \ref{prop: HFI exact triangle}.
\end{proof}

We now analyze the conclusion of Proposition \ref{prop: HFI exact triangle zero} in the case $b_{1}=0$ and recall the definition of the involutive $d$ invariants $\bar{d}$ and $\underline{d}$. After this, we consider the case $H_{1}(Y;\Z)\cong\Z$. To minimize confusion, for the rest of this section we use the letter $M$ to denote rational homology spheres and the letter $Y$ to denote 3-manifolds with $b_{1} = 1$. 

Consider a rational homology sphere $M$ equipped with a self-conjugate $\spinc$ structure $\mathfrak{s}$. Then the exact triangle of Proposition \ref{prop: HFI exact triangle zero} decomposes into exact sequences:
\begin{center}
    \begin{tikzcd}
    0\arrow[r] & Q\cdot HF_{r}^{\infty}(M,\mathfrak{s})[-1]\arrow[r, ,"\sim"] & HFI_{r}^{\infty}(M,\mathfrak{s})\arrow[r]&0\text{ (if }r\equiv d(M,\mathfrak{s})\bmod 2)\\
    0\arrow[r]& HFI_{r}^{\infty}(M,\mathfrak{s})\arrow[r,"\sim"]& HF_{r-1}^{\infty}(M,\mathfrak{s})\arrow[r]&0\text{ (if }r\equiv d(M,\mathfrak{s})+1\bmod 2)
    \end{tikzcd}
\end{center}
Since the maps in the exact triangle are $U$-equivariant, we further get that $HFI^{\infty}$ splits as a graded $\F[U]$-module as follows:
\begin{align*}
    HFI^{\infty}(M,\mathfrak{s})\cong Q\cdot HF^{\infty}(M,\mathfrak{s})[-1]\oplus HF^{\infty}(M,\mathfrak{s})[-1]
\end{align*}
This splitting is canonical since $HF^{\infty}_{r}$ is supported in alternating degrees. Moreover, as graded $\F[Q,U]/(Q^2)$-modules (up to possibly an overall grading shift) one can check that 
\begin{align*}
    HFI^{\infty}(M,\mathfrak{s})\cong \F[Q,U, U^{-1}]/(Q^2)
\end{align*}
Therefore, we may think of $HFI^{\infty}(M,\mathfrak{s})$ as the direct sum of two doubly infinite towers:
one which is not in the image of $Q$, and the other which is the image of the first under multiplication by $Q$. Both towers have involutive grading congruent to $d(M,\mathfrak{s})\bmod 2\Z$. 

We now recall the definition of the involutive $d$ invariants introduced by Hendricks and Manolescu. To make sense of the definition, it is useful to recall that:
\begin{align*}
    \Image(\pi_{*}: HFI^{\infty}(M,\mathfrak{s})\rightarrow HFI^{+}(M,\mathfrak{s}))=\Image(U^n)
\end{align*}
for $n\gg 0$ (see \cite[Lemma 4.6]{MR2113019}). 

\begin{defns}[{See \cite[5.1 Definitions]{MR3649355}}]\label{defn: inv correction terms}
Let $M$ be an oriented rational homology 3-sphere and $\mathfrak{s}\in \Spinc(M)$ with $\mathfrak{s}=\bar{\mathfrak{s}}$. Define the lower and upper involutive correction terms of $(M, \mathfrak{s})$ to be $\underline{d}(M,\mathfrak{s})$ and $\bar{d}(M,\mathfrak{s})$, respectively, where
\begin{align*}
\underline{d}(M, \mathfrak{s}) &= \min\{r\ |\  \exists\  x \in HFI_{r}^+(M, \mathfrak{s}), x \in \Image(U^n), x\notin \Image(U^nQ)\text{ for }n\gg 0\}-1\\
\bar{d}(M,\mathfrak{s}) &= \min\{r\ |\  \exists\  x \in HFI_{r}^+(M, \mathfrak{s}), x\neq 0, x\in \Image(U^nQ)\text{ for }n\gg 0\}
\end{align*}
\end{defns}

It is conceptually useful to think of $\bar{d}$ and $\underline{d}$ in terms of a splitting of $HFI^+$ into towers and reducible elements as follows: 

\begin{cor}\label{cor: involutive rational homology structure}
Suppose $M$ is an oriented rational homology 3-sphere and $\mathfrak{s}\in \Spinc(M)$ with $\mathfrak{s}=\overline{\mathfrak{s}}$. Then, we get a (non-canonical) splitting as graded $\F[U]$-modules:
\begin{align*}
HFI^+(M,\mathfrak{s})\cong \mathcal{T}^+_{\bar{d}}\oplus\mathcal{T}^+_{\underline{d}+1}\oplus HFI^+_{\text{red}}(M,\mathfrak{s})
\end{align*}
Here, $\mathcal{T}^+_{\bar{d}}\oplus\mathcal{T}^+_{\underline{d}+1}$ corresponds to $\Image(\pi_{*})$, with  $\mathcal{T}^+_{\bar{d}}$ in the image of $Q$ and $\mathcal{T}^+_{\underline{d}+1}$ not in the image of $Q$.
\end{cor}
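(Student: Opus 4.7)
The strategy is to first identify the submodule $\Image(\pi_*: HFI^\infty(M,\mathfrak{s}) \to HFI^+(M,\mathfrak{s}))$ as a direct sum of two $\mathcal{T}^+$-towers with the claimed grading shifts, and then split this summand off from all of $HFI^+(M,\mathfrak{s})$.

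For the first step, the discussion preceding the corollary already identifies $HFI^\infty(M,\mathfrak{s})$ as a graded $\F[Q,U]/(Q^2)$-module: it decomposes as a non-$Q$-image tower isomorphic to $HF^\infty(M,\mathfrak{s})[-1] \cong \F[U,U^{-1}]$ (supported in gradings $\equiv d(M,\mathfrak{s})+1 \pmod 2$), together with a $Q$-image tower equal to its image under $Q$ (supported in gradings $\equiv d(M,\mathfrak{s}) \pmod 2$). Since $\pi_*$ is induced by the projection of mapping cones, it is $\F[Q,U]/(Q^2)$-equivariant, so $\Image(\pi_*)$ likewise decomposes as a sum of a non-$Q$-image part $T_1$ and a $Q$-image part $T_2 = Q\cdot T_1$. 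Each $T_i$ is the image of an $\F[U,U^{-1}]$-tower under the natural map $HFI^\infty \to HFI^+$; by the same argument used for $\pi_*:HF^\infty \to HF^+$ on a rational homology sphere, each $T_i$ is therefore isomorphic to $\mathcal{T}^+$ with an appropriate grading shift.

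For the second step, the recollection that $\Image(\pi_*) = \Image(U^n)$ for $n\gg 0$ (from \cite[Lemma 4.6]{MR2113019}) allows one to match the definitions of $\bar{d}$ and $\underline{d}$ with the two summands. Since $U$ and $Q$ commute and $Q^2=0$, for large $n$ we have $\Image(U^nQ) = Q\cdot\Image(\pi_*) = T_2$, so the minimum grading of a nonzero element of $\Image(U^nQ)$ equals the bottom grading of $T_2$; by Definition \ref{defn: inv correction terms} this gives $T_2 \cong \mathcal{T}^+_{\bar{d}}$. Similarly, elements of $\Image(U^n)$ not in $\Image(U^nQ)$ are precisely elements of $\Image(\pi_*)$ having a nonzero component in $T_1$, so the minimum such grading equals the bottom grading of $T_1$, yielding $T_1 \cong \mathcal{T}^+_{\underline{d}+1}$.

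Finally, to extend to a full splitting $HFI^+(M,\mathfrak{s}) \cong \Image(\pi_*) \oplus HFI^+_{\text{red}}(M,\mathfrak{s})$, observe that $\mathcal{T}^+$ is a divisible (hence injective) graded $\F[U]$-module, so $\mathcal{T}^+_{\bar{d}} \oplus \mathcal{T}^+_{\underline{d}+1}$ is injective in the graded category of $\F[U]$-modules, and the inclusion $\Image(\pi_*) \hookrightarrow HFI^+(M,\mathfrak{s})$ admits a retraction. Defining $HFI^+_{\text{red}}(M,\mathfrak{s})$ as the resulting complement yields the claimed non-canonical splitting. The main subtlety is keeping careful track of the $Q$-image structure through $\pi_*$, which ultimately reduces to the $\F[Q,U]/(Q^2)$-equivariance of $\pi_*$; beyond this, the argument is a direct adaptation of the analogous decomposition $HF^+(M,\mathfrak{s}) \cong \mathcal{T}^+_d \oplus HF^+_{\text{red}}(M,\mathfrak{s})$ recalled earlier.
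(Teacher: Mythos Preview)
Your proof is correct and follows the same line of reasoning the paper implicitly relies on: the paper states this corollary without proof, treating it as an immediate consequence of the preceding identification $HFI^\infty(M,\mathfrak{s})\cong \F[Q,U,U^{-1}]/(Q^2)$ together with Definitions~\ref{defn: inv correction terms} and the fact that $\Image(\pi_*)=\Image(U^n)$ for $n\gg 0$. Your write-up simply makes explicit the steps the paper leaves to the reader, namely that $\pi_*$ is $\F[Q,U]/(Q^2)$-equivariant (so the $Q$-image/non-$Q$-image decomposition passes to $\Image(\pi_*)$), that the bottom gradings of the two resulting towers are read off from the definitions of $\bar d$ and $\underline d$, and that the splitting of $HFI^+$ follows from injectivity of $\mathcal{T}^+$ as a graded $\F[U]$-module.
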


The invariants $\underline{d}$ and $\bar{d}$ satisfy the following basic properties:
\begin{prop}[{See \cite[Propositions 5.1, 5.2]{MR3649355}}]
With $M$ and $\mathfrak{s}$ as in Definitions \ref{defn: inv correction terms}, 
\begin{enumerate}
\item $\underline{d}(M, \mathfrak{s})\leq d(M, \mathfrak{s})\leq \bar{d}(M,\mathfrak{s})$

\item $\underline{d}(M,\mathfrak{s}) = -\bar{d}(-M, \mathfrak{s})$
\end{enumerate}
\end{prop}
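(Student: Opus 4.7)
The plan is to follow the strategy of Hendricks--Manolescu \cite[Propositions 5.1, 5.2]{MR3649355}: prove each inequality in (1) using the exact triangle of Proposition \ref{prop: HFI exact triangle} together with the $HFI^\infty$ splitting discussed after Proposition \ref{prop: HFI exact triangle zero}, and deduce (2) from Poincaré duality.

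For $\underline{d}(M,\mathfrak{s}) \leq d(M,\mathfrak{s})$, write $d = d(M,\mathfrak{s})$ and construct a tower-like, non-$Q$-image element of $HFI^+_{d+1}$ as follows. Let $u \in HF^+_d$ be the bottom generator of $\mathcal{T}^+_d$ and $x_\infty \in HF^\infty_d$ a lift with $\pi_*(x_\infty) = u$. In the splitting $HFI^\infty \cong Q \cdot HF^\infty[-1]\oplus HF^\infty[-1]$, let $y_\infty \in HFI^\infty_{d+1}$ be the element of the non-$Q$ summand with $h^\infty(y_\infty) = x_\infty$, and set $y := \pi_*(y_\infty) \in HFI^+_{d+1}$. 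By naturality $h(y) = \pi_*(x_\infty) = u \neq 0$, so $y \notin \ker(h) = \Image(g) = Q \cdot HFI^+$; in particular $y \notin \Image(U^n Q)$. Since $y_\infty$ lies in the $HFI^\infty$-tower, $y \in \Image(U^n)$ for all $n \gg 0$, so $y$ witnesses $\underline{d}(M,\mathfrak{s})+1 \leq d+1$.

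For $d \leq \bar{d}(M,\mathfrak{s})$, I argue by contradiction: suppose $\bar{d} < d$ and take $x \in HFI^+_{\bar{d}}$ nonzero with $x \in \Image(U^n Q)$ for all $n \gg 0$. Since $\Image(Q) = \Image(g)$, write $x = g(w)$ for $w \in HF^+_{\bar{d}}$, using the graded identification $(Q \cdot HF^+[-1])_{\bar{d}} \cong HF^+_{\bar{d}}$. Writing $x = U^n Q y_n = U^n g(w'_n)$ with $w'_n \in HF^+_{\bar{d}+2n}$ yields $g(w - U^n w'_n) = 0$, so $w - U^n w'_n \in \ker(g) = \Image(1+\iota_*)$. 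For $n$ sufficiently large, $\bar{d}+2n$ lies in the stable tower range of $HF^+$, so $w'_n$ is a tower element; since $\bar{d} < d$, applying $U^n$ sends it below the tower bottom, forcing $U^n w'_n = 0$ in $HF^+$. Hence $w \in \Image(1+\iota_*) = \ker(g)$, so $x = g(w) = 0$, a contradiction.

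Part (2) follows from the Poincaré-duality pairing $HFI^+(M,\mathfrak{s}) \otimes HFI^-(-M,\mathfrak{s}) \to \F$, the involutive analogue of the classical Heegaard Floer duality. This pairing is compatible with $\iota$ and with the $Q$-action and reverses grading; under it, the minimal-grading tower-in-image-of-$Q$ element of $HFI^+(-M,\mathfrak{s})$ corresponds to the minimal-grading tower-not-in-image-of-$Q$ element of $HFI^+(M,\mathfrak{s})$, yielding $\underline{d}(M,\mathfrak{s}) = -\bar{d}(-M,\mathfrak{s})$ after tracking the grading shifts. The main obstacle is this duality step — checking that the involutive Poincaré pairing correctly interchanges the upper and lower invariants with the proper grading shift, which is the essential technical content of HM's Proposition 5.2. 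The inequalities in (1) are by comparison relatively direct, once one has the $HFI^\infty$ splitting and the long exact sequence in hand.
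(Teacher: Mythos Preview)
The paper does not give its own proof of this proposition; it simply records the statement and cites Hendricks--Manolescu \cite[Propositions 5.1, 5.2]{MR3649355}. Your sketch follows essentially the same line as HM's original argument (exact triangle plus the $HFI^\infty$ splitting for (1), involutive duality for (2)), so there is nothing materially different to compare.

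One small imprecision worth flagging: you write ``$\ker(h)=\Image(g)=Q\cdot HFI^+$'' and ``$\Image(Q)=\Image(g)$'', but in general only the inclusion $\Image(Q)\subseteq\Image(g)$ holds. Indeed $g([Qa])=[(0,Qa)]$ lies in $\Image(Q)$ only if $(1+\iota_*)[a]=0$, which need not be true. This does not harm your argument: in the first inequality you only need $\Image(U^nQ)\subseteq\Image(g)=\ker(h)$, and in the second you only need $x\in\Image(U^nQ)\subseteq\Image(g)$ to write $x=g(w)$. Both follow from the inclusion. With that correction, your argument for (1) is sound, and your description of (2) correctly identifies the duality input (HM's Proposition~4.4) that does the work.
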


Additionally, Hendricks and Manolescu generalize \cite[Theorem 9.6]{MR1957829} to the involutive setting to obtain:

\begin{thm}[{See \cite[Theorem 1.2]{MR3649355}}]\label{thm: negative-definite bound}
With $M$ and $\mathfrak{s}$ as in Definitions \ref{defn: inv correction terms}, if $X$ is a smooth negative definite 4-manifold with boundary $M$ and $\mathfrak{t}$ is a spin structure on $X$ such that $\mathfrak{t}|_{M}=\mathfrak{s}$, then 
\begin{align*}
    \rank(H^{2}(X;\Z))\leq 4\underline{d}(M,\mathfrak{s})
\end{align*}
 \end{thm}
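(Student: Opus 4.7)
The plan is to adapt the standard Ozsv\'ath--Szab\'o cobordism-map proof of the ordinary $d$-invariant bound \cite[Theorem 9.6]{MR1957829} to the involutive setting, in the spirit of Hendricks--Manolescu's strategy.

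First, view $W = X \setminus \mathrm{int}(B^4)$ as a spin cobordism from $S^3$ to $M$, equipped with the restriction of $\mathfrak{t}$. The Ozsv\'ath--Szab\'o cobordism map
\[F^+_{W,\mathfrak{t}}\colon HF^+(S^3) \longrightarrow HF^+(M,\mathfrak{s})\]
is non-trivial on the $U$-tower; in fact, because $W$ is negative definite, the induced map $F^{\infty}_{W,\mathfrak{t}}$ on $HF^{\infty}$ is an isomorphism. The grading shift is
\[\Delta = \frac{c_1(\mathfrak{t})^2 - 2\chi(W) - 3\sigma(W)}{4} = \frac{b_2(X)}{4},\]
after substituting $c_1(\mathfrak{t})^2 = 0$ (since $\mathfrak{t}$ is spin), $\sigma(W) = -b_2(X)$ (since $W$ is negative definite), and $\chi(W) = b_2(X)$ (using $b_1(X)=0$, which holds here since $M$ is a rational homology sphere and $X$ is negative definite).

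Next, upgrade this to a map on involutive complexes. The crucial input is that when $\mathfrak{t}$ is self-conjugate---automatic for a spin structure---the cobordism map $F_{W, \mathfrak{t}}$ commutes with the conjugation involution $\iota$ up to chain homotopy. Once this commutation is established at the chain level, $F^+_{W,\mathfrak{t}}$ descends to a map of mapping cones,
\[F^{I,+}_{W,\mathfrak{t}}\colon HFI^+(S^3) \longrightarrow HFI^+(M,\mathfrak{s}),\]
which is $\F[Q,U]/(Q^2)$-equivariant and preserves the grading shift $\Delta = b_2(X)/4$. Using the decomposition of Corollary \ref{cor: involutive rational homology structure}, separate $HFI^+$ into the $Q$-tower (image of $Q$) and the non-$Q$ tower. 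Because $F^{\infty}$ is an isomorphism, its mapping-cone lift $F^{I,\infty}$ is also an isomorphism, and in particular sends the generator of the non-$Q$ tower of $HFI^+(S^3)$ to an element of $HFI^+(M,\mathfrak{s})$ lying in $\Image(U^n)\setminus\Image(U^nQ)$ for $n\gg 0$.

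Comparing the grading of that image with the definition of $\underline{d}(M, \mathfrak{s})$ yields the desired inequality $b_2(X) \leq 4\underline{d}(M,\mathfrak{s})$, and since $\rank(H^2(X;\Z)) = b_2(X)$ for this $X$, this is the conclusion. The main obstacle is the chain-level commutation of $\iota$ with spin cobordism maps used in the second step: this requires a careful naturality analysis of conjugation on Heegaard triple diagrams and the associated holomorphic triangle counts, as carried out in \cite[Section 4]{MR3649355}. The remaining steps---verifying non-triviality on the non-$Q$ tower and tracking the grading---are then formal manipulations of the mapping cone and its $\F[Q,U]/(Q^2)$-module structure.
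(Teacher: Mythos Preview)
The paper itself does not prove this theorem; it is quoted from Hendricks--Manolescu. However, the paper's proof of the analogous Theorem~\ref{thm: spin boundary constraints} shows the correct argument, and your sketch has the direction reversed. You push the bottom generator of the non-$Q$ tower of $HFI^+(S^3)$ (in grading $1$) forward along $F^{I,+}_{W,\mathfrak t}$ to an element of $HFI^+(M,\mathfrak s)$ in grading $1+\tfrac{b_2}{4}$, and claim that comparing with the definition of $\underline d(M,\mathfrak s)$ gives $b_2\le 4\underline d(M,\mathfrak s)$. But the definition says $\underline d(M,\mathfrak s)+1$ is the \emph{minimum} grading of an element in $\Image(U^n)\setminus\Image(U^nQ)$; producing such an element at grading $1+\tfrac{b_2}{4}$ only yields $\underline d(M,\mathfrak s)+1\le 1+\tfrac{b_2}{4}$, i.e.\ $4\underline d(M,\mathfrak s)\le b_2$, the opposite of what you want.

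The correct argument (carried out in detail in the paper's proof of Theorem~\ref{thm: spin boundary constraints}, with $Y$ replaced by $M$) runs backward: start with an element $y^+\in HFI^+_{\underline d(M,\mathfrak s)+1}(M,\mathfrak s)$ realizing $\underline d(M,\mathfrak s)$, lift to $y^\infty\in HFI^\infty$, and use the isomorphism $F^{I,\infty}_{W,\mathfrak t}$ together with a diagram chase through the exact triangle of Proposition~\ref{prop: HFI exact triangle} to produce a nonzero element $x^+\in HFI^+_{\underline d(M,\mathfrak s)+1-b_2/4}(S^3)$ in $\Image(U^n)\setminus\Image(U^nQ)$. Then $\underline d(S^3)+1=1\le \underline d(M,\mathfrak s)+1-\tfrac{b_2}{4}$ gives the desired bound. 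The nontrivial point in the chase is showing $x^+\neq 0$: this uses that if $\pi^I_M(F^{I,\infty}x^\infty)$ vanished, then $y^+$ would lie in $\Image(U^nQ)$, contradicting its choice. A minor separate issue: your claim that $b_1(X)=0$ follows automatically from $M$ being a rational homology sphere and $X$ negative definite is not correct; one first surgers out $b_1(X)$ (as in the proof of Theorem~\ref{thm: spin boundary constraints}), which does not change $b_2(X)$.
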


The method of proof of Theorem \ref{thm: negative-definite bound} is used to further show that $\underline{d}$ and $\bar{d}$ are spin rational homology cobordism invariants. 

Now suppose $Y$ is a closed oriented 3-manifold with $H_{1}(Y;\Z)\cong\Z$ and let $\mathfrak{s}_{0}$ be the unique torsion $\spinc$-structure on $Y$. Then the exact triangle of Proposition \ref{prop: HFI exact triangle zero} decomposes into short exact sequences:
\begin{center}
    \begin{tikzcd}
    0\arrow[r] & Q\cdot HF_{r}^{\infty}(Y,\mathfrak{s}_{0})[-1]\arrow[r, ,"g^{\infty}"] & HFI_{r}^{\infty}(Y,\mathfrak{s}_{0})\arrow[r, "h^{\infty}"]&HF_{r-1}^{\infty}(Y,\mathfrak{s}_{0})\arrow[r] &0
    \end{tikzcd}
\end{center}
These short exact sequences are of the form:
\begin{center}
    \begin{tikzcd}
    0\arrow[r] &\F\arrow[r]&\F\oplus\F\arrow[r]&\F\arrow[r]&0
    \end{tikzcd}
\end{center}
Therefore, as vector spaces, we get a splitting:
\begin{align*}
    HFI_{r}^{\infty}(Y,\mathfrak{s}_{0})\cong Q\cdot HF_{r}^{\infty}(Y,\mathfrak{s}_{0})[-1]\oplus HF_{r-1}^{\infty}(Y,\mathfrak{s}_{0})
\end{align*}
where each summand is one dimensional. Unlike in the $b_{1}=0$ case, this splitting is not canonical. However, we are still able to get the following structure result:
\begin{prop}\label{prop: HFI b1 standard}
Suppose $Y$ is a closed connected oriented 3-manifold with $H_{1}(Y;\Z)\cong \Z$ and $\mathfrak{s}_{0}\in \Spinc(Y)$ is the unique $\spinc$ structure with $\mathfrak{s}_{0}=\bar{\mathfrak{s}}_{0}$. Then, as graded $\F[Q,U]/(Q^2)$-modules, 
\begin{align*}
    HFI^{\infty}(Y,\mathfrak{s}_{0})\cong \F[Q,U, U^{-1}]/(Q^2)\oplus \F[Q,U, U^{-1}]/(Q^2)
\end{align*}
where, on the right side of the equation, the first factor has gradings congruent to $1/2\bmod 2$ and the second factor has gradings congruent to $-1/2\bmod 2$.
\end{prop}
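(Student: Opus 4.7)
My plan is to combine the short exact sequence induced by Proposition \ref{prop: HFI exact triangle zero} with a chain-level description of the $Q$-action on the mapping cone.

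First, since the connecting map $Q(1+\iota_{*})$ in the exact triangle of Proposition \ref{prop: HFI exact triangle zero} vanishes, I obtain a short exact sequence of graded $\F[U]$-modules
\[
0 \to Q \cdot HF^{\infty}(Y, \mathfrak{s}_{0})[-1] \xrightarrow{g^{\infty}} HFI^{\infty}(Y, \mathfrak{s}_{0}) \xrightarrow{h^{\infty}} HF^{\infty}(Y, \mathfrak{s}_{0}) \to 0
\]
where $h^{\infty}$ has grading degree $-1$. Because $U$ acts invertibly on $CF^{\infty}$ at the chain level, it acts invertibly on the mapping cone $CFI^{\infty}$, so the sequence is one of graded $\F[U,U^{-1}]$-modules. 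Since $HF^{\infty}(Y, \mathfrak{s}_{0}) \cong \F[U, U^{-1}]^{\oplus 2}$ is a graded free module over the graded PID $\F[U,U^{-1}]$, the sequence splits; fix an $\F[U,U^{-1}]$-linear, grading-preserving splitting $s$ of $h^{\infty}$.

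Next, I compute the $Q$-action on $HFI^{\infty}$ from the chain-level mapping cone structure. On $CFI^{\infty} = CF^{\infty} \oplus Q \cdot CF^{\infty}[-1]$, multiplication by $Q$ sends $(a, Qb)$ to $(0, Qa)$ since $Q^{2}=0$; descending to homology yields
\[
Q \cdot x \;=\; g^{\infty}\bigl(Q \cdot h^{\infty}(x)\bigr)
\]
for every $x \in HFI^{\infty}$. It follows that $Q$ annihilates $\Image(g^{\infty})$ (using $h^{\infty}\circ g^{\infty} = 0$) and maps $s(HF^{\infty}(Y,\mathfrak{s}_{0}))$ bijectively onto $\Image(g^{\infty})$ via $s(v) \mapsto g^{\infty}(Q \cdot v)$.

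Writing $HF^{\infty}(Y,\mathfrak{s}_{0}) = \F[U,U^{-1}] \cdot \alpha \oplus \F[U,U^{-1}] \cdot \beta$ with $\gr(\alpha) \equiv 1/2 \bmod 2$ and $\gr(\beta) \equiv -1/2 \bmod 2$, I conclude that $HFI^{\infty}(Y,\mathfrak{s}_{0})$ is freely generated as a graded $\F[Q,U,U^{-1}]/(Q^{2})$-module by $s(\beta)$ and $s(\alpha)$. Since $h^{\infty}$ shifts grading by $-1$, the generator $s(\beta)$ lies at a grading $\equiv 1/2 \bmod 2$ and $s(\alpha)$ at a grading $\equiv -1/2 \bmod 2$, giving the claimed decomposition
\[
HFI^{\infty}(Y,\mathfrak{s}_{0}) \;\cong\; \F[Q,U,U^{-1}]/(Q^{2}) \;\oplus\; \F[Q,U,U^{-1}]/(Q^{2})
\]
with the first factor generated in grading $\equiv 1/2 \bmod 2$ and the second in grading $\equiv -1/2 \bmod 2$. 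The main obstacle will be pinning down the $Q$-action on $HFI^{\infty}$ from the cone structure; once the formula $Q \cdot x = g^{\infty}(Q \cdot h^{\infty}(x))$ is in place, the rest reduces to straightforward linear algebra and grading bookkeeping.
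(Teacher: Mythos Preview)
Your proof is correct and follows essentially the same strategy as the paper: both arguments start from the short exact sequence coming from Proposition~\ref{prop: HFI exact triangle zero}, lift the two generators of $HF^{\infty}(Y,\mathfrak{s}_{0})$ to $HFI^{\infty}(Y,\mathfrak{s}_{0})$, and then check that $Q$ sends each lift to the corresponding element in $\Image(g^{\infty})$.

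The only difference is in packaging. The paper works at the chain level: it chooses cycle representatives $a,b$ for the generators $\alpha,\beta$, uses $(1+\iota_{*})=0$ to find $x,y$ with $(1+\iota)a=\partial x$ and $(1+\iota)b=\partial y$, and then exhibits the explicit involutive cycles $a+Qx$ and $b+Qy$. You instead stay at the level of homology, invoking freeness of $HF^{\infty}$ over the graded PID $\F[U,U^{-1}]$ to split the sequence abstractly and deriving the clean formula $Q\cdot x = g^{\infty}(Q\cdot h^{\infty}(x))$ directly from the cone structure. Your route avoids choosing chain-level representatives and is slightly more streamlined; the paper's route has the virtue of making the generators explicit, which is useful later in the paper when analogous explicit cycles are written down in Theorem~\ref{thm: HFI N}. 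Either way the content is the same.
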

\begin{proof}
Fix a Heegaard pair $\mathcal{H}=(H,J)$ representing $Y$ and admissible with respect to $\mathfrak{s}_{0}$. Let $\partial^{I}$ be the boundary map on the involutive chain complex. We can compactly write $\partial^{I}$ as $\partial^I= \partial +Q(1+\iota)$ where $\partial$ is the usual boundary map on the Heegaard Floer chain complex extended by $Q$-linearity. 

By Theorem \ref{thm - OS HF infinity}, $HF_{1/2}^{\infty}(\mathcal{H},\mathfrak{s}_{0})\cong \F$ and $HF_{-1/2}^{\infty}(\mathcal{H},\mathfrak{s}_{0})\cong \F$. Let $\alpha\in HF_{1/2}^{\infty}(\mathcal{H},\mathfrak{s}_{0})$ and $\beta\in HF_{-1/2}^{\infty}(\mathcal{H},\mathfrak{s}_{0})$ be the unique non-zero generators. Let $a, b\in CF^{\infty}(\mathcal{H},\mathfrak{s}_{0})$ be representatives of $\alpha$ and $\beta$ respectively. Then, 
the unique non-zero element in the image of 
\begin{align*}
    g^{\infty}:Q\cdot HF_{1/2}^{\infty}(\mathcal{H},\mathfrak{s}_{0})[-1]\to HFI_{1/2}^{\infty}(\mathcal{H},\mathfrak{s}_{0})
\end{align*}
is $[Qa]$. Similarly, $[Qb]$ is the unique non-zero element in the image of 
\begin{align*}
    g^{\infty}:Q\cdot HF_{-1/2}^{\infty}(\mathcal{H},\mathfrak{s}_{0})[-1]\to HFI_{-1/2}^{\infty}(\mathcal{H},\mathfrak{s}_{0})
\end{align*}
As we have observed above, $1+\iota_{*}$ is the zero map on homology. Therefore, there exists some $x,y\in CF^{\infty}(\mathcal{H}, \mathfrak{s}_{0})$ such that $(1+\iota)a = \partial x$ and $(1+\iota)b = \partial y$. Thus, $\partial^{I}(a+Qx) = 0$ and $\partial^{I}(b+Qy) = 0$. Furthermore, we have that $Q[a+Qx] = [Qa]$ and $Q[b+Qy]=[Qb]$. Therefore, the first summand in the decomposition can be taken to be $\left(\F[Q,U,U^{-1}]/(Q^2)\right)[b+Qy]$ and the second to be $\left(\F[Q,U,U^{-1}]/(Q^2)\right)[a+Qx]$. 
\end{proof}

The isomorphism in Proposition \ref{prop: HFI b1 standard} is not canonical with respect to a given Heegaard pair $\mathcal{H}=(H,J)$ because the elements $[a+Qx]$ and $[b+Qy]$ depend on our choice of representatives $a,b,x,y$. Despite this, we can still define involutive $d$ invariants in this situation. We only need to know the $\F[Q,U]/(Q^2)$-module structure of $HFI^{\infty}$, regardless of a canonical isomorphism. 

\begin{defns}\label{defn: b1=1 inv correction terms}
Let $Y$ be a closed oriented 3-manifold with $H_{1}(Y;\Z) \cong \Z$. Let $\mathfrak{s}_{0}$ be the unique $\spinc$ structure on $Y$ with $\mathfrak{s}_{0}=\bar{\mathfrak{s}}_{0}$. Define: 
{\footnotesize
\begin{align*}
\underline{d}_{1/2}(Y, \mathfrak{s}_{0}) &= \min\{r\ |\ r\equiv -1/2\bmod 2, \exists x \in HFI_{r}^+(Y, \mathfrak{s}_{0}), x \in \Image(U^n), x\notin \Image(U^nQ)\text{ for }n\gg 0\}-1\\
\underline{d}_{-1/2}(Y, \mathfrak{s}_{0}) &= \min\{r\ |\ r\equiv 1/2\bmod 2,  \exists x \in HFI_{r}^+(Y, \mathfrak{s}_{0}), x \in \Image(U^n), x\notin \Image(U^nQ)\text{ for }n\gg 0\}-1\\
\bar{d}_{1/2}(Y,\mathfrak{s}_{0}) &= \min\{r\ |\ r\equiv 1/2\bmod 2, \exists x \in HFI_{r}^+(Y, \mathfrak{s}_{0}), x\neq 0, x\in \Image(U^nQ)\text{ for }n\gg 0\}\\
\bar{d}_{-1/2}(Y,\mathfrak{s}_{0}) &= \min\{r\ |\  r\equiv -1/2\bmod 2, \exists x \in HFI_{r}^+(Y, \mathfrak{s}_{0}), x\neq 0, x\in \Image(U^nQ)\text{ for }n\gg 0\}
\end{align*}
}%
\end{defns}

\begin{rem}
Since $\mathfrak{s}_{0}$ is unique, we will often just write $\underline{d}_{\pm 1/2}(Y)$ and $\bar{d}_{\pm 1/2}(Y)$, or $\underline{d}_{\pm 1/2}$ and $\bar{d}_{\pm 1/2}$ if $Y$ is clear from context. 
\end{rem}
As in the $b_{1}=0$ case, it is again useful to think of these invariants in terms of a splitting of $HFI^+$.
\begin{cor}\label{cor: involutive H1=Z structure}
Suppose $Y$ is a closed, connected, oriented 3-manifold with $H_{1}(Y,\Z)\cong \Z$ and $\mathfrak{s}_{0}\in \Spinc(Y)$ is the unique $\Spinc$ structure with $\mathfrak{s}_{0}=\overline{\mathfrak{s}}_{0}$. Then, there exists a (non-canonical) splitting: 
\begin{align*}
HFI^{+}(Y,\mathfrak{s}_{0})\cong \mathcal{T}^{+}_{\bar{d}_{1/2}}\oplus \mathcal{T}^{+}_{\bar{d}_{-1/2}}\oplus\mathcal{T}^{+}_{\underline{d}_{1/2}+1}\oplus \mathcal{T}^{+}_{\underline{d}_{-1/2}+1}\oplus HFI^{+}_{\text{red}}(Y,\mathfrak{s}_{0})
\end{align*}
where $\mathcal{T}^{+}_{\bar{d}_{1/2}}\oplus \mathcal{T}^{+}_{\bar{d}_{-1/2}}\oplus\mathcal{T}^{+}_{\underline{d}_{1/2}+1}\oplus \mathcal{T}^{+}_{\underline{d}_{-1/2}+1}$ corresponds to $\Image(\pi_{*})$ and $\mathcal{T}^+_{\bar{d}_{1/2}}\oplus \mathcal{T}^+_{\bar{d}_{-1/2}}$ is contained in the image of multiplication by $Q$.
\end{cor}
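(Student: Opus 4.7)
The argument parallels the $b_1 = 0$ case that produces Corollary~\ref{cor: involutive rational homology structure}. The essential input is Proposition~\ref{prop: HFI b1 standard}, giving
\[
HFI^{\infty}(Y,\mathfrak{s}_{0})\cong \F[Q,U, U^{-1}]/(Q^2)\oplus \F[Q,U, U^{-1}]/(Q^2)
\]
as a graded $\F[Q,U]/(Q^2)$-module, with the two summands supported in the two half-integer grading classes described there. The natural map $\pi_*\colon HFI^{\infty}\to HFI^+$ is $\F[Q,U]/(Q^2)$-linear, and the standard characterization $\Image(\pi_*) = \bigcap_{n \geq 0} \Image(U^n)$ inside $HFI^+$ (the involutive analogue of \cite[Lemma~4.6]{MR2113019}) carries over to this setting. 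Thus $\Image(\pi_*)$ is a graded $\F[Q,U]/(Q^2)$-submodule of $HFI^+$ whose $\F$-dimension in each sufficiently negative grading agrees with that of $HFI^{\infty}$.

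With this set up, the plan is to show that $\Image(\pi_*)$ splits as a graded $\F[U]$-module into four $\mathcal{T}^+$-towers, two in each grading class modulo $2$. This will follow because each summand of $HFI^{\infty}$ contributes a pair of towers to $\Image(\pi_*)$: one arising from the $\pi_*$-image of its $\F[Q,U,U^{-1}]/(Q^2)$-generator, and a second obtained by multiplying the first by $Q$. The second tower is automatically contained in $Q\cdot HFI^+$, while the first is not in the image of $Q$ at its bottom grading. Matching the four resulting bottom gradings against Definitions~\ref{defn: b1=1 inv correction terms} identifies them with $\bar{d}_{1/2}$, $\bar{d}_{-1/2}$, $\underline{d}_{1/2}+1$, and $\underline{d}_{-1/2}+1$, with the $\bar{d}_{\pm 1/2}$-towers being precisely the two lying in $\Image(Q)$.

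Finally, I would choose any graded $\F[U]$-module complement of $\Image(\pi_*)$ inside $HFI^+$ and designate it $HFI^+_{\mathrm{red}}(Y,\mathfrak{s}_0)$, producing the stated non-canonical splitting. The main piece of bookkeeping will be to pair each of the four bottom-of-tower gradings in $\Image(\pi_*)$ with the correct invariant from Definitions~\ref{defn: b1=1 inv correction terms}; this uses that $Q$ shifts grading by $-1$, together with the explicit grading supports of the two $HFI^\infty$ summands recorded in Proposition~\ref{prop: HFI b1 standard}, to determine which congruence class mod $2$ each tower lives in. No deep new input is needed: once one has the $HFI^\infty$ structure and the defining characterization of $\bar{d}_{\pm 1/2}$ and $\underline{d}_{\pm 1/2}$ in terms of eventual $U^n$- and $U^nQ$-images, the decomposition essentially writes itself.
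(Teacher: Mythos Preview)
Your proposal is correct and takes essentially the same approach as the paper. The paper does not give an explicit proof of this corollary; it simply introduces it with the remark that, as in the $b_1=0$ case, one can reinterpret the invariants $\underline{d}_{\pm 1/2}$ and $\bar{d}_{\pm 1/2}$ in terms of a tower splitting of $HFI^+$, relying on the $HFI^\infty$ structure from Proposition~\ref{prop: HFI b1 standard} and Definitions~\ref{defn: b1=1 inv correction terms}---exactly the ingredients you invoke.
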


\begin{prop}\label{ref:involutive correction term properties}
The involutive correction terms $\underline{d}_{\pm 1/2}$ and $\bar{d}_{\pm 1/2}$ satisfy the following basic properties:
\begin{enumerate}
\item $\underline{d}_{\pm 1/2}(Y)\leq d_{\pm 1/2}(Y)\leq \bar{d}_{\pm 1/2}(Y)$

\item $\underline{d}_{\pm 1/2}(Y) = -\bar{d}_{\mp 1/2}(-Y)$
\end{enumerate}
\end{prop}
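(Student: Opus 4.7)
The plan is to handle the two parts separately. Part (1) will follow from a direct analysis of the exact triangle of Proposition~\ref{prop: HFI exact triangle} combined with Proposition~\ref{prop: HFI b1 standard}, while part (2) will follow from an orientation-reversal duality argument mirroring \cite[Proposition~5.2]{MR3649355}.

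For the upper bound $d_{\pm 1/2}\leq \bar{d}_{\pm 1/2}$ in (1), I pick a non-zero $x\in HFI^+_{\bar{d}_{1/2}}$ with $x=U^n Q y$ for some $y$ and $n\gg 0$. Since $\Image(Q)=\Image(g)$ in $HFI^+$, write $Qy=g(w)$ for some $w\in Q\cdot HF^+[-1]$, which corresponds under the tautological identification $Q\cdot HF^+[-1]\cong HF^+$ to some $w'\in HF^+$. The $U$-equivariance of $g$ gives $x=g(U^n w)$, so $U^n w'\in HF^+_{\bar{d}_{1/2}}$ is non-zero and lies in $\Image(U^n)=\Image(\pi_*)=\mathcal{T}^+_{d_{1/2}}\oplus \mathcal{T}^+_{d_{-1/2}}$ for $n\gg 0$ by \cite[Lemma~4.6]{MR2113019}. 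The parity constraint $\bar{d}_{1/2}\equiv 1/2\bmod 2$ then forces $U^n w'\in \mathcal{T}^+_{d_{1/2}}$, giving $\bar{d}_{1/2}\geq d_{1/2}$; the $-1/2$ case is identical.

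For the lower bound $\underline{d}_{\pm 1/2}\leq d_{\pm 1/2}$, I lift the bottom tower element of $HF^+$ to $HFI^+$ through the $\infty$-flavor. Using the element $[a+Qx]$ constructed in the proof of Proposition~\ref{prop: HFI b1 standard}, together with its $U$-translates, there exists $\tilde{\xi}^\infty\in HFI^\infty_{d_{1/2}+1}$ with $h^\infty(\tilde{\xi}^\infty)$ a generator of $HF^\infty_{d_{1/2}}$ and $\tilde{\xi}^\infty\notin \Image(Q)$. Setting $\tilde{\xi}:=\pi^{\mathrm{inv}}_*(\tilde{\xi}^\infty)\in HFI^+_{d_{1/2}+1}$, naturality of the exact triangle together with $\pi_*(h^\infty(\tilde{\xi}^\infty))= \xi_{d_{1/2}}$ (the bottom tower generator in $HF^+$) gives $h(\tilde{\xi})=\xi_{d_{1/2}}\neq 0$. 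Since $\tilde{\xi}\in \Image(\pi^{\mathrm{inv}}_*)$ it lies in $\Image(U^m)$ for all $m\gg 0$, and since $h\circ g=0$ it cannot lie in $\Image(U^m Q)\subset \Image(g)=\ker(h)$. Thus $\tilde{\xi}$ is admissible in the definition of $\underline{d}_{1/2}$, giving $\underline{d}_{1/2}+1\leq d_{1/2}+1$; the $-1/2$ subscript case is entirely symmetric.

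Part (2) will be handled by adapting the proof of \cite[Proposition~5.2]{MR3649355}: the orientation-reversal duality $HFI^+(Y,\mathfrak{s}_0)\cong (HFI^-(-Y,\mathfrak{s}_0))^\vee$ exchanges $\Image(Q)$ with its annihilator and negates gradings up to a fixed shift. Combined with the standard parity swap $d_{\pm 1/2}(Y)=-d_{\mp 1/2}(-Y)$, this identifies the minimum-grading condition defining $\underline{d}_{\pm 1/2}(Y)$ in $HFI^+(Y)$ with the (negated) maximum-grading analogue defining $\bar{d}_{\mp 1/2}(-Y)$ in $HFI^-(-Y)$, yielding the stated equality. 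The genuinely new feature compared to the rational homology sphere case is that the two $HF^\infty$ towers have opposite parities, so the duality must swap the $\pm 1/2$ subscripts; ensuring that this subscript swap is correctly tracked through the duality pairing, while simultaneously interchanging the $\underline{d}$ and $\bar{d}$ roles, is the main obstacle.
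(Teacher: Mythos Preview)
Your argument is correct and follows the same route as the paper, which simply says that (1) and (2) are proved exactly as in \cite[Propositions~5.1 and~5.2]{MR3649355} (invoking \cite[Proposition~4.4]{MR3649355} for the duality in (2)); you have just written out those arguments in the $b_1=1$ setting. One small correction: the assertion ``$\Image(Q)=\Image(g)$ in $HFI^+$'' is false in general (for instance whenever $\iota_*$ acts nontrivially on $HF^+$, as in the $N_j$ examples later in the paper, one has $\Image(Q)\subsetneq\Image(g)$); however your argument only uses the containment $\Image(Q)\subseteq\Image(g)=\ker(h)$, which is immediate from the chain-level description and is all that is needed.
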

\begin{proof}
The proof of $(1)$ follows from the same arguments as the proof of \cite[Proposition 5.1]{MR3649355}. The proof of $(2)$ follows from \cite[Proposition 4.4]{MR3649355} and the same arguments as in the proof of \cite[Proposition 5.2]{MR3649355}.
\end{proof}

\subsection{Spin filling constraints, homology cobordism invariance, and $0$-surgery obstruction.}
In \cite[Theorem 9.11]{MR1957829}, Ozsváth and Szabó establish constraints in terms of $d_{\pm 1/2}$ on the intersection form of a negative semi-definite 4-manifold with boundary a given 3-manifold $Y$ with $H_{1}(Y;\Z) \cong \Z$. Furthermore, Ozsváth and Szabó establish $0$-surgery obstructions in terms of $d_{\pm 1/2}$ (see \cite[Corollary 9.14, Proposition 4.11]{MR1957829}). In this subsection, we establish the analogous results in the involutive setting.

\begin{thm}\label{thm: spin boundary constraints}
Suppose $X$ is a smooth oriented negative semi-definite spin 4-manifold with boundary a 3-manifold $Y$ with $H_{1}(Y;\Z)\cong \Z$. 
\begin{enumerate}
    \item If the restriction $H^{1}(X;\Z)\to H^{1}(Y;\Z)$ is trivial, then 
    \begin{align*}
        b_{2}(X)-3\leq 4\underline{d}_{-1/2}(Y)
    \end{align*}
    
    \item If the restriction $H^{1}(X;\Z)\to H^{1}(Y;\Z)$ is non-trivial, then
    \begin{align*}
        b_{2}(X)+2\leq 4\underline{d}_{1/2}(Y)
    \end{align*}
\end{enumerate}
\end{thm}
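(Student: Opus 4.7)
The plan is to adapt Ozsváth and Szabó's proof of Theorem 9.11 in \cite{MR1957829} to the involutive setting, following the strategy Hendricks and Manolescu used to upgrade the negative-definite bound (Theorem \ref{thm: negative-definite bound}). I would begin by forming the cobordism $W_0 = X \setminus B^4$, viewed as a spin cobordism from $S^3$ to $Y$, and extending the spin structure on $X$ to a self-conjugate $\spinc$ structure $\mathfrak{s}$ on $W_0$ with $c_1(\mathfrak{s}) = 0$ and $\mathfrak{s}|_Y = \mathfrak{s}_0$. This produces a $\spinc$ cobordism map
\[
F^+_{W_0, \mathfrak{s}} \colon HF^+(S^3) \to HF^+(Y, \mathfrak{s}_0)
\]
with grading shift $\Delta = \frac{-2\chi(W_0) - 3\sigma(W_0)}{4}$. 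Because $\mathfrak{s} = \bar{\mathfrak{s}}$, a standard spin-cobordism compatibility argument (as used in \cite[Section 4]{MR3649355}) shows this map commutes with $\iota_*$ up to chain homotopy and hence lifts to an involutive cobordism map $F^{I+}_{W_0, \mathfrak{s}} \colon HFI^+(S^3) \to HFI^+(Y, \mathfrak{s}_0)$.

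The next step is to track an appropriate tower-bottom element of $HFI^+(S^3)$ — namely, the generator of the non-$Q$-image tower $\mathcal{T}^+_1$ sitting in grading $1$ — under $F^{I+}_{W_0, \mathfrak{s}}$. Using $Q$-equivariance, compatibility with the $\pi_*$ maps from $HFI^\infty$, and non-triviality of the induced map on $HF^\infty$ for negative semi-definite cobordisms (the analogue of \cite[Lemma 8.2]{MR1957829}), the image must lie in the $\Image(\pi_*)$-part of $HFI^+(Y, \mathfrak{s}_0)$ and outside the two $Q$-image summands of Corollary \ref{cor: involutive H1=Z structure}. The image therefore sits in one of the two towers $\mathcal{T}^+_{\underline{d}_{\pm 1/2}+1}$, so comparing the grading of the image to the bottom grading of the relevant tower yields the desired inequality once $\Delta$ is computed explicitly in terms of $b_2(X)$ and the topology of $X$.

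The final step is the case-by-case analysis using the hypothesis on $H^1(X;\Z) \to H^1(Y;\Z)$. Under hypothesis (1), the triviality of the restriction combined with the long exact sequence of $(X, Y)$ and Poincaré–Lefschetz duality gives the needed constraints on $\chi(X)$ (in particular $b_3(X) = b_1(X)$ and $b_2(X) \geq 1$), and a grading parity computation forces the image into $\mathcal{T}^+_{\underline{d}_{-1/2}+1}$, yielding $b_2(X) - 3 \leq 4\underline{d}_{-1/2}(Y)$. Under hypothesis (2), non-triviality of the restriction requires working instead with an $H^1(Y;\Z)$-twisted cobordism map, exactly as in Ozsváth and Szabó's original proof; the analogous grading computation places the image in $\mathcal{T}^+_{\underline{d}_{1/2}+1}$ and produces $b_2(X) + 2 \leq 4\underline{d}_{1/2}(Y)$. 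The principal obstacle is verifying that the involutive lift of the cobordism map genuinely distinguishes between the two towers: since the splitting in Corollary \ref{cor: involutive H1=Z structure} is non-canonical, carefully identifying which summand contains the image requires either tracking the $\F[H_1(Y;\Z)/\mathrm{Tors}]$-module structure on $HFI^+(Y, \mathfrak{s}_0)$ or working through the twisted-coefficient construction in parallel with the non-involutive argument.
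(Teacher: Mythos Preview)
Your proposal has the direction of the diagram chase reversed, and this is a genuine gap. You push the bottom generator of the non-$Q$-image tower $\mathcal{T}^+_{1}\subset HFI^+(S^3)$ forward along $F^{I+}_{W_0,\mathfrak{s}}$ and compare its grading $1+\ell$ (with $\ell=\frac{b_2(X)-3}{4}$ in case (1)) to the bottom of the target tower. But if the image is nonzero and lies in $\mathcal{T}^+_{\underline{d}_{-1/2}+1}$, the conclusion is $1+\ell\geq \underline{d}_{-1/2}(Y)+1$, i.e.\ $\ell\geq \underline{d}_{-1/2}(Y)$ --- the reverse of what you need. And there is no mechanism preventing the image from being zero: $F^{I+}(x^+)=\pi^I_Y\bigl(F^{I\infty}(x^\infty)\bigr)$, and $\pi^I_Y$ kills everything in gradings below $\underline{d}_{-1/2}(Y)+1$, so when $\ell<\underline{d}_{-1/2}(Y)$ (exactly the case you want to establish) the image vanishes and you extract no information.

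The paper's proof runs the other way. One first surgers out $b_1(X)$ (in case (1)) or down to $b_1(X)=1$ (in case (2)) before removing a ball; this is what pins down the grading shift and guarantees, via \cite[Proposition 9.3]{MR1957829}, that $F^\infty_{W,\mathfrak{s}}$ is an isomorphism onto the correct parity tower. Then one starts with an element $y^+\in HFI^+_{\underline{d}_{-1/2}+1}(Y,\mathfrak{s}_0)$ realizing $\underline{d}_{-1/2}(Y)$, lifts to $y^\infty\in HFI^\infty$, and chases through the commutative square linking $HFI^\infty(S^3)\to HFI^\infty(Y)$ with $HF^\infty(S^3)\to HF^\infty(Y)$ (via the maps $g^\infty$, $h^\infty$ of Proposition~\ref{prop: HFI exact triangle zero}) to produce $x^\infty\in HFI^\infty_{\underline{d}_{-1/2}+1-\ell}(S^3)$ whose projection $x^+=\pi^I_{S^3}(x^\infty)$ is nonzero and not in $\Image(U^nQ)$. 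This forces $\underline{d}(S^3)+1\leq \underline{d}_{-1/2}(Y)+1-\ell$, which is the correct inequality. Case (2) uses the same chase with $\ell=\frac{b_2(X)+2}{4}$; no twisted coefficients are needed once $b_1$ has been surgered appropriately.
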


\begin{proof}
Let $\mathfrak{s}$ be a spin structure on $X$. In particular, $c_{1}^2(\mathfrak{s}) = 0$. We follow the proof strategy of \cite[Theorem 9.11]{MR1957829}.

(1) Suppose the restriction $H^1(X;\Z)\to H^1(Y;\Z)$ is trivial. First, surger out all of $b_{1}(X)$ without changing the non-degenerate part of the intersection form of $X$. Then, remove a ball from $X$ to obtain $W$ which we regard as a cobordism $W:S^3\rightarrow Y$. As observed in the proof of \cite[Theorem 9.11]{MR1957829}, the map induced from the cobordism $W$ 
\begin{align*}
F^{\infty}_{W,\mathfrak{s}|_{W}}: HF^{\infty}(S^3)\rightarrow HF^{\infty}(Y,\mathfrak{s}|_{Y})
\end{align*}
is injective with image equal to the doubly infinite tower with degrees congruent to $-1/2\bmod 2$ and shifts degree by $\ell = \frac{b_{2}(X)-3}{4}$. Also, by \cite[Section 4.5]{MR3649355} there exists an induced map 
\begin{align*}
    F^{I,\infty}_{W,\mathfrak{s}|_{W},\alpha}:HFI^{\infty}(S^3)\to HFI^{\infty}(Y,\mathfrak{s}_{0})
\end{align*}
which also shifts degree by $\ell = \frac{b_{2}(X)-3}{4}$. Note that the involutive cobordism map $F^{I,\infty}_{W,\mathfrak{s}|_{W},\alpha}$ depends on an additional choice of auxiliary data $\alpha$.

Combining the results in \cite[Section 4.5]{MR3649355} with Proposition \ref{prop: HFI exact triangle zero}, we see that for every even integer $r$, we have the following commutative diagram with exact horizontal rows:
\begin{center}
\begin{tikzpicture}
\node[scale = .5] at (0,0)
{\begin{tikzcd}
0 \arrow[rr, "Q(1+\iota_{*})"] &                                    & {QHF^{\infty}_{r+1+\ell}(Y,\mathfrak{s}_{0})[-1]} \arrow[rr, "g^{\infty}_{Y}"] \arrow[dd, "\pi_{Y}", pos = .7] &                                                        & {HFI^{\infty}_{r+1+\ell}(Y,\mathfrak{s}_{0})} \arrow[rr, "h^{\infty}_{Y}"] \arrow[dd, "\pi^I_{Y}", pos = .7] &                                                     & {HF^{\infty}_{r+\ell}(Y,\mathfrak{s}_{0})} \arrow[rr, "Q(1+\iota_{*})"] \arrow[dd,"\pi_{Y}", pos = .7] &                         & 0\\
             & 0 \arrow[ru] \arrow[rr, crossing over] \arrow[dd] &                                                                      & HFI^{\infty}_{r+1}(S^3) \arrow[ru, "F^{I, \infty}_{W,\mathfrak{s}|_{W},\alpha}"] \arrow[rr, crossing over, "h^{\infty}_{S^3}", pos =.7]&                                                                      & HF^{\infty}_{r}(S^3) \arrow[ru, "F^{\infty}_{W, \mathfrak{s}|_{W}}"] \arrow[rr, crossing over, ]&                                                            &0 \arrow[ru] &                                         \\
             &                                    & {QHF^+_{r+1+\ell}(Y,\mathfrak{s}_{0})[-1]} \arrow[rr, "g^+_{Y}", pos = .3]                   &                                                        & {HFI^+_{r+1+\ell}(Y,\mathfrak{s}_{0})} \arrow[rr, "h^+_{Y}", pos = .3]                   &                                                     & {HF^+_{r+\ell}(Y,\mathfrak{s}_{0})} &                         &\hspace{10em}  \\
             & QHF^+_{r+1}(S^3)[-1] \arrow[ru] \arrow[rr]            &                                                                      & HFI^+_{r+1}(S^3)\arrow[uu, <-,crossing over, swap, "\pi^I_{S^3}", pos = .7] \arrow[ru, "F^{I,+}_{W,\mathfrak{s}|_{W},\alpha}"] \arrow[rr, "h^+_{S^3}", pos = .7]                   &                                                                      & HF^+_{r}(S^3)\arrow[uu, <-, crossing over, swap, "\pi_{S^3}", pos = .7] \arrow[ru, "F^+_{W, \mathfrak{s}|_{W}}"]                &                                                            & &                                        
\end{tikzcd}};
\end{tikzpicture}
\end{center}
By definition of $\underline{d}_{-1/2}(Y)$, there exists some $y^+$ in $HFI_{\underline{d}_{-1/2}+1}^{+}(Y,\mathfrak{s}_{0})$ such that $y^+\in \Image(U^n)$ for $n\gg0$ and $y^+\notin\Image(U^nQ)$ for $n\gg 0$. The condition $[y^+\in \Image(U^n)$ for $n\gg0]$ is equivalent to the condition $[y^+\in \Image(\pi^I_{Y})]$. Therefore, there exists some $y^{\infty}\in HFI^{\infty}_{\underline{d}_{-1/2}+1}(Y,\mathfrak{s}_{0})$ such that $\pi^I_{Y}(y^{\infty})=y^+$. The condition [$y^+\notin\Image(U^nQ)$ for $n\gg 0$] implies that $y^{\infty}\notin \Image(g^{\infty}_{Y})$. Therefore, by exactness, $h^{\infty}_{Y}(y^{\infty})\neq 0\in HF^{\infty}_{\underline{d}_{-1/2}}(Y,\mathfrak{s}_{0})$. By assumption, the map $F^{\infty}_{W,\mathfrak{s}|_{W}}: HF^{\infty}_{\underline{d}_{-1/2}-\ell}(S^3)\rightarrow HF^{\infty}_{\underline{d}_{-1/2}}(Y,\mathfrak{s}|_{Y})$ is an isomorphism. Moreover, by exactness, the map $h^{\infty}_{S^3}:HFI^{\infty}_{\underline{d}_{-1/2}+1-\ell}(S^3)\to HF^{\infty}_{\underline{d}_{-1/2}-\ell}(S^3)$ is also an isomorphism. Therefore, there exists some $x^{\infty}\in HFI^{\infty}_{\underline{d}_{-1/2}+1-\ell}(S^3)$ such that $(F^{\infty}_{W,\mathfrak{s}|_{W}}\circ h^{\infty}_{S^3})(x^{\infty})=h_{Y}^{\infty}(y^{\infty})$. Let $z^{\infty} = F^{I,\infty}_{W,\mathfrak{s}|_{W},\alpha}(x^{\infty})\in HFI^{\infty}_{\underline{d}_{-1/2}+1}(Y,\mathfrak{s}_{0})$. By commutativity, $h_{Y}^{\infty}(z^{\infty}) = h^{\infty}_{Y}(y^{\infty})$. Therefore, $z^{\infty}+y^{\infty}\in \ker(h_{Y}^{\infty})$. So, by exactness, there exists some $w^{\infty}\in Q\cdot HF^{\infty}_{\underline{d}_{-1/2}+1}(Y,\mathfrak{s}_{0})[-1]$ such that $g^{\infty}_{Y}(w^{\infty})=z^{\infty}+y^{\infty}$. If $\pi_{Y}^I(z^{\infty})=0$, then that would imply $\pi^I_{Y}(g^{\infty}(w^{\infty}))=y^+$. But this would be a contradiction because that would imply $y^+\in \Image(U^nQ)$ for $n\gg0$. Therefore, $\pi_{Y}^I(z^{\infty})\neq0$. Thus, $(\pi^I_{Y}\circ F^{I,\infty}_{W,\mathfrak{s}|_{W},\alpha})(x^{\infty})\neq 0$. So, by commutativity, $(F^{I,+}_{W,\mathfrak{s}|_{W},\alpha}\circ \pi^I_{S^3})(x^{\infty})\neq 0$. In particular, $\pi^I_{S^3}(x^{\infty})\neq 0$. Therefore, the element $x^+=\pi^I_{S^3}(x^{\infty})\in HFI^+_{\underline{d}_{-1/2}+1-\ell}(S^3)$ has the property that $x^+\in \Image(U^n)$ for $n\gg 0$ and $x^+\notin\Image(U^nQ)$ for $n\gg0$. It follows that:
\begin{align}\label{eq: bounds from proof}
    \underline{d}(S^3)+1\leq \underline{d}_{-1/2}(Y)+1-\ell
\end{align}
Observing that $\underline{d}(S^3)=0$ and rearranging/canceling the terms, we get:
\begin{align*}
    b_{2}(X)-3\leq 4\underline{d}_{-1/2}(Y)
\end{align*}

(2) Now suppose the restriction $H^{1}(X;\Z)\rightarrow H^{1}(Y;\Z)$ is non-trivial. Surger out the 1 dimensional homology of $X$ until $b_{1}(X)=1$ and so that the map $H^{1}(X;\Z)\rightarrow H^{1}(Y;\Z)$ is still non-trivial. Again, remove a ball from $X$ to obtain a cobordism $W:S^3\to Y$. In this case, the induced map
\begin{align*}
F^{\infty}_{W,\mathfrak{s}|_{W}}: HF^{\infty}(S^3)\rightarrow HF^{\infty}(Y,\mathfrak{s}|_{Y})
\end{align*}
is injective with image equal to the doubly infinite tower with degrees congruent to $+1/2\bmod 2$. The degree shift of this map is now $\frac{b_{2}(X)+2}{4}$. We then repeat the analogous diagram chase to establish the inequality. We leave the details to the reader.
\end{proof}

\begin{cor}\label{cor: obstruction to any negative semi-definite}
Suppose $Y$ is a closed oriented 3-manifold with $H_{1}(Y;\Z)\cong \Z$. If 
\begin{align*}
    \underline{d}_{-1/2}(Y)< -1/2\hspace{2em}\text{and}\hspace{2em}\underline{d}_{1/2}(Y)<1/2
\end{align*}
then $Y$ is not the boundary of any negative semi-definite spin manifold.
\end{cor}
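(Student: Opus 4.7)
The proof will be a direct application of Theorem (A) by contradiction, splitting into the two cases of that theorem. The plan is to suppose, toward contradiction, that $Y$ does bound some smooth oriented negative semi-definite spin 4-manifold $X$, and then derive inequalities on $\underline{d}_{\pm 1/2}(Y)$ which contradict the hypotheses.

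First I would consider whether the restriction map $H^1(X;\Z) \to H^1(Y;\Z)$ is trivial or non-trivial, since these are precisely the two cases of Theorem (A). In the trivial case, Theorem (A)(1) gives
\begin{align*}
    b_{2}(X) - 3 \leq 4\underline{d}_{-1/2}(Y),
\end{align*}
and the remark immediately following the theorem statement asserts $b_2(X) \geq 1$ in this case. Combining, $4\underline{d}_{-1/2}(Y) \geq -2$, i.e.\ $\underline{d}_{-1/2}(Y) \geq -1/2$, contradicting the hypothesis $\underline{d}_{-1/2}(Y) < -1/2$.

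In the non-trivial case, Theorem (A)(2) gives $b_2(X) + 2 \leq 4\underline{d}_{1/2}(Y)$, and since $b_2(X) \geq 0$ trivially, we get $\underline{d}_{1/2}(Y) \geq 1/2$, contradicting $\underline{d}_{1/2}(Y) < 1/2$. Since both cases lead to contradictions, no such $X$ can exist.

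There is no real obstacle here: both bounds on $b_2(X)$ are doing all the work, and the corollary is essentially an algebraic rearrangement packaging Theorem (A) into a clean obstruction. The only care needed is to ensure the case split exhausts the possibilities and that the remark giving $b_2(X) \geq 1$ in case (1) is explicitly invoked (without it, case (1) would only yield $\underline{d}_{-1/2}(Y) \geq -3/4$, which would not suffice).
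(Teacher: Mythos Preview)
Your proposal is correct and follows essentially the same approach as the paper: suppose such an $X$ exists, split into the two cases of Theorem~(A) according to whether $H^1(X;\Z)\to H^1(Y;\Z)$ is trivial, and derive $\underline{d}_{-1/2}(Y)\geq -1/2$ or $\underline{d}_{1/2}(Y)\geq 1/2$ respectively. The only cosmetic difference is that the paper re-derives $b_2(X)\geq 1$ in the trivial case via the long exact sequence of the pair $(X,Y)$ rather than citing the remark, but the content is identical.
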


\begin{proof}
Suppose $X$ is a smooth negative semi-definite spin 4-manifold with boundary $Y$. If the restriction $H^1(X;\Z)\to H^1(Y;\Z)$ is trivial, then the map $H^{1}(Y;\Z)\to H^2(X,Y;\Z)$ is injective. Since $H^{1}(Y;\Z)\cong H_{1}(Y;\Z)\cong \Z$ and $H^2(X,Y;\Z)\cong H_{2}(X;\Z)$, it follows that $b_{2}(X)\geq 1$. Hence, by Theorem \ref{thm: spin boundary constraints}, $-1/2\leq \underline{d}_{-1/2}(Y)$. If instead $H^1(X;\Z)\to H^1(Y;\Z)$ is non-trivial, then all we can say about $b_{2}(X)$ is that $b_{2}(X)\geq 0$. Theorem \ref{thm: spin boundary constraints} therefore implies $1/2\leq \underline{d}_{1/2}(Y)$. The conclusion now follows. 
\end{proof}

\begin{prop}
Suppose $Y_{1}$ and $Y_{2}$ are closed oriented 3-manifolds with $H_{1}(Y_{i};\Z)\cong \Z$ for $i\in \{1,2\}$. If there exists a spin integer homology cobordism $(W,\mathfrak{s}):Y_{1}\rightarrow Y_{2}$, then $\underline{d}_{\pm 1/2}(Y_{1})=\underline{d}_{\pm 1/2}(Y_{2})$ and $\bar{d}_{\pm 1/2}(Y_{1})=\bar{d}_{\pm 1/2}(Y_{2})$.
\end{prop}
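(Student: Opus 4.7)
The plan is to mirror the proof strategy of Theorem \ref{thm: spin boundary constraints}, replacing the filling $X$ by the cobordism $W$. First I would observe that since $(W, \mathfrak{s})$ is a spin integer homology cobordism, we have $b_{2}(W) = \chi(W) = \sigma(W) = 0$ and $c_{1}(\mathfrak{s})^{2} = 0$, and the spin structure $\mathfrak{s}$ necessarily restricts to the unique self-conjugate $\spinc$ structure $\mathfrak{s}_{0}$ on each $Y_{i}$. Consequently the cobordism map $F^{\circ}_{W,\mathfrak{s}}\colon HF^{\circ}(Y_{1},\mathfrak{s}_{0}) \to HF^{\circ}(Y_{2},\mathfrak{s}_{0})$ and its involutive analogue $F^{I,\circ}_{W,\mathfrak{s},\alpha}$ from \cite[Section 4.5]{MR3649355} both preserve the absolute $\Q$-grading.

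Next, and this is the key step, I would verify that $F^{\infty}_{W,\mathfrak{s}}$ is a grading-preserving isomorphism of the two standard modules provided by Theorem \ref{thm - OS HF infinity}, restricting to an isomorphism on each of the two $\F[U,U^{-1}]$-towers (those in gradings congruent to $\pm 1/2 \bmod 2$ separately). This follows from the observation that $W$ admits a handle decomposition in which every 2-handle is attached along a homologically trivial curve and every 3-handle caps off a homologically trivial 2-sphere, so the Ozsv\'ath--Szab\'o computation of $HF^{\infty}$ cobordism maps shows that $F^{\infty}_{W,\mathfrak{s}}$ corresponds, under the standard identifications, to an $H_{1}$-equivariant grading-preserving isomorphism. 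With this in hand, I would run the identical diagram chase as in the proof of Theorem \ref{thm: spin boundary constraints}, this time using the naturality square coming from $F^{I,\circ}_{W,\mathfrak{s},\alpha}$ and $F^{\circ}_{W,\mathfrak{s}}$ together with the exact triangles of Proposition \ref{prop: HFI exact triangle zero} applied to both $Y_{1}$ and $Y_{2}$. Beginning with an element $y^{+} \in HFI^{+}_{\underline{d}_{-1/2}(Y_{2})+1}(Y_{2},\mathfrak{s}_{0})$ (respectively with gradings congruent to $1/2 \bmod 2$ for the other invariant) that realizes $\underline{d}_{-1/2}(Y_{2})$, I would trace through the diagram as in the proof of Theorem \ref{thm: spin boundary constraints} to produce a corresponding element in $HFI^{+}(Y_{1},\mathfrak{s}_{0})$ with the same grading and membership/non-membership in $\Image(U^{n})$ and $\Image(U^{n}Q)$, yielding $\underline{d}_{\pm 1/2}(Y_{1}) \leq \underline{d}_{\pm 1/2}(Y_{2})$.

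To conclude, I would apply the same argument to the reverse cobordism $-W\colon Y_{2} \to Y_{1}$, which is again a spin integer homology cobordism, to obtain the opposite inequalities $\underline{d}_{\pm 1/2}(Y_{2}) \leq \underline{d}_{\pm 1/2}(Y_{1})$, so that the two are equal. The equality $\bar{d}_{\pm 1/2}(Y_{1}) = \bar{d}_{\pm 1/2}(Y_{2})$ then follows from the duality $\bar{d}_{\pm 1/2}(Y) = -\underline{d}_{\mp 1/2}(-Y)$ of Proposition \ref{ref:involutive correction term properties}(2), applied to the spin integer homology cobordism $-W\colon -Y_{1} \to -Y_{2}$. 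The main obstacle I anticipate is the verification that $F^{\infty}_{W,\mathfrak{s}}$ splits as an isomorphism on each half-integer tower; in the rational homology sphere setting of \cite{MR3649355} this is immediate since $HF^{\infty}$ is a single tower, but in the $H_{1} \cong \Z$ setting one must carefully keep track of the $H_{1}$-action to ensure that neither tower is collapsed or mixed with the other under the cobordism map.
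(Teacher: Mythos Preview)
Your approach is correct and essentially the same as the paper's. The paper's proof is a one-line reference to \cite[Proposition 5.4]{MR3649355}, stating only that the key input is that $W$ induces an isomorphism $F^{I,\infty}_{W,\mathfrak{s},\alpha}\colon HFI^{\infty}(Y_{1},\mathfrak{s}_{0})\to HFI^{\infty}(Y_{2},\mathfrak{s}_{0})$; your version unpacks this by first establishing that $F^{\infty}_{W,\mathfrak{s}}$ is an isomorphism and then running the explicit diagram chase from Theorem~\ref{thm: spin boundary constraints}, which is precisely what the Hendricks--Manolescu argument does in the rational homology sphere case.

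One remark on your anticipated obstacle: the concern that $F^{\infty}_{W,\mathfrak{s}}$ might mix the two towers is not a real issue, since the map preserves absolute grading (degree shift zero for an integer homology cobordism with torsion $\spinc$ structure) and the two $\F[U,U^{-1}]$-summands live in gradings congruent to $1/2$ and $-1/2 \bmod 2$ respectively, so any grading-preserving map automatically respects the splitting. The genuine content is that the map is an isomorphism at all, and for that your $H_{1}$-equivariance argument (together with the fact that $H_{1}(Y_{1})\to H_{1}(W)\leftarrow H_{1}(Y_{2})$ are isomorphisms) is the right idea.
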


\begin{proof}
The argument is the same as in the proof of \cite[Proposition 5.4]{MR3649355}, using the fact that $W$ induces an isomorphism 
\begin{align*}
    F^{\infty}_{W, \mathfrak{s},\alpha}: HFI^{\infty}(Y_{1},\mathfrak{s}|_{Y_{1}})\to HFI^{\infty}(Y_{2},\mathfrak{s}|_{Y_{2}})
\end{align*}
\end{proof}

\newpage
\begin{thm}\label{thm: involutive zero surgery inequalities}
Let $M$ be an oriented integer homology 3-sphere and let $Y$ and $M'$ be the 3-manifolds obtained via $0$ and $+1$ surgery respectively on a knot $K$ in $M$. Then, 
\begin{enumerate}
\item 
$\underline{d}(M)-\frac{1}{2}\leq \underline{d}_{-1/2}(Y)\hspace{2em}\text{and}\hspace{2em}\bar{d}(M)-\frac{1}{2}\leq \bar{d}_{-1/2}(Y)$

\item 
$\underline{d}_{1/2}(Y)-\frac{1}{2}\leq \underline{d}(M')\hspace{2em}\text{and}\hspace{2em}\bar{d}_{1/2}(Y)-\frac{1}{2}\leq \bar{d}(M')$
\end{enumerate}

\end{thm}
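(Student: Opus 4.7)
The plan is to apply a negative semi-definite spin cobordism to each surgery and rerun the diagram chase from the proof of Theorem~\ref{thm: spin boundary constraints}, paralleling Ozsváth–Szabó's proof of the non-involutive analogue \cite[Proposition~4.11]{MR1957829}. For part (1), let $W_{0}\colon M\to Y$ be the cobordism obtained by attaching a $0$-framed $2$-handle along $K$. Because $M$ is an integer homology sphere and the framing is even, $W_{0}$ carries a spin structure $\mathfrak{s}_{W_{0}}$ restricting to the unique self-conjugate $\spinc$ structures on $M$ and $Y$, with $b_{2}(W_{0})=1$ and $\sigma(W_{0})=0$. By the computations in \cite[Section~9]{MR1957829} the map $F^{\infty}_{W_{0},\mathfrak{s}_{W_{0}}}\colon HF^{\infty}(M)\to HF^{\infty}(Y,\mathfrak{s}_{0})$ is injective with image the doubly infinite tower in gradings $\equiv -1/2\bmod 2$ and has grading shift $(c_{1}^{2}-2\chi-3\sigma)/4=-1/2$. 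The commutative diagram in the proof of Theorem~\ref{thm: spin boundary constraints}(1) then applies with $S^{3}$ replaced by $M$ and $X$ by $W_{0}$: starting from $y^{+}\in HFI^{+}_{\underline{d}_{-1/2}(Y)+1}(Y,\mathfrak{s}_{0})$ realizing $\underline{d}_{-1/2}(Y)$ and tracing back through $F^{I,\infty}_{W_{0},\mathfrak{s}_{W_{0}},\alpha}$ produces $x^{+}\in HFI^{+}_{\underline{d}_{-1/2}(Y)+3/2}(M)$ with $x^{+}\in \Image(U^{n})$ but $x^{+}\notin \Image(U^{n}Q)$ for $n\gg 0$. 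By definition of $\underline{d}(M)$ this gives $\underline{d}(M)+1\leq \underline{d}_{-1/2}(Y)+3/2$, i.e.\ $\underline{d}(M)-1/2\leq \underline{d}_{-1/2}(Y)$. The bound $\bar{d}(M)-1/2\leq \bar{d}_{-1/2}(Y)$ comes from the parallel chase, this time starting with a non-zero $y^{+}\in HFI^{+}_{\bar{d}_{-1/2}(Y)}(Y,\mathfrak{s}_{0})$ lying in $\Image(U^{n}Q)$ for $n\gg 0$ (equivalently, with a lift in the image of $g^{\infty}_{Y}$) and tracing through the diagram using the commutative squares involving $g^{\infty}$ in place of $h^{\infty}$ to produce a non-zero $x^{+}\in HFI^{+}_{\bar{d}_{-1/2}(Y)+1/2}(M)$ in $\Image(U^{n}Q)$.

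For part (2) the cobordism runs in the opposite direction. By the slam-dunk move, $M'=M_{+1}(K)=Y_{-1}(\mu)$ where $\mu\subset Y$ is the meridian of $K$ (a generator of $H_{1}(Y;\Z)$), so attaching a $(-1)$-framed $2$-handle along $\mu$ yields a cobordism $W\colon Y\to M'$ with $b_{2}(W)=1$. The long exact sequence of the pair $(W,Y)$ shows that $H_{2}(Y;\Z)\to H_{2}(W;\Z)$ is an isomorphism; pushing the single generator of $H_{2}(W)$ off itself into the interior of $W$ gives self-intersection zero, so $\sigma(W)=0$ and $W$ is negative semi-definite. Since $[\mu]$ is non-zero in $H_{1}(Y;\Z/2)$, exactly one of the two spin structures on $Y$ extends across the $(-1)$-framed $2$-handle; for that choice $W$ is spin, with a resulting self-conjugate $\spinc$ structure $\mathfrak{t}$ restricting to $\mathfrak{s}_{0}$ on both ends. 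The map $F^{\infty}_{W,\mathfrak{t}}\colon HF^{\infty}(Y,\mathfrak{s}_{0})\to HF^{\infty}(M')$ again has grading shift $-1/2$, and by degree considerations together with exactness of the surgery triangle it restricts to an isomorphism from the tower of $HF^{\infty}(Y,\mathfrak{s}_{0})$ at gradings $\equiv 1/2\bmod 2$ onto $HF^{\infty}(M')$, while vanishing on the other tower. The same diagram chase, now starting with $y^{+}\in HFI^{+}_{\underline{d}(M')+1}(M')$ realizing $\underline{d}(M')$ and tracing back through $F^{I,\infty}_{W,\mathfrak{t},\alpha}$, produces $x^{+}\in HFI^{+}_{\underline{d}(M')+3/2}(Y,\mathfrak{s}_{0})$ with $x^{+}\in \Image(U^{n})\setminus \Image(U^{n}Q)$ for $n\gg 0$; since $\underline{d}(M')+3/2\equiv -1/2\bmod 2$ matches the grading class of $\underline{d}_{1/2}(Y)+1$, this element realizes the $\underline{d}_{1/2}$-tower, giving $\underline{d}_{1/2}(Y)+1\leq \underline{d}(M')+3/2$, i.e.\ $\underline{d}_{1/2}(Y)-1/2\leq \underline{d}(M')$. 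The bound $\bar{d}_{1/2}(Y)-1/2\leq \bar{d}(M')$ follows from the parallel chase tracking elements in $\Image(U^{n}Q)$.

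The main obstacle is verifying the structural properties of the cobordism $W$ in part (2): confirming the existence and restriction behavior of the self-conjugate $\spinc$ refinement $\mathfrak{t}$, pinning down the grading shift, and showing that $F^{\infty}_{W,\mathfrak{t}}$ restricts to an isomorphism from the correct tower of $HF^{\infty}(Y,\mathfrak{s}_{0})$ onto $HF^{\infty}(M')$. Once these inputs are in place—following the same reasoning as for $W_{0}$ and the surgery exact triangle computations of \cite[Section~9]{MR1957829}—the diagram chase from the proof of Theorem~\ref{thm: spin boundary constraints} applies essentially verbatim.
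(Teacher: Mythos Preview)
Your proposal is correct and follows essentially the same approach as the paper. Both use the spin cobordism $M\to Y$ from the $0$-framed $2$-handle for part~(1) and the spin cobordism $Y\to M'$ (your $(-1)$-framed handle along the meridian is exactly the paper's ``$2$-handle along the dual of $K$'') for part~(2), then rerun the diagram chase from Theorem~\ref{thm: spin boundary constraints} with the appropriate grading shift $\ell=-1/2$; you supply more detail than the paper on why the second cobordism is negative semi-definite and spin, but the argument is the same.
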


\begin{proof}
First, we prove the inequalities in $(1)$. 

Let $(W,\mathfrak{s})$ be the spin cobordism from $M$ to $Y$ obtained by attaching a 0-framed 2-handle along $K$ and let $\mathfrak{s}_{0}$ be the trivial $\spinc$ structure on $Y$. Then, then by \cite[Proposition 9.3]{MR1957829}, the induced map 
\begin{align*}
F_{W,\mathfrak{s}}^{\infty}: HF^{\infty}(M)\rightarrow HF^{\infty}(Y, \mathfrak{s}_{0})
\end{align*}
shifts grading by $-1/2$ and is injective with image equal to the doubly infinite tower with gradings congruent to $-1/2\bmod 2$. The first inequality of (1) now follows by repeating exactly the same argument as in the proof of Theorem \ref{thm: spin boundary constraints} where now $M$ assumes the role of $S^3$ and $\ell = -1/2$ (see inequality \ref{eq: bounds from proof}).

To establish the second inequality in (1), we consider the rightward continuation of the commutative diagram used in the proof of Theorem \ref{thm: spin boundary constraints} again replacing $S^3$ with $M$. Specifically, for $r$ even, we have the following commutative diagram with exact horizontal rows: 
\begin{center}
\begin{tikzpicture}
\node[scale = .5] at (0,0)
{\begin{tikzcd}
                                   & 0 \arrow[rr, "Q(1+\iota_{*})"]                         &                                                      & {QHF^{\infty}_{r-1/2}(Y,\mathfrak{s}_{0})[-1]} \arrow[rr, "g^{\infty}_{Y}"] \arrow[dd, "\pi_{Y}", pos = .7] &                                                      & {HFI^{\infty}_{r-1/2}(Y,\mathfrak{s}_{0})} \arrow[rr, "h^{\infty}_{Y}"] \arrow[dd, "\pi^I_{Y}", pos = .7] &                         & {HF^{\infty}_{r-1.5}(Y,\mathfrak{s}_{0})} \arrow[rr, "Q(1+\iota_{*})"] \arrow[dd, "\pi_{Y}", pos = .7] &  & 0 \\
0 \arrow[rr] \arrow[ru] &                                                   & QHF^{\infty}_{r}(M)[-1] \arrow[ru, "F^{\infty}_{W, \mathfrak{s}}"] \arrow[rr, crossing over, "g^{\infty}_{M}", pos = .7] \arrow[dd, "\pi_{M}"] &                                                                      & HFI^{\infty}_{r}(M) \arrow[ru, "F^{I,\infty}_{W,\mathfrak{s}, \alpha}"] \arrow[rr, crossing over] &                                                                      & 0 \arrow[ru] &                                                                     &  &   \\
                                   & &                                                      & {QHF^+_{r-1/2}(Y,\mathfrak{s}_{0})[-1]} \arrow[rr, "g^+_{Y}", pos = .3]                   &                                                      & {HFI^+_{r-1/2}(Y,\mathfrak{s}_{0})} \arrow[rr, "h^+_{Y}", pos = .3]                   &                         & {HF^+_{r-1.5}(Y,\mathfrak{s}_{0})}                              &  &   \\
\hspace{10em}&                                                   & QHF^+_{r}(M)[-1] \arrow[ru, "F^{+}_{W, \mathfrak{s}}"] \arrow[rr, "g^+_{M}"]                   &                                                                      & HFI^+_{r}(M)\arrow[uu, swap, <-, crossing over, "\pi^I_{M}", pos=.3] \arrow[ru, "F^{I,+}_{W,\mathfrak{s},\alpha}"] \arrow[rr]                   &                                                                      & HF^+_{r-1}(M) \arrow[ru]\arrow[uu, swap, <-, crossing over]            &                                                                     &  &  
\end{tikzcd}};
\end{tikzpicture}
\end{center}

Now we get that $g^{\infty}_{M}$ is an isomorphism, and we again know that $F^{\infty}_{W, \mathfrak{s}}$ is an isomorphism. Furthermore, $g^{\infty}_{Y}$ is injective with $\Image(g^{\infty}_{Y}) = \ker(h^{\infty}_{Y})$. Thus, $F^{I,\infty}_{W,\mathfrak{s},\alpha}$ maps $HFI^{\infty}_{r}(M)$ isomorphically onto $\Image(g^{\infty}_{Y})$. 

By definition of the value $\bar{d}_{-1/2}$, there exists some non-zero $y^+\in HFI^+(Y,\mathfrak{s}_{0})$ such that $\gr(y^+) = \bar{d}_{-1/2}$ and $y^+\in \Image(U^nQ)$ for $n\gg 0$. This implies that there exists some element $y^{\infty}\in \Image(g_{Y}^{\infty})\subset HFI^{\infty}_{\bar{d}_{-1/2}}(Y,\mathfrak{s}_{0})$ such that $\pi_{Y}^I(y^{\infty})=y^+$. Therefore, the unique non-zero element of $HFI_{\bar{d}_{-1/2}+1/2}^{\infty}(M)$, which we will call $x^{\infty}$, maps to $y^{\infty}$ under $F^{I,\infty}_{W,\mathfrak{s}, \alpha}$. Since $(\pi_{Y}^I\circ F^{I,\infty}_{W,\mathfrak{s},\alpha})(x^{\infty}) = y^+\neq 0$, the commutativity of the diagram implies $\pi_{M}^I(x^{\infty})\neq 0$. Additionally, $\pi_{M}^I(x^{\infty})\in \Image(U^n Q)$ for $n\gg 0$. Therefore, 
\begin{align*}
    \bar{d}(M)\leq \bar{d}_{-1/2}(Y)+\frac{1}{2}
\end{align*}

The proofs of the inequalities in $(2)$ follow the same arguments as the proofs of $(1)$, except that now we consider the maps:
\begin{align*}
F_{W', \mathfrak{s}'}^{\circ}: HF^{\circ}(Y, \mathfrak{s}_{0})\rightarrow HF^{\circ}(M')
\end{align*}
and 
\begin{align*}
F_{W', \mathfrak{s}', \alpha'}^{I,\circ}: HFI^{\circ}(Y, \mathfrak{s}_{0})\rightarrow HFI^{\circ}(M')
\end{align*}
induced by the spin cobordism $(W', \mathfrak{s}'): Y\rightarrow M'$ obtained by attaching a 2-handle to the dual of $K$ in $Y$ with framing so that the resulting space is $M'$. Analyzing the corresponding commutative diagrams and using the fact that for all $r$ even, \begin{align*}
F_{W', \mathfrak{s}'}^{\infty}: HF^{\infty}_{r+1/2}(Y, \mathfrak{s}_{0})\rightarrow HF^{\infty}_{r}(M')
\end{align*} 
is an isomorphism, we get statement $(2)$. We leave the details to the reader. 
\end{proof}

\begin{cor}\label{cor:zero surgery obstruction}
Suppose $K$ is a knot in $S^3$ and $Y$ is the result of $0$-surgery on $K$. Then,
\begin{enumerate}
\item $-\frac{1}{2}\leq \underline{d}_{-1/2}(Y)$
\item $\bar{d}_{1/2}(Y)\leq \frac{1}{2}$
\end{enumerate}
\end{cor}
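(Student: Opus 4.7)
The plan is to derive both inequalities directly from Theorem \ref{thm: involutive zero surgery inequalities} (Theorem B), specializing to the case where the ambient homology sphere is $S^3$, together with the orientation-reversal symmetry of Proposition \ref{ref:involutive correction term properties}(2).

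For part (1), I would take $M = S^3$ in Theorem \ref{thm: involutive zero surgery inequalities}(1). Since $S^3$ is the standard $3$-sphere, the ordinary involutive $d$-invariants vanish: $\underline{d}(S^3) = \bar{d}(S^3) = 0$. Substituting these values into the inequality $\underline{d}(M) - \tfrac{1}{2} \leq \underline{d}_{-1/2}(Y)$ immediately yields $-\tfrac{1}{2} \leq \underline{d}_{-1/2}(Y)$, which is exactly part (1).

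For part (2), rather than trying to use the $+1$-surgery inequality from Theorem \ref{thm: involutive zero surgery inequalities}(2) directly (which would require controlling $\bar{d}$ of an arbitrary $+1$-surgery on $K$, a quantity we have no a priori bound on), I would appeal to orientation reversal. The key observation is that if $Y = S^3_0(K)$, then $-Y = S^3_0(\bar{K})$ where $\bar{K}$ is the mirror of $K$; in particular $-Y$ is also $0$-surgery on a knot in $S^3$. Applying part (1), already established, to the manifold $-Y$ gives $-\tfrac{1}{2} \leq \underline{d}_{-1/2}(-Y)$. Now Proposition \ref{ref:involutive correction term properties}(2) tells us that $\underline{d}_{-1/2}(-Y) = -\bar{d}_{1/2}(Y)$, so rearranging yields $\bar{d}_{1/2}(Y) \leq \tfrac{1}{2}$.

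There is essentially no serious obstacle here; the corollary is a direct specialization of Theorem B. The only minor subtlety is recognizing that part (2) is best obtained not from Theorem \ref{thm: involutive zero surgery inequalities}(2) (since bounding $\bar{d}(M')$ for an arbitrary $+1$-surgery is not free) but by dualizing part (1) via the orientation-reversal identity, which depends on the fact that $0$-surgery on a knot in $S^3$ is closed under orientation reversal.
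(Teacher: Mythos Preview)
Your proposal is correct and follows essentially the same approach as the paper: part (1) is obtained by specializing Theorem \ref{thm: involutive zero surgery inequalities}(1) to $M=S^3$ using $\underline{d}(S^3)=0$, and part (2) is obtained by applying part (1) to $-Y=S^3_0(\bar{K})$ and then invoking the orientation-reversal identity $\underline{d}_{-1/2}(-Y)=-\bar{d}_{1/2}(Y)$ from Proposition \ref{ref:involutive correction term properties}(2). You even correctly identify the subtlety that Theorem \ref{thm: involutive zero surgery inequalities}(2) is not directly useful here.
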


\begin{proof}
$0 = d(S^3)=\underline{d}(S^3)=\bar{d}(S^3)$. Therefore, $(1)$ follows immediately from Theorem \ref{thm: involutive zero surgery inequalities}. For $(2)$, let $\bar{K}$ be the mirror of $K$. Then, $0$-surgery on $\bar{K}$ is $-Y$. Thus, we have $-\frac{1}{2}\leq \underline{d}_{-1/2}(-Y, \mathfrak{s}_{0})$. Now by Proposition \ref{ref:involutive correction term properties}, $\underline{d}_{-1/2}(-Y, \mathfrak{s}_{0}) = -\bar{d}_{1/2}(Y, \mathfrak{s}_{0})$. Therefore, $\bar{d}_{1/2}(Y,\mathfrak{s}_{0})\leq \frac{1}{2}$. 
\end{proof}

\newpage 

\section{Plumbings}\label{section: plumbings}
We now make a digression from our discussion of involutive Heegaard Floer homology to review the definition and some basic properties of plumbed 3- and 4-manifolds.  

\begin{notn}
Given a graph $\Gamma$, we denote the set of vertices of $\Gamma$ by $\mathcal{V}(\Gamma)$ and the set of edges by $\mathcal{E}(\Gamma)$. 
\end{notn}

\begin{defn}
A \textit{weighted graph} is a graph $\Gamma$ together with a function $m:\mathcal{V}(\Gamma)\rightarrow \Z$, called a \textit{weight function}. Given a vertex $v\in \mathcal{V}(\Gamma)$, we call $m(v)$ the weight of $v$. Usually we will refer to a weighted graph as $\Gamma$ and not explicitly write the weight function associated to it. 
\end{defn}

For the purposes of this paper, we will use the term \textit{plumbing graph} to mean a weighted graph $\Gamma$ such that $|\mathcal{V}(\Gamma)|<\infty$ and $\Gamma$ is a forest (i.e. a disjoint union of trees). Plumbing graphs in general can be more complicated, however for simplicity we only consider plumbing graphs of the type just described. As we explain in more detail below, our plumbed manifolds will be those obtained by plumbing disk bundles over 2-spheres according to such plumbing graphs.

\begin{con}[Plumbed manifolds]
Let $\Gamma$ be a plumbing graph and let $D^2$ denote the 2-dimensional disk. Suppose first that $\Gamma$ is connected. Then, the plumbed 4-manifold $X(\Gamma)$ is constructed in the following way:

\begin{enumerate}
    \item For each vertex $v\in \mathcal{V}(\Gamma)$, we assign a $D^2$-bundle $\pi_{v}: E(v)\rightarrow S^2$ over the 2-sphere with Euler number equal to $m(v)$. Here we are implicitly using the fact that the Euler number gives a bijection from bundle isomorphism classes of $D^2$-bundles over $S^2$ to $\Z$.

    \item For each edge, $[u,v]$, connecting vertices $u,v\in \mathcal{V}(\Gamma)$, we choose disks $D^2_{u, [u,v]}$ and $D^2_{v, [u,v]}$ in the base 2-spheres of the respective bundles $E(u)$ and $E(v)$. If a vertex $u$ is adjacent to multiple edges $[u,v_{1}], \ldots, [u,v_{\ell}]$, then we choose the discs,\\ $D^2_{u,[u,v_{1}]}, \ldots, D^2_{u,[u,v_{\ell}]}$, associated to $u$, to be pairwise disjoint in the base 2-sphere of the bundle $E(u)$.

    \item Since $D^2_{u,[u,v]}$ and $D^2_{v,[u,v]}$ are contractible, the restrictions $\pi_{u}: \pi_{u}^{-1}(D^2_{u,[u,v]})\rightarrow D^2_{u,[u,v]}$ and $\pi_{v}: \pi_{v}^{-1}(D^2_{v,[u,v]})\rightarrow D^2_{v,[u,v]}$ are trivial $D^2$-bundles. Now, for each edge $[u,v]$ we identify $\pi_{u}^{-1}(D_{u,[u,v]}^2)$ with $\pi_{v}^{-1}(D_{v,[u,v]}^2)$ by a diffeomorphism that swaps the two factors in the product structure, $D^2\times D^2$, of these bundles. In other words, after choosing trivializations of the two restriction bundles, we send $(x, y) \in D^2\times D^2$ to $(y,x)$.
\end{enumerate}
If $\Gamma$ has multiple connected components, then do the above construction to each component and boundary connect sum the resulting 4-manifolds. 
\end{con}

\begin{defn}
The 4-manifold $X(\Gamma)$ constructed from a plumbing graph $\Gamma$ by the process described above is called the \textit{plumbed 4-manifold} with plumbing graph $\Gamma$. The boundary of $X(\Gamma)$, denoted $Y(\Gamma)$, is called the \textit{plumbed 3-manifold} with plumbing graph $\Gamma$. 
\end{defn}

\begin{rem}
In general, a given plumbed 3-manifold $Y$ may bound many different plumbed 4-manifolds. In \cite{MR632532}, Neumann describes a calculus for passing between different plumbing graphs that describe the same 3-manifold. 
\end{rem}

Given a plumbing graph $\Gamma$, a Kirby diagram for $X(\Gamma)$ (which is also a surgery diagram for $Y(\Gamma)$) is given by an $m(v)$-framed unknot for each $v\in \mathcal{V}(\Gamma)$ such that any pair of these unknots is either Hopf linked or unlinked depending on whether or not there is an edge between the vertices with which the unknots correspond. 
\begin{exmp}
\end{exmp}
\begin{figure}[h]
\includegraphics[scale=.6]{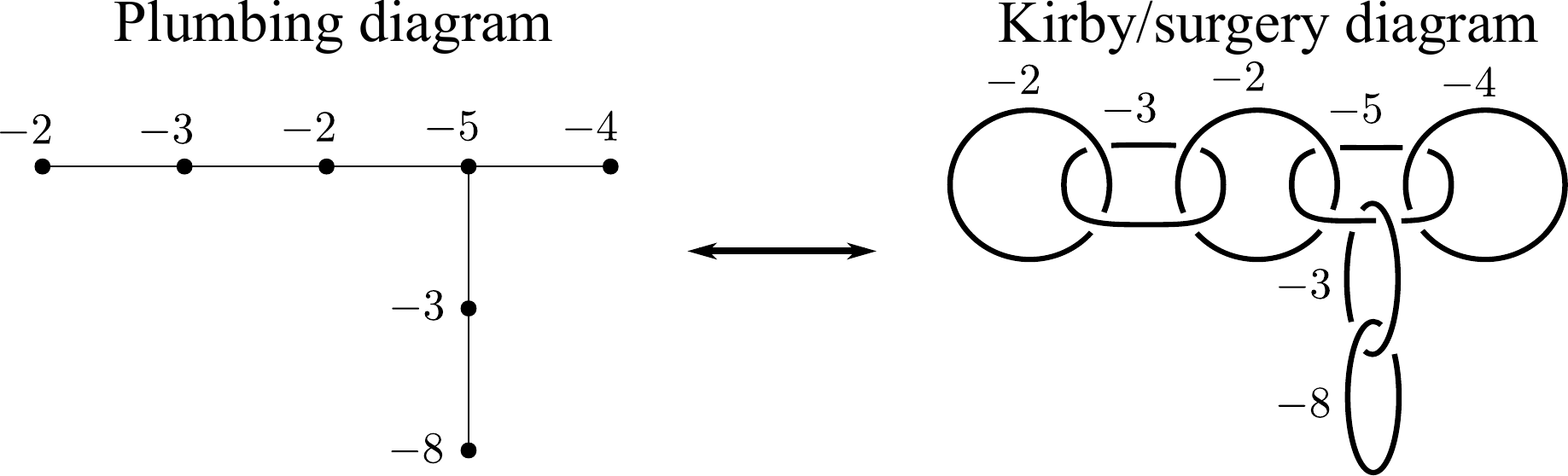}
\end{figure}

\subsection{Algebraic topological properties of plumbings}\label{subsection: alg top prop}
Fix a plumbing graph $\Gamma$ and let $X = X(\Gamma)$ and $Y = Y(\Gamma)$ be the associated plumbed 4- and 3-manifolds. Label the vertices of $\Gamma$ by $\mathcal{V}(\Gamma) = \{v_{1}, \ldots, v_{s}\}$ where $s = |\mathcal{V}(\Gamma)|$. For each $v_{j}\in \mathcal{V}(\Gamma)$, let $[v_{j}]\in H_{2}(X;\Z)$ be the homology class of the 2-sphere corresponding to the $0$-section of the $D^2$-bundle associated to $v_{j}$. Equivalently, $[v_{j}]$ is represented by the capped-off core of the corresponding 2-handle. In particular, it is easy to see that $H_{2}(X;\Z)\cong \bigoplus\limits_{j=1}^{s}\Z [v_{j}]$. Given $x = \sum a_{j}[v_{j}]\in H_{2}(X;\Z)$, we write $x\geq 0$ if $a_{j}\geq 0$ for all $j$. If in addition, $x\neq 0$, we write \hypertarget{greater than}{$x > 0$}. Given two elements $x,y\in H_{2}(X;\Z)$, we write $x\geq y$ ($x>y$) if $x-y\geq 0$ ($x-y>0$). 

Denote the intersection form of $X$ by 
\begin{align*}
    (\cdot, \cdot):H_{2}(X,\Z)\times H_{2}(X;\Z)\rightarrow \Z 
\end{align*}
By construction, 
\begin{align*}
([v_{i}],[v_{j}]) = 
\begin{cases}
m(v_{i})& i = j\\
1&\text{if }i\neq j\text{ and there is an edge }[v_{i},v_{j}]\text{ connecting }v_{i}\text{ and }v_{j}\\
0&\text{otherwise}
\end{cases}
\end{align*}
Let $B$ be the matrix of the intersection form with respect to the ordered basis $([v_{1}],\ldots, [v_{s}])$. Notice, $B$ is the incidence matrix of the graph $\Gamma$ with the $ith$-diagonal entry equal to $m(v_{i})$. 
\begin{defn}
We define the \textit{definiteness type} of a plumbing graph $\Gamma$ to be the definiteness type of its associated intersection form $(\cdot, \cdot)$, or equivalently the definiteness type of $B$. For example, we say $\Gamma$ is negative semi-definite if $(\cdot, \cdot)$ is negative semi-definite.
\end{defn}

By an abuse of notation, we will also refer to the corresponding intersection pairing on cohomology as $(\cdot, \cdot ): H^2(X, Y; \Z)\times H^2(X, Y;\Z)\rightarrow \Z$. It will be useful to, in addition, consider the slightly modified intersection pairing $(\cdot, \cdot)':  H^2(X; \Z)\times H^2(X, Y;\Z)\rightarrow \Z $ with a different domain, but still defined by the usual formula: $(\alpha,\beta)' = (\alpha\cup \beta)[X]$. 

Recall, the set of characteristic vectors of $X$, denoted $\Char(X)$, is defined by
\begin{align*}
\Char(X) &= \{\alpha\in H^2(X;\Z)\ |\ (\alpha,\beta)' \equiv (\beta, \beta)\bmod{2}, \forall \beta\in H^2(X, Y;\Z)\}\\
&= \{\alpha\in H^2(X;\Z)\ |\ \alpha(x) \equiv (x,x)\bmod{2}\text{ for all }x\in H_{2}(X;\Z)\}
\end{align*}

We now recall the relationship between the $\spinc$ structures on $X$ and $Y$ and the characteristic vectors of $X$. The first observation is that we have a commutative diagram:
\begin{center}
\begin{tikzcd}
\Spinc(X)\arrow[d,"c_{1}"]\arrow[r, "|_{Y}"]&\Spinc(Y)\arrow[d,  "c_{1}"]\\
\Char(X)\arrow[r, "\partial^{*}"]&H^2(Y;\Z)
\end{tikzcd}
\end{center}
Here, $c_{1}$ denotes the first Chern class of the determinant line bundle of the $\spinc$ structure, the top horizontal map is restriction to $Y$ and the bottom horizontal map is the restriction of the map $\partial^*:H^2(X;\Z)\rightarrow H^2(Y;\Z)$ in the long exact sequence in cohomology of the pair $(X,Y)$. The left vertical map is a bijection since $H_{1}(X,\Z)$ has no 2-torsion (see \cite[p. 56]{MR1707327} for details). Therefore, $c_{1}$ provides a canonical identification of $\Spinc(X)$ with $\Char(X)$. Furthermore, since $X$ is simply connected, we have the following commutative diagram: 
\begin{center}
\begin{tikzcd}[row sep = small, ar symbol/.style = {draw=none,"\textstyle#1" description,sloped}, isomorphic/.style = {ar symbol={\cong}}]

0 \arrow[r]           & H^1(Y;\Z) \arrow[r, "i^*"]             & {H^2(X, Y;\Z)} \arrow[r, "j^*"] & H^2(X;\Z) \arrow[r, "\partial^*"]                          & H^2(Y;\Z) \arrow[r]             & 0           \\
0 \arrow[r] & H_{2}(Y;\Z) \arrow[r, "i_{*}"] \arrow[u, isomorphic] & H_{2}(X;\Z) \arrow[r, "j_{*}"] \arrow[u, isomorphic]   & {H_{2}(X,Y;\Z)} \arrow[r, "\partial_{*}"] \arrow[u, isomorphic] & H_{1}(Y;\Z) \arrow[r] \arrow[u, isomorphic] & 0 
\end{tikzcd}
\end{center}
with exact rows coming from the long exact sequences in homology and cohomology of the pair $(X,Y)$ and with vertical isomorphisms given by Poincar\'e/Lefschetz duality. 

We have yet another commutative diagram: 
\begin{center}
\begin{tikzcd}[column sep = small, ar symbol/.style = {draw=none,"\textstyle#1" description,sloped}, isomorphic/.style = {ar symbol={\cong}}]

{\Hom(H_{2}(X;\Z), \Z)} \arrow[r, isomorphic] & H^2(X;\Z) \arrow[d, isomorphic]      \\
H_{2}(X;\Z) \arrow[r, "j_{*}"]\arrow[u, "\phi"]                      & {H_{2}(X,Y;\Z)}
\end{tikzcd}
\end{center}
Here, the top row is the isomorphism coming from the universal coefficient theorem, the right vertical map is the Lefschetz duality isomorphism, and the map $\phi$ is defined by $\phi(x) = (x,\cdot)$.

Combining the three previous diagrams we get the following commutative diagram:
\begin{center}
\begin{tikzcd}[column sep = tiny]
&&&&\Spinc(X)\isoarrow{d}{c_{1}}\arrow[r, "|_{Y}"]&\Spinc(Y)\arrow[dd, "c_{1}"]&\\
&&&&\Char(X)\arrow[d, hook]&&\\
0 \arrow[r]            & H^1(Y;\Z) \arrow[r, "i^*"]              & {H^2(X, Y;\Z)} \arrow[rr, "j^*"] &                                              & H^2(X;\Z) \arrow[r, "\partial^*"]                           & H^2(Y;\Z) \arrow[r]              & 0            \\
                       &                                           &                                    & {\Hom(H_{2}(X;\Z),\Z)} \arrow[ru,leftrightarrow, "\rotatebox{25}{\(\sim\)}"] &                                               &                                           &              \\
0 \arrow[r] & H_{2}(Y;\Z) \arrow[r, "i_{*}"] \doubleisoarrow{uu} & H_{2}(X;\Z) \arrow[rr, "j_{*}"]\arrow[ru, "\phi"] \doubleisoarrow{uu}  &                                              & {H_{2}(X,Y;\Z)} \arrow[r, "\partial_{*}"] \doubleisoarrow{uu} & H_{1}(Y;\Z) \arrow[r] \doubleisoarrow{uu} & 0 
\end{tikzcd}
\end{center}
\newpage
In addition, there is a free and transitive action of $H^2(X;\Z)$ on $\Char(X)$, defined by $(\alpha, k)\mapsto k + 2\alpha$ for all $\alpha\in H^2(X;\Z)$ and $k\in \Char(X)$. Restricting this action to $j^*(H^2(X,Y;\Z))$, we get an action of $j^*(H^2(X,Y;\Z))$ on $\Char(X)$. Let $\Char(X)/2j^*(H^2(X,Y;Z))$ denote the set of orbits of this action and denote the orbit $k + 2j^*(H^2(X,Y;\Z))$ of an element $k$ by $[k]$.

\begin{prop}
The map $\Psi:\Char(X)/2j^*(H^2(X,Y;\Z))\rightarrow \Spinc(Y)$ given by 
\begin{align*}
\Psi([k])=c_{1}^{-1}(k)|_{Y}
\end{align*}
is well-defined and is a bijection. 
\end{prop}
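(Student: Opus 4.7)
The plan is to exhibit $\Psi$ as the composition of the inverse of the identification $c_{1}:\Spinc(X)\to\Char(X)$ with a bijection induced on orbits by the restriction map $r:\Spinc(X)\to\Spinc(Y)$, $\mathfrak{s}\mapsto\mathfrak{s}|_{Y}$. The key observation driving everything is that $r$ is equivariant with respect to the restriction homomorphism $\partial^{*}:H^{2}(X;\Z)\to H^{2}(Y;\Z)$ in the torsor sense, i.e.\ $r(\mathfrak{s}+\alpha)=r(\mathfrak{s})+\partial^{*}\alpha$ for all $\mathfrak{s}\in\Spinc(X)$ and $\alpha\in H^{2}(X;\Z)$.

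First I would invoke the long exact sequence of the pair $(X,Y)$ already displayed in the excerpt to conclude that $\partial^{*}:H^{2}(X;\Z)\to H^{2}(Y;\Z)$ is surjective with kernel $j^{*}(H^{2}(X,Y;\Z))$; surjectivity uses $H^{3}(X,Y;\Z)\cong H_{1}(X;\Z)=0$, which holds because $X$ is simply connected. Combining this with the equivariance of $r$ and the fact that $\Spinc(X)$ and $\Spinc(Y)$ are non-empty torsors over $H^{2}(X;\Z)$ and $H^{2}(Y;\Z)$, I conclude that $r$ itself is surjective and that two $\spinc$ structures $\mathfrak{s},\mathfrak{s}+\alpha$ on $X$ satisfy $\mathfrak{s}|_{Y}=(\mathfrak{s}+\alpha)|_{Y}$ if and only if $\alpha\in j^{*}(H^{2}(X,Y;\Z))$. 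Thus $r$ descends to a bijection $\bar{r}:\Spinc(X)/j^{*}(H^{2}(X,Y;\Z))\to\Spinc(Y)$.

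Second, I would transport this statement across the bijection $c_{1}:\Spinc(X)\to\Char(X)$, which intertwines the $H^{2}(X;\Z)$-action on $\Spinc(X)$ with the translation action $\alpha\cdot k=k+2\alpha$ on $\Char(X)$. Under this intertwining, the orbits of $j^{*}(H^{2}(X,Y;\Z))$ acting on $\Spinc(X)$ correspond exactly to the orbits of $2j^{*}(H^{2}(X,Y;\Z))$ acting by translation on $\Char(X)$. Composing $c_{1}^{-1}$ with $\bar{r}$ then yields precisely the map $\Psi$, which is therefore both well-defined and a bijection.

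The only real obstacle is mild bookkeeping: keeping track of the factor of $2$ that appears when the $H^{2}(X;\Z)$-action on $\spinc$ structures is transported to the translation action on characteristic vectors, and making sure that the stabilizer coming out of the exact sequence matches exactly the subgroup by which we are quotienting on the $\Char(X)$ side. Once this is tracked carefully, well-definedness, injectivity, and surjectivity all follow simultaneously from the exactness of the long exact sequence of the pair $(X,Y)$ and the torsor formalism for $\spinc$ structures.
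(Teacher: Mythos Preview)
Your argument is correct and is precisely the natural one: exploit the torsor structure of $\Spinc$ together with the exact sequence of the pair $(X,Y)$, then transport across $c_{1}$ while tracking the factor of $2$. The paper in fact states this proposition without proof, so there is no ``paper's own proof'' to compare against; your write-up supplies exactly the standard justification the authors left implicit.
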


\begin{notn}\label{notn: characterstic equiv relation}
Justified by the above proposition, we will use $[k]$ to denote both the orbit $k + 2j^*(H^2(X,Y;\Z))$ as well as the corresponding $\spinc$ structure $\Psi([k])$.
\end{notn}

\begin{rem}\label{rem:char Q}
From the above diagram, one can see that if $k$ is a characteristic vector, then $[k]$ is a torsion $\spinc$ structure on $Y$ if and only if some integer multiple of $k$ is in the image of $j^*$. Equivalently, $[k]$ is torsion if and only if there exists some $z_{k}\in H_{2}(X;\Z)\otimes \Q$ such that $k(x) = (z_{k}, x)$ for all $x\in H_{2}(X;\Z)$. 
\end{rem}

\subsection{Rationality and weight conditions} We now recall some terminology that will be useful later when we discuss lattice cohomology and Heegaard Floer homology of plumbings.

If $\Gamma$ is a negative definite plumbing tree, then there is a special characteristic vector $K_{can}$ which is called the \textit{canonical characteristic vector}. It is defined by the equation $K_{can}(v) = -m(v)-2$ for all $v\in \mathcal{V}(\Gamma)$. 

\begin{defn}
A plumbing graph $\Gamma$ is called \textit{rational} if it is a negative definite tree which satisfies the following condition: if $x \in H_{2}(X(\Gamma);\Z)$ and \hyperlink{greater than}{$x>0$}, then 
\begin{align*}
    -\frac{K_{can}(x)-(x,x)}{2}\geq 1
\end{align*}
\end{defn}
In \cite{MR2140997}, Némethi introduces the following generalization of rational plumbings:

\begin{defn}[See {\cite[Definition 8.1]{MR2140997}}]
A negative definite plumbing tree $\Gamma$ is \textit{almost rational} if there exists a vertex $v\in \mathcal{V}(\Gamma)$ and some integer $r\leq m(v)$ such that if you replace the weight of $v$ with $r$, $\Gamma$ becomes rational. 
\end{defn}

A further generalization of this notion is the following:

\begin{defn}[See {\cite[Definition 2.1]{MR3317341}}]
A plumbing tree $\Gamma$ is \textit{type n} if there exist $n$ vertices of $\Gamma$ such that if we reduce their weights sufficiently, the plumbing becomes rational. 
\end{defn}

\begin{rem}
A type $n$ plumbing is not required to be negative definite. 
\end{rem}

Recall the \textit{degree}, denoted $\delta(v)$, of a vertex $v\in \mathcal{V}(\Gamma)$ is the number of edges adjacent to $v$. Following the terminology introduced in \cite{MR1988284}, we say a vertex is \hypertarget{bad vertex}{\textit{bad}} if $m(v)>-\delta(v)$. In particular, it can be shown that a negative definite plumbing with at most one bad vertex is almost rational. 

\newpage

\section{Heegaard Floer homology and lattice cohomology of plumbings}\label{section: homology of plumbings}
In this section, we review some of the key developments in the Heegaard Floer homology and lattice cohomology of plumbed 3-manifolds. We then present a modified version of lattice cohomology that involves passing to a quotient lattice. This presentation enables us to readily adapt and combine the work of Rustamov in \cite{Rustamov} and the work of Dai and Manolescu in \cite{MR4021102} to compute $HFI^+$ of certain negative semi-definite plumbed 3-manifolds with $b_{1} = 1$ and at most one bad vertex. 

\subsection{O-S description of $HF^+$ of negative definite plumbed 3-manifolds with at most one bad vertex}\label{subsection: O-S H plus}
In an early paper on Heegaard Floer homology (see \cite{MR1988284}), Ozsváth and Szabó provide a combinatorial description of the Heegaard Floer homology of 3-manifolds plumbed along negative definite forests with at most one bad vertex. We briefly review their description. 

Given a plumbing presentation $\Gamma$ of a 3-manifold $Y$, there is a naturally associated cobordism from $S^3$ to $Y$ via attaching two handles to $S^3\times [0,1]$ according to the plumbing graph $\Gamma$. One can turn this cobordism around and use the fact that there is an orientation preserving diffeomorphism from $-S^3$ to $S^3$ to yield a cobordism $W_{\Gamma}:-Y\to S^3$. For each $\spinc$ structure $\mathfrak{s}$ on $W_{\Gamma}$, we get a $U$-equivariant map
\begin{align*}
    F^+_{W_{\Gamma},\mathfrak{s}}:HF^+(-Y,\mathfrak{s}|_{Y})\to HF^+(S^3)
\end{align*}
It is easy to see that the $\spinc$ structures on $W_{\Gamma}$ correspond in a direct way to $\spinc$ structures on the plumbed 4-manifold $X(\Gamma)$ since $W_{\Gamma}$ is diffeomorphic to $X(\Gamma) - D^4$. Because of this we will work with $\spinc$ structures on $X(\Gamma)$ rather than on $W_{\Gamma}$. 

Now by the basic facts about $\spinc$ structures and characteristic vectors described in the previous section and the fact that $HF^+(S^3)\cong \mathcal{T}^+$ as a graded $\F[U]$-module, we can define a map $T^+:HF^+(-Y)\to \Map(\Char(X(\Gamma),\mathcal{T}^+)$ via the formula:
\begin{align*}
    T^+(\xi)(c_{1}(\mathfrak{s})) = F^+_{W_{\Gamma},\mathfrak{s}}(\xi)
\end{align*}
Here $\Map(\Char(X(\Gamma),\mathcal{T}^+)$ simply denotes the set of functions from $\Char(X(\Gamma))$ to $\mathcal{T}^+$. 

Let $\mathrm{H}^+(\Gamma)\subset \Map(\Char(X(\Gamma),\mathcal{T}^+)$ be the functions $\phi$ of finite support which satisfy the following adjunction relations: For each $k\in \Char(X(\Gamma))$ and $v_{i}\in \mathcal{V}(\Gamma)$, let $2n_{i} = k([v_{i}])+([v_{i}], [v_{i}])$. Then,
\begin{enumerate}
    \item if $n_{i}\geq 0$, we require $U^{n_{i}}\phi(k+2PDj_{*}[v_{i}]) = \phi(k)$
    \item if $n_{i}<0$, we require $U^{-n_{i}}\phi(k)= \phi(k+2PDj_{*}[v_{i}])$
\end{enumerate}
The set $\mathrm{H}^+(\Gamma)$ naturally inherits an $\F[U]$-module structure from $\mathcal{T}^+$. One can also introduce a grading on $\mathrm{H}^+(\Gamma)$ by defining $\phi\in \mathrm{H}^+(\Gamma)$ to be a homogeneous element of degree $d$ if $\phi(k)\in \mathcal{T}^+$ is a homogeneous element of degree $d+\dfrac{k^2+|\mathcal{V}(\Gamma)|}{4}$ for all $k\in \Char(X(\Gamma))$. Furthermore, we can decompose $\mathrm{H}^+(\Gamma)$ into a direct sum over $\spinc$ structures of $Y$ by defining $\mathrm{H}^+(\Gamma, [k])$ to be the elements of $\mathrm{H}^+(\Gamma)$ which are supported on the \textit{set} $[k]$. Recall $[k]$ denotes both a $\spinc$ structure on $Y$ as well as a subset of $\Char(X(\Gamma))$ (see Notation \ref{notn: characterstic equiv relation}).  

\begin{rem}
In \cite{MR1988284}, $\mathrm{H}^+(\Gamma)$ is instead denoted by $\Hplus(\Gamma)$. We have changed the notation in this paper to $\mathrm{H}^+(\Gamma)$ to avoid confusion with lattice cohomology which is denoted by $\Hbb^*(\Gamma)$.
\end{rem}

\newpage
The main result (Theorem 1.2) in \cite{MR1988284} states that if $\Gamma$ is a negative definite plumbing with at most one bad vertex, then $T^+:HF^{+}(-Y(\Gamma), [k])\rightarrow \mathrm{H}^+(\Gamma, [k])$ is an isomorphism of graded $\F[U]$-modules for all $\spinc$ structures $[k]$ on $Y(\Gamma)$. Moreover, $\mathrm{H}^+(\Gamma, [k])$ can be computed combinatorially from the data encoded by the plumbing graph. Therefore, this result enables one to compute $HF^{+}(-Y(\Gamma), [k])$ without having to count holomorphic disks. In particular, Ozsváth and Szabó provide a relatively simple algorithm to compute $\ker(U)\subset \mathrm{H}^+(\Gamma, [k])$. 

\subsection{Némethi's graded roots and lattice cohomology}
Building upon the work of Oszváth and Szabó, Némethi in \cite{MR2140997} provides an algorithm to compute the entire $\F[U]$-module $\mathrm{H}^+$ for almost rational plumbings by adapting methods of computation sequences used in the study of normal surface singularities. On the way to computing $\mathrm{H}^+$, Némethi's algorithm first computes an intermediate object called a graded root whose definition we review below (see Definitions \ref{defns: graded root}). For now, we will just mention that a graded root is weighted graph associated to $Y(\Gamma)$ from which one can easily calculate $\mathrm{H}^+$ and therefore $HF^+$. Furthermore, by using the language of graded roots, Némethi shows that \cite[Theorem 1.2]{MR1988284} holds for almost rational plumbed manifolds, a strictly larger class of plumbed 3-manifolds than the class of negative definite trees with at most one bad vertex.
\begin{rem}
We say trees in the previous sentence because strictly speaking almost rational plumbings are typically assumed to be connected. This assumption, however, is not important. The same methods apply to yield the isomorphism if you drop the connectedness assumption in the definition of almost rational. 
\end{rem}

Motivated by questions involving complex analytic normal surface singularities and the Seiberg-Witten invariant, Némethi further generalizes his work on negative definite plumbed 3-manifolds by introducing the broader framework of lattice cohomology in \cite{MR2426357}. Lattice cohomology assigns to any negative definite plumbed 3-manifold and $\spinc$ structure a graded $\F[U]$-module, which we denote $\Hbb^*$. 

Némethi's original definition provides two different, but equivalent, realizations of lattice cohomology. One realization is constructed by first decomposing Euclidean space $\R^s = \R\otimes H_{2}(X(\Gamma);\Z)$ into cubes using the $\Z$-lattice $H_{2}(X(\Gamma);\Z)$ with basis $[v_{1}], \ldots, [v_{s}]$. Then, one considers the usual cellular cohomology of $\R^s$, except with the differential modified by a set of weight functions which encode information about the intersection form of $X(\Gamma)$. The other realization is built by taking the cellular cohomology of certain sublevel sets of these weight functions on cubes. 

Lattice cohomology also comes equipped with an extra $\Z$-grading. Namely $\Hbb^*$ decomposes as $\Hbb^* = \bigoplus\limits_{q=0}^{\infty}\Hbb^q$ such that each $\Hbb^q$ is itself a $\Z$-graded $\F[U]$-module. In particular, together with his work in \cite{MR2140997}, Némethi shows that for a negative definite almost rational plumbed 3-manifold, $Y(\Gamma)$, and $\mathfrak{s}\in \spinc(Y(\Gamma))$, $\Hbb^0(Y(\Gamma), \mathfrak{s})$ is isomorphic to $HF^+(-Y(\Gamma), \mathfrak{s})$ as graded $\F[U]$-modules (up to an overall grading shift), and, moreover, $\Hbb^q(Y,\mathfrak{s})\cong 0$ for $q\geq 1$. In general, however, it is not the case that for arbitrary negative definite plumbed 3-manifolds $\Hbb^q\cong 0$ for all $q\geq 1$. For example, Némethi shows the existence of a negative definite plumbed rational homology sphere with non-trivial $\Hbb^1$ (see \cite[Example 4.4.1]{MR2426357}). Of course though, this plumbing is not almost rational.

\subsection{Modified formulation of lattice cohomology}
In this subsection we construct a modified version of lattice cohomology in order to deal with negative semi-definite plumbings rather than just negative definite plumbings. Before defining this modified version, it is important to point out that subsequent to Némethi's original definition of lattice cohomology several other variants/generalizations have been defined which apply to much more general plumbings including negative semi-definite plumbings (see for example: \cite{MR3031647}, \cite{MR2815139}, \cite{MR3317341}). The modified construction we provide is very similar to these formulations in many regards; the main difference is that we handle degenerate plumbings by passing to a certain quotient lattice. As in \cite{MR2426357}, we begin by giving the constructions in general terms, without reference to plumbings.

\subsubsection{Construction 1}\label{subsection: construction 1}
Let $A$ be a free finitely generated $\Z$-module with a specified ordered basis $(e_{1},\ldots, e_{n})$. Let $\bar{A}$ be a quotient of $A$ with the property that $\bar{A}$ is itself a free finitely generated $\Z$-module. Given $a\in A$, we write $\bar{a}$ for the corresponding element of $\bar{A}$.

We define a chain complex as follows. For each $0\leq q\leq n$, let $C_{q}$ be the free $\F$-module generated by the set $\mathcal{Q}_{q}=\bar{A}\times \{I \subseteq \{1,\ldots n\} \ |\  |I|=q\}$. Because later we will want to think of these generators as cubes in a cube complex (see \hyperref[subsection:construciton 2]{Construction 2}), we denote the generator of $C_{q}$ and the element of $\mathcal{Q}_{q}$ corresponding to $(\bar{a}, I)$ by $\square(\bar{a}, I)$. We define a differential $\partial: C_{q}\rightarrow C_{q-1}$ by the following formula, 
\begin{align*}
    \partial \square(\bar{a}, I) = \sum\limits_{i\in I}\Big[\square(\bar{a}, I-\{i\}) + \square(\bar{a}+\bar{e}_{i}, I-\{i\})\Big]
\end{align*}

\begin{rem}
Intuitively, it may be helpful to think of this differential as a cellular boundary map on cubes. We make this point of view precise in \hyperref[subsection:construciton 2]{Construction 2}. 
\end{rem}

\begin{prop}
$\partial^2 = 0$
\end{prop}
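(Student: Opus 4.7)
The plan is to verify $\partial^2 = 0$ by a direct expansion, exploiting the fact that we work over $\F = \Z/2$. Because $\partial$ is $\F$-linear, it suffices to check the identity on a single generator $\square(\bar{a}, I)$.

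First I would apply the defining formula once to get
\begin{align*}
\partial \square(\bar{a}, I) = \sum_{i \in I}\Big[\square(\bar{a}, I-\{i\}) + \square(\bar{a}+\bar{e}_i, I-\{i\})\Big],
\end{align*}
and then apply $\partial$ again to each of the two kinds of summands. Collecting everything, $\partial^2 \square(\bar{a}, I)$ becomes a double sum indexed by ordered pairs $(i,j)$ of distinct elements of $I$, where the pair $(i,j)$ contributes the four terms $\square(\bar{a} + \epsilon_1 \bar{e}_i + \epsilon_2 \bar{e}_j,\, I-\{i,j\})$ for $(\epsilon_1, \epsilon_2) \in \{0,1\}^2$.

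The key observation is that this four-element multiset of terms is visibly symmetric under the swap $i \leftrightarrow j$: the underlying index set $I - \{i,j\}$ is unchanged, and the four shifts $\{0, \bar{e}_i, \bar{e}_j, \bar{e}_i + \bar{e}_j\}$ are permuted among themselves. Hence the ordered pairs $(i,j)$ and $(j,i)$ contribute exactly the same four generators, so every generator appearing in $\partial^2 \square(\bar{a}, I)$ occurs with even multiplicity, which is zero in $\F$. This gives $\partial^2 = 0$.

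There is no real obstacle here: the argument is the usual "cube complex" computation, and passing to the quotient $\bar{A}$ plays no role, since the cancellation happens purely at the level of the combinatorial index sets. It is worth noting that the cancellation is automatic only because we use $\F = \Z/2$ coefficients; if one wanted a $\Z$-linear theory, one would need to introduce signs on the faces of each cube. The same observation incidentally explains why the quotient step (modding out $A$ by some sublattice to obtain $\bar{A}$) causes no trouble: the differential depends only on the formal shifts $\bar{a} \mapsto \bar{a} + \bar{e}_i$, and pairwise cancellation of swapped ordered pairs is insensitive to how these shifts interact in $\bar{A}$.
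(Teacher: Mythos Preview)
Your proof is correct and follows essentially the same approach as the paper: both expand $\partial^2\square(\bar{a},I)$ as a double sum over ordered pairs $(i,j)$ of distinct elements of $I$ and observe that swapping $i\leftrightarrow j$ produces the same terms, so everything cancels over $\F$. Your packaging of the four terms via $(\epsilon_1,\epsilon_2)\in\{0,1\}^2$ is a bit more compact, but the argument is identical in substance.
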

\begin{proof}

\begin{align*}
    \partial^2 \square(\bar{a}, I) &= \sum\limits_{i\in I}\sum\limits_{j\in I-\{i\}}\Big[\square(\bar{a}, I-\{i,j\}) +\square(\bar{a} +\bar{e}_{j}, I-\{i,j\})\Big]\\ 
    &+\sum\limits_{i\in I}\sum\limits_{j\in I-\{i\}}\Big[\square(\bar{a}+\bar{e}_{i}, I-\{i,j\}) +\square(\bar{a}+\bar{e}_{i}+ \bar{e}_{j}, I-\{i,j\})\Big]
\end{align*}
Now observe that the terms of the form $\square(\bar{a}, I-\{i,j\})$ cancel in pairs as $i$ and $j$ vary, as do the terms of the form $\square(\bar{a}+\bar{e}_{i}+ \bar{e}_{j}, I-\{i,j\})$. Finally, the cross terms also cancel. Therefore, $\partial^2 = 0$. 
\end{proof}

\begin{rem}
If one wanted to work over the coefficient ring $\Z$ instead of $\F$, then signs could be introduced as follows: Given a non-empty subset $I$ of $\{1,\ldots, n\}$ with $|I| = q$, let $g_{I}: I\rightarrow \{1,\ldots, q\}$ be the unique order preserving bijection. Define the differential via the formula:
\begin{align*}
    \partial \square(\bar{a}, I) = \sum\limits_{i\in I}(-1)^{g_{I}(i)}\Big[\square(\bar{a}, I-\{i\}) - \square(\bar{a}+\bar{e}_{i}, I-\{i\})\Big]
\end{align*}
One can check that we still have $\partial^2=0$. For the purposes of this paper, we will stick with the coefficient ring $\F$. 
\end{rem}

For each $0\leq q\leq s$, define $\mathcal{F}^q = \Hom_{\F}(C_{q},\mathcal{T}^+)$. We endow $\mathcal{F}^q$ with a $\F[U]$-module structure by the following formula: $(U^n\cdot\phi)(\square_{q}) = U^n\phi(\square_{q})$ for all $\square_{q}\in \mathcal{Q}_{q}$. Our goal now is to define a differential, $\delta_{w}$, on our cochain modules $\mathcal{F}^q$ by modifying the usual coboundary map by a set of weight functions $w$.

\begin{defn}[See {\cite[3.1.4. Definition]{MR2426357}}]
A set of functions $w_{q}: \mathcal{Q}_{q}\rightarrow \Z$, $0\leq q\leq n$ is called a set of compatible weight functions if the following hold:
\begin{enumerate}
    \item For any integer $k\in \Z$, the set $w_{0}^{-1}((-\infty, k])$ is finite. 
    \item For any $\square(\bar{a},I) \in\mathcal{Q}_{q}$ and any $i\in I$, $w_{q}(\square(\bar{a},I))\geq w_{q-1}(\square(\bar{a},I-\{i\}))$ and $w_{q}(\square(\bar{a},I))\geq w_{q-1}(\square(\bar{a}+\bar{e}_{i},I-\{i\}))$.
\end{enumerate}
Fix a set of compatible weight functions $w$ (we drop the subscript for simplicity). By using $w$, we are able to define a $\Z$-grading on our cochain modules $\mathcal{F}^q$. Specifically, we say that $\phi\in \mathcal{F}^q$ is homogeneous of degree $d\in \Z$ if $\phi(\square_{q})$ is a homogeneous element of $\mathcal{T}^+$ of degree $d-2w(\square_{q})$ whenever $\phi(\square_{q})\neq 0$.
 
\subsubsection{The differential.}
Mimicking the formula for the differential given in \cite[3.1.4 Definition]{MR2426357}, we define $\delta_{w}:\mathcal{F}^q\rightarrow \mathcal{F}^{q+1}$ as follows:
\begin{itemize}
    \item Let $\square_{q+1}\in \mathcal{Q}_{q+1}$ and write $\partial \square_{q+1} = \sum\limits_{k}\square_{q}^k$. 
    \item Given $\phi\in \mathcal{F}^q$, let 
    \begin{align*}
        (\delta_{w}\phi)(\square_{q+1}) = \sum\limits_{k}U^{w(\square_{q+1})-w(\square_{q}^{k})}\phi(\square_{q}^k)
    \end{align*}
\end{itemize}

\begin{prop}
$\delta_{w}^2 = 0$
\end{prop}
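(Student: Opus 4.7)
The plan is to mimic the standard proof that a coboundary squares to zero, paying careful attention to how the $U$-weights interact. The crucial observation is that although the coefficients in $\delta_w$ depend on the cubes, the telescoping structure of the exponents will make the intermediate cube disappear from the formula.

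First I would unwind the definition directly. Given $\phi \in \mathcal{F}^q$ and $\square_{q+2} \in \mathcal{Q}_{q+2}$, write $\partial \square_{q+2} = \sum_j \square_{q+1}^{j}$ and $\partial \square_{q+1}^{j} = \sum_k \square_{q}^{j,k}$. Then
\begin{align*}
(\delta_w^2 \phi)(\square_{q+2})
&= \sum_{j} U^{w(\square_{q+2}) - w(\square_{q+1}^j)} (\delta_w \phi)(\square_{q+1}^j) \\
&= \sum_{j,k} U^{w(\square_{q+2}) - w(\square_{q+1}^j)} \cdot U^{w(\square_{q+1}^j) - w(\square_q^{j,k})} \phi(\square_q^{j,k}) \\
&= \sum_{j,k} U^{w(\square_{q+2}) - w(\square_q^{j,k})} \phi(\square_q^{j,k}).
\end{align*}
So the intermediate weight $w(\square_{q+1}^j)$ telescopes out, and the coefficient of $\phi(\square_q^{j,k})$ depends only on the outer cube $\square_{q+2}$ and the inner codimension-two face $\square_q^{j,k}$. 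I should also note that $U^{w(\square_{q+2})-w(\square_q^{j,k})}$ is a legitimate element of $\F[U]$, since compatibility condition (2) gives $w(\square_{q+2}) \geq w(\square_{q+1}^j) \geq w(\square_q^{j,k})$.

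Because the common coefficient depends only on $(\square_{q+2},\square_q^{j,k})$, the sum above is just
\[
\sum_{q\text{-cubes } \square_q \text{ of } \square_{q+2}} U^{w(\square_{q+2}) - w(\square_q)} \cdot m(\square_q) \cdot \phi(\square_q),
\]
where $m(\square_q)$ is the multiplicity with which $\square_q$ appears in the formal sum $\partial^2 \square_{q+2}$. By the proposition $\partial^2 = 0$ proved just above, each such multiplicity is zero in $\F$ (i.e.\ each codimension-two face arises from exactly two intermediate $(q+1)$-cubes, so appears twice). Hence every term vanishes and $\delta_w^2 \phi = 0$.

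The only real content is the telescoping identity; the main (and only) potential obstacle would be if the $U$-exponents depended on the intermediate cube $\square_{q+1}^j$, in which case the paired terms would not cancel. The definition of $\delta_w$ is rigged precisely so that this dependence disappears, and the compatibility assumption on $w$ is exactly what makes the resulting $U$-exponent non-negative. No further input is needed.
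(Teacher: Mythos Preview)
Your proof is correct and is precisely the unwinding that the paper's one-line argument (``This follows directly from the definition and the fact that $\partial^2 = 0$'') has in mind. The telescoping of the $U$-exponents together with $\partial^2=0$ is exactly the content; your remark that compatibility condition (2) guarantees non-negativity of the exponents is a nice point the paper leaves implicit.
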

\begin{proof}
This follows directly from the definition and the fact that $\partial^2 = 0$. 
\end{proof}
\begin{defn}
The homology of the cochain complex $(\mathcal{F}^*, \delta_{w})$ is called the \textit{lattice cohomology} of the triple $(\bar{A}, (e_{1}, \ldots, e_{n}), w)$ and is denoted by $\Hbb^*(\bar{A}, (e_{1}, \ldots, e_{n}), w)$. 
\end{defn}

\begin{rems}
\hspace{1em}
\begin{enumerate}
    \item For each $q$, the $\Z$-grading on $\mathcal{F}^q$ induces a $\Z$-grading on $\Hbb^q$. Therefore, $\Hbb^q$ is a $\Z$-graded $\F[U]$-module.
    \item If $\bar{A}=A$, then we recover the usual lattice cohomology defined by Némethi in \cite{MR2426357}.
\end{enumerate}

\end{rems}
\end{defn}

\subsubsection{Construction 2}\label{subsection:construciton 2} We now give a more geometric, but equivalent formulation of the lattice cohomology theory we defined in \hyperref[subsection: construction 1]{Construction 1}. This is analogous to \cite[3.1.11 Definitions]{MR2426357}.

First, we give a geometric realization of the chain complex $C_{q}$. For each $1\leq q\leq s$, let $\textbf{c}_{q}$ be denote the $q$-dimensional cube $[0,1]^q$ oriented in the standard way. Additionally, let $\textbf{c}_{0}$ be a fixed $0$-dimensional cube (i.e. point) oriented positively. To each $\square(\bar{a},I)\in \mathcal{Q}_{q}$ we associate a distinct copy of $\textbf{c}_{q}$. By an abuse of notation, from now on we will regard each $\square(\bar{a},I)\in \mathcal{Q}_{q}$ as both a distinct copy of $\textbf{c}_{q}$ and a generator of $C_{q}$ depending on which point of view is more convenient in a given context. 

We now construct a cube complex $\mathcal{C}$ whose $q$-dimensional cubes are precisely the elements of $\mathcal{Q}_{q}$ with attaching maps defined as follows:
\begin{itemize}
    \item First, we prescribe a method for identifying each $(q-1)$-dimensional face of $\textbf{c}_{q}$ with $\textbf{c}_{q-1}$. Let $\{x_{j}\}_{j=1}^{q}$ be the standard coordinate functions on $\textbf{c}_{q} = [0,1]^q$. Each $(q-1)$-dimensional face of $\textbf{c}_{q}$ is defined by an equation $x_{i}=\epsilon$ for some $\epsilon\in\{0,1\}$. Denote this face by $f_{i, \epsilon}$. For $q\geq 2$, we identify $f_{i,\epsilon}$ with $\textbf{c}_{q-1}$ via the map $(x_{1},\ldots, x_{q})\mapsto (x_{1}, \ldots, \hat{x}_{i}, \ldots, x_{q})$. For $q=1$, we send the point $f_{i, \epsilon}$ to the point $\textbf{c}_{0}$. 
    
    \item Given $\square(\bar{a}, I)\in \mathcal{Q}_{q}$, the face $f_{i, \epsilon}$ of $\square(\bar{a}, I)$ gets glued to the cube $\square(\bar{a}+\epsilon \bar{e_{i}}, I-\{i\})$ via the map defined in the first bullet point. 
\end{itemize}
By construction the $q$-dimensional cellular chain group of the cube complex $\mathcal{C}$ is equal to $C_{q}$ and the cellular boundary map is equal to the differential $\partial: C_{q}\rightarrow C_{q-1}$ defined in \hyperref[subsection: construction 1]{Construction 1}.

Again, fix a set of compatible weight functions $w$. For every integer $n\geq 1$, let $S_{n}$ be the subcomplex of $\mathcal{C}$ consisting of all cubes $\square$ such that $w(\square_{q})\leq n$ where $q$ ranges over all dimensions. Let $m_{w} = \min\{w(\square_{q})\ |\ \square_{q}\in\mathcal{Q}_{q},\ 0\leq q\leq n\}$. Define 
\begin{align*}
    \mathbb{S}^q(\bar{A}, (e_{1},\ldots, e_{n}), w) = \bigoplus\limits_{n\geq m_{w}} H^q(S_{n};\F)
\end{align*}
where $H^q$ denotes the $qth$-cellular cohomology. For each fixed $q$, we give $\mathbb{S}^q(\bar{A}, (e_{1},\ldots, e_{n}), w)$ the structure of an $\F[U]$-module by defining the $U$ action to be the restriction map 
\begin{align*}
    U: H^q(S_{n+1};\Z)\rightarrow H^q(S_{n};\Z)
\end{align*}
We additionally put a $\Z$-grading on $\mathbb{S}^q(\bar{A}, (e_{1},\ldots, e_{n}), w)$ by declaring the elements of $H^q(S_{n},\Z)$ to be homogeneous of degree $2n$.

\begin{prop}
As graded $\F[U]$-modules, $\Hbb^*(\bar{A}, (e_{1},\ldots, e_{n}), w)\cong \mathbb{S}^*(\bar{A}, (e_{1},\ldots, e_{n}), w)$.
\end{prop}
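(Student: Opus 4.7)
The plan is to identify, for each integer $n \geq m_w$, the degree-$2n$ subcomplex of $(\mathcal{F}^*, \delta_w)$ with the cellular cochain complex $(C^*(S_n; \F), \delta)$, and then assemble these identifications over all $n$ to produce a graded $\F[U]$-module isomorphism. The key observation is that the homogeneous subspace of $\mathcal{T}^+$ in degree $2k$ is one-dimensional when $k \geq 0$ (spanned by $U^{-k}$) and zero when $k < 0$. So a degree-$2n$ element $\phi \in \mathcal{F}^q$ is forced to vanish on any cube with $w(\square) > n$, and on cubes $\square$ with $w(\square) \leq n$ is given by $\phi(\square) = c(\phi)(\square) \cdot U^{w(\square) - n}$ for a unique scalar $c(\phi)(\square) \in \F$. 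The weight-compatibility condition ensures that $S_n$ is closed under taking faces, so $c(\phi)$ is a well-defined cellular $q$-cochain on $S_n$, and the assignment $\phi \mapsto c(\phi)$ defines a natural $\F$-linear bijection $(\mathcal{F}^q)_{2n} \xrightarrow{\sim} C^q(S_n; \F)$.

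Next I would verify that this identification intertwines $\delta_w$ with the standard cellular coboundary $\delta$. Writing $\partial \square_{q+1} = \sum_k \square_q^k$ for a $(q+1)$-cube $\square_{q+1} \in S_n$, and substituting $\phi(\square_q^k) = c(\phi)(\square_q^k) \cdot U^{w(\square_q^k) - n}$, the $U$-powers telescope to give
\begin{align*}
(\delta_w \phi)(\square_{q+1}) = \sum_k U^{w(\square_{q+1}) - w(\square_q^k)} \phi(\square_q^k) = \Bigl(\sum_k c(\phi)(\square_q^k)\Bigr) U^{w(\square_{q+1}) - n}.
\end{align*}
Therefore $c(\delta_w \phi)(\square_{q+1}) = \sum_k c(\phi)(\square_q^k)$, which is exactly the value of the cellular coboundary $\delta c(\phi)$ on $\square_{q+1}$. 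Taking $q$-th cohomology of both sides yields $(\Hbb^q)_{2n} \cong H^q(S_n; \F)$ as $\F$-vector spaces.

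Finally I would check $U$-equivariance. Multiplication by $U$ sends $\phi \in (\mathcal{F}^q)_{2n}$ to the degree-$(2n-2)$ element with $(U\phi)(\square) = c(\phi)(\square) \cdot U^{w(\square) - (n-1)}$, which vanishes whenever $w(\square) > n-1$. So under the identification $c$, multiplication by $U$ becomes exactly the restriction map $C^q(S_n; \F) \to C^q(S_{n-1}; \F)$ induced by the inclusion $S_{n-1} \hookrightarrow S_n$, and on cohomology it matches the definition of the $U$-action on $\mathbb{S}^q$. Assembling the isomorphisms $(\Hbb^q)_{2n} \cong H^q(S_n; \F)$ over all $n$ then produces the desired isomorphism of graded $\F[U]$-modules. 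The only real obstacle is the bookkeeping: one has to track carefully the degree shift $2n - 2w(\square)$ so that the monomial $U^{w(\square) - n}$ consistently encodes the scalar value of a cellular cochain on $S_n$, and verify that the $U$-action direction $n \mapsto n-1$ matches the restriction direction $H^q(S_n) \to H^q(S_{n-1})$ used in defining $\mathbb{S}^*$.
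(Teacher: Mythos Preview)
Your proposal is correct and is essentially the argument the paper has in mind: the paper simply cites \cite[3.1.12 Theorem (a)]{MR2426357}, and N\'emethi's proof there proceeds exactly by identifying the degree-$2n$ piece of $(\mathcal{F}^*,\delta_w)$ with the cellular cochain complex of $S_n$ via the assignment $\phi \mapsto c(\phi)$ you describe, then checking that $U$ corresponds to restriction along $S_{n-1}\hookrightarrow S_n$. Your bookkeeping of the $U$-exponents and the verification that $\delta_w$ becomes the ordinary cellular coboundary are precisely the content of that argument.
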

\begin{proof}
This is proved in exactly the same way as \cite[3.1.12 Theorem (a)]{MR2426357}.
\end{proof}

\begin{notn}
From now on we will denote lattice cohomology by $\Hbb^*$ regardless of which construction we are using.  
\end{notn}

\subsubsection{Lattice cohomology associated to negative semi-definite plumbings}
Fix a negative semi-definite plumbing graph $\Gamma$ and let $k$ be a characteristic vector of $X(\Gamma)$ such that $[k]$ is a torsion $\spinc$ structure on $Y(\Gamma)$. 

We now show how to associate a lattice cohomology module to the pair $(\Gamma, k)$. Let $L = H_{2}(X(\Gamma);\Z)$ and $\bar{L} = H_{2}(X(\Gamma);\Z)/\ker(j_{*})$. By the long exact sequence in homology, $\bar{L}$ is isomorphic to a submodule of the free finitely generated $\Z$-module $H_{2}(X,Y;\Z)$ and therefore is itself free and finitely generated. As in section \ref{section: plumbings}, let $s =\rank(H_{2}(X;\Z))$. Also, let $\sigma = s-b_{1}(Y)$. With this notation, we have that $\bar{L}\cong \Z^{\sigma}$. Furthermore, after choosing an ordering on the vertices, the plumbing gives us an ordered basis $([v_{1}],\ldots, [v_{s}])$ of $L$. 

We now have almost all the data we need in order to get lattice cohomology. It remains to define a set of weight functions. To do this, we rely on our choice of characteristic vector $k$.  

\subsubsection{Weight functions}
Let $\chi_{k}:L\rightarrow \Z$ be the function defined by $\chi_{k}(x) = -\dfrac{k(x)+(x,x)}{2}$.

\begin{prop}
$\chi_{k}:L\rightarrow\Z$ descends to a well-defined function $\bar{\chi}_{k}:\bar{L}\rightarrow \Z$. 
\end{prop}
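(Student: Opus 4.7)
The plan is to show that $\chi_{k}$ is constant on cosets of $\ker(j_{\ast})$, i.e., that $\chi_{k}(x+y)=\chi_{k}(x)$ for every $x\in L$ and every $y\in\ker(j_{\ast})$. A direct expansion gives
\begin{equation*}
\chi_{k}(x+y)-\chi_{k}(x)=-\tfrac{1}{2}\bigl(k(y)+2(x,y)+(y,y)\bigr),
\end{equation*}
so the task reduces to checking that each of the three terms $(x,y)$, $(y,y)$, and $k(y)$ vanishes for $y\in\ker(j_{\ast})$.

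The first two vanishings are essentially the statement that the intersection form on a $4$-manifold with boundary factors through $j_{\ast}$. More precisely, using the commutative diagram involving $\phi:H_{2}(X;\Z)\to\Hom(H_{2}(X;\Z),\Z)$ recalled in Section \ref{subsection: alg top prop}, the pairing $(x,y)$ is computed as $\phi(x)(y)$ where $\phi(x)$ factors through $j_{\ast}$ via Lefschetz duality. Thus $j_{\ast}(y)=0$ forces $(x,y)=0$ for every $x\in L$, and in particular $(y,y)=0$.

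For the third term I plan to use Remark \ref{rem:char Q}: since $[k]$ is torsion, there exists $z_{k}\in L\otimes\Q$ such that $k(x)=(z_{k},x)$ for every $x\in L$. The pairing extends $\Q$-bilinearly, and the same factorization through $j_{\ast}$ shows $(z_{k},y)=0$ whenever $j_{\ast}(y)=0$, so $k(y)=0$. Combining the three vanishings with the displayed identity above yields $\chi_{k}(x+y)=\chi_{k}(x)$, which is exactly what is needed for $\chi_{k}$ to descend to $\bar{L}$. One should also note in passing that $\chi_{k}$ actually lands in $\Z$: this is immediate from the defining property $k(x)\equiv (x,x)\bmod 2$ of a characteristic vector.

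The main (mild) obstacle is simply bookkeeping: the torsion hypothesis on $[k]$ only gives $z_{k}$ over $\Q$, so one must be careful that the factorization of the intersection form through $j_{\ast}$ remains valid after tensoring with $\Q$. This is immediate from $\Q$-bilinearity, so in the end there is no real difficulty, and no non-routine step is needed beyond invoking Remark \ref{rem:char Q} and the standard fact that $\ker(j_{\ast})$ is the radical of the intersection form.
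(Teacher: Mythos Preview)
Your proof is correct and follows essentially the same approach as the paper: both arguments expand $\chi_{k}(x+y)-\chi_{k}(x)$ and use Remark~\ref{rem:char Q} together with the fact that $\ker(j_{\ast})$ is the radical of the intersection form to show the difference vanishes. The only cosmetic difference is that the paper combines the three terms into the single expression $-\tfrac{1}{2}(z_{k}+2x+x',x')=-\tfrac{1}{2}\,PD[j_{\ast}(x')](z_{k}+2x+x')$ before observing it is zero, whereas you treat the terms separately.
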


\begin{proof}
Since $[k]$ is assumed to be a torsion $\spinc$ structure on $Y$ there exists, by Remark \ref{rem:char Q}, some $z_{k}\in L\otimes \Q$ such that $k(x) = (z_{k}, x)$ for all $x\in L$. Now suppose $x\in L$ and $x'\in \ker(j_{*})$. Then,
\begin{align*}
\chi_{k}(x+x') = -\frac{k(x+x')+(x+x',x+x')}{2}&=\chi_{k}(x)-\frac{k(x')+2(x,x') + (x',x')}{2}\\
&=\chi_{k}(x)-\frac{1}{2}(z_{k}+2x+x',x')\\
&=\chi_{k}(x) - \frac{1}{2}PD[j_{*}(x')](z_{k}+2x+x')\\
&=\chi_{k}(x)
\end{align*}
\end{proof}

To make it easier to state some qualitative properties of $\bar{\chi}_{k}$, we now consider the extension of $\bar{\chi}_{k}$ by scalars to the function $\bar{\chi}^{\R}_{k}:\bar{L}\otimes \R\rightarrow \R$. Notice that the negative semi-definite intersection form $(\cdot, \cdot):L\times L\rightarrow \Z$ descends to a negative definite symmetric bilinear pairing on $\bar{L}$ which we denote by $(\cdot, \cdot)_{\bar{L}}$. Extending by scalars, we get a negative definite intersection form $(\cdot, \cdot)_{\bar{L}\otimes\R}:(\bar{L}\otimes\R)\times(\bar{L}\otimes\R)\rightarrow\R$. Therefore, we have 
\begin{align*}
\bar{\chi}^{\R}_{k}(\bar{x})= -\frac{k(x)+(\bar{x},\bar{x})_{\bar{L}\otimes\R}}{2}= -\frac{1}{2}(\bar{z_{k}}+\bar{x},\bar{x})_{\bar{L}\otimes \R}
\end{align*}
In particular, we see that $\bar{\chi}^{\R}_{k}$ is a positive definite quadratic form plus a linear shift. Putting these observations together yields the following proposition.
\begin{prop}\label{prop: chi level sets}
\hspace{1em}
\begin{enumerate}
\item $\bar{\chi}^{\R}_{k}$ is bounded below.
\item Let $\{\bar{x}_{1},\ldots, \bar{x}_{\sigma}\}$ be any $\R$-basis of $\bar{L}\otimes \R$. Identify $\bar{L}\otimes \R$ with $\R^{\sigma}$ via $\bar{L}\otimes\R = \bigoplus\limits_{j=1}^{\sigma}\R\bar{x}_{j}$. Then, the level sets of $\bar{\chi}_{k}^{\R}:\R^{\sigma}\rightarrow \R$ are $(\sigma-1)$-dimensional ellipsoids and the sublevel sets are $\sigma$-dimensional balls bounded by these ellipsoids. 
\end{enumerate}
\end{prop}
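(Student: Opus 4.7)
The plan is to reduce both statements to the standard geometry of a positive definite quadratic form by completing the square. Since $(\cdot,\cdot)_{\bar L\otimes \R}$ is negative definite by hypothesis, the bilinear form $A(\bar x,\bar y) := -(\bar x,\bar y)_{\bar L\otimes \R}$ is a genuine inner product on $\bar L\otimes\R$. The given expression for $\bar\chi_k^{\R}$ rewrites as
\begin{equation*}
\bar\chi_k^{\R}(\bar x) \;=\; \tfrac{1}{2} A(\bar z_k,\bar x) + \tfrac{1}{2} A(\bar x,\bar x),
\end{equation*}
so that $\bar\chi_k^{\R}$ is a positive definite quadratic form plus a linear shift in the $A$-geometry, as the paper remarks just before the statement.

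First I would complete the square against $A$. The identity
\begin{equation*}
\bar\chi_k^{\R}(\bar x) \;=\; \tfrac{1}{2} A\!\left(\bar x + \tfrac{\bar z_k}{2},\, \bar x + \tfrac{\bar z_k}{2}\right) \;-\; \tfrac{1}{8} A(\bar z_k,\bar z_k)
\end{equation*}
is a routine bilinear expansion. Since $A$ is positive definite the first term is $\ge 0$ for every $\bar x\in \bar L\otimes\R$, so (1) follows immediately: $\bar\chi_k^{\R}(\bar x)\ge -\tfrac18 A(\bar z_k,\bar z_k)$, with equality at $\bar x = -\bar z_k/2$.

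For (2), fix a constant $c> -\tfrac{1}{8} A(\bar z_k,\bar z_k)$, and set $r_c^2 := 2c + \tfrac{1}{4} A(\bar z_k,\bar z_k)>0$. Then the level set $\{\bar\chi_k^{\R} = c\}$ is exactly the $A$-sphere of radius $r_c$ centered at $-\bar z_k/2$, and the sublevel set $\{\bar\chi_k^{\R}\le c\}$ is the corresponding closed $A$-ball. Choosing an $A$-orthonormal basis $\{\bar x_1,\dots,\bar x_{\sigma}\}$ of $\bar L\otimes \R$ and identifying $\bar L\otimes\R\cong \R^{\sigma}$, $A$ is the standard Euclidean form, so the $A$-spheres are round $(\sigma-1)$-spheres and the $A$-balls are $\sigma$-balls in the standard sense. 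For an arbitrary $\R$-basis $\{\bar x_1,\ldots,\bar x_\sigma\}$ (as in the statement), $A$ becomes a positive definite matrix $M$, and the level/sublevel sets are the image of these round spheres/balls under the linear change of coordinates given by the positive definite square root of $M$, hence are ellipsoids and (solid) ellipsoids respectively. In the degenerate case $c = -\tfrac18 A(\bar z_k,\bar z_k)$ the level/sublevel set is a single point, and for smaller $c$ both are empty; these cases are consistent with the statement.

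There is no serious obstacle here: once the defining expression for $\bar\chi_k^{\R}$ is written in terms of the positive definite inner product $A$ and the square is completed, both conclusions are classical facts about Euclidean quadratic forms. The only subtlety worth remarking on is that one must use the hypothesis that $[k]$ is torsion, which is already invoked in the preceding proposition to guarantee the existence of $\bar z_k$ and hence the well-definedness of the shift.
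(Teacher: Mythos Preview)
Your proof is correct and follows exactly the approach the paper indicates: the paper does not give a detailed proof but simply remarks that $\bar{\chi}^{\R}_{k}$ is a positive definite quadratic form plus a linear shift and states that the proposition follows from this observation. Your completion of the square makes this explicit and fills in the details the paper omits.
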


\begin{cor}\label{cor: sublevel sets finite}
$\bar{\chi}_{k}:\bar{L}\rightarrow \Z$ is bounded below and its sublevel sets are finite.
\end{cor}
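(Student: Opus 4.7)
The plan is to deduce the corollary directly from Proposition \ref{prop: chi level sets} by using the fact that $\bar{L}$ sits inside $\bar{L}\otimes\R$ as a discrete lattice and that $\bar{\chi}_{k}$ is the restriction of $\bar{\chi}_{k}^{\R}$ to $\bar{L}$. The key structural input is that $\bar{L}$ is free of rank $\sigma$, so after choosing a $\Z$-basis $\{\bar{x}_{1},\ldots,\bar{x}_{\sigma}\}$ and using it as an $\R$-basis of $\bar{L}\otimes\R$, the inclusion $\bar{L}\hookrightarrow \bar{L}\otimes\R$ is identified with the standard inclusion $\Z^{\sigma}\hookrightarrow \R^{\sigma}$, which is discrete.

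First I would prove boundedness below. Since $\bar{\chi}_{k}=\bar{\chi}_{k}^{\R}|_{\bar{L}}$ and $\bar{\chi}_{k}^{\R}$ is bounded below on all of $\bar{L}\otimes\R$ by part (1) of Proposition \ref{prop: chi level sets}, the same lower bound applies to $\bar{\chi}_{k}$.

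Next I would handle the sublevel sets. Fix any integer $n$. Then
\begin{align*}
\bar{\chi}_{k}^{-1}\bigl((-\infty,n]\bigr) \;=\; \bar{L}\cap (\bar{\chi}_{k}^{\R})^{-1}\bigl((-\infty,n]\bigr).
\end{align*}
By part (2) of Proposition \ref{prop: chi level sets}, the sublevel set $(\bar{\chi}_{k}^{\R})^{-1}((-\infty,n])$ is a closed $\sigma$-dimensional ellipsoidal ball in $\bar{L}\otimes\R\cong\R^{\sigma}$; in particular it is compact. The intersection of a compact set with a discrete subset of $\R^{\sigma}$ is finite, so $\bar{\chi}_{k}^{-1}((-\infty,n])$ is finite, as required.

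There is essentially no obstacle here; the corollary is a formal consequence of the preceding proposition once one observes that $\bar{L}$ is a lattice (in the classical sense) inside the Euclidean space $\bar{L}\otimes\R$. The only point worth being careful about is ensuring that $\bar{L}$ really is discrete in $\bar{L}\otimes\R$, which follows from freeness of $\bar{L}$ established earlier via the long exact sequence in homology for the pair $(X(\Gamma),Y(\Gamma))$.
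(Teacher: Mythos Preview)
Your proof is correct and is exactly the intended argument: the paper states this as an immediate corollary of Proposition~\ref{prop: chi level sets} without writing out the details, and your reasoning (restrict the bounded-below function, then intersect the compact ellipsoidal sublevel set with the discrete lattice $\bar{L}\cong\Z^{\sigma}$) is precisely what that implication unpacks to.
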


\begin{defn}
Define $w_{q}: \mathcal{Q}_{q}\rightarrow \Z$ by 
\begin{align*}
    w(\square(\bar{l}, I)) = \max\{\bar{\chi}_{k}(\bar{x})\ |\  \bar{x}= \bar{l} + \sum\limits_{j\in J}\overline{[v_{j}]}, J\subseteq I\}
\end{align*}
Note, $w: \mathcal{Q}_{0}\rightarrow \Z$ is simply $\bar{\chi}_{k}$.
\end{defn}
By Corollary \ref{cor: sublevel sets finite}, $w$ is a valid set of weight functions.

\begin{defn}
Define $\Hbb^*(\Gamma, k) = \Hbb(\bar{L}, ([v_{1}], \ldots, [v_{s}]), w)$
\end{defn}

As in the case with negative definite plumbings, different choices of representatives for $[k]$ yield isomorphic lattice cohomology up to an overall grading shift. More specifically,

\begin{lem}[{See \cite[3.3.2 Lemma]{MR2140997}}]
If $k' = k +2PD[j_{*}(l)]$ for some $l\in L$, then $\Hbb^*(\Gamma, k)=\Hbb^*(\Gamma, k')[2\bar{\chi}_{k}(\bar{l})]$. 
\end{lem}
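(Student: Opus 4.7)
The plan is to show that translation by $\bar{l}$ on the quotient lattice $\bar{L}$ is a cellular self-isomorphism of the cube complex $\mathcal{C}$ that intertwines the weight functions $w_k$ and $w_{k'}$ up to an additive constant, and then read off the induced grading shift on lattice cohomology.

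First I would establish the key algebraic identity
\[
\bar{\chi}_{k'}(\bar{x}) \;=\; \bar{\chi}_k(\bar{x}+\bar{l}) \;-\; \bar{\chi}_k(\bar{l}) \qquad\text{for all } \bar{x}\in \bar{L}.
\]
This is a one-line computation: since $k'(x) = k(x) + 2\,PD[j_*(l)](x) = k(x) + 2(l,x)$, expanding $\bar{\chi}_k(\bar{x}+\bar{l}) = -(k(x+l)+(x+l,x+l))/2$ and subtracting $\bar{\chi}_{k'}(\bar{x}) = -(k(x)+2(l,x)+(x,x))/2$ collapses to $-(k(l)+(l,l))/2 = \bar{\chi}_k(\bar{l})$. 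Because $w(\square(\bar{a},I))$ is by definition the maximum of $\bar{\chi}_k$ over the vertices $\bar{a}+\sum_{j\in J}\overline{[v_j]}$, $J\subseteq I$, this immediately upgrades to the weight-function identity
\[
w_{k'}\bigl(\square(\bar{a},I)\bigr) \;=\; w_k\bigl(\square(\bar{a}+\bar{l},\,I)\bigr)\;-\;\bar{\chi}_k(\bar{l}).
\]

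Next I would invoke Construction~2. The cube complex $\mathcal{C}$ and its cellular cochain complex $\mathcal{F}^*$ depend only on $\bar{L}$ and the ordered basis $([v_1],\dots,[v_s])$, not on $k$; only the differential, the grading and the sublevel subcomplexes change with $k$. Translation by $\bar{l}$ defines a cellular automorphism $\tau:\mathcal{C}\to\mathcal{C}$ sending $\square(\bar{a},I)$ to $\square(\bar{a}+\bar{l},I)$, and the weight identity above says $\tau$ restricts to a cellular isomorphism $S_n^{(k')}\xrightarrow{\sim}S_{n+\bar{\chi}_k(\bar{l})}^{(k)}$ for every $n$. These isomorphisms are compatible with the inclusions $S_n\hookrightarrow S_{n+1}$ whose restriction maps define the $U$-action, so they assemble into a $U$-equivariant cellular cohomology isomorphism $H^q(S_{n+\bar{\chi}_k(\bar{l})}^{(k)};\F)\cong H^q(S_n^{(k')};\F)$ for every $n$ and $q$.

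Summing over $n$ and applying the equivalence of Constructions~1 and~2, one obtains a graded $\F[U]$-module isomorphism $\Hbb^*(\Gamma,k')\cong\Hbb^*(\Gamma,k)$ that carries the degree-$2n$ summand of $\Hbb^*(\Gamma,k')$ onto the degree-$2(n+\bar{\chi}_k(\bar{l}))$ summand of $\Hbb^*(\Gamma,k)$; translating this through the convention $\mathcal{A}[r]_j=\mathcal{A}_{j+r}$ gives the claimed identification $\Hbb^*(\Gamma,k)=\Hbb^*(\Gamma,k')[2\bar{\chi}_k(\bar{l})]$. The only real obstacle is bookkeeping: one must keep straight the direction of the shift (checking, in particular, consistency with the convention $\mathcal{T}^+_d=\mathcal{T}^+[-d]$ used elsewhere in the paper) and verify compatibility of the translation with the $U$-action, both of which are routine. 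The geometric heart of the argument is simply that $\bar{\chi}_k$ is a positive-definite quadratic-plus-linear function whose level structure is rigidly translated, by $-\bar{l}$ and an additive constant, when $k$ moves within its spin$^c$ orbit.
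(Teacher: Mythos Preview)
The paper does not actually supply its own proof of this lemma: it simply states the result and cites N\'emethi \cite[3.3.2 Lemma]{MR2140997}, with a remark about the sign convention. Your argument is correct and is essentially the standard one (and indeed the one underlying N\'emethi's original proof): the identity $\bar{\chi}_{k'}(\bar{x})=\bar{\chi}_k(\bar{x}+\bar{l})-\bar{\chi}_k(\bar{l})$ shows that translation by $\bar{l}$ on $\bar{L}$ identifies the sublevel complexes $S_n^{(k')}$ and $S_{n+\bar{\chi}_k(\bar{l})}^{(k)}$ compatibly with inclusions, which immediately gives the graded $\F[U]$-module isomorphism via Construction~2.

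Your caveat about bookkeeping is well placed. With the paper's convention $\mathcal{A}[r]_j=\mathcal{A}_{j+r}$, the isomorphism you produce reads $\Hbb^*(\Gamma,k')_{2n}\cong\Hbb^*(\Gamma,k)_{2n+2\bar{\chi}_k(\bar{l})}$, i.e.\ $\Hbb^*(\Gamma,k)_m\cong\Hbb^*(\Gamma,k')_{m-2\bar{\chi}_k(\bar{l})}$, which literally says $\Hbb^*(\Gamma,k)=\Hbb^*(\Gamma,k')[-2\bar{\chi}_k(\bar{l})]$. This matches N\'emethi's stated form; the paper's remark that ``N\'emethi uses the opposite convention'' is what accounts for the sign flip in the lemma as stated here. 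The mathematical content of your proof is unaffected.
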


\begin{rem}
Némethi uses the opposite convention for grading shifts. Hence, \cite[3.3.2 Lemma]{MR2140997} is stated as: $\Hbb^*(\Gamma, k)=\Hbb^*(\Gamma, k')[-2\bar{\chi}_{k}(\bar{l})]$. 
\end{rem}

\newpage
\begin{exmps}\label{ex:S1S2}
\hfill
\begin{enumerate}
    \item(Compare \cite[Examples 3.11]{MR3317341}) 
Consider the plumbing graph $\Gamma$ consisting of a single $0$-framed vertex $v_{1}$. $\Gamma$ is a negative semi-definite plumbing whose corresponding plumbed 3-manifold, $Y(\Gamma)$, is diffeomorphic to $S^1\times S^2$. In this case, $\bar{L} = \langle \overline{[v_{1}]}\rangle = \{0\}$, $\mathcal{Q}_{0} = \{\square(0, \emptyset)\}$, and $\mathcal{Q}_{1}=\{\square(0, \{1\})\}$. Therefore, $\mathcal{F}^0\cong \mathcal{T}^+$ and $\mathcal{F}^{1}\cong \mathcal{T}^+$. Let $k = 0\in H_{2}(X(\Gamma);\Z)$. Then, $[k]$ is the unique torsion $\spinc$ structure on $Y(\Gamma)$ and $\bar{\chi}_{k} \equiv 0$. Therefore, the weight functions $w$ are identically zero. So in this very simple case, the lattice cohomology coboundary map $\delta_{w}$ is literally the dual of $\partial$. But, 
\begin{align*}
    \partial \square(0, \{1\}) &= -\square(0, \{\emptyset\})+\square(0+\overline{[v_{1}]}, \emptyset)\\
    &=-\square(0, \{\emptyset\}) + \square(0, \{\emptyset\})\\
    &=0
\end{align*}
Thus, $\delta_{w} = 0$. It follows that
\begin{align*}
    \mathbb{H}(\Gamma, k)\cong \mathbb{H}^0(\Gamma, k)\oplus \mathbb{H}^1(\Gamma, k)\cong \mathcal{T}^+\oplus \mathcal{T}^+
\end{align*}
In particular, up to the appropriate grading shifts, $\mathbb{H}^*(\Gamma, k)\cong HF^+(-(S^1\times S^2), [k])$. 

\item Even though our main focus is when $b_{1} = 1$, we think it is instructive to generalize the previous example. Specifically, let $\Gamma$ be the plumbing graph consisting of $s$ disjoint vertices $v_{1}, \ldots, v_{s}$ all with weight $0$ and no edges. Then, $Y(\Gamma) = \#_{s} S^1\times S^2$. Again, let $k=0\in H_{2}(X;\Z)$. One can check that the associated cube complex $\mathcal{C}$ is $T^s = \underbrace{S^1\times \cdots \times S^1}_{s\text{ times}}$. The singular cohomology ring of $T^s$ is $H^*(T^s;\F)\cong \Lambda(\F^s)$, where $\Lambda$ denotes the exterior algebra. Since in this case the weight functions are all identically zero, \hyperref[subsection:construciton 2]{Construction 2} tells us that $\mathbb{H}^*(\Gamma, k)\cong \Lambda(\F^s)\otimes \mathcal{T}^+$. In particular, up to the appropriate grading shifts, $\mathbb{H}^*(\Gamma, k)\cong HF^+(-T^s,[k])$. 
\end{enumerate}
\end{exmps}

\begin{rem}
In the above examples, there are of course many other plumbing descriptions of the same 3-manifolds. Therefore, it is worth noting that at this stage we have not yet shown that this modified version of lattice cohomology is independent of the plumbing description (i.e. that it is a topological invariant). However, these examples do suggest that up to grading shifts (at least for ``nice enough'' negative semi-definite plumbings) lattice cohomology agrees with $HF^+$, which of course is a topological invariant. We would like to point out though that in \cite{MR3317341} Oszváth, Stipsicz, and Szabó construct a spectral sequence relating a completed version of $HF^+$ to their version of lattice cohomology and show that these two objects coincide for plumbing trees of type 2. In particular, their isomorphism holds for negative semi-definite plumbings of type 2. 
\end{rem}

\newpage
\subsection{Graded roots associated to negative semi-definite plumbings}

\begin{defns}[{See \cite[3.2 Definitions]{MR2140997}}]\label{defns: graded root} 
\hspace{1em}
\begin{enumerate}
\item Let $R$ be an infinite tree with vertices $\mathcal{V}$ and edges $\mathcal{E}$. We denote by $[u,v]$ the edge with end-points $u$ and $v$. We say that $R$ is a graded root with grading $\chi: \mathcal{V}\rightarrow \Z$ if 
\begin{enumerate}
\item $\chi(u)-\chi(v)=\pm 1$ for any $[u,v]\in\mathcal{E}$
\item $\chi(u)>\min\{\chi(u), \chi(w)\}$ for any $[u,v], [u,w]\in \mathcal{E}, v\neq w$
\item $\chi$ is bounded below, $\chi^{-1}(k)$ is finite for any $k\in \Z$, and $\#\chi^{-1}(k) = 1$ if $k$ is sufficiently large.
\end{enumerate}
\item We say that $v\in \mathcal{V}$ is a local minimum point of the graded root $(R, \chi)$ if $\chi(v)<\chi(w)$ for any edge $[v,w]$.
\item If $(R,\chi)$ is a graded root, and $r\in \Z$, then we denote by $(R,\chi)[r]$ the same $R$ with the new grading $\chi[r](v) :=\chi(v)+r$. (This can be generalized for any $r\in \Q$ as well.)
\end{enumerate}
\end{defns}

\begin{exmp}
\end{exmp}

\begin{figure}[h]
\centering
\includegraphics[scale=.51]{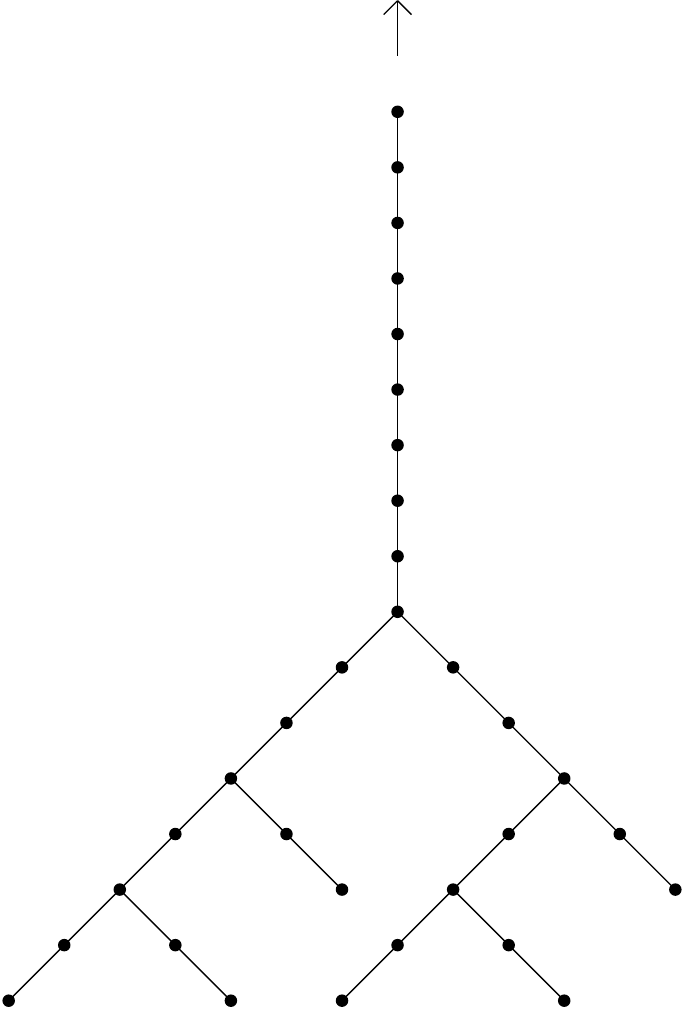}
\end{figure}

We now show how to associate a graded root to a pair $(\Gamma, k)$ where $\gamma$ is a negative semi-definite plumbing and $k$ is a characteristic vector of $X(\Gamma)$ such that $[k]$ is a torsion $\spinc$ structure on $Y(\Gamma)$. For each $n\in \Z$, let $\bar{L}_{k, \leq n}$ be the graph whose vertex set is $\mathcal{V}(\bar{L}_{k, \leq n})=\{\bar{x}\in \bar{L} : \bar{\chi}_{k}(\bar{x})\leq n\}$ and such that there is an edge between two vertices $\bar{x}_{1}, \bar{x}_{2}$ if and only if $\bar{x}_{1}-\bar{x}_{2} = \pm \overline{[v_{j}]}$ where the $v_{j}$ are as in subsection \ref{subsection: alg top prop}. Now let $\pi_{0}(\bar{L}_{k, \leq n})$ denote the set of connected components of the graph $\bar{L}_{k, \leq n}$. 

The graded root $(\bar{R}_{k}, \bar{\chi}_{k})$ associated to $\Gamma$ and $k$ is constructed as follows:
\begin{itemize}
    \item The vertex set is $\mathcal{V}(\bar{R}_{k})=\bigsqcup\limits_{n\in \Z}\pi_{0}(\bar{L}_{k,\leq n})$. By an abuse of notation, we denote the grading $\mathcal{V}(\bar{R}_{k})\rightarrow \Z$ by $\bar{\chi}_{k}$ where now $\bar{\chi}_{k}|_{\pi_{0}(\bar{L}_{k,\leq n})}= n$. 
    
    \item There is an edge between two vertices $v, v'\in \mathcal{V}(\bar{R}_{k})$, which correspond to connected components $C_{v}$ and $C_{v'}$, if and only if after possibly reordering $v$ and $v'$, we have $\bar{\chi}_{k}(v') = \bar{\chi}_{k}(v)+1$ and $C_{v}\subset C_{v'}$. 
\end{itemize}

\begin{rem}
When $\Gamma$ is negative definite, $(\bar{R}_{k}, \bar{\chi}_{k})$ is precisely the graded root, $(R_{k}, \chi_{k})$, defined by Némethi in \cite[Section 4]{MR2140997}.
\end{rem}

\begin{rem}
The graph $\bar{L}_{k, \leq n}$ is the 1-skeleton of the space $S_{n}$ considered above in \hyperref[subsection:construciton 2]{Construction 2} of lattice cohomology. In particular, we can think of $\pi_{0}(\bar{L}_{k, \leq n})$ equivalently as $\pi_{0}(S_{n})$.
\end{rem}

\begin{prop}[{See \cite[4.3 Proposition]{MR2140997}}]
$(\bar{R}_{k}, \bar{\chi}_{k})$ is a graded root. 
\end{prop}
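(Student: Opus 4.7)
The plan is to verify the three conditions in Definitions \ref{defns: graded root} together with the underlying tree structure, closely following Némethi's original argument (\cite[4.3 Proposition]{MR2140997}) but with careful attention to the modifications arising from passing to the quotient lattice $\bar{L}$. The essential new input beyond the negative definite setting is that, although the intersection form on $L$ is only negative semi-definite, the induced form $(\cdot,\cdot)_{\bar{L}}$ is negative definite and $\bar{\chi}_k$ is well-defined on $\bar{L}$ (using that $[k]$ is torsion, so $z_k \in L\otimes\Q$ exists). Consequently Proposition \ref{prop: chi level sets} and Corollary \ref{cor: sublevel sets finite} provide the analogues of the ingredients Némethi uses in the negative definite case.

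First I would check condition (a): if an edge joins $v$ at level $n$ to $v'$ at level $n+1$, then by construction $\bar{\chi}_k(v') - \bar{\chi}_k(v) = 1$. For condition (b) I would observe the key uniqueness statement: each vertex $v\in\mathcal{V}(\bar{R}_k)$ has a unique neighbor at grading one higher, namely the unique connected component of $\bar{L}_{k,\leq \bar{\chi}_k(v)+1}$ containing $C_v$ (uniqueness is automatic from the nesting $\bar{L}_{k,\leq n}\subset \bar{L}_{k,\leq n+1}$). A vertex may of course have several neighbors of strictly lower grading, corresponding to components of $\bar{L}_{k,\leq n-1}$ which merge inside $C_v$, but the unique-parent property gives the required asymmetry for (b). For the three sub-clauses of (c), Corollary \ref{cor: sublevel sets finite} immediately yields both the lower bound on $\bar{\chi}_k$ and the finiteness of $\#\pi_0(\bar{L}_{k,\leq n}) = \#\chi^{-1}(n)$, since there are only finitely many integer points available to form connected components.

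The remaining sub-clause, $\#\chi^{-1}(n) = 1$ for $n\gg 0$, is the step I expect to be the main obstacle and is the genuinely new piece relative to the finiteness statements. Here I would invoke Proposition \ref{prop: chi level sets}(2): the real sublevel set $\{\bar{\chi}_k^{\R}\leq n\}$ is an ellipsoidal ball in $\bar{L}\otimes\R$, and for $n$ sufficiently large this ball has radius much larger than the diameter of a fundamental domain for $\bar{L}\subset \bar{L}\otimes\R$. The task is to show that any two integer points in such a large ball can be joined by a path of basis moves $\pm\overline{[v_j]}$ that remains in the ball; this is a standard convex-geometry argument, but it requires the concrete positive-definite quadratic structure of $-\bar{\chi}_k^{\R}$ rather than merely its growth. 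Once this connectivity is established, $\bar{L}_{k,\leq n}$ becomes connected for all $n$ beyond a threshold, so $\pi_0$ is a single point, verifying (c).

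Finally, to confirm $(\bar{R}_k,\bar{\chi}_k)$ really is a tree rather than only a forest, acyclicity follows from the unique-parent property established above (a cycle would force some vertex to have two distinct neighbors at grading one higher), and connectedness follows because the chains of parents emanating from any two vertices both ascend into the eventual single top-level vertex produced by the connectivity argument. This completes the verification that $(\bar{R}_k,\bar{\chi}_k)$ satisfies Definitions \ref{defns: graded root}.
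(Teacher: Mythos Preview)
Your proposal is correct and takes essentially the same approach as the paper: both defer to N\'emethi's original argument, invoking Corollary \ref{cor: sublevel sets finite} for boundedness and finiteness in (c), and using the negative definiteness of $(\cdot,\cdot)_{\bar{L}}$ (equivalently, the ellipsoid description of Proposition \ref{prop: chi level sets}) for the eventual connectivity of the sublevel sets. Your explicit unique-parent argument for condition (b) and the tree structure is a welcome elaboration of what the paper leaves as a bare citation.
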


\begin{proof}
This proof is essentially identical to the proof of \cite[4.3 Proposition]{MR2140997}. Condition (a) of Definition \ref{defns: graded root} (1) follows immediately from the construction of $(R_{k}, \bar{\chi}_{k})$. The proof of condition (b) is the same as in \cite[4.3 Proposition]{MR2140997}. The first two conditions of (c) follow from Corollary \ref{cor: sublevel sets finite}. The last condition of (c) follows the same argument as Némethi's proof, with mild modification. Essentially just replace the function $\chi_{k}$ in Némethi's proof with $\bar{\chi}_{k}$ and use that $\bar{\chi}_{k}$ has a (not necessarily unique) global minimum and that $(\cdot, \cdot)_{\bar{L}}$ is negative definite. 
\end{proof}

Again, as in the case with negative definite plumbings, the graded roots, $(\bar{R}_{k}, \bar{\chi}_{k})$ and $(\bar{R}_{k'}, \bar{\chi}_{k'})$ corresponding to two characteristic vectors $k$ and $k'$, which restrict to the same torsion $\spinc$ structure on $Y$, are equal up to an overall grading shift. More specifically, 

\begin{prop}[{See \cite[4.4 Proposition]{MR2140997}}]\label{prop: chi grade shift}
If $k' = k + 2PD[j_{*}(l)]$ for some $l\in L$ and $k\in \Char(X(\Gamma))$ with $[k]$ torsion, then
\begin{align*}
(\bar{R}_{k'}, \bar{\chi}_{k'}) = (\bar{R}_{k},\bar{\chi}_{k})[\bar{\chi}_{k}(\bar{l})]
\end{align*} 
\end{prop}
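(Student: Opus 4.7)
The plan is to adapt Némethi's proof of \cite[4.4 Proposition]{MR2140997} to the negative semi-definite setting by working in the quotient lattice $\bar{L}$. The key observation is that changing the characteristic vector by $2PD[j_{*}(l)]$ is equivalent, at the level of $\bar{\chi}_{k}$, to translating the variable by $\bar{l}$ up to an additive constant, and this translation induces an isomorphism of the graphs used to construct the graded roots.

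First I would carry out the algebraic computation relating $\chi_{k'}$ and $\chi_{k}$. Using Poincaré-Lefschetz duality, one has $PD[j_{*}(l)](x) = (l,x)$ for all $x\in L$, so $k'(x) = k(x) + 2(l,x)$ and therefore
\begin{equation*}
    \chi_{k'}(x) = -\frac{k(x) + 2(l,x) + (x,x)}{2} = \chi_{k}(x) - (l,x).
\end{equation*}
Expanding $\chi_{k}(x+l)$ directly using bilinearity of the intersection form yields
\begin{equation*}
    \chi_{k}(x+l) = \chi_{k}(x) + \chi_{k}(l) - (l,x).
\end{equation*}
Combining these two identities gives $\chi_{k'}(x) = \chi_{k}(x+l) - \chi_{k}(l)$, which descends to the quotient to produce the key relation $\bar{\chi}_{k'}(\bar{x}) = \bar{\chi}_{k}(\bar{x}+\bar{l}) - \bar{\chi}_{k}(\bar{l})$.

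Next I would use this identity to build the isomorphism of graded roots. Define $\phi\colon\bar{L}\to\bar{L}$ by $\phi(\bar{x}) = \bar{x} + \bar{l}$; this is a bijection that preserves the edge relation on the graphs $\bar{L}_{k',\leq n}$, since adjacency there is defined by translation by $\pm\overline{[v_{j}]}$ and this operation commutes with $\phi$. By the key identity, $\phi$ restricts to a graph isomorphism $\bar{L}_{k',\leq n}\xrightarrow{\sim}\bar{L}_{k,\leq n+\bar{\chi}_{k}(\bar{l})}$ for every $n\in\Z$, and these restrictions are compatible with the inclusions among sublevel sets since $\phi$ is the same map at every level. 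Therefore $\phi$ induces a compatible family of bijections on path-components $\pi_{0}(\bar{L}_{k',\leq n})\to\pi_{0}(\bar{L}_{k,\leq n+\bar{\chi}_{k}(\bar{l})})$, which assemble into an isomorphism of the underlying graphs of $\bar{R}_{k'}$ and $\bar{R}_{k}$ whose effect on gradings is a uniform shift by $\bar{\chi}_{k}(\bar{l})$, yielding the claimed identification of graded roots.

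The main obstacle is bookkeeping the sign and direction of the grading shift, which is made subtle by the fact that Némethi uses the opposite convention for $[r]$ than this paper does (as flagged in the remark following the lattice cohomology version of this lemma). Once the key identity $\bar{\chi}_{k'}(\bar{x}) = \bar{\chi}_{k}(\bar{x}+\bar{l}) - \bar{\chi}_{k}(\bar{l})$ is in hand, the rest of the argument is purely formal: passage to the quotient $\bar{L}$ does not disturb the translation-invariance of the basis edges, and Corollary \ref{cor: sublevel sets finite} guarantees that every level-set graph is finite, so that taking $\pi_{0}$ behaves well under the induced maps.
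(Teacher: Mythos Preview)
The paper does not give its own proof of this proposition; it simply states the result with a reference to N\'emethi's \cite[4.4 Proposition]{MR2140997} and moves on. Your argument is exactly the natural adaptation of N\'emethi's proof to the quotient lattice $\bar{L}$: the identity $\bar{\chi}_{k'}(\bar{x}) = \bar{\chi}_{k}(\bar{x}+\bar{l}) - \bar{\chi}_{k}(\bar{l})$ together with the translation $\phi(\bar{x})=\bar{x}+\bar{l}$ is precisely what is needed, and your observation that $\phi$ commutes with the edge relation (differences by $\pm\overline{[v_{j}]}$) is the only additional check required in the semi-definite setting.

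One remark on the sign, which you correctly flag as the delicate point. Tracing your computation literally, a vertex of $\bar{R}_{k'}$ at level $n$ is carried to a vertex of $\bar{R}_{k}$ at level $n+\bar{\chi}_{k}(\bar{l})$; with the graded-root convention $\chi[r]=\chi+r$ from Definitions~\ref{defns: graded root} this yields $(\bar{R}_{k'},\bar{\chi}_{k'})=(\bar{R}_{k},\bar{\chi}_{k})[-\bar{\chi}_{k}(\bar{l})]$ rather than $[+\bar{\chi}_{k}(\bar{l})]$. Since the proposition is only used to assert that the graded root depends on $[k]$ up to an overall shift, this sign plays no role downstream, but it is worth recording.
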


\subsection{The relationship between lattice cohomology, $\mathrm{H}^+$, and graded roots}\label{subsection: hf lattice relationship}
In subsection \ref{subsection: O-S H plus}, we recalled the definition of the $\F[U]$-module $\mathrm{H}^+(\Gamma, [k])$ introduced by Ozsváth and Szabó where $\Gamma$ is a negative definite plumbing and $[k]$ is a $\spinc$ structure on $Y(\Gamma)$. The same definition makes sense for negative semi-definite plumbings and $[k]$ torsion except that we adjust the grading as follows: we say $\phi\in \mathrm{H}^+(\Gamma, [k])$ is a homogeneous element of degree $d$ if for each $k'\in [k]$ with $\phi(k')\neq 0$, we have that $\phi(k')\in \mathcal{T}^+$ is a homogeneous element of degree 
\begin{align*}
    d+\dfrac{(k')^2+|\mathcal{V}(\Gamma)|-3b_{1}(Y(\Gamma))}{4}
\end{align*}

\begin{prop}\label{prop:graded root lattice iso}
As graded $\F[U]$-modules, 
\begin{align*}
    \mathrm{H}^+(\Gamma, [k])\cong \Hbb^{0}(\Gamma, k)\left[\frac{k^2+|V(\Gamma)|-3b_{1}(Y)}{4}\right]
\end{align*}
\end{prop}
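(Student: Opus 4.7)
The plan is to construct an explicit $\F[U]$-module isomorphism $\Psi\colon \mathrm{H}^+(\Gamma,[k]) \to \Hbb^0(\Gamma,k)$ that raises gradings by $(k^2+|\mathcal{V}(\Gamma)|-3b_{1}(Y))/4$; the desired identification then follows from the convention $\mathcal{A}[r]_d = \mathcal{A}_{d+r}$. By the algebraic topology recalled in Section \ref{section: plumbings}, the orbit $[k]$ is in canonical bijection with $\bar{L}$ via $\bar{l}\leftrightarrow k + 2PD[j_{*}(l)]$, which is well-defined since any two representatives $l, l'$ of $\bar{l}$ differ by an element of $\ker j_{*}$. Given $\phi\in \mathrm{H}^+(\Gamma,[k])$, set
\[
(\Psi\phi)(\square(\bar{l},\emptyset)) := \phi\bigl(k + 2PD[j_{*}(l)]\bigr),
\]
yielding an element $\Psi\phi\in\mathcal{F}^0$. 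Since $\mathcal{Q}_{-1}=\emptyset$, the module $\Hbb^0(\Gamma,k)$ coincides with $\ker(\delta_w\colon\mathcal{F}^0\to\mathcal{F}^1)$, so it suffices to verify that $\Psi$ maps into this kernel, is $\F[U]$-linear, and is a bijection onto it.

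The heart of the argument is matching the adjunction relations on $\mathrm{H}^+$ with the cocycle condition on $\mathcal{F}^0$. Fixing $\bar{l}\in \bar{L}$ and $v_{i}\in\mathcal{V}(\Gamma)$, write $k' = k + 2PD[j_{*}(l)]$. Expanding the definitions using $k'([v_{i}]) = k([v_{i}]) + 2(l,[v_{i}])$ gives
\[
n_i = \tfrac{1}{2}\bigl(k'([v_{i}])+([v_{i}],[v_{i}])\bigr) = \bar{\chi}_k(\bar{l}) - \bar{\chi}_k(\bar{l}+\overline{[v_{i}]}).
\]
When $n_i\geq 0$ one has $w(\square(\bar{l},\{i\}))=\bar{\chi}_k(\bar{l})$, and the cocycle equation $(\delta_w\Psi\phi)(\square(\bar{l},\{i\}))=0$ unwinds to $\Psi\phi(\bar{l}) + U^{n_i}\Psi\phi(\bar{l}+\overline{[v_{i}]})=0$, which, over $\F$, is exactly the first adjunction relation $U^{n_i}\phi(k'+2PDj_{*}[v_{i}])=\phi(k')$. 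The case $n_i<0$ is symmetric, using $w(\square(\bar{l},\{i\}))=\bar{\chi}_k(\bar{l}+\overline{[v_{i}]})$ instead. Thus $\Psi$ lands in $\ker\delta_w$. The inverse is given by $\Psi^{-1}(\tilde\phi)(k')=\tilde\phi(\bar{l})$; both maps are clearly $\F[U]$-equivariant because each $U$-action is pointwise multiplication on $\mathcal{T}^+$-values.

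The finite-support requirement on $\mathrm{H}^+$ is automatic for the inverse: a homogeneous element $\tilde\phi\in\Hbb^0$ of degree $d'$ takes values in grading $d' - 2\bar{\chi}_k(\bar{l})$ of $\mathcal{T}^+$, but $\mathcal{T}^+$ is supported only in non-negative gradings, so $\tilde\phi(\bar{l})=0$ whenever $\bar{\chi}_k(\bar{l})>d'/2$, a condition carving out a finite set by Corollary \ref{cor: sublevel sets finite}. A general element is a finite sum of homogeneous pieces and is therefore finitely supported. For the grading shift, the standard identity $(k')^2 = k^2 - 8\bar{\chi}_k(\bar{l})$ gives
\[
\tfrac{(k')^2 + |\mathcal{V}(\Gamma)| - 3b_{1}(Y)}{4} = \tfrac{k^2 + |\mathcal{V}(\Gamma)| - 3b_{1}(Y)}{4} - 2\bar{\chi}_k(\bar{l}),
\]
so if $\phi$ has $\mathrm{H}^+$-degree $d$, then $\Psi\phi(\bar{l})$ lies in $\mathcal{T}^+$-grading $d + \tfrac{k^2+|\mathcal{V}(\Gamma)|-3b_{1}(Y)}{4} - 2\bar{\chi}_k(\bar{l})$, meaning $\Psi\phi$ has lattice cohomology degree $d + \tfrac{k^2+|\mathcal{V}(\Gamma)|-3b_{1}(Y)}{4}$, yielding the claimed shift.

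The main obstacle is the careful dictionary in the second paragraph: tracking the two sign cases of $n_i$ against the $\max$ defining the weight function $w$, and checking that the translation $k'\mapsto k'+2PDj_{*}[v_{i}]$ on the Ozsv\'ath--Szab\'o side corresponds to $\bar{l}\mapsto \bar{l}+\overline{[v_{i}]}$ on the lattice side. The remaining verifications are routine bookkeeping.
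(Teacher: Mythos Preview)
Your proof is correct and follows the same approach as the paper: the paper defines exactly your map (calling it $Z$ instead of $\Psi$) and then explicitly leaves all details to the reader. You have carried out those details faithfully---the identification of the adjunction relations with the cocycle condition via $n_i = \bar{\chi}_k(\bar{l}) - \bar{\chi}_k(\bar{l}+\overline{[v_i]})$, the grading computation using $(k')^2 = k^2 - 8\bar{\chi}_k(\bar{l})$, and the finite-support check via Corollary~\ref{cor: sublevel sets finite}---so there is nothing to add or correct.
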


\begin{proof}
The isomorphism is induced by the map $Z:\mathrm{H}^+(\Gamma, [k])\to \mathcal{F}^0$ defined by
\begin{align*}
    Z(\phi)(\square(\bar{l}, \emptyset))=\phi(k+2PDj_{*}(l))
\end{align*}
We leave the details to the reader. 
\end{proof}

As described in \cite{MR1988284} and \cite{Rustamov}, for calculation purposes it is convenient to consider the ``dual space'' of $\mathrm{H}^+(\Gamma, [k])$, which we denote by $\Kplus(\Gamma, [k])$. To recall their definition of $\Kplus(\Gamma, [k])$, first consider the set $\Z_{\geq 0}\times [k]$. Write elements $(m, k')\in \Z_{\geq 0}\times [k]$ as $U^m\otimes k'$. Define an equivalence relation $\sim$ on $\Z_{\geq 0}\times [k]$ in the following way: for each $k'\in [k]$ and $v_{i}\in \mathcal{V}(\Gamma)$, let $2n_{i} = k'([v_{i}])+([v_{i}], [v_{i}])$. Then, 
\begin{enumerate}
    \item if $n_{i}\geq 0$, we require $U^{n_{i}+m}\otimes (k' + 2PDj_{*}[v_{i}]) \sim U^{m}\otimes k'$\\
    \item if $n_{i}<0$, we require $U^{m}\otimes (k'+2PDj_{*}[v_{i}])\sim U^{m-n_{i}}\otimes k'$
\end{enumerate}
In other words, two elements $U^m\otimes k'$ and $U^{n}\otimes k''$ are equivalent if and only if there exists a finite sequence of elements $U^{m_{0}}\otimes k_{1}, \ldots, U^{m_{\ell}}\otimes k_{\ell}$ such that $U^{m_{0}}\otimes k_{1} = U^m\otimes k'$, $U^{m_{\ell}}\otimes k_{\ell} = U^{n}\otimes k''$ and each adjacent pair in the sequence is related by a relation of type (1) or (2) as given above. We call such a sequence a \textit{path} connecting $U^m\otimes k'$ and $U^{n}\otimes k''$.
\begin{rem}
In general, there are many different paths connecting a given pair of elements $U^m\otimes k'$ and $U^{n}\otimes k''$.
\end{rem}

Write the equivalence class containing $U^m\otimes k'$ as $\underline{U^m\otimes k'}$ and define $\Kplus(\Gamma, [k])$ to be the set of these equivalence classes. $\Kplus(\Gamma, [k])$ is the dual of $\mathrm{H}^+(\Gamma, [k])$ (or maybe more naturally $\mathrm{H}^+(\Gamma, [k])$ is the dual of $\Kplus(\Gamma, [k])$) in the following sense:
\begin{itemize}
    \item Define $\Kplus(\Gamma, [k])^*$ to be the set of finitely supported functions $\phi:\Kplus(\Gamma, [k])\to \mathcal{T}^+$ such that $\phi(\underline{U^{n+m}\otimes k'}) = U^{n}\phi(\underline{U^{m}\otimes k'})$ for all $n,m\geq 0$ and $k'\in [k]$. Endow $(\Kplus)^*$ with an $\F[U]$-module structure by inheriting that of $\mathcal{T}^+$. 

    \item Define a map $F: \mathrm{H}^+(\Gamma, [k])\to \Kplus(\Gamma, [k])^*$ by
    \begin{align*}
        F(\phi)(\underline{U^m\otimes k'})=U^m\phi(k')
    \end{align*}
    It is straightforward to check that $F$ is a well-defined $\F[U]$-module isomorphism. 
\end{itemize}

We can put more structure on $\Kplus(\Gamma, [k])$ by thinking of it as a graph. Specifically, define $g\Kplus(\Gamma, [k])$ to be the graph whose vertices are the elements of $\Kplus(\Gamma, [k])$ and such that there is an edge between to vertices $\underline{U^m\otimes k'}$ and $\underline{U^{n}\otimes k''}$ if and only if either $\underline{U^{m+1}\otimes k'}=\underline{U^{n}\otimes k''}$ or $\underline{U^{m}\otimes k'} = \underline{U^{n+1}\otimes k''}$. 

\begin{prop}\label{prop: H dual and graded root}
As graphs, $g\Kplus(\Gamma, [k])$ is isomorphic to the graded root $(\bar{R}_{k},\bar{\chi}_{k})$.
\end{prop}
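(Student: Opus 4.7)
The plan is to construct an explicit bijection $\Psi:\Kplus(\Gamma,[k]) \to \mathcal{V}(\bar{R}_k)$ and verify it preserves edges. The starting observation is the identity
\begin{equation*}
\bar{\chi}_k(\bar{l}+\overline{[v_i]}) - \bar{\chi}_k(\bar{l}) = -n_i, \qquad \text{where } 2n_i = k'([v_i])+([v_i],[v_i]) \text{ and } k' = k+2PDj_*(l),
\end{equation*}
which follows by direct expansion of $\chi_k(l+[v_i])-\chi_k(l)$ using the definition of $\chi_k$. This equation is the bridge between the ``Ozsv\'ath--Szab\'o bookkeeping'' on pairs $(m,k')$ and the level structure of the graded root.

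First, I would define a level function $N\colon\Z_{\geq 0}\times[k]\to\Z$ by $N(U^m\otimes k') = m+\bar{\chi}_k(\bar{l})$. Well-definedness in $l$ follows because $\bar{\chi}_k$ descends to $\bar{L}$ (already shown). Then I would check that $N$ is invariant under both equivalence relations: in case (1), passing from $(m,k')$ to $(m+n_i,k'+2PDj_*[v_i])$ shifts $m$ by $+n_i$ and $\bar{\chi}_k$ by $-n_i$; in case (2), the shifts are $-n_i$ and $+n_i$ respectively. Hence $N$ descends to $\Kplus(\Gamma,[k])$.

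Next, I would define $\Psi(\underline{U^m\otimes k'})$ to be the connected component of $\bar{l}$ inside $\bar{L}_{k,\leq N(U^m\otimes k')}$. Well-definedness on equivalence classes reduces to showing that whenever a single elementary relation is applied, the two representative lattice points $\bar{l}$ and $\bar{l}+\overline{[v_i]}$ lie in the same component at the common level $N$; this is automatic because they are joined by a single edge in $\bar{L}$ and both satisfy $\bar{\chi}_k\leq N$ (using the identity above together with $n_i\geq 0$ or $n_i<0$ as appropriate). For surjectivity, any component $C$ at level $n$ contains some $\bar{l}$, and $U^{n-\bar{\chi}_k(\bar{l})}\otimes(k+2PDj_*(l))$ maps to $C$. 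For injectivity, if $\bar{l}$ and $\bar{l}'$ lie in the same component of $\bar{L}_{k,\leq N}$, choose a path $\bar{l}=\bar{l}_0,\ldots,\bar{l}_r=\bar{l}'$ of adjacent vertices staying in $\bar{L}_{k,\leq N}$, and apply relation (1) or (2) at each step (the choice dictated by the sign of the relevant $n_i$) to chain together the corresponding elements $U^{N-\bar{\chi}_k(\bar{l}_j)}\otimes(k+2PDj_*(l_j))$; the level computation guarantees the $U$-exponents at each intermediate step are nonnegative, so the relations apply.

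Finally, edges: an edge in $g\Kplus(\Gamma,[k])$ between $\underline{U^m\otimes k'}$ and $\underline{U^{m+1}\otimes k'}$ corresponds to $\Psi$-images at levels $N$ and $N+1$ that are represented by the same $\bar{l}$, and hence satisfy the inclusion $C\subset C'$ of components that defines an edge in $\bar{R}_k$. Conversely, given an edge $C\subset C'$ in $\bar{R}_k$ at levels $n,n+1$, pick $\bar{l}\in C$ and observe that $U^{n-\bar{\chi}_k(\bar{l})}\otimes(k+2PDj_*(l))$ and $U^{n+1-\bar{\chi}_k(\bar{l})}\otimes(k+2PDj_*(l))$ are joined by an edge in $g\Kplus(\Gamma,[k])$ mapping to this edge. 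The main obstacle is the injectivity step: one must confirm that the path in the sublevel set can actually be lifted to a chain of elementary equivalences in $\Kplus$, i.e., that the appropriate $U$-exponents remain nonnegative and that the correct branch of relation (1) versus (2) applies at each edge of the path; the identity $\bar{\chi}_k(\bar{l}+\overline{[v_i]})-\bar{\chi}_k(\bar{l})=-n_i$ together with the bound $\bar{\chi}_k(\bar{l}_j)\leq N$ is exactly what makes this work.
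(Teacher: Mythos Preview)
Your proposal is correct and takes essentially the same approach as the paper: your map $\Psi$ is exactly the paper's map $p$, and your level function $N$ is the quantity $\bar{\chi}_k(\bar{l}_{k'})+m$ appearing in the paper's definition of $p$. The only cosmetic difference is that the paper packages bijectivity by writing down an explicit inverse $q$ (sending a component $C$ at level $n$ to $\underline{U^{n-\bar{\chi}_k(\bar{l})}\otimes(k+2PDj_*(l))}$ for any $\bar{l}\in C$) and checking $pq=qp=\mathrm{Id}$, whereas you argue surjectivity and injectivity directly; the well-definedness of $q$ in the paper is precisely your injectivity step of lifting a path in the sublevel set to a chain of elementary equivalences.
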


\begin{proof}
This proof is essentially the same as Némethi's proof of \cite[Proposition 4.7]{MR2140997}. For completeness, we provide the details here.

By definition each element $k'\in [k]$ can be written as $k' = k+2PD[j_{*}(l)]$ for some $l\in L$. Let $\bar{l}_{k'} := \bar{l}\in \bar{L}$. Define a map $p:\Kplus(\Gamma, [k])\to \mathcal{V}(\bar{R}_{k})$ as follows:
\begin{align*}
    p(\underline{U^{m}\otimes k'}) = \text{ the connected component of }\bar{L}_{k, \leq \bar{\chi}_{k}(\bar{l}_{k'})+m}\text{ containing }\bar{l}_{k'}
\end{align*}
To show that $p$ is well-defined, let $2n_{i}=k'([v_{i}]) + ([v_{i}], [v_{i}])$. Suppose first that $n_{i}\geq 0$ so that we have $U^{n_{i}+m}\otimes (k' + 2PDj_{*}[v_{i}]) \sim U^m\otimes k'$. Let $k'' = k' + 2PDj_{*}[v_{i}]$. Then, $\bar{l}_{k''} = \bar{l}_{k'}+\overline{[v_{i}]}$. Thus,
\begin{align*}
    \bar{\chi}_{k}(\bar{l}_{k''}) + n_{i}+m &= \bar{\chi}_{k}(\bar{l}_{k'}) +\bar{\chi}_{k'}(\overline{[v_{i}]})+n_{i}+m\\
    &=\bar{\chi}_{k}(\bar{l}_{k'}) - n_{i}+n_{i}+m\\
    &=\bar{\chi}_{k}(\bar{l}_{k'})+m
\end{align*}
Therefore, $\bar{L}_{k, \leq \bar{\chi}_{k}(\bar{l}_{k'})+m} = \bar{L}_{k, \leq \bar{\chi}_{k}(\bar{l}_{k''})+n_{i}+m}$ and $\bar{l}_{k'}$ and $\bar{l}_{k''}$ are in the same connected component since they differ by $\overline{[v_{i}]}$. The case when $n_{i}<0$ is similar. This establishes that $p$ is well-defined.

Next we define a map $q:\mathcal{V}(\bar{R}_{k})\to\Kplus(\Gamma, [k])$ which we will show is the inverse of $p$. Suppose $v\in \mathcal{V}(\bar{R}_{k})$. Let $C_{v}$ be the corresponding connected component in $\bar{L}_{k,\leq \bar{\chi}_{k}(v)}$ and let $\bar{l}_{v}$ be some element in $\bar{L}\cap C_{v}$. Define 
\begin{align*}
    q(v) = \underline{U^{\bar{\chi}_{k}(v)-\bar{\chi}_{k}(\bar{l}_{v})}\otimes (k+2PDj_{*}(l_{v}))} 
\end{align*}
\newpage
To show $q$ is well-defined, suppose $\bar{l}'$ is some other element in $\bar{L}\cap C_{v}$. It suffices to consider the case that $\bar{l}' = \bar{l}_{v} + \overline{[v_{i}]}$ for some $i$. First note, 
\begin{align*}
\bar{\chi}_{k}(\bar{l}') = \bar{\chi}_{k}(\bar{l}_{v} + \overline{[v_{i}]})= \bar{\chi}_{k}(\bar{l}_{v}) + \bar{\chi}_{k}(\overline{[v_{i}]}) - ([v_{i}], l_{v})
\end{align*}
Also, 
\begin{align*}
    (k + 2PDj_{*}(l_{v}))(v_{i}) + (v_{i}, v_{i}) &= k(v_{i}) + (v_{i}, v_{i}) + 2(v_{i},l_{v})\\
    &=-2[\bar{\chi}_{k}(\overline{[v_{i}]}) - (v_{i}, l_{v})]
\end{align*}
Hence, if $-[\bar{\chi}_{k}(\overline{[v_{i}]}) - (v_{i}, l_{v})]\geq 0$, then
\begin{align*}
    U^{\bar{\chi}_{k}(v) - \bar{\chi}_{k}(\bar{l}_{v})}\otimes (k + 2PDj_{*}(l_{v}))&\sim U^{\bar{\chi}_{k}(v) - \bar{\chi}_{k}(\bar{l}_{v})-\bar{\chi}_{k}(\overline{[v_{i}]})+(v_{i}, l_{v})}\otimes (k + 2PDj_{*}(l_{v}+[v_{i}]))\\
    &=U^{\bar{\chi}_{k}(v) - \bar{\chi}_{k}(\bar{l}')}\otimes (k + 2PDj_{*}(l'))
\end{align*}
Similarly, if $-[\bar{\chi}_{k}(\overline{[v_{i}]}) - (v_{i}, l_{v})]< 0$, then
\begin{align*}
U^{\bar{\chi}_{k}(v)-\bar{\chi}_{k}(\bar{l}')}\otimes (k+2PDj_{*}(l'))&\sim U^{\bar{\chi}_{k}(v)-\bar{\chi}_{k}(\bar{l}')+\bar{\chi}_{k}(\overline{[v_{i}]}) - (v_{i}, l_{v})}\otimes (k+2PDj_{*}(l_{v}))\\
&=U^{\bar{\chi}_{k}(v) -\bar{\chi}_{k}(\bar{l}_{v})}\otimes (k+2PDj_{*}(l_{v}))
\end{align*}
Therefore, $q$ is well-defined. 

Now consider $qp(\underline{U^m\otimes k'})$ where $k' = k+2PDj_{*}(\bar{l}_{k})$. Let $v=p(\underline{U^m\otimes k'})$ and $C_{v}$ be the connected component of $\bar{L}_{k,\leq \bar{\chi}_{k}(\bar{l}_{k'})+m}$ containing $\bar{l}_{k'}$. Then, by definition
\begin{align*}
    q(v) &= \underline{U^{\bar{\chi}_{k}(\bar{l}_{k'}) + m - \bar{\chi}_{k}(\bar{l}_{k'})}\otimes k + 2PDj_{*}(\bar{l}_{k'})}\\
    &= \underline{U^m\otimes k'}
\end{align*}
Hence, $qp= Id$. The other direction, i.e. that $pq = Id$, is tautological. Therefore, $p$ is a bijection. To see that $p$ takes edges to edges bijectively, let $v_{1} = p(\underline{U^{m}\otimes k'})$ and $v_{2} = p(\underline{U^{m+1}\otimes k'})$. It follows directly from the definition that $C_{v_{1}}\subset C_{v_{2}}$ and $\bar{\chi}_{k}(v_{2}) -\bar{\chi}_{k}(v_{1}) = 1$. 
\end{proof}

\begin{rem}
It is useful to point out that under the isomorphism $p$ constructed in the above proof, we have that 
\begin{align*}
    \gr(p(\underline{U^m\otimes k'})) = m-\frac{(k')^2-k^2}{8}
\end{align*}
\end{rem}

\subsection{A quick review of Rustamov's results on negative semi-definite plumbings with $\mathbf{b_{1} =1}$}\label{subsection: rustamov results}

In \cite{Rustamov}, Rustamov generalizes the setting in which the isomorphism $T^+$, described in Subsection \ref{subsection: O-S H plus}, holds. In particular, Rustamov proves the following theorem:
\begin{thm}[{See \cite[Theorem 1.2]{Rustamov}}]\label{thm: rustamov}
Let $\Gamma$ be a negative semi-definite plumbing with at most one bad vertex and with $b_{1}(Y(\Gamma))=1$. Further, let $[k]$ be a torsion $\spinc$ structure. Then, 
\begin{enumerate}
    \item $T^+: HF_{odd}^+(-Y(\Gamma), [k])\to \mathrm{H}^+(\Gamma, [k])$ is an isomorphism of graded $\F[U]$-modules. \\
    \item $HF^+_{even}(-Y(\Gamma),[k])\cong \mathcal{T}^+_{d}$ where $d = d_{-1/2}(-Y(\Gamma), [k])$.
\end{enumerate}
Here $HF_{odd}^+(-Y(\Gamma), [k])$ and $HF_{even}^+(-Y(\Gamma), [k])$ refer to the submodules generated by elements of $HF^+(-Y(\Gamma), [k])$ of degrees congruent to $1/2\bmod 2$ and $-1/2\bmod 2$ respectively. 
\end{thm}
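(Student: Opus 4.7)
The plan is to follow the strategy originated by Ozsv\'ath--Szab\'o in \cite{MR1988284} for the negative definite case with at most one bad vertex, adapted to allow a one-dimensional degenerate subspace in the intersection form. The essential new feature is the presence of two infinite towers in $HF^\infty$, one in each grading parity mod $2$, and the proof must track how these interact with the combinatorial model $\mathrm{H}^+(\Gamma, [k])$. Throughout, the splitting $HF^+(-Y(\Gamma),[k]) = HF^+_{\text{odd}} \oplus HF^+_{\text{even}}$ from Proposition \ref{prop: HFI b1 standard} (together with the fact that $Q$ changes parity, hence the two towers sit in distinct parities) lets us analyze each summand separately.

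First I would induct on the complexity
\[
c(\Gamma) \;=\; \sum_{v \in \mathcal{V}(\Gamma)} \max\{0,\, m(v) + \delta(v)\},
\]
which vanishes exactly when no vertex is bad. The base case $c(\Gamma) = 0$ is the geometric input. Here one builds an explicit Heegaard diagram for $Y(\Gamma)$ from the plumbing generalizing the diagrams of \cite{MR1988284}, with a single free $S^1$-factor coming from the degenerate direction. A direct disk count shows: (a) the cobordism map $T^+$ identifies $HF^+_{\text{odd}}(-Y(\Gamma), [k])$ with $\mathrm{H}^+(\Gamma,[k])$, mirroring the negative definite case but carried out on the summand transverse to the degenerate direction in $\bar{L}$; and (b) $HF^+_{\text{even}}(-Y(\Gamma),[k]) \cong \mathcal{T}^+_{d_{-1/2}}$, with no reduced part. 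Assertion (b) uses Theorem \ref{thm - OS HF infinity} to know that $HF^\infty_{\text{even}}$ is a single tower, combined with a vanishing argument for reduced classes in even degree in the base case --- essentially, when no vertex is bad the holomorphic disks connecting the two parities are absent, so the even tower is ``pure.''

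For the inductive step, let $v_0$ be the unique bad vertex of $\Gamma$. Let $\Gamma^-$ be the plumbing obtained by replacing $m(v_0)$ with $m(v_0) - 1$, and let $\Gamma^0$ be obtained from $\Gamma$ by attaching a new $(-1)$-framed leaf at $v_0$ and then blowing down, so that the three plumbed 3-manifolds fit into a triple of 2-handle cobordisms inducing a surgery exact triangle
\[
\cdots \to HF^+(-Y(\Gamma^-),[k^-]) \to HF^+(-Y(\Gamma),[k]) \to HF^+(-Y(\Gamma^0),[k^0]) \to \cdots .
\]
Both $\Gamma^-$ and $\Gamma^0$ have strictly smaller complexity $c$, and they satisfy the hypotheses of the theorem --- possibly with $b_1 = 0$ in one case and $b_1 = 1$ in the other, so the analogous theorem of Ozsv\'ath--Szab\'o is available when $b_1$ drops to $0$. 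Simultaneously, one sets up the same triple for $\mathrm{H}^+$ using the natural restriction/extension maps on characteristic covectors dual to the change of framing at $v_0$; a direct combinatorial check shows these fit into an exact triangle of graded $\F[U]$-modules. A five-lemma argument, summed over all relevant $\spinc$ extensions on the cobordisms, then promotes the inductive isomorphisms to the desired isomorphism for $\Gamma$, and part (2) is deduced from the same diagram by tracking the $d_{-1/2}$-tower across the triangle.

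The hardest step will be the compatibility of $T^+$ with the surgery exact triangle, i.e.\ commutativity up to chain homotopy of the squares relating the cobordism maps $F^+_{W,\mathfrak{s}}$ on the Heegaard Floer side with the combinatorial restriction maps on the $\mathrm{H}^+$ side. This requires a careful accounting of which $\spinc$ structures on the cobordism restrict to $[k]$, $[k^-]$, $[k^0]$ respectively, together with the exact grading shifts. The $b_1 = 1$ hypothesis makes this more delicate than the rational homology sphere case: the two towers of $HF^\infty$ must be followed separately through the cobordism, and the semi-definiteness (rather than definiteness) of the intersection form is what guarantees that $F^\infty$ has the model form needed so that only the odd summand receives reduced contributions while the even summand remains a single tower. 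Once this compatibility square is in hand, both (1) and (2) fall out of the five-lemma together with the base case.
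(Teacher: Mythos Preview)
The paper does not prove this statement; it is quoted verbatim as Rustamov's result \cite[Theorem~1.2]{Rustamov} and used as a black box, so there is no in-paper proof to compare against.

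Your sketch follows the right broad architecture --- induction on a bad-vertex complexity, surgery exact triangle, five-lemma --- which is indeed the Ozsv\'ath--Szab\'o strategy that Rustamov adapts. Two points are off, however. First, you invoke Proposition~\ref{prop: HFI b1 standard} and the formal variable $Q$ to justify the odd/even splitting, but those belong to \emph{involutive} Heegaard Floer theory; Theorem~\ref{thm: rustamov} is a statement about ordinary $HF^+$, and the odd/even decomposition is simply the splitting by absolute $\Q$-grading mod $2$ coming from the standard form of $HF^\infty$ (Theorem~\ref{thm - OS HF infinity}). Second, the base case in the Ozsv\'ath--Szab\'o argument (and Rustamov's adaptation) is not handled by building an explicit Heegaard diagram with a ``free $S^1$-factor'' and counting disks. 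Rather, one first shows $T^+$ is injective for \emph{all} plumbings of the relevant type by analyzing compositions of cobordism maps to $S^3$, and then in the no-bad-vertex case deduces surjectivity by a rank comparison (these manifolds having the minimal possible $HF^+$). Your description of the base case as a direct holomorphic disk count is not how the argument goes and would be substantially harder to carry out.
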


\newpage
Combining Rustamov's result with the observations of the previous subsection, we get:
\begin{cor}\label{cor: HF and lattice identification}
With $\Gamma$ as above,  $HF_{odd}^+(-Y(\Gamma), [k])\cong \Hbb^{0}(\Gamma, k)\left[\frac{k^2+|V(\Gamma)|-3}{4}\right]$ as graded $\F[U]$-modules. In particular, up to an overall grading shift, $\Hbb^{0}(\Gamma, k)$ is a topological invariant of $Y(\Gamma)$. 
\end{cor}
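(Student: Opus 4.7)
The plan is to derive the corollary by directly splicing together two results already in place: Rustamov's isomorphism (Theorem \ref{thm: rustamov}) and Proposition \ref{prop:graded root lattice iso}. Since the hypotheses on $\Gamma$ in the corollary are precisely those of Theorem \ref{thm: rustamov}, and since $b_{1}(Y(\Gamma))=1$ is part of those hypotheses, the composition of isomorphisms will give the claimed grading shift with $3b_{1}(Y(\Gamma))=3$ specialized into the exponent.

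Concretely, I would first invoke Theorem \ref{thm: rustamov}(1) to obtain an isomorphism
\[
T^{+}:HF_{odd}^{+}(-Y(\Gamma),[k])\;\xrightarrow{\cong}\;\mathrm{H}^{+}(\Gamma,[k])
\]
of graded $\F[U]$-modules. Then I would apply Proposition \ref{prop:graded root lattice iso} to obtain a graded $\F[U]$-module isomorphism
\[
\mathrm{H}^{+}(\Gamma,[k])\;\cong\;\Hbb^{0}(\Gamma,k)\!\left[\frac{k^{2}+|\mathcal{V}(\Gamma)|-3b_{1}(Y(\Gamma))}{4}\right].
\]
Substituting $b_{1}(Y(\Gamma))=1$ into the grading shift and composing the two isomorphisms yields the asserted identification. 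One minor point to verify is that the chosen grading conventions match up: Rustamov's grading on $\mathrm{H}^{+}(\Gamma,[k])$ should be the adjusted one recorded in Subsection \ref{subsection: hf lattice relationship} (with the $-3b_{1}$ correction), which is how we set things up so that the isomorphism $T^{+}$ is grading-preserving on the nose. This is essentially a bookkeeping check, not a computation.

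For the second sentence, since $Y(\Gamma)$ determines $HF^{+}(-Y(\Gamma),[k])$ up to canonical isomorphism of graded $\F[U]$-modules, and the projection onto $HF_{odd}^{+}$ is intrinsic to this module, the left-hand side is a topological invariant of the pair $(Y(\Gamma),[k])$. The isomorphism above therefore forces $\Hbb^{0}(\Gamma,k)$ to be a topological invariant up to the grading shift $\frac{k^{2}+|\mathcal{V}(\Gamma)|-3}{4}$; different plumbing descriptions of the same 3-manifold can a priori produce different values of $k^{2}+|\mathcal{V}(\Gamma)|$, but only in a way that is absorbed by the overall grading shift.

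I do not anticipate any serious obstacles here: the corollary is a direct corollary in the strict sense, and the only care needed is in matching grading conventions between Rustamov's formulation of $\mathrm{H}^{+}$ and the one used in Proposition \ref{prop:graded root lattice iso}. If there is a subtlety, it will be in confirming that Rustamov's absolute $\Q$-grading on $HF^{+}(-Y(\Gamma),[k])$ (via the standard conventions of \cite{MR1988284}) is compatible with the grading convention for $\mathrm{H}^{+}(\Gamma,[k])$ adopted in Subsection \ref{subsection: hf lattice relationship}, so that no additional constant grading shift is hidden in the composition.
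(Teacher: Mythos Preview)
Your proposal is correct and matches the paper's own approach: the paper simply states that the corollary follows by ``combining Rustamov's result with the observations of the previous subsection,'' i.e., composing Theorem~\ref{thm: rustamov}(1) with Proposition~\ref{prop:graded root lattice iso} and specializing $b_1(Y(\Gamma))=1$. Your additional remarks on grading conventions and the topological invariance claim are reasonable elaborations of what the paper leaves implicit.
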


\begin{rem}
It is likely possible that one can prove $\Hbb^{0}(\Gamma, k)\left[\frac{k^2+|V(\Gamma)|-3}{4}\right]$ is a topological invariant without appealing to Heegaard Floer homology, by showing invariance under Neumann moves as in the proof of \cite[Proposition 4.6]{MR2140997}.
\end{rem}
\newpage

\section{Calculation method}\label{section:computation method}
Throughout this section, fix a negative semi-definite plumbing $\Gamma$ with at most one bad vertex and such that $b_{1}(Y(\Gamma)) = 1$. Let $[k]$ be a self-conjugate $\spinc$ structure on $Y(\Gamma)$. In other words, $[k]=[-k]$ or, equivalently, $k = PD[j_{*}(l)]$ for some $l \in L$. Note that by identifying $\bar{l}$ with $k$, we can think of $k$ as an element of $\bar{L}$.

\subsection{Involutions on lattice cohomology and Heegaard Floer homology}\label{subsection: involutions}
 As in \cite[Section 2]{MR4021102}, define $J_{0}: \bar{L}\to \bar{L}$ by $J_{0}(\bar{x}) = -\bar{x}-\bar{l}$. Clearly, $J_{0}^2 = Id$. We can extend $J_{0}$ to a cubical involution on the cube complex $\mathcal{C}$ considered in \hyperref[subsection:construciton 2]{Construction 2} of lattice cohomology via the formula, 
\begin{align*}
   J_{0}\square(\bar{a}, I) = \square(J_{0}(\bar{a}+\sum\limits_{i\in I}\overline{[v_{i}]}), I) 
\end{align*}
It is straightforward to check that $J_{0}$ is compatible with the gluing of the cells. Moreover, since $\bar{\chi}_{k}(J_{0}(\bar{x})) = \bar{\chi}_{k}(\bar{x})$ for all $\bar{x}\in \bar{L}$, $J_{0}$ maps the subcomplex $S_{n}$ of $\mathcal{C}$ to itself. Therefore, $J_{0}$ induces an involution on $H^q(S_{n};\Z)$ or each $n, q$ and hence on lattice cohomology. By an abuse of notation, we denote the involution on lattice cohomology again by $J_{0}$. In a similar manner, one could alternatively define $J_{0}$ by using \hyperref[subsection: construction 1]{Construction 1}, but we leave the details to the reader. 

Focusing our attention on the $0th$-level of lattice cohomology, we can think of the action of $J_{0}$ on $\Hbb^{0}$ from the dual perspective by realizing an involution on the associated graded root. More specifically, since $J_{0}$ acts continuously on $S_{n}$, $J_{0}$ also induces an involution on the connected components of $S_{n}$. Hence, $J_{0}$ induces an involution on the graded root $(\bar{R}_{k}, \bar{\chi}_{k})$. From another perspective, under the identification of $(\bar{R}_{k}, \bar{\chi}_{k})$ with  $g\Kplus(\Gamma, [k])$ given in Proposition \ref{prop: H dual and graded root}, the involution $J_{0}$ sends $\underline{U^m\otimes k'}$ to $\underline{U^m\otimes -k'}$. 

There is a fundamental difference between the action of $J_{0}$ on the graded root in the negative definite and negative semi-definite cases. Before describing this difference, we need to recall the following definition:

\begin{defn}[{See \cite[Definition 2.11]{MR4021102}}]\label{def: symmetric involution}
A \textit{symmetric graded root} is a graded root $(R, \chi)$ together with an involution $J: \mathcal{V}(R)\to \mathcal{V}(R)$ such that
\begin{itemize}
    \item $\chi(v) = \chi(Jv)$ for any vertex $v$
    \item $[v, w]$ is an edge in $R$ if an only if $[Jv, Jw]$ is an edge in $R$
    \item for every $r\in \Q$, there is at most one $J$ invariant vertex $v$ with $\chi(v) = r$
\end{itemize}
We call such a $J$ a \textit{symmetric involution}. 
\end{defn}

In \cite[Lemma 2.1]{MR3802260} (see also \cite[Section 2.1]{MR4021102}) it is shown that the graded root $(R_{k}, \chi_{k})$ of a negative definite almost rational plumbing with $[k]$ self-conjugate is symmetric and $J_{0}$ is a symmetric involution; in particular, this holds if the plumbing has at most one bad vertex. However, if the plumbing is negative semi-definite and has at most one bad vertex, then the proof of \cite[Lemma 2.1]{MR3802260} no longer works and, as we show in section \ref{subsection: comparison}, $J_{0}$ need not be a symmetric involution. 

The proof of \cite[Lemma 2.1]{MR3802260} uses the classical Lefshetz fixed-point theorem and relies crucially on the fact that for $\Gamma$ negative definite and almost rational, $\Hbb^q(\Gamma, k)=0$ for $q>0$. However, as we have seen in Examples \ref{ex:S1S2}, when $\Gamma$ is negative semi-definite, it is not necessarily true that $\Hbb^q(\Gamma, k)=0$ for $q>0$. Hence, the proof that $J_{0}$ is symmetric fails in this case. As we will demonstrate in section \ref{section: examples}, the possibility that $J_{0}$ is not symmetric has important implications on the involutive $d$ invariants and hence on properties regarding spin cobordism and $0$-surgery.

Despite the difference in behavior of the involution $J_{0}$ in the negative definite and negative semi-definite cases, the proof of \cite[Theorem 3.1]{MR4021102} still holds in the negative semi-definite setting to give an identification of $J_{0}$ on $\Hbb^{0}(\Gamma, k)$ with the involution $\iota_{*}$ on $HF^+(-Y(\Gamma), [k])$. More precisely, 
\begin{thm}[{See \cite[Theorem 3.1]{MR4021102}}]\label{thm: involution identification}
Let $\Gamma$ be a negative semi-definite plumbing with at most one bad vertex and such that $b_{1}(Y(\Gamma))=1$. If $[k]$ is a self-conjugate $\spinc$ structure, then under the isomorphism given in Corollary \ref{cor: HF and lattice identification} the maps $J_{0}$ and the restriction of $\iota_{*}$ to $HF^+_{odd}(-Y(\Gamma), [k])$ are identified. 
\end{thm}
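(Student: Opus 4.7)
The plan is to follow the argument of Dai--Manolescu in \cite[Theorem 3.1]{MR4021102} and verify that each step carries over to the negative semi-definite setting. The core idea is that the map $\iota_{*}$ on $HF^+(-Y(\Gamma), [k])$ is induced by Spin$^c$ conjugation, and under the identification of $HF^+_{odd}$ with $\Hbb^0$ via Rustamov's Theorem \ref{thm: rustamov} combined with Corollary \ref{cor: HF and lattice identification}, this conjugation action on the plumbing cobordism is recorded combinatorially as $J_0$ on the quotient lattice $\bar{L}$.

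First I would recall that $T^+: HF^+_{odd}(-Y(\Gamma), [k])\to \mathrm{H}^+(\Gamma,[k])$ is assembled from the cobordism maps $F^+_{W_{\Gamma},\mathfrak{s}}$ with $c_{1}(\mathfrak{s})\in [k]$. The naturality of the conjugation involution under cobordism maps gives $F^+_{W_{\Gamma},\bar{\mathfrak{s}}}\circ \iota = \iota\circ F^+_{W_{\Gamma},\mathfrak{s}}$ up to chain homotopy, and since $c_{1}(\bar{\mathfrak{s}}) = -c_{1}(\mathfrak{s})$, this forces $\iota_{*}$ on $\mathrm{H}^+(\Gamma,[k])$ to be the map that sends a functional supported at a characteristic vector $k'$ to one supported at $-k'$, with a compatible $U$-equivariant shift. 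Next I would translate this action to the lattice picture: writing $k' = k + 2PD\,j_{*}(l)$ exhibits an element of $\mathrm{H}^+(\Gamma, [k])$ as corresponding, via Proposition \ref{prop:graded root lattice iso}, to a vertex $\bar{l}\in \bar{L}$. Since $[k]$ is self-conjugate and $k = PD[j_{*}(l_{0})]$ for some $l_{0}\in L$, we have $-k' = k + 2PD\,j_{*}(-l_{0}-l)$, whose image in $\bar{L}$ is $-\bar{l}_{0}-\bar{l} = J_{0}(\bar{l})$. This pinpoints the identification of Spin$^c$ conjugation with the cubical involution $J_{0}$ on $\Hbb^{0}(\Gamma, k)$, matching the formula in subsection \ref{subsection: involutions}.

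The main obstacle is precisely that we are working in the semi-definite setting, where Rustamov's isomorphism holds only on $HF^+_{odd}$ and where the argument of \cite{MR4021102} must be checked to not require negative definiteness at the chain level. The Dai--Manolescu chain-level argument uses the decomposition $\iota = \Phi(\overline{\mathcal{H}}, \mathcal{H})\circ \eta$ applied to a Heegaard diagram compatible with the plumbing, together with handle-slide invariance of the cobordism maps; none of this input actually uses definiteness, only the structural fact that $T^+$ is built from cobordism maps indexed by $\spinc$ structures on $W_{\Gamma}$. The potential failure of $J_{0}$ to be a symmetric involution in the sense of Definition \ref{def: symmetric involution} is not an obstruction here because the theorem asserts only the identification of maps, not any structural consequence of symmetry; that distinction becomes relevant only later, when the involutive $d$-invariants are extracted. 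Finally, one must check that the decomposition of $HF^+$ according to the parity of the absolute grading is preserved by $\iota_{*}$, which follows from the fact that $\iota_{*}$ is grading-preserving together with the description of $HF^{\infty}_{r}(Y(\Gamma),[k])$ established in Proposition \ref{prop: HFI b1 standard}.
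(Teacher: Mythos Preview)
Your proposal is correct and matches the paper's approach: the paper gives no independent proof but simply asserts that the argument of \cite[Theorem 3.1]{MR4021102} goes through unchanged in the semi-definite setting, and you have spelled out exactly what that entails---the conjugation symmetry of the cobordism maps $F^+_{W_{\Gamma},\mathfrak{s}}$, triviality of $\iota_{*}$ on $HF^+(S^3)$, and the resulting identification $k'\mapsto -k'$ on $\mathrm{H}^+(\Gamma,[k])$, which translates under Proposition~\ref{prop:graded root lattice iso} to $\bar{l}\mapsto -\bar{l}-\bar{l}_{0}=J_{0}(\bar{l})$. Your remarks on why definiteness is not needed and why the failure of $J_{0}$ to be symmetric is irrelevant here are also to the point and consistent with the paper's surrounding discussion.
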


The action of $\iota_{*}$ on the even part of $HF^+$ is less interesting. Since $HF_{even}^+(-Y(\Gamma), [k])\cong \mathcal{T}_{d}$ and $\iota_{*}$ is $U$-equivariant, the restriction of $\iota_{*}$ to the even part must be the identity. Moreover, if one knows $HF^+$ and $\iota_{*}$, then by using the mapping cone exact triangle in Proposition \ref{prop: HFI exact triangle}, one can completely determine $HFI^+$ as a graded $\F$-vector space. 

In the context of negative definite almost rational plumbings, Dai and Manolescu show that one can actually determine the entire $\F[U,Q]/(Q^2)$-module structure of $HFI^+$ just from knowing $J_{0}$ (see \cite[Sections 4-5]{MR4021102}). However, one encounters issues when trying to extrapolate their methods to the case of negative semi-definite plumbings with at most one bad vertex. The main difficulty is that in the negative definite almost rational case, $HF^+$ is supported in even gradings, whereas in the negative semi-definite case, $HF^+$ has gradings in both even and odd dimensions which allows for the possibility of a more complicated action of $\iota$ at the chain level. Despite this issue, for negative semi-definite plumbings with at most one bad vertex whose $HF^+$ and $\iota_{*}$ are sufficiently simple, it is still possible to compute much, if not all, of the $\F[U,Q]/(Q^2)$-module structure of $HFI^+$ as well as some of the involutive $d$ invariants just from the mapping cone exact triangle. We illustrate this via the examples in section \ref{section: examples}. 

\subsection{Computation of $HFI^+(-Y(\Gamma), [k])$ as a graded $\F$-vector space}\label{subsection: comp strategy} We summarize the strategy we use to compute $HFI^+(-Y(\Gamma), [k])$ as a graded graded $\F$-vector space in the following 3-step process and then elaborate on each individual step.
\begin{enumerate}
    \item Compute $HF^+(-Y(\Gamma), [k])$ using the methods from section \ref{section: homology of plumbings}.\label{step 1}
    
    \item Use Theorem \ref{thm: involution identification} to compute the involution 
    \begin{align*}
         \iota_{*}:HF^+(-Y(\Gamma), [k])\to HF^+(-Y(\Gamma), [k])
    \end{align*}
   
    \item Apply the exact triangle relating $HF^+$ and $HFI^+$ from Proposition \ref{prop: HFI exact triangle}.

\end{enumerate}
\vspace{1em}
\indent\underline{Step (1)}: 
To compute $HF^+(-Y(\Gamma), [k])$, the first and main step is to determine the set 
\begin{align*}
    \mathcal{L}(\Gamma, [k]): = \{x\in \Kplus(\Gamma, [k])\ |\ x\text{ has no representative of the form } U^{n}\otimes k' \text{ for }n>0\}
\end{align*}
It is easy to see that the elements of $\mathcal{L}(\Gamma, [k])$ correspond to the leaves of the graded root $(\bar{R}_{k}, \bar{\chi}_{k})$ under the isomorphism in Proposition \ref{prop: H dual and graded root}. Moreover, from the results in section \ref{subsection: rustamov results}, it follows that the leaves of $(\bar{R}_{k}, \bar{\chi}_{k})$ correspond to a basis of the $\mathbb{F}$-vector space: 
\begin{align*}
    \ker(U)\cap HF_{odd}^+(-Y(\Gamma), [k])
\end{align*}

In \cite[Section 3]{Rustamov}, Rustamov provides an algorithm to compute $\mathcal{L}(\Gamma, [k])$ which builds on the Ozsváth and Szabó algorithm in \cite[Section 3]{MR1988284} for negative definite plumbings. For our computations in section \ref{section: examples}, rather than use Rustamov's algorithm directly, we instead will use a simple criterion (see Proposition \ref{prop:algorithm} below) which characterizes the elements of $\mathcal{L}(\Gamma, [k])$.

To explain this criterion, first recall from section \ref{subsection: hf lattice relationship} that two elements $U^m\otimes k'$ and $U^n\otimes k''$ are equivalent (i.e. represent the same element of $\Kplus(\Gamma, [k])$) if and only if there is a path between them. In particular, every element of $\mathcal{L}(\Gamma, [k])$ is represented by an element of the form $U^{0}\otimes k'$ and every element of a path connecting $U^{0}\otimes k'$ to another representative must also have $0$ as the exponent on the $U$ term. Therefore, when discussing representatives or paths for elements in $\mathcal{L}(\Gamma, [k])$, we can drop the $U^0$ term and instead think of a representative as an element $k'\in [k]$ and a path as a sequence of vectors $k_{1}, \ldots, k_{j}\in [k]$. Furthermore, the relations defining such a path imply that for adjacent elements $k_{i}, k_{i+1}$ we have that $k_{i+1} = k_{i}\pm 2PD[v]$ for some $v\in \mathcal{V}(\Gamma)$ with $k_{i}(v) = \mp m(v)$. Additionally, it follows from the definition that a representative $k'$ of an element in $\mathcal{L}(\Gamma, [k])$ must satisfy the following property:
\begin{align*}
    m(v) \leq k'(v)\leq -m(v)
\end{align*}
for all $v\in \mathcal{V}(\Gamma)$. We refer to this property as $\star$\label{star} and we let $\star[k] = \{k'\in [k] : k'\text{ satisfies }\star\}$. 

Combining these observations, we get the following proposition:

\begin{prop}\label{prop:algorithm}
An element $k'\in [k]$ represents an element of $\mathcal{L}(\Gamma, [k])$ if and only if $k'$ satisfies \hyperref[star]{$\star$} and every element on every path containing $k'$ also satisfies \hyperref[star]{$\star$}. 
\end{prop}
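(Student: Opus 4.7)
The plan is to reduce the proposition to a single-step statement about elementary relations in the definition of $\Kplus(\Gamma, [k])$. Specifically, I would first establish a local lemma: for any $k''\in[k]$, the element $k''$ fails \hyperref[star]{$\star$} if and only if there exists a one-step relation of the form $U^0\otimes k''\sim U^n\otimes(k''\pm 2PDj_{*}[v])$ with $n>0$ for some $v\in\mathcal{V}(\Gamma)$. The ``only if'' direction is a direct computation: if $k''(v)>-m(v)$, then $n_v=(k''(v)+m(v))/2>0$ and relation (1) in the definition of $\Kplus$ gives $U^{n_v}\otimes(k''+2PDj_{*}[v])\sim U^0\otimes k''$; the symmetric case $k''(v)<m(v)$ is handled by applying relation (2) at $k''-2PDj_{*}[v]$. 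The ``if'' direction amounts to verifying that when $k''$ satisfies \hyperref[star]{$\star$} at every vertex, neither relation (1) nor relation (2) can be arranged so that $U^0\otimes k''$ sits on the lower-exponent side of a jump with $n>0$; this is a short case analysis depending on the relation type used and on which of the two elements plays the role of the ``source'' $k$ in the definition.

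Granted this local lemma, the forward direction of the proposition is essentially immediate. If $k'$ represents an element of $\mathcal{L}(\Gamma,[k])$ and $k''$ lies on any $U^0$-path from $k'$, then $\underline{U^0\otimes k''}=\underline{U^0\otimes k'}$ is in $\mathcal{L}(\Gamma,[k])$, so this class has no representative with positive $U$-exponent, and the local lemma forces $k''$ to satisfy \hyperref[star]{$\star$}.

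For the backward direction I would argue by contradiction. Suppose $k'$ and every element of every $U^0$-path from $k'$ satisfy \hyperref[star]{$\star$}, yet $\underline{U^0\otimes k'}$ has some representative $U^n\otimes k''$ with $n>0$. Choose a path $U^{m_0}\otimes k_0,\ldots,U^{m_\ell}\otimes k_\ell$ witnessing this equivalence, with $m_0=0$ and $m_\ell=n>0$, and let $i$ be the smallest index with $m_i>0$. Then $k_0=k',k_1,\ldots,k_{i-1}$ is a $U^0$-path from $k'$ by construction, while the step $U^0\otimes k_{i-1}\to U^{m_i}\otimes k_i$ strictly raises the $U$-exponent. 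The local lemma then forces $k_{i-1}$ to violate \hyperref[star]{$\star$}, contradicting the hypothesis.

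The main obstacle is the ``if'' direction of the local lemma: although each individual sub-case is elementary, several sub-cases must be tracked carefully to ensure that the sign conditions governing relations (1) and (2) and the constraint that $U$-exponents be non-negative remain consistent. Once the local lemma is in hand, the remainder of the argument is a direct combinatorial deduction from the definition of a $U^0$-path together with the fact that the first moment a path leaves the $U^0$ level is itself a one-step elementary relation.
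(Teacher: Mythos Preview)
Your proposal is correct. The paper does not actually give a proof of this proposition; it states the result as an immediate consequence of the preceding observations (namely, that a representative of an element of $\mathcal{L}(\Gamma,[k])$ must satisfy $\star$, and that adjacent elements of a $U^0$-path are related by $k_{i+1}=k_i\pm 2PD[v]$ with $k_i(v)=\mp m(v)$). Your local lemma makes precise the content of the first of these observations, and your forward and backward arguments spell out exactly the reasoning the paper leaves implicit. In particular, your verification that the initial segment $k_0,\ldots,k_{i-1}$ of a general $\Kplus$-path with $m_0=\cdots=m_{i-1}=0$ is indeed a $U^0$-path in the paper's sense (so that the hypothesis applies to $k_{i-1}$) is the one point the paper glosses over, and your treatment of it is fine.
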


After using Proposition \ref{prop:algorithm} to find elements $k_{1},\ldots, k_{n}\in [k]$ which represent the distinct elements of $\mathcal{L}(\Gamma, [k])$, it then follows that every other vertex of $(\bar{R}_{k}, \bar{\chi}_{k})$ corresponds to an element of the form $\underline{U^m\otimes k_{i}}$ for some $m$ and $i$. Of course, there could be relations of the form $\underline{U^{m_{1}}\otimes k_{i}} = \underline{U^{m_{2}}\otimes k_{j}}$. To determine these relations, in principle, one can write down the elements of the equivalence classes $\underline{U^{m_{1}}\otimes k_{i}}$ and $\underline{U^{m_{2}}\otimes k_{j}}$ and see whether they are equal. However, this can be quite tedious to do by hand and, in simple enough situations, there are shortcuts one can take by leveraging properties of $HF^+$. For example, we will use the relationship between Turaev torsion and $HF^+$ established in \cite[Theorem 10.17]{MR2113020} to complete the computation of $(\bar{R}_{k}, \bar{\chi}_{k})$ for the manifolds $N_{j}$.

By sections \ref{subsection: hf lattice relationship} and \ref{subsection: rustamov results}, once we have computed $(\bar{R}_{k}, \bar{\chi}_{k})$, we know $HF^+_{odd}(-Y(\Gamma), [k])$. Furthermore, by Rustamov, we know that $HF^+_{even}(-Y(\Gamma), [k]) = \mathcal{T}^+_{d_{-1/2}}$. So to complete the computation of  $HF^+(-Y(\Gamma), [k])$ it suffices to compute $d_{-1/2}(-Y(\Gamma), [k])$. As noted in Rustamov, one strategy to compute $d_{-1/2}(-Y(\Gamma), [k])$ is to first notice that $d_{-1/2}(-Y(\Gamma), [k]) = d_{1/2}(Y(\Gamma), [k])$. Then, if we can find a negative semi-definite plumbing with one bad vertex representing $-Y(\Gamma)$, we can repeat the above steps to compute $HF_{even}^+(Y(\Gamma), [k])$ which then gives us $d_{1/2}(Y(\Gamma), [k])$ and hence $d_{-1/2}(-Y(\Gamma), [k])$. This is the approach we take.

\underline{Step (2)}:
Having done the computations in step (1), it is now easy to complete step (2). By Theorem \ref{thm: involution identification}, to compute $\iota_{*}$, we just need to compute $J_{0}$. As noted in section \ref{subsection: involutions}, $J_{0}$ simply maps $\underline{U^m\otimes k'}$ to $\underline{U^m\otimes -k'}$. $J_{0}$ is also $U$-equivariant. Thus, to compute $J_{0}$, we just need to determine for each leaf representative $k_{i}$, which representative $k_{j}$ corresponds to $-k_{i}$. This amounts to finding a path from $-k_{i}$ to one of the $k_{j}$. 
\newpage

\underline{Step (3)}:
It follows from Proposition \ref{prop: HFI exact triangle} and basic homological algebra, that as a graded $\mathbb{F}$-vector space:
\begin{align*}
    HFI^+_{r}(-Y(\Gamma),[k])\cong \ker Q(1+\iota_{*})_{r-1} \oplus \coker Q(1+\iota_{*})_{r}
\end{align*}
where 
\begin{align*}
    \ker Q(1+\iota_{*})_{r-1} = \ker[Q(1+\iota_{*}):HF^+_{r-1}(-Y(\Gamma),[k])\to Q\cdot HF^+_{r-1}(-Y(\Gamma),[k])]
\end{align*}
and 
\begin{align*}
    \coker Q(1+\iota_{*})_{r} = \coker[Q(1+\iota_{*}):HF^+_{r}(-Y(\Gamma),[k])\to Q\cdot HF^+_{r}(-Y(\Gamma),[k])]
\end{align*}
Furthermore, steps (1) and (2) give us all of the ingredients to compute $\ker Q(1+\iota_{*})_{r-1}$ and $\coker Q(1+\iota_{*})_{r}$ for each $r$. 

\newpage

\section{Small Seifert fibered space examples}\label{section: examples}
In this section, we compute $HFI^+(-N_{j}, \mathfrak{s}_{0})$ for the infinite family of small Seifert fiber spaces $\{N_{j}\}_{j\in\mathbb{N}}$ described in the introduction. As an application, we prove \hyperref[thm: nj not zero surgery]{Theorem (E)}. We also compute $HFI^+(-S^3_{0}(K_{1}),\mathfrak{s}_{0})$ where $S^3_{0}(K_{1})$ is the manifold obtained by $0$-surgery on the Ichihara-Motegi-Song knot $K_{1}$ from \cite{MR2499823}. We then compare $HFI^+(- S^3_{0}(K_{1}),\mathfrak{s}_{0})$ and $HFI^+(- N_{1}, \mathfrak{s}_{0})$. 

\subsection{Moves between equivalent vectors}
Let $\Gamma$ be a negative semi-definite plumbing with at most one bad vertex and with $b_{1} = 1$. Suppose $\Gamma$ contains a linear subgraph $\Lambda$ with framing $-2$ at each vertex:  

\begin{figure}[h]
\centering
\includegraphics[scale=1]{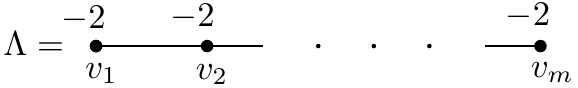}
\end{figure}
Let $[k]$ be a self-conjugate $\spinc$ structure on $Y(\Gamma)$. Given a characteristic vector $k'\in [k]$, let $k'_{\Lambda} = (a_{1}, \ldots, a_{m})$ be the subvector corresponding to the vertices $v_{1}, \ldots, v_{m}$. We call $k'_{\Lambda}$ the $\Lambda$-subvector of $k'$. 

Note, if $k'\in [k]$ and satisfies \hyperref[star]{$\star$}, then we must have $a_{i}\in\{-2, 0, 2\}$ for each $1\leq i\leq m$. If there exists some $i$ such that $a_{i} = \pm 2$, then $k''=k'\pm 2PD[v_{i}]$ is an equivalent vector. In particular, 
\begin{align*}
    k''_{\Lambda} = (a_{1}, \ldots, a_{i-1} \pm 2, \mp 2, a_{i+1} \pm 2, \ldots, a_{m})
\end{align*}
Of course, other entries of $k''$ not contained in $k''_{\Lambda}$ may also differ from those of $k'$. Specifically, any entry $a$ of $k'$ corresponding to a vertex adjacent to $v_{i}$ will change from $a$ to $a\pm 2$. We call the replacement of $k'$ with $k'' = k'\pm 2PD[v_{i}]$ where $k'(v_{i})=\pm 2$ a move of \textbf{type $\pm 2$}.

Next suppose $k'_{\Lambda} = (a_{1},\ldots, a_{i}, 0,\ldots, 0, 2, -2, a_{j}, \ldots, a_{m})$. Then, by iteratively applying type $+2$ moves to the $+2$-entry, we can convert $k'$ into an equivalent vector $k''$ with:
\begin{align*}
    k''_{\Lambda} = (a_{1},\ldots, a_{i}, 2,-2, 0,\ldots, 0, a_{j}, \ldots, a_{m}) 
\end{align*}
We call the replacement of $k'$ with $k''$ or $k''$ with $k'$ a $(2,-2)$-\textbf{slide}. We define a $(-2,2)$-\textbf{slide} analogously.

\begin{lem}
Let $k'\in [k]$ be a vector with $k'_{\Lambda} = (a_{1}, \ldots, a_{i}, 0, \pm 2, 0, \ldots, 0, \mp 2, a_{j}, \ldots, a_{m})$. Then, $k'$ is equivalent to a vector $k''$ with $k''_{\Lambda}= (a_{1}, \ldots, a_{i}, \pm 2, 0, \ldots, 0, \mp 2, 0, a_{j}, \ldots, a_{m})$. 
\end{lem}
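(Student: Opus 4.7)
The plan is to prove the lemma by an explicit iterative construction: I would push the $+2$ entry rightward through the corridor of zeros via successive type-$(+2)$ moves, so that it eventually annihilates with the $-2$ and leaves the desired pattern in place. I assume $\pm 2 = +2$ and $\mp 2 = -2$ without loss of generality; the opposite sign case follows by a symmetric argument using type-$(-2)$ moves.

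Let $z \geq 0$ denote the number of zeros between the $+2$ and $-2$ in the given block, so the initial $\Lambda$-values at positions $v_{i+1}, \ldots, v_{j-1}$ read $(0, +2, 0, \ldots, 0, -2)$. Recall that a type-$(+2)$ move at a vertex $v_{s}$ of $\Lambda$ carrying $+2$ modifies the triple of values at $(v_{s-1}, v_s, v_{s+1})$ by adding $(+2, -4, +2)$. I would apply $z+1$ successive type-$(+2)$ moves at the vertices $v_{i+2}, v_{i+3}, \ldots, v_{i+z+2}$, in that order. A direct induction on the step index $\ell$ should show that after step $\ell$, for $1 \leq \ell \leq z$, the block reads
\begin{align*}
   (+2,\, \underbrace{0, \ldots, 0}_{\ell-1},\, -2,\, +2,\, \underbrace{0, \ldots, 0}_{z-\ell},\, -2),
\end{align*}
consisting of a stationary $+2$ at position $v_{i+1}$ (the desired final $+2$) together with a $(-2, +2)$ pair that has migrated $\ell$ positions to the right of its initial location. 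The final step, $\ell = z+1$, is then applied to the triple $(-2, +2, -2)$ at the right end of the corridor; it produces $(0, -2, 0)$, yielding exactly the target block $(+2, 0, \ldots, 0, -2, 0)$.

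The argument poses no real obstacle; its entire substance is the routine bookkeeping of how a single type-$(+2)$ move affects a local triple of values. The remaining verifications are that each step is legitimately a type-$(+2)$ move, which holds since the targeted vertex carries $+2$ by the inductive description above, and that every intermediate vector continues to satisfy \hyperref[star]{$\star$} on $\Lambda$, which is immediate because all block entries stay in $\{-2, 0, +2\}$, well inside the required range $[m(v), -m(v)] = [-2, 2]$.
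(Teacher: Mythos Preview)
Your proof is correct and is essentially the same argument as the paper's: the paper applies a type $\pm 2$ move, then a rightward $(\mp 2,\pm 2)$-slide, then another type $\pm 2$ move, and since the slide is itself defined as an iterated sequence of type $\pm 2$ moves, unpacking it yields exactly your sequence of $z+1$ moves at $v_{i+2},\ldots,v_{i+z+2}$. The only cosmetic difference is that the paper invokes the previously named ``slide'' move as a black box, whereas you carry out the induction explicitly.
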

\begin{proof}
Apply a type $\pm 2$ move to the $\pm 2$-entry to get an equivalent vector $h'$ with:
\begin{align*}
    h'_{\Lambda} = (a_{1},\ldots, a_{i}, \pm 2, \mp 2, \pm 2, 0, \ldots, 0,\mp 2, a_{j}, \ldots, a_{m})
\end{align*}
Now do a rightward $(\mp 2, \pm 2)$-slide to $h'$ to convert $h'$ into an equivalent vector $h''$ with: 
\begin{align*}
    h''_{\Lambda} = (a_{1},\ldots, a_{i}, \pm 2, 0,\ldots, 0, \mp 2, \pm 2, \mp 2, a_{j}, \ldots, a_{m})
\end{align*}
Finally apply a type $\pm 2$ move to the rightmost $\pm 2$-entry to get an equivalent vector $k''$ with: 
\begin{align*}
    k''_{\Lambda} = (a_{1}, \ldots, a_{i}, \pm 2, 0, \ldots, 0, \mp 2, 0, a_{j}, \ldots, a_{m})
\end{align*}
\end{proof}
By iterating the sequence of moves described in the above proof, we can now convert any vector $k'\in [k]$ with:
\begin{align*}
    k'_{\Lambda} = (a_{1},\ldots, a_{i}, 0,\ldots, 0, \pm 2, 0, \ldots, 0, \mp 2, a_{j}, \ldots, a_{m})
\end{align*}
into an equivalent vector $k''$ with:
\begin{align*}
    k''_{\Lambda} = (a_{1}, \ldots, a_{i}, \pm 2, 0,\ldots, 0, \mp 2, 0 ,\ldots, 0, a_{j},\ldots, a_{m})
\end{align*}
By an abuse of notation, we also call the replacement of $k'$ with $k''$ or $k''$ with $k'$ via the above sequence of moves a \textbf{$(\pm 2, \mp 2)$-slide}.

\begin{lem}\label{lem: alternating}
Suppose $k'\in [k]$ represents an element of $\mathcal{L}(\Gamma, [k])$, then either $k'_{\Lambda}$ is the zero vector or it has entries which alternate between $2$ and $-2$ with possibly $0$s inbetween. 
\end{lem}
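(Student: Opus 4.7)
The plan is to argue by contradiction using Proposition \ref{prop:algorithm}. Assuming the conclusion fails, I will exhibit a single path of equivalent characteristic vectors starting at $k'$ whose final term violates \hyperref[star]{$\star$}, which by the proposition forces $k'$ not to represent a leaf.

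Supposing the lemma fails, I would first note that, since $k' \in \star[k]$, every entry of $k'_{\Lambda}$ lies in $\{-2, 0, 2\}$, so the failure of alternation produces positions $i < j$ with $k'_{\Lambda}(i) = k'_{\Lambda}(j) \neq 0$ and $k'_{\Lambda}(\ell) = 0$ for all $i < \ell < j$. By symmetry I may assume both entries are $+2$; the opposite sign case is handled identically with type $-2$ moves in place of type $+2$ moves.

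The main step is then to perform type $+2$ moves successively at $v_{i}, v_{i+1}, \ldots, v_{j-1}$, producing a sequence
\[
k^{(0)} = k', \qquad k^{(t)} := k' + 2\sum_{s=0}^{t-1} PD[v_{i+s}].
\]
A short induction, exploiting the fact that in the linear chain $\Lambda$ the only vertex among $v_{i}, \ldots, v_{i+t-1}$ adjacent to $v_{i+t}$ is $v_{i+t-1}$, shows that $k^{(t)}(v_{i+t}) = k'(v_{i+t}) + 2([v_{i+t-1}],[v_{i+t}]) = 0 + 2 = 2$ for every $1 \leq t \leq j-i-1$, while $k^{(0)}(v_{i}) = 2$ by hypothesis. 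Hence each move really is a type $+2$ move (i.e.\ corresponds to the relation with $n_{v_{i+t}} = 0$), so the entire sequence is a bona-fide path of equivalent vectors at $U$-exponent zero in the sense of Section \ref{subsection: comp strategy}.

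For the final vertex, $v_{j}$ is only adjacent to $v_{j-1}$ among $\{v_{i}, \ldots, v_{j-1}\}$, so
\[
k^{(j-i)}(v_{j}) = k'(v_{j}) + 2([v_{j-1}], [v_{j}]) = 2 + 2 = 4,
\]
which violates \hyperref[star]{$\star$} at $v_{j}$. Applying Proposition \ref{prop:algorithm} to the path $k^{(0)}, k^{(1)}, \ldots, k^{(j-i)}$ then contradicts the assumption that $k'$ represents an element of $\mathcal{L}(\Gamma,[k])$. The main technical point is simply the bookkeeping for the induction above; the essential geometric input is the linearity of $\Lambda$, which guarantees that each previously modified vertex acts on the next one only through the single edge on the chain, so the cumulative effect at $v_{i+t}$ and at $v_{j}$ can be tracked in closed form.
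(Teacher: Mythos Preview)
Your proof is correct and takes essentially the same approach as the paper's. Both arguments apply the identical sequence of type $+2$ moves at $v_{i}, v_{i+1}, \ldots, v_{j-1}$ to push the entry at $v_{j}$ up to $4$; the paper simply packages the middle portion of this sequence as a single $+2$ move followed by a rightward $(-2,2)$-slide and one final $+2$ move, whereas you track each step directly via the inductive bookkeeping.
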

\begin{proof}
Suppose $k'$ represents an element of $\mathcal{L}(\Gamma, [k])$ and $k'_{\Lambda}$ contains a subvector of the form $(2,\underbrace{0,\ldots, 0}_{j}, 2)$ where $j\geq 0$. Then, by doing a type $+2$ move on the leftmost $+2$-entry, $k'$ is equivalent to a vector whose corresponding subvector is $(-2,2, \underbrace{0,\ldots, 0}_{j-1}, 2)$ if $j\geq 1$ or $(-2,4)$ if $j=0$. In the latter case, the vector fails to satisfy \hyperref[star]{$\star$} and thus we get a contradiction by Proposition \ref{prop:algorithm}. So we can assume the subvector is $(-2,2, \underbrace{0,\ldots, 0}_{j-1}, 2)$ with $j\geq 1$. Now do a rightward $(-2,2)$-slide to produce an equivalent vector whose corresponding subvector is $(\underbrace{0,\ldots, 0}_{j-1},-2, 2,2)$. Next apply a type $+2$ move to get an equivalent vector whose corresponding subvector is $(\underbrace{0,\ldots, 0}_{j},-2,4)$. We again get a contradiction for the same reason as before. Therefore, $k'_{\Lambda}$ cannot contain a subvector of the form $(2,\underbrace{0,\ldots, 0}_{j}, 2), j\geq 0$. By an analogous argument, $k'_{\Lambda}$ also cannot contain a subvector of the form $(-2,\underbrace{0,\ldots, 0}_{j}, -2), j\geq 0$. This completes the proof. 
\end{proof}

\begin{lem}\label{lem: sparse vector}
Suppose $k'\in [k]$ represents an element of $\mathcal{L}(\Gamma, [k])$. Then, $k'$ is equivalent to a vector $k''$ such that $k''_{\Lambda}$ is the zero vector except for possibly one non-zero entry equal to $\pm 2$.  
\end{lem}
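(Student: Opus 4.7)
The plan is to induct on $n$, the number of nonzero entries of $k'_\Lambda$. The base case $n\le 1$ is immediate. For the inductive step $n\ge 2$, I will produce an equivalent vector with exactly $n-1$ nonzero entries whose $\Lambda$-subvector still alternates in sign (so the hypothesis applies again).

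Let the nonzero entries of $k'_\Lambda$ be at positions $p_1<p_2<\dots<p_n$, with signs alternating by Lemma \ref{lem: alternating}. First apply the $(\pm 2,\mp 2)$-slide to the leftmost pair $(a_{p_1},a_{p_2})$ to shift them leftward until $a_{p_1}$ sits at position $1$; the slide is built from type $\pm 2$ moves on positions $\le p_2$, so the nonzero entries at $p_3,\dots,p_n$ are untouched. Writing $d=p_2-p_1$, we have $a_1=\pm 2$, $a_{1+d}=\mp 2$, and zeros in between.

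Now I reduce this leftmost pair to a single entry. If $d=1$, one type $\pm 2$ move at $v_1$ takes $(a_1,a_2)=(\pm 2,\mp 2)$ to $(\mp 2, 0)$. If $d\ge 2$, three steps suffice: first, the type $\pm 2$ move at $v_1$ converts the leading block from $(\pm 2, 0,\ldots,0, \mp 2)$ to $(\mp 2, \pm 2, 0, \ldots, 0, \mp 2)$; second, an $(\mp 2,\pm 2)$-slide moves the new pair from positions $(1,2)$ rightward to positions $(d-1,d)$, producing the triple $(\mp 2, \pm 2, \mp 2)$ at positions $(d-1,d,d+1)$; third, a type $\pm 2$ move at $v_d$ zeros out positions $d-1$ and $d+1$ and leaves $a_d=\mp 2$. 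Either way the leftmost pair has been replaced by a single entry of sign $\mp 2$ at a position $\le d<p_3$, while $a_{p_3},\dots,a_{p_n}$ are preserved. The new $\Lambda$-subvector has $n-1$ nonzero entries, and the alternation is maintained because the new leftmost sign $\mp 2$ is opposite to the sign at $p_3$, which was opposite to $a_{p_1}=\pm 2$ by the original alternation.

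The one thing to check is that every intermediate vector in the procedure satisfies \hyperref[star]{$\star$} on all of $\Gamma$, not just on $\Lambda$. This is automatic: each move in the recipe is a type $\pm 2$ move, and by Proposition \ref{prop:algorithm} every vector equivalent to $k'$ in the $U^0$ equivalence class of $\underline{U^0\otimes k'}\in\mathcal{L}(\Gamma,[k])$ satisfies $\star$ throughout $\Gamma$. The main obstacle is therefore purely bookkeeping: verifying the case analysis above and checking that the alternating property survives each reduction. Once this is done, iterating the reduction drives $n$ down to $0$ or $1$, producing the desired equivalent vector $k''$.
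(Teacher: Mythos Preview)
Your proof is correct and follows essentially the same approach as the paper's: induct on the number of nonzero entries, slide the leftmost $(\pm2,\mp2)$ pair to the far left, then collapse it to a single $\mp2$ via a type $\pm2$ move at $v_1$, a rightward adjacent slide, and a final type $\pm2$ move. The paper assumes $a_{p_1}=2$ WLOG and omits your explicit check that alternation persists (which is in any case automatic from Lemma~\ref{lem: alternating}, since the new vector still represents an element of $\mathcal{L}(\Gamma,[k])$), but the moves are identical.
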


\begin{proof}
We induct on the number of non-zero entries of $k'_{\Lambda}$. Obviously the statement is true if $k'_{\Lambda}$ is the zero vector or has only one-nonzero entry. So suppose $k'_{\Lambda}$ has $n\geq 2$ non-zero entries. Let $a_{i}$ and $a_{i+j}$ be the leftmost non-zero entries. Then by the Lemma \ref{lem: alternating}, $a_{i} = \pm 2$ and $a_{i+j}=\mp 2$. For simplicity, assume $a_{i}=2$. (The argument when $a_{i}=-2$ is identical up to sign changes.) We can write $k'_{\Lambda}$ as:
\begin{align*}
    k'_{\Lambda}=(0, \ldots, 0,2, 0,\ldots, 0, -2, a_{i+j+1}, \ldots, a_{m})
\end{align*}
where there are possibly no initial $0$ entries and no $0$ entries between $a_{i}$ and $a_{i+j}$. If there are initial $0$ entries, then by doing a leftward $(2,-2)$-slide, $k'$ is equivalent to a vector whose $\Lambda$-subvector is
\begin{align*}
    (2,0, \ldots,0, -2,0,\ldots, 0, a_{i+j+1}, \ldots, a_{m})
\end{align*}
Now apply a type $+2$ move to the left most $+2$-entry to get an equivalent vector whose $\Lambda$-subvector is:
\begin{align*}
    (-2,2,0,\ldots, 0,-2, 0,\ldots, 0, a_{i+j+1}, \ldots, a_{m})
\end{align*}
if $j>1$ or: 
\begin{align*}
    (-2,0,\ldots, 0, a_{i+2}, \ldots, a_{m})
\end{align*}
if $j=1$. In the latter case, we have reduced the number of non-zero entries in the $\Lambda$-subvector by 1. Hence, we can assume $j>1$. In this case, if we do a rightward $(-2, 2)$-slide on leftmost $(-2,2)$-pair, we get an equivalent vector whose $\Lambda$-subvector is:
\begin{align*}
    (0,\ldots, 0, -2,2,-2,0,\ldots, 0, a_{i+j+1},\ldots, a_{m})
\end{align*}
Finally apply a type $+2$ move to produce an equivalent vector whose $\Lambda$-subvector is
\begin{align*}
    (0,\ldots, 0, 0,-2,0,0,\ldots, 0, a_{i+j+1}, \ldots, a_{m})
\end{align*}
We have reduced the number of non-zero entries by 1. Therefore, by induction the result follows.
\end{proof}

\begin{lem}\label{lem: one 2}
Suppose $k'\in [k]$ with:
\begin{align*}
    k'_{\Lambda} = (\underbrace{0,\ldots, 0}_{j\ copies}, 2, \underbrace{0,\ldots, 0}_{m-j-1\ copies})
\end{align*}
Then $k'$ is equivalent to a vector $k''$ with:
\begin{align*}
    k''_{\Lambda} = (\underbrace{0,\ldots, 0}_{m-j-1\ copies}, -2, \underbrace{0,\ldots, 0}_{j\ copies})
\end{align*}
\end{lem}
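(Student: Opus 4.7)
The plan is to construct an explicit equivalence between $k'$ and $k''$ through a sequence of type $+2$ moves and one application of the generalized $(2,-2)$-slide. I handle the boundary case $j=0$ first; the case $j = m-1$ then follows by the evident symmetry of reversing the indexing of $\Lambda$.

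For $j = 0$, starting from the $\Lambda$-subvector $(2, 0, \ldots, 0)$, I apply type $+2$ moves at $v_1, v_2, \ldots, v_m$ in order. An induction on the step number shows that after the move at $v_p$ (for $p < m$) the $\Lambda$-subvector has $-2$ at position $p$, $+2$ at position $p+1$, and zeros elsewhere; the final move at $v_m$ then resets position $m-1$ to zero and converts the $+2$ at position $m$ into $-2$ (there is no neighbor $v_{m+1}$ in $\Lambda$), yielding the desired $(0, \ldots, 0, -2)$.

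For $1 \leq j \leq m-2$, I reduce to this pattern in three phases. Phase 1: apply type $+2$ moves at $v_{j+1}, v_{j+2}, \ldots, v_m$ in order. The first move introduces a $+2$ at position $j$ and a $(-2, +2)$-pair at positions $j+1$ and $j+2$; subsequent moves leave the $+2$ at position $j$ untouched (since $v_j$ is not adjacent to any of $v_{j+2}, \ldots, v_m$ in $\Lambda$) and push the $(-2, +2)$-pair rightward in exactly the manner of the $j = 0$ analysis, leaving the $\Lambda$-subvector $(\underbrace{0,\ldots,0}_{j-1}, 2, \underbrace{0,\ldots,0}_{m-j-1}, -2)$. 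Phase 2: apply the generalized $(2,-2)$-slide from the lemma immediately preceding to shift the pair leftward by $j-1$ positions (its internal gap being preserved), producing $(2, \underbrace{0,\ldots,0}_{m-j-1}, -2, \underbrace{0,\ldots,0}_{j-1})$. Phase 3: apply type $+2$ moves at $v_1, v_2, \ldots, v_{m-j}$ in order. This pushes a new $(-2, +2)$-pair rightward while the $-2$ sitting at position $m-j+1$ stays put, until the final move at $v_{m-j}$ simultaneously cancels its own $+2$-entry and the neighboring $-2$ at position $m-j+1$ (as well as resetting position $m-j-1$ to zero), yielding the desired $(\underbrace{0,\ldots,0}_{m-j-1}, -2, \underbrace{0,\ldots,0}_{j})$.

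The only real work is verifying by induction the effect of each type $+2$ move on the $\Lambda$-subvector; entries of $k'$ outside $\Lambda$ may change freely at each step, but these are irrelevant since the lemma only controls the $\Lambda$-subvector.
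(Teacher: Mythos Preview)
Your proof is correct and uses the same toolkit as the paper---type $+2$ moves together with the generalized $(\pm 2,\mp 2)$-slide---but organizes the sequence differently. The paper's proof is a seven-step list: a single type $+2$ move at the $+2$ entry creates a $(2,-2,2)$ triple; a leftward adjacent $(2,-2)$-slide followed by a type $+2$ move at the left end produces a $(-2,\ldots,2)$ pair with gap $j$; a rightward generalized $(-2,2)$-slide pushes this pair all the way right; and then three more moves (type $+2$, leftward slide, type $+2$) collapse it to the target. Your version instead runs a sweep of sequential type $+2$ moves from $v_{j+1}$ to $v_m$ (Phase~1), a single leftward generalized slide (Phase~2), and a second sweep from $v_1$ to $v_{m-j}$ (Phase~3). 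Both achieve the same reflection $j\leftrightarrow m-j-1$; yours uses one slide and many elementary moves, the paper's uses three slides and four elementary moves. A minor advantage of your write-up is that you separate out the boundary cases $j=0$ and $j=m-1$, whereas the paper's displayed steps implicitly assume $1\le j\le m-2$ (e.g.\ step~1 writes a vector with $j-1$ leading zeros).
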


\begin{proof}
We list the sequence of moves needed to obtain the relevant vector. In each move, we only write the resulting $\Lambda$-subvector. 
\begin{enumerate}
    \item Type $+2$ move: 
    \begin{align*}
        (\underbrace{0,\ldots, 0}_{j-1\ copies}, 2,-2,2 \underbrace{0,\ldots, 0}_{m-j-2\ copies})
    \end{align*}
    
    \item Leftward $(2,-2)$-slide: 
    \begin{align*}
        (2,-2,\underbrace{0,\ldots, 0}_{j-1\ copies},2, \underbrace{0,\ldots, 0}_{m-j-2\ copies})
    \end{align*}
    
    \item Type $+2$ move: 
    \begin{align*}
        (-2,\underbrace{0,\ldots, 0}_{j\ copies},2, \underbrace{0,\ldots, 0}_{m-j-2\ copies})
    \end{align*}
    
    \item Rightward $(-2,2)$-slide: 
    \begin{align*}
        (\underbrace{0,\ldots, 0}_{m-j-2\ copies},-2,\underbrace{0,\ldots, 0}_{j\ copies},2)
    \end{align*}
    
    \item Type $+2$ move:
    \begin{align*}
        (\underbrace{0,\ldots, 0}_{m-j-2\ copies},-2,\underbrace{0,\ldots, 0}_{j-1\ copies},2,-2)
    \end{align*}

    \item Leftward $(2,-2)$-slide: 
    \begin{align*}
        (\underbrace{0,\ldots, 0}_{m-j-2\ copies},-2,2,-2\underbrace{0,\ldots, 0}_{j-1\ copies})
    \end{align*}
    
    \item Type $+2$ move: 
    \begin{align*}
        (\underbrace{0,\ldots, 0}_{m-j-1\ copies},-2,\underbrace{0,\ldots, 0}_{j\ copies})
    \end{align*}
\end{enumerate}
\end{proof}

\begin{rem}\label{rem:adj vertex change}
If one traces through the above sequence of moves, it is easy to see that if $v$ is a vertex not in $\Lambda$, but is adjacent to the initial vertex $v_{1}$ or terminal vertex $v_{m}$ of $\Lambda$, then $k''(v) = k'(v)+2$. 
\end{rem}

\subsection{Computation of $HFI^+(-N_{j}, \mathfrak{s}_{0})$}

\subsubsection{Step 1}

Recall, the 3-manifold $N_{j}, j\geq 1$ is given by the following surgery diagram:

\begin{figure}[h]
  \centering
  \includegraphics[scale=.7]{"Nj".pdf}
  \end{figure}

In \cite[Section 7]{MR4029676}, it is shown via Kirby calculus that $N_{j}$ can be represented as a plumbing as follows:
\begin{figure}[h]
  \centering
  \includegraphics[scale=1]{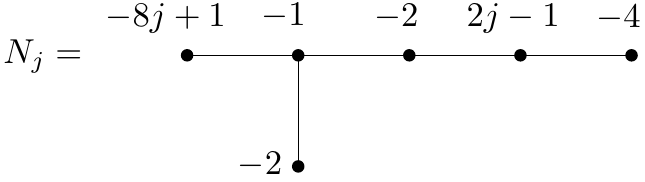}
  \end{figure}
  
\noindent By performing two slam dunks on the rightward stem, we get:
\begin{figure}[h]
  \centering
  \includegraphics[scale=1]{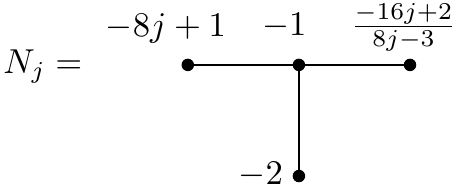}
  \end{figure}
  
\noindent One can further check that: 
\begin{align*}
    \frac{-16j+2}{8j-3}= -3-\cfrac{1}{-2-\cfrac{1}{\ddots-2-\cfrac{1}{\frac{-8j+3+4r}{8j-7-4r}}}}
\end{align*}
where there are $r$ copies of $-2$ along the diagonal. In particular, setting $r = 2j-2$, the last term becomes: 
\begin{align*}
    \frac{-8j+3+4(2j-2)}{8j-7-4(2j-2)}=-5
\end{align*}
Hence, by performing the corresponding slam dunks, we get:
\begin{figure}[h]
  \centering
  \includegraphics[scale=1]{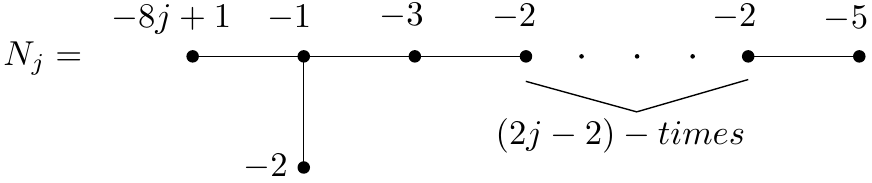}
  \end{figure}
  
\noindent Let $\Gamma_{j}$ be the above plumbing graph with vertices labeled as follows:

\begin{figure}[h]
  \centering
  \includegraphics[scale=1]{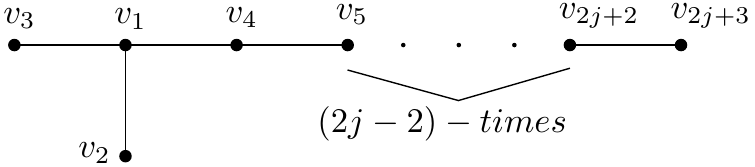}
  \end{figure}

\noindent With respect to the ordered basis $([v_{1}],\ldots, [v_{2j+3}])$, the matrix for the intersection form of $X(\Gamma_{j})$ is:
\begin{center}
$B_{j}=\begin{pmatrix}
-1&\phantom{-}1&1&\phantom{-}1&\phantom{--}&\phantom{-}& \phantom{-}&\phantom{-}&\phantom{--} &\phantom{--}\\
\phantom{-}1&-2&\phantom{-}&\phantom{--}&\phantom{--}&\phantom{-}& \phantom{-}&\phantom{-}&\phantom{--} &\phantom{--}\\
\phantom{-}1&\phantom{--}&-8j+1&\phantom{--}&\phantom{--}&\phantom{-}&\phantom{-}&\phantom{-}&\phantom{--} &\phantom{--}\\
\phantom{-}1&\phantom{--}&\phantom{-}&-3&\phantom{-}1&\phantom{-}&\phantom{-}&\phantom{-}&\phantom{--}&\phantom{--}\\
\phantom{--}&\phantom{--}&\phantom{-}&\phantom{-}1&-2&\phantom{-}1&\phantom{-}&\phantom{-}&\phantom{--}&\phantom{-]}\\
\phantom{--}&\phantom{--}&\phantom{-}&\phantom{--}&\phantom{-}1&-2&1&\phantom{-}&\phantom{--}&\phantom{--}\\
\phantom{--}&\phantom{--}&\phantom{-}&\phantom{--}&\phantom{--}&\phantom{--}&\ddots&\phantom{-}&\phantom{--}&\phantom{--}\\
\phantom{--}&\phantom{--}&\phantom{-}&\phantom{--}&\phantom{--}&\phantom{-}&\phantom{-}&1&-2&\phantom{-}1\\
\phantom{--}&\phantom{--}&\phantom{-}&\phantom{--}&\phantom{--}&\phantom{-}&\phantom{-}&\phantom{-}&\phantom{-}1&-5
\end{pmatrix}$
\end{center}
It is straightforward to check that $B_{j}$ is negative semi-definite and $H_{1}(N_{j};\Z)\cong \Z$, we leave this to the reader. 

Note, the $\Z$-kernel of $B_{j}$ is generated by the vector:
\begin{align*}
    x = (16j-2, 8j-1, 2, 8j-3, 8j-7,8j-11, \ldots, 1)
\end{align*}
Therefore, the unique self-conjugate $\spinc$-structure $\mathfrak{s}_{0}$ on $N_{j}$ can be thought of as:
\begin{align*}
    [k] = \{k'\in \Char(X(\Gamma_{j}))\ |\ k'\cdot x = 0\}
\end{align*}
Let $\Lambda_{j}$ be the linear subgraph of $\Gamma_{j}$ given by:
\begin{figure}[h]
  \centering
  \includegraphics[scale=1]{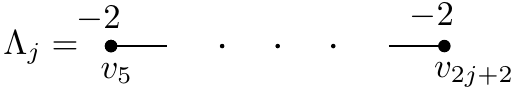}
  \end{figure}
  
\noindent We write vectors $k'\in [k]$ as: 
\begin{align*}
    k' = (a_{1}, a_{2},a_{3},a_{4}, b_{5},\ldots, b_{2j+2}, c_{2j+3})
\end{align*}
where $k'_{\Lambda_{j}} = (b_{5},\ldots, b_{2j+2})$. 

\newpage

\begin{lem}\label{lem: non-zero lambda vector}
If $k'\in [k]$ represents an element of $\mathcal{L}(\Gamma_{j}, [k])$, then $k'$ is equivalent to a vector whose $\Lambda_{j}$-subvector is not equal to the zero vector. 
\end{lem}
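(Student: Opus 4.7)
The plan is to show that if $k'$ is a leaf representative with $k'_{\Lambda_j}=0$, then a short explicit sequence of moves produces an equivalent leaf representative whose $\Lambda_j$-subvector is nonzero. The underlying principle is that, among the moves available within the leaf class of $\Gamma_j$, only the moves on the two vertices $v_4$ and $v_{2j+3}$ adjacent to $\Lambda_j$ can alter the $\Lambda_j$-subvector: a move on $v_4$ requires $a_4=\pm 3$ and changes $b_5$ by $\pm 2$, while a move on $v_{2j+3}$ requires $c_{2j+3}=\pm 5$ and changes $b_{2j+2}$ by $\pm 2$. Thus the problem reduces to arranging one of these ``trigger'' conditions within the equivalence class of $k'$.

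First I would dispose of the easy case: if $a_4=\pm 3$ or $c_{2j+3}=\pm 5$, then a single move on $v_4$ or $v_{2j+3}$ yields an equivalent vector with nonzero $\Lambda_j$-subvector, which remains a leaf representative since it lies in the equivalence class of $k'$ (Proposition~\ref{prop:algorithm}). For the remaining case, $a_4\in\{-1,1\}$ and $c_{2j+3}\in\{-3,-1,1,3\}$, and the plan is to shift $a_4$ into $\{-3,3\}$ using moves on $v_1,v_2,v_3$. Since the characteristic condition together with $\star$ forces $a_1\in\{-1,1\}$, a move on $v_1$ is always available and shifts $a_4$ (along with $a_2$ and $a_3$) by $\pm 2$; in favorable configurations this directly produces $a_4=\pm 3$. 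Otherwise, I would interleave a move on $v_2$ or $v_3$ -- each of which flips the sign of $a_1$ without touching $a_4$ -- with a second $v_1$-move, producing a net shift of $\pm 4$ in $a_4$. A brief case analysis on $(a_1,a_2,a_4)$, exploiting the freedom to reverse any of these moves, shows that in every configuration a sequence of at most three moves reaches $a_4=\pm 3$, at which point a final move on $v_4$ introduces $\pm 2$ into $\Lambda_j$.

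The hard part will be verifying that each intermediate vector in the chosen move sequence satisfies $\star$ and hence remains in the leaf equivalence class. This will be handled by leveraging the leaf representative property of $k'$ through Proposition~\ref{prop:algorithm}: since every vector on every path containing $k'$ must satisfy $\star$, sharp inequalities on the coordinates of $k'$ are forced. For instance, if $a_1=1$ then necessarily $a_2\le 0$ and $a_3\le 8j-3$, since otherwise the forward $v_1$-move from $k'$ would already produce a vector violating $\star$ and contradict the leaf property. These sharp bounds are precisely what guarantees that the sequence of moves in our construction stays inside the leaf class until $a_4=\pm 3$ (or $c_{2j+3}=\pm 5$) is realized, at which point the easy case applies to complete the proof.
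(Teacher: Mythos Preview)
Your proposal is correct and follows essentially the same route as the paper: a case analysis on $(a_1,a_2,a_4)$ using moves at $v_1,v_2,v_4$, with Proposition~\ref{prop:algorithm} supplying the needed coordinate constraints; the paper phrases it as a proof by contradiction (assume every representative has zero $\Lambda_j$-subvector and derive a $\star$-violation), but the underlying sequence of moves---ending in the three-step chain $v_1,v_2,v_1$ that pushes $a_4$ to $\pm 3$---is identical to your constructive version. One minor point of clarification: what you call the ``hard part'' is not really verifying that intermediates satisfy $\star$ and ``remain in the leaf class''---both are automatic, since type~$\pm 2$ moves preserve the equivalence class by definition and Proposition~\ref{prop:algorithm} then forces $\star$---but rather checking that the $v_2$-move is \emph{available} (i.e.\ $a_2=\pm 2$) when needed; and the mentions of $v_3$ and of $c_{2j+3}=\pm 5$ turn out to be unnecessary, as the paper's argument shows $v_1,v_2$ alone suffice.
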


\begin{proof}
Suppose $k'\in [k]$ represents an element of $\mathcal{L}(\Gamma_{j}, [k])$. For the purpose of contradiction, suppose the $\Lambda_{j}$-subvector of every representative of every element of $\mathcal{L}(\Gamma_{j}, [k])$ is zero. Then, in particular, $k'_{\Lambda_{j}} = 0$. Also, since $k'$ represents an element of $\mathcal{L}(\Gamma_{j}, [k])$, it must satisfy \hyperref[star]{$\star$}. So we must have $a_{4}\in \{-3,-1,1,3\}$. If $a_{4}=\pm 3$, then by adding $\pm 2PD[v_{4}]$ to $k'$ we would obtain an equivalent vector with a non-zero $\Lambda_{j}$-subvector. Thus, $a_{4}\in \{-1,1\}$. 

Since $k'$ must satisfy \hyperref[star]{$\star$}, we also have $a_{1}=\pm 1$. If $a_{1} = 1$ and $a_{4}=1$, then by adding $2PD[v_{1}]$ to $k'$, $a_{4}$ becomes $3$. But we just showed that $a_{4}$ cannot be equal to 3. Similarly, if $a_{1}=-1$ and $a_{4}=-1$, then by adding $-2PD[v_{1}]$ to $k'$, $a_{4}$ becomes $-3$, which is again a contradiction. Hence, $a_{1}=\pm 1$ and $a_{4}=\mp 1$. By adding $-2PD[v_{1}]$ if necessary, we may assume $a_{1}=1$ and $a_{4}=-1$. Again, by \hyperref[star]{$\star$}, we must have $a_{2}\in \{-2,0,2\}$. If $a_{2}=2$, then by adding $2PD[v_{1}]$ to $k'$, we get an equivalent vector with $a_{2}=4$, which contradicts Proposition \ref{prop:algorithm}. Therefore, $a_{2}\in\{0,-2\}$. If $a_{2}=-2$, then by adding $-2PD[v_{2}]$ to $k'$ we obtain an equivalent vector with $a_{1}=-1$ and $a_{4}=-1$, which we already determined cannot happen. Therefore, $a_{2}=0$. Now add $2PD[v_{1}]$ to $k'$. The result is an equivalent vector with $a_{1}=-1$, $a_{2}=2$, and $a_{4}=1$. Since $a_{2}=2$, we can add $2PD[v_{2}]$ to get an equivalent vector with $a_{1}=1, a_{2}=-2$, and $a_{4}=1$, but we have already shown that we cannot have both $a_{1}=1$ and $a_{4}=1$. Therefore, we get a contradiction and hence $k'$ must be equivalent to some vector whose $\Lambda_{j}$-subvector is not equal to the zero vector.   
\end{proof}

Somewhat counter-intuitively, we are now going to use the previous lemma to find a small finite set of possible representatives of $\mathcal{L}(\Gamma_{j}, [k])$, all of whose $\Lambda_{j}$-subvectors are all equal to the zero vector. 

\begin{lem}
If $k'\in [k]$ represents an element of $\mathcal{L}(\Gamma_{j}, [k])$, then $k'$ is equivalent to a vector of the form 
\begin{align*}
    k'' = (-1,0,a_{3},3, 0,\ldots, 0, c_{2k+3})
\end{align*}
where $a_{3}\in \{-8k+1, -8k+3, \ldots, 8k+1\}$ and $c_{2j+3}\in\{-5,-3,-1,1,3\}$. 
\end{lem}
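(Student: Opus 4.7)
The plan is to reduce $k'$ to the canonical form through a sequence of characteristic equivalences, combining the slide and endpoint-move technology of Lemmas \ref{lem: sparse vector} and \ref{lem: one 2} with a case analysis on the entries at $v_1, v_2, v_4$.

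First, I would apply Lemma \ref{lem: sparse vector} together with Lemma \ref{lem: non-zero lambda vector} to produce $h \sim k'$ whose $\Lambda_j$-subvector is a single nonzero entry equal to $\pm 2$. Using Lemma \ref{lem: one 2} and the sign adjustments tracked in Remark \ref{rem:adj vertex change}, I would then position this entry at $v_5$ with value $+2$, giving $h = (a_1, a_2, a_3, a_4, 2, 0, \ldots, 0, c_{2j+3})$. If instead the single entry is forced to end up at $v_{2j+2}$ (with opposite sign, by Lemma \ref{lem: one 2}), the argument is carried out symmetrically with a $v_{2j+3}$-move in place of the $v_4$-move below; this is the reason for the slight asymmetry in the allowed range of $c_{2j+3}$.

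Second, with $b_5 = 2$ fixed, the available same-level moves preserving the full $\Lambda_j$-subvector are those at $v_1$, $v_2$, and $v_3$ (the move at $v_4$ would change $b_5$, and the move at $v_{2j+3}$ would change $b_{2j+2}$). Via a case analysis in the spirit of the proof of Lemma \ref{lem: non-zero lambda vector}, now run constructively rather than for contradiction, I would show that any valid starting configuration for $(a_1, a_2, a_4)$ can be brought to the specific triple $(1, 0, -3)$, subject to $\star$ at every step as guaranteed by Proposition \ref{prop:algorithm}. The forcing begins with the observation that $b_5 = 2$ combined with $a_4 = 3$ immediately requires $a_1 = -1$ (else a $v_1$-move would push $a_4$ to $5$, violating $\star$); iterated alternation of $v_1$- and $v_2$-moves then drives $a_4$ down to $-3$ in steps of $2$, and the analogous $\star$-constraints at that boundary force $(a_1, a_2) = (1, 0)$. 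The result is $\tilde k = (1, 0, a_3, -3, 2, 0, \ldots, 0, c_{2j+3}) \sim k'$.

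Third, $k'' = (-1, 0, a_3, 3, 0, \ldots, 0, c_{2j+3})$ satisfies $k''(v_4) = 3 = -m(v_4)$, hence $n_4 = 0$ at $k''$ and $\tilde k = k'' + 2PD[v_4]$ is a same-level equivalence; thus $k' \sim \tilde k \sim k''$. The ranges on $a_3$ and $c_{2j+3}$ then follow by applying $\star$ to $k''$ and its immediate same-level neighbors (for example, $k'' + 2PD[v_1]$ shifts $a_3$ by $+2$, which yields the upper bound on $a_3$; the exclusion $c_{2j+3} \neq 5$ arises because a same-level $+2PD[v_{2j+3}]$ move combined with slides through $\Lambda_j$ and a $v_5$-move would otherwise carry $a_4$ beyond its $\star$-bound of $3$).

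The principal obstacle is the case analysis in the second step: each allowed move flips one entry's sign while shifting two or three others by $\pm 2$, so verifying that the orbit of $(a_1, a_2, a_4)$ contracts to $(1, 0, -3)$ requires treating a number of sub-cases. In several of them, a direct alternation of $v_1$- and $v_2$-moves cycles without progress, and must be broken either by a $v_3$-move (same-level only when $a_3 = \pm (8j-1)$) or by a temporary $v_5$-move that leaves the sparse $\Lambda_j$-form but unlocks an otherwise-forbidden $v_1$-move, after which Lemma \ref{lem: one 2} is used to re-sparsify.
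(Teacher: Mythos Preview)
Your overall plan---normalize the $\Lambda_j$-subvector to a single nonzero entry, then pin down $(a_1,a_2,a_4)$, then read off the constraints on $a_3$ and $c_{2j+3}$---matches the paper's strategy. But Step~1 contains a real gap: Lemma~\ref{lem: one 2} does \emph{not} let you position the $+2$ entry at $v_5$. That lemma is a reflection: a $+2$ in slot $j+1$ becomes a $-2$ in slot $m-j$, with both endpoint-adjacent entries shifting by $+2$ (Remark~\ref{rem:adj vertex change}). Iterating it just returns you to the original slot. So after Lemmas~\ref{lem: sparse vector} and~\ref{lem: non-zero lambda vector} you are stuck with the $+2$ in an arbitrary position $\ell$, not at $v_5$, and your Step~2 case analysis---which assumes $b_5=2$ from the outset---never gets off the ground.

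The paper handles this in the opposite order and with a different mechanism. It first normalizes $(a_1,a_2)$ to $(1,0)$ using $v_1$- and $v_2$-moves (which do not touch $\Lambda_j$), then pins down $a_4=-3$ by a clean trick: the composite of $+2PD[v_1]$, $+2PD[v_2]$, $+2PD[v_1]$, followed by Lemma~\ref{lem: one 2}, raises $a_4$ by exactly $6$, so $\star$ on both ends forces $a_4=-3$. The exclusion $c_{2j+3}\neq 5$ also falls out of Lemma~\ref{lem: one 2} directly (it would become $7$). Only \emph{then}, with $(a_1,a_2,a_4)=(1,0,-3)$ fixed, does the paper shift the $+2$ entry leftward \emph{one position at a time} via an explicit $8$-move cycle (a $-2PD[v_4]$ move, a $(-2,2)$-slide, a type~$+2$ move, a $(2,-2)$-slide, another type~$+2$ move, Lemma~\ref{lem: one 2}, and three $v_1/v_2$-moves) that restores $(a_1,a_2,a_4)=(1,0,-3)$ at the end of each iteration. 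This shifting procedure is the technical heart of the proof, and it is precisely what your proposal is missing. Your final paragraph gestures toward ``temporary $v_5$-moves\ldots{} then re-sparsify,'' which is morally the right direction, but you present it as a contingency for Step~2 rather than as the central engine, and you have not identified the specific sequence that actually makes the shift work.
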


\begin{proof}
Suppose $k'$ represents an element of $\mathcal{L}(\Gamma_{j}, [k])$. Then, by combining Lemmas \ref{lem: sparse vector}, \ref{lem: one 2}, and \ref{lem: non-zero lambda vector}, we may assume:
\begin{align*}
    k'_{\Lambda_{j}} = (\underbrace{0,\ldots, 0}_{\ell\ copies}, 2, \underbrace{0,\ldots, 0}_{2j-3-\ell\ copies})
\end{align*}
for some $0\leq \ell\leq 2j-3$. By \hyperref[star]{$\star$}, $a_{1}=\pm 1$. If $a_{1}=-1$, then we can add $-2PD[v_{1}]$ from $k'$ to get an equivalent vector with $a_{1}=1$. This addition does not effect any of the entries in $k'_{\Lambda_{j}}$. Thus, we may assume $a_{1}=1$. 

Next, by \hyperref[star]{$\star$}, $a_{2}\in \{-2,0,2\}$. If $a_{2}=2$, then adding $2PD[v_{1}]$ to $k'$ yields an equivalent vector with $a_{2}=4$, which violates \hyperref[star]{$\star$}. Therefore, $a_{2}\in\{-2,0\}$. Suppose $a_{2}=-2$. Then, by adding $-2PD[v_{2}]$, we get an equivalent vector with $a_{2}=2$ and $a_{1}=-1$. $k'_{\Lambda_{j}}$ is unaffected by this move. If we then add $-2PD[v_{1}]$, we get an equivalent vector with $a_{1}=1$ and $a_{2}=0$. Again $k'_{\Lambda_{j}}$ is unaffected. Therefore, we may assume $a_{2}=0$.

Next, with:
\begin{align*}
    k' = (1, 0, a_{3}, a_{4}, \underbrace{0,\ldots, 0}_{\ell\ copies}, 2, \underbrace{0,\ldots, 0}_{2j-3-\ell\ copies}, c_{2j+3})
\end{align*}
add $2PD[v_{1}]$ to $k'$ to get the equivalent vector:
\begin{align*}
    (-1, 2, a_{3}+2, a_{4}+2, \underbrace{0,\ldots, 0}_{\ell\ copies}, 2, \underbrace{0,\ldots, 0}_{2j-3-\ell\ copies}, c_{2j+3})
\end{align*}
Next, add $2PD[v_{2}]$ to get: 
\begin{align*}
    (1, -2, a_{3}+2, a_{4}+2, \underbrace{0,\ldots, 0}_{\ell\ copies}, 2, \underbrace{0,\ldots, 0}_{2j-3-\ell\ copies}, c_{2j+3})
\end{align*}
Then add another $2PD[v_{1}]$, to get:
\begin{align*}
    (-1, 0, a_{3}+4, a_{4}+4, \underbrace{0,\ldots, 0}_{\ell\ copies}, 2, \underbrace{0,\ldots, 0}_{2j-3-\ell\ copies}, c_{2j+3})
\end{align*}
Now, if we apply the move in Lemma \ref{lem: one 2} and take into account Remark \ref{rem:adj vertex change}, one can check that we get an equivalent vector whose $4th$-entry is $a_{4}+6$. Since we assumed $k'$ represents an element of $\mathcal{L}(\Gamma_{j}, [k])$, we must therefore have that $a_{4}\in \{-3,-1,1,3\}$ and $a_{4}+6\in\{-3,-1,1,3\}$. Hence, we must have had $a_{4}=-3$. To summarize, we have now shown that we can assume:
\begin{align*}
    k' = (1, 0, a_{3}, -3, \underbrace{0,\ldots, 0}_{\ell\ copies}, 2, \underbrace{0,\ldots, 0}_{2j-3-\ell\ copies}, c_{2j+3})
\end{align*}

Next, by \hyperref[star]{$\star$}, $c_{2j+3}\in \{-5,-3,-1,1,3,5\}$. If $c_{2j+3}=5$, then again by applying the move from Lemma \ref{lem: one 2}, one can check that we transform $c_{2j+3}$ into $7$, which violates \hyperref[star]{$\star$}. Therefore, we must have had $c_{2j+3}\in \{-5,-3,-1,1,3\}$. 

Now add $-2PD[v_{4}]$ to get an equivalent vector (which we again call $k'$) with $a_{1}=-1$, $a_{2}=0$, $a_{4}=3$ and $k'_{\Lambda_{j}}$ unchanged except for the first entry which decreases by 2. Also, $c_{2k+3}$ remains unchanged. If $\ell=0$, then $k'_{\Lambda_{j}}$ is now the zero vector, so we are done. Thus, suppose $\ell>0$. Then, 
\begin{align*}
    k' = (-1,0, a_{3}, 3, -2, \underbrace{0,\ldots, 0}_{\ell-1\ copies},2, \underbrace{0,\ldots, 0}_{2j-3-\ell\ copies}, c_{2j+3})
\end{align*}
Now consider the following sequence of moves:
\begin{enumerate}
    \item Rightward $(-2, 2)$-slide:
\begin{align*}
    (-1,0, a_{3}, 3, \underbrace{0,\ldots, 0}_{2j-3-\ell\ copies}, -2, \underbrace{0,\ldots, 0}_{\ell-1\ copies},2, c_{2j+3})
\end{align*}

\item Type $+2$ move:
\begin{align*}
    (-1,0, a_{3}, 3, \underbrace{0,\ldots, 0}_{2j-3-\ell\ copies}, -2, \underbrace{0,\ldots, 0}_{\ell-2\ copies},2,-2, c_{2j+3}+2)
\end{align*}

\item Leftward $(2,-2)$-slide:
\begin{align*}
    (-1,0, a_{3}, 3, \underbrace{0,\ldots, 0}_{2j-3-\ell\ copies}, -2,2,-2 \underbrace{0,\ldots, 0}_{\ell-2\ copies}, c_{2j+3}+2)
\end{align*}

\item Type $+2$ move:
\begin{align*}
    (-1,0, a_{3}, 3, \underbrace{0,\ldots, 0}_{2j-2-\ell\ copies},-2, \underbrace{0,\ldots, 0}_{\ell-1\ copies}, c_{2j+3}+2)
\end{align*}

\item Apply Lemma \ref{lem: one 2} and Remark \ref{rem:adj vertex change}:
\begin{align*}
    (-1,0, a_{3}, 1, \underbrace{0,\ldots, 0}_{\ell-1\ copies},2,\underbrace{0,\ldots, 0}_{2j-2-\ell\ copies}, c_{2j+3})
\end{align*}

\item Add $-2PD[v_{1}]$:
\begin{align*}
    (1,-2, a_{3}-2, -1, \underbrace{0,\ldots, 0}_{\ell-1\ copies},2,\underbrace{0,\ldots, 0}_{2j-2-\ell\ copies}, c_{2j+3})
\end{align*}

\item Add $-2PD[v_{2}]$:
\begin{align*}
    (-1,2, a_{3}-2, -1, \underbrace{0,\ldots, 0}_{\ell-1\ copies},2,\underbrace{0,\ldots, 0}_{2j-2-\ell\ copies}, c_{2j+3})
\end{align*}

\item Add $-2PD[v_{1}]$:
\begin{align*}
    (1,0, a_{3}-4, -3, \underbrace{0,\ldots, 0}_{\ell-1\ copies},2,\underbrace{0,\ldots, 0}_{2j-2-\ell\ copies}, c_{2j+3})
\end{align*}
\end{enumerate}
The net effect of this sequence of moves is that the $+2$-entry in $k'_{\Lambda_{j}}$ shifts one space to the left while every other entry, excluding $a_{3}$, remains the same. So now we can repeat the above process until $+2$-entry is in the first position of $k'_{\Lambda_{j}}$. Then add $-2PD[v_{4}]$ to get:
\begin{align*}
    (-1,0, a_{3}', 3, 0,\ldots, 0, c_{2j+3})
\end{align*}
with $c_{jk+3}\in \{-5,-3,-1,1,3\}$ and, by \hyperref[star]{$\star$}, $a_{3}'\in \{-8j+1, -8j+3, \ldots, 8j+1\}$
\end{proof}
\begin{prop}\label{prop: -N reps}
If $k'$ represents an element of $\mathcal{L}(\Gamma_{j}, [k])$, then $k'$ is equivalent to 
\begin{align*}
    k_{1}=(-1, 0, 5-4j, 3,0, \ldots, 0, -3)\text{ or }k_{2}=(-1, 0, 3-4j, 3,0, \ldots, 0, 1)
\end{align*}
In particular, $|\mathcal{L}(\Gamma_{j}, [k])|\leq 2$. 
\end{prop}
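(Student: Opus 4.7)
The plan is to combine the normal form produced by the previous lemma with two further constraints: (a) that a representative of $[k]$ must be orthogonal to the kernel of $B_j$, and (b) the characteristic parity conditions. These together will force $a_3$ and $c_{2j+3}$ to take very restricted values.

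First, I would invoke the previous lemma to reduce to the case
\[
k'' = (-1, 0, a_3, 3, 0, \ldots, 0, c_{2j+3}),
\]
with $a_3 \in \{-8j+1, -8j+3, \ldots, 8j+1\}$ and $c_{2j+3} \in \{-5, -3, -1, 1, 3\}$. Since $[k]$ is the unique self-conjugate (hence torsion) $\spinc$ structure, by Remark \ref{rem:char Q} and the description already given, $k''$ lies in $[k]$ if and only if $k'' \cdot x = 0$, where
\[
x = (16j-2,\, 8j-1,\, 2,\, 8j-3,\, 8j-7,\, 8j-11,\, \ldots,\, 1)
\]
generates the $\Z$-kernel of $B_j$. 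Computing the dot product term by term, only the first, third, fourth, and last entries contribute, yielding
\[
k'' \cdot x \;=\; -(16j-2) + 2a_3 + 3(8j-3) + c_{2j+3} \;=\; 2a_3 + c_{2j+3} + 8j - 7.
\]
Setting this equal to $0$ gives the single Diophantine equation
\[
2a_3 + c_{2j+3} \;=\; 7 - 8j.
\]

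Next I would apply the parity condition. Since $k''$ is a characteristic vector, $a_3 \equiv (v_3, v_3) = -8j+1 \equiv 1 \pmod 2$, so $a_3$ is odd. This forces $c_{2j+3}$ to satisfy $c_{2j+3} \equiv 7 - 8j - 2a_3 \equiv 1 \pmod 2$ with an additional constraint: rearranging, $c_{2j+3} = 7 - 8j - 2a_3$ must have the same parity as $c_{2j+3}$ being odd \emph{and} $7-8j - c_{2j+3}$ being divisible by $4$ (so that $a_3$ is an integer of the right parity). Checking each admissible $c_{2j+3} \in \{-5, -3, -1, 1, 3\}$: for $c_{2j+3} = -5$ one gets $a_3 = 6-4j$ (even, rejected); for $c_{2j+3} = -1$, $a_3 = 4-4j$ (even, rejected); for $c_{2j+3} = 3$, $a_3 = 2-4j$ (even, rejected). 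The only surviving cases are $c_{2j+3} = -3$, giving $a_3 = 5-4j$, and $c_{2j+3} = 1$, giving $a_3 = 3-4j$. Both values of $a_3$ are odd and a quick check confirms they lie in the allowed range $\{-8j+1, \ldots, 8j+1\}$ for every $j \geq 1$. This yields precisely $k_1$ and $k_2$, and since any representative of any element of $\mathcal{L}(\Gamma_j, [k])$ is equivalent to one of these two vectors, we conclude $|\mathcal{L}(\Gamma_j, [k])| \leq 2$.

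The main obstacle here is really just bookkeeping --- the hard work has already been packaged into the previous lemma, which extracts the normal form $(-1, 0, a_3, 3, 0, \ldots, 0, c_{2j+3})$. Given that, the remaining argument is essentially a one-line linear equation plus a parity check, and I do not anticipate serious difficulty.
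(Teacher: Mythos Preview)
Your proposal is correct and follows essentially the same approach as the paper: reduce to the normal form from the previous lemma, impose the orthogonality condition $k''\cdot x = 0$ to obtain $2a_3 + c_{2j+3} = 7-8j$, and then check the finitely many admissible values of $c_{2j+3}$. The only cosmetic difference is that you spell out the parity constraint on $a_3$ explicitly, whereas the paper absorbs it into the already-odd range $\{-8j+1,-8j+3,\ldots\}$ stated in the previous lemma.
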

\begin{proof}
Up to this point, we have not used the fact that $k'\cdot x = 0$ where $x$ is a generator of $\ker_{\Z}(B_{j})$ as above. So assume $k'$ is of the form in the previous lemma. Then, we get the following equation:
\begin{align*}
    0 = k'\cdot x =8j-7+2a_{3}+c_{2j+3}
\end{align*}
where $a_{3}\in \{-8j+1, -8k+3, \ldots, 8j+1\}$ and $c_{2j+3}\in\{-5,-3,-1,1,3\}$. The only solutions to this equation with the given constraints are: $(a_{3}, c_{2j+3}) = (5-4j, -3)$ and $(3-4j, 1)$, corresponding to $k_{1}$ and $k_{2}$, respectively. 
\end{proof}

We have not yet proved that $k_{1}$ and $k_{2}$ represent different elements of $\mathcal{L}(\Gamma_{j}, [k])$. To do this we will do a similar analysis for $-N_{j}$ and then use Turaev torsion. However, before we undertake this task, we first compute the $HF^+$ grading associated to the vectors $k_{1}$ and $k_{2}$. 

\begin{cor}\label{cor: d inv 1}
$d_{1/2}(-N_{j};\mathfrak{s}_{0}) =\frac{1}{2}$
\end{cor}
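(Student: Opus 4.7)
The plan is to combine Rustamov's identification with the graded-root description of Section~\ref{section: homology of plumbings}. By Theorem~\ref{thm: rustamov}(1), $HF^+_{odd}(-N_{j}, \mathfrak{s}_{0}) \cong \mathrm{H}^+(\Gamma_{j}, [k])$, and by Propositions~\ref{prop:graded root lattice iso} and \ref{prop: H dual and graded root} the infinite $\F[U]$-tower inside $\mathrm{H}^+(\Gamma_{j},[k])$ corresponds to the trunk of the graded root $(\bar{R}_{k}, \bar{\chi}_{k})$. Thus
\[
d_{1/2}(-N_{j}, \mathfrak{s}_{0}) = 2n_{0} - \frac{k^2 + |\mathcal{V}(\Gamma_{j})| - 3}{4},
\]
where $n_{0}$ is the smallest integer for which $\bar{L}_{k, \leq n_{0}}$ is connected.

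Next, I would analyze the leaves of the graded root, which by Proposition~\ref{prop: -N reps} are represented by at most the two vectors $k_{1}$ and $k_{2}$. Solving the linear system $B_{j} z = k_{i}$ (which is consistent modulo $\ker B_{j}$ because $[k]$ is a torsion $\spinc$ structure) yields $(k_{1})^2 = (k_{2})^2 = -(2j+2)$; this can also be read off directly from the identity $-k_{1} = k_{2} + 2\,PD[j_{*}v_{4}]$ combined with the vanishing $k_{2}([v_{4}]) + ([v_{4}],[v_{4}]) = 0$. Using $\bar{\chi}_{k}(\bar{l}_{k'}) = \tfrac{k^2 - (k')^2}{8}$, the $\mathrm{H}^+$-grading of each leaf candidate simplifies to
\[
\frac{-(k_{i})^2 + 3 - |\mathcal{V}(\Gamma_{j})|}{4} = \frac{(2j+2)+3-(2j+3)}{4} = \frac{1}{2}.
\]
Since the trunk of a graded root lies above every leaf, this already forces $d_{1/2}(-N_{j}, \mathfrak{s}_{0}) \geq \tfrac{1}{2}$.

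For the reverse inequality, it suffices to show that $\mathcal{L}(\Gamma_{j}, [k])$ consists of a single equivalence class in $\Kplus(\Gamma_{j},[k])$, so that the trunk begins already at the leaf level. The identity above together with $n_{v_{4}}(k_{2}) = 0$ realizes $\underline{-k_{1}} = \underline{k_{2}}$ in $\Kplus$ by a single $U$-flat move; hence it suffices to produce a flat path connecting $k_{1}$ and $-k_{1}$. Such a path can be assembled from the elementary moves developed in the preceding lemmas (type $\pm 2$ moves, $(\pm 2, \mp 2)$-slides, and the move of Lemma~\ref{lem: one 2}), each of which preserves the $\bar{\chi}_{k}$-value at its minimum. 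This combinatorial construction is the main obstacle: at each step one must verify that $\bar{\chi}_{k}$ does not rise above the leaf value, which requires careful bookkeeping of how all entries of the intermediate characteristic vectors evolve under the various slides and flips.
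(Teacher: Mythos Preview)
Your identification of $d_{1/2}$ with the grading level at which $\bar{L}_{k,\leq n}$ first becomes connected is incorrect, and this breaks the argument. The infinite tower $\mathcal{T}^+_{d_{1/2}}\subset \mathrm{H}^+(\Gamma_{j},[k])$ is the image of $U^n$ for $n\gg 0$; in the graded-root picture this is the ``diagonal'' submodule, whose bottom element sits at the \emph{minimal} $\bar{\chi}_k$-level (the lowest leaf), not at the bottom of the trunk. Concretely, for a graded root with two leaves at level $0$ merging at level $h>0$, one has $\mathrm{H}^+\cong\mathcal{T}^+_{0}\oplus \F[U]/U^{h}$, so $d=0$, not $2h$. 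Thus the correct formula replaces your $n_{0}$ by $\min \bar{\chi}_k$, and since every leaf candidate $k_1,k_2$ has grading $\tfrac{1}{2}$, the conclusion $d_{1/2}=\tfrac{1}{2}$ follows immediately---this is exactly the paper's proof.

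Your proposed ``reverse inequality'' step is therefore unnecessary, and in fact the claim you set out to prove there is false: $\mathcal{L}(\Gamma_j,[k])$ does \emph{not} consist of a single class. The paper shows later (via Turaev torsion, $T_{N_j}(\mathfrak{s}_0)=2j$) that $k_1$ and $k_2$ represent distinct leaves, and in Step~2 that $-k_1\sim k_2$ with $J_0$ swapping them. A flat path from $k_1$ to $-k_1$ would force $k_1\sim k_2$, contradicting this. So the combinatorial construction you outline cannot exist. (Incidentally, the asserted identity $-k_1 = k_2 + 2\,PD[j_*v_4]$ is also not correct: compare the $v_3$- and $v_{2j+3}$-entries.)
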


\begin{proof}
Let
\begin{align*}
    \alpha_{1} &= (-12j+6, -6j+3, -1, -6j+3, -6j, -6j-3, \ldots, 2, 1, 0)\\
    \alpha_{2} &= (4j+2, 2j+1, 1, 2j-1, 2j-2, 2j-3, \ldots,2, 1, 0)
\end{align*}
Then, $\alpha_{1}B_{j}= k_{1}$ and $\alpha_{2}B_{j}= k_{2}$. Thus, 
\begin{align*}
    k_{1}^2 &= k_{1}\cdot \alpha_{1} = -2j-2\\
    k_{2}^2 &= k_{2}\cdot \alpha_{2} = -2j-2
\end{align*}
Hence, under the isomorphism from Corollary \ref{cor: HF and lattice identification}, the elements of $HF^+(-N_{j}, \mathfrak{s}_{0})$ corresponding to $k_{1}$ and $k_{2}$ have gradings:
\begin{align*}
    \gr(k_{1}) = \gr(k_{2}) &= -\frac{k_{2}^2 + |\mathcal{V}(\Gamma_{j})|-3}{4}\\
               &= -\frac{-2j-2+2j+3-3}{4}\\
               &=\frac{1}{2}
\end{align*}
\end{proof}

We now find a plumbing representation of $-N_{j}$ and then do Kirby calculus to make it negative semi-definite:
\begin{figure}[h]
  \centering
  \includegraphics[scale=.8]{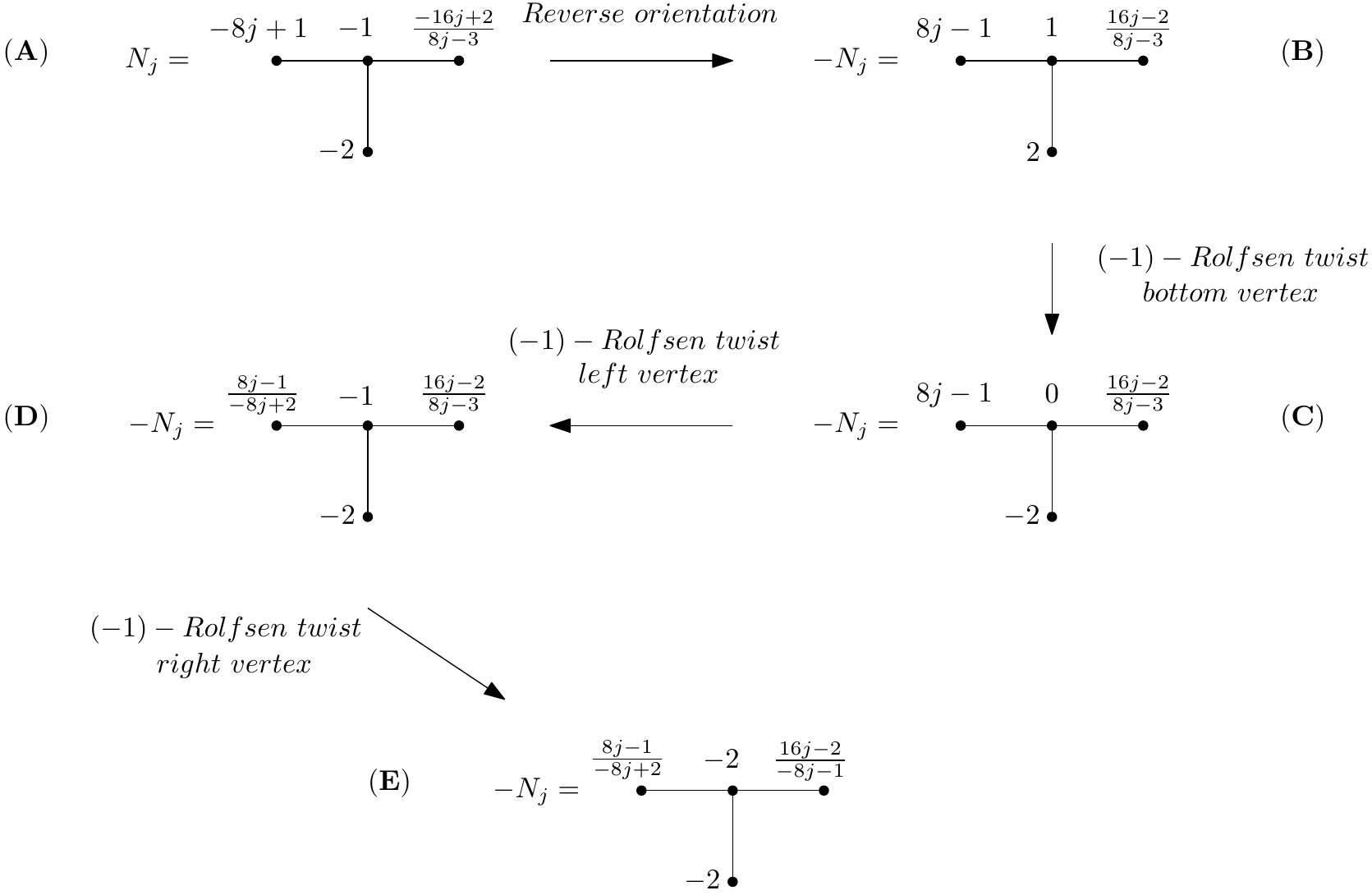}
  \end{figure}
  
\newpage

\noindent Now do slam dunks on the left and right vertices to get:
\begin{figure}[h]
  \centering
  \includegraphics[scale=1]{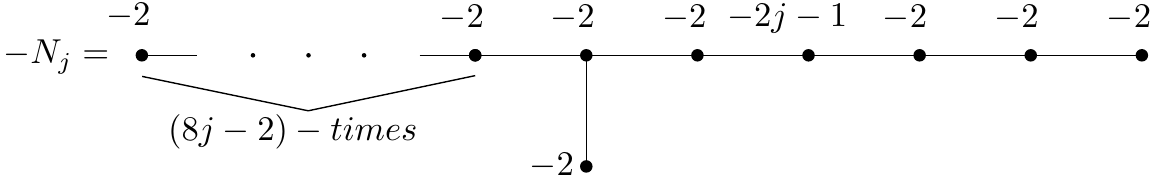}
  \end{figure}
  
\noindent Let $\Gamma'_{j}$ be the above plumbing graph with vertices labeled as follows:
\begin{figure}[h]
  \centering
  \hbox{\hspace{10.75em}
  \includegraphics[scale=1]{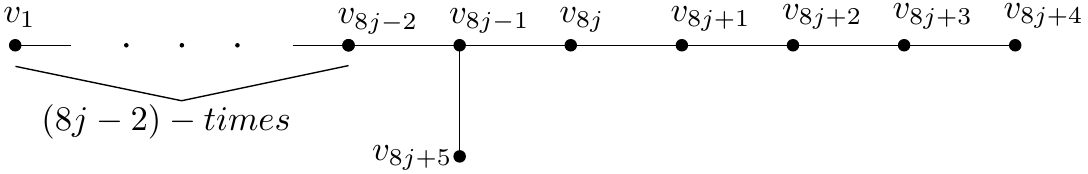}}
  \end{figure}

\noindent With respect to the ordered basis ($[v_{1}], \ldots, [v_{8j+5}])$, the matrix for the intersection form of $X(\Gamma'_{j})$ is:
\begin{center}
$B'_{j}=
\begin{pmatrix}
-2&\phantom{-}1&\phantom{--}\\
\phantom{-}1&-2&1\\
\phantom{--}&\phantom{--}&\phantom{-}&\ddots\\
\phantom{--}&\phantom{--}&\phantom{-}&1&-2&\phantom{-}1&\\
\phantom{--}&\phantom{--}&\phantom{-}&\phantom{-}&\phantom{-}1&-2&\phantom{-}1&\phantom{--}&\phantom{--}&\phantom{--}&\phantom{--}&\phantom{-}1\\
\phantom{--}&\phantom{--}&\phantom{--}&\phantom{-}&\phantom{-}&\phantom{-}1&-2&\phantom{-}1&\\
\phantom{--}&\phantom{--}&\phantom{--}&\phantom{--}&\phantom{-}&\phantom{-}&\phantom{-}1&-2j-1&\phantom{-}1\\
\phantom{--}&\phantom{--}&\phantom{--}&\phantom{--}&\phantom{--}&\phantom{-}&\phantom{-}&\phantom{-}1&-2&\phantom{-}1\\
\phantom{--}&\phantom{--}&\phantom{--}&\phantom{--}&\phantom{--}&\phantom{--}&\phantom{-}&\phantom{-}&\phantom{-}1&-2&\phantom{-}1\\
\phantom{--}&\phantom{--}&\phantom{--}&\phantom{--}&\phantom{--}&\phantom{--}&\phantom{--}&\phantom{-}&\phantom{-}&\phantom{-}1&-2&\phantom{--}\\
\phantom{--}&\phantom{--}&\phantom{--}&\phantom{--}&\phantom{--}&\phantom{-}1&\phantom{--}&\phantom{-}&\phantom{-}&\phantom{--}&\phantom{--}&-2\\
\end{pmatrix}$
\end{center}

Again, it is straightforward to check that $B'_{j}$ is negative semi-definite. Also, the $\Z$-kernel of $B'_{j}$ is generated by the vector:
\begin{align*}
    x' = (2,4, 6, \ldots, 16j-2, 8j+1, 4,3, 2, 1, 8j-1)
\end{align*}
Let $t$ denote a characteristic vector representing the trivial $\spinc$ structure $\mathfrak{s}_{0}$. Then again, we can think of $\mathfrak{s}_{0}$ as: 
\begin{align*}
    [t] = \{t'\in \Char(X(\Gamma_{j}'))\ |\ t'\cdot x' = 0\}
\end{align*}
Let $\Lambda'_{j}$ be the linear subgraph of $\Gamma'_{j}$ given by:
\begin{figure}[h]
  \centering
  \includegraphics[scale=1]{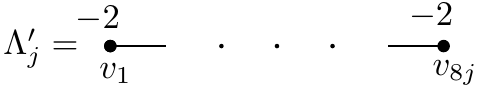}
  \end{figure}
  
\noindent We write vectors $t'\in [t]$ as: 
\begin{align*}
    t' = (a_{1}, a_{2},\ldots, a_{8j}, b_{8j+1}, c_{8j+2}, c_{8j+3}, c_{8j+4}, d_{8j+5})
\end{align*}
where $t'_{\Lambda'_{j}} = (a_{1}, a_{2},\ldots, a_{8j})$.

\begin{lem}
If $t'\in [t]$ represents an element of $\mathcal{L}(\Gamma'_{j}, [t])$, then $t'$ is equivalent to a vector whose $\Lambda'_{j}$-subvector is of the form:
\begin{align*}
    (0, \ldots, 0, a_{8j})
\end{align*}
where $a_{8j}\in\{0, 2\}$.
\end{lem}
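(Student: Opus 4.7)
The plan is to apply the sliding machinery from the previous subsection to progressively simplify the $\Lambda'_j$-subvector of $t'$. Since $\Lambda'_j$ is a linear chain of $-2$-framed vertices, Lemmas \ref{lem: alternating}, \ref{lem: sparse vector}, and \ref{lem: one 2} apply to $\Lambda'_j$ without modification, regardless of the fact that certain interior vertices of $\Lambda'_j$ (such as the bad vertex $v_6$) have additional edges going outside $\Lambda'_j$.

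First I would invoke Lemma \ref{lem: sparse vector} to obtain an equivalent representative $t''$ whose $\Lambda'_j$-subvector is either identically zero (completing the proof with $a_{8j}=0$) or has a single non-zero entry of value $\pm 2$ at some position $p\in\{1,\ldots,8j\}$. In the latter case, I would apply Lemma \ref{lem: one 2}, together with its sign-symmetric analogue (proved by the same sequence of moves with $+2$ and $-2$ interchanged throughout), to produce further equivalences: a $+2$ at position $p$ is equivalent to a $-2$ at position $8j-p+1$, and vice versa.

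To push the single entry to position $8j$ with the required sign, I would iteratively combine these reflections with type $\pm 2$ moves and $(\pm 2,\mp 2)$-slides. Specifically, starting from a single $+2$ at an interior position $p$ with $1<p<8j$, a type $+2$ move at $v_p$ produces the configuration $(+2,-2,+2)$ at positions $(p-1,p,p+1)$; a subsequent rightward $(-2,+2)$-slide followed by a type $+2$ move at $v_{8j}$ and another application of Lemma \ref{lem: sparse vector} reduces the configuration to a single entry closer to position $8j$. Each step must be verified against the $\star$ condition at the affected vertices, and one must keep track of the effects on the external entries $b_{8j+1}$ and $d_{8j+5}$, noting that the latter can be corrected via a type $\pm 2$ move at $v_{8j+5}$ whenever the entry at the endpoint in $\Lambda'_j$ adjacent to it falls outside the bounds required by $\star$.

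The main obstacle is excluding the case $a_{8j}=-2$. I expect this to follow either by a direct constraint argument, using $t''\cdot x'=0$ and the bounds from $\star$ to show that $(0,\ldots,0,-2)$ forces some external entry outside its allowed range, or by constructing an explicit equivalence between the configurations with $a_{8j}=-2$ and $a_{8j}=+2$ via a type $-2$ move at $v_{8j}$ (yielding the pattern $(0,\ldots,0,-2,+2)$ on $\Lambda'_j$), followed by a leftward $(-2,+2)$-slide to shift the pair to the left end of $\Lambda'_j$, and then a final application of Lemma \ref{lem: sparse vector} and Lemma \ref{lem: one 2} to reduce to the desired form. Tracking the bookkeeping of the entries outside $\Lambda'_j$ throughout these combined manipulations will be the technical heart of the argument.
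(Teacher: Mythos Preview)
Your proposal has a genuine gap: it does not engage with the specific geometry of $\Gamma'_j$ that makes this lemma work, and the mechanism you sketch for pushing the lone $\pm 2$ entry to position $8j$ is not viable on its own.

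The key structural fact you miss is that the extra vertex $v_{8j+5}$ (weight $-2$) is adjacent to $v_{8j-1}$, an \emph{interior} vertex of $\Lambda'_j$ two steps from the right end. (Incidentally, $v_{8j-1}$, not $v_6$, is the bad vertex of $\Gamma'_j$.) The paper's proof uses this in an essential way. After reducing to a single $+2$ at position $\ell+1$ via Lemmas~\ref{lem: sparse vector} and~\ref{lem: one 2} (which you correctly invoke), one observes that the path $v_{\ell+1},\ldots,v_{8j-1},v_{8j+5}$ is a linear chain of $-2$'s, so Lemma~\ref{lem: alternating} forces $d_{8j+5}\in\{0,-2\}$. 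The argument then splits on the value of $d_{8j+5}$: if $d_{8j+5}=0$ one derives a contradiction with Lemma~\ref{lem: alternating}, while if $d_{8j+5}=-2$ one uses a type $-2$ move at $v_{8j+5}$ together with slides inside $\Lambda'_j$ to shift the lone $+2$ two positions to the right, iterating until it reaches position $8j-1$ or $8j$. The interaction with $v_{8j+5}$ is what makes the rightward shift possible; the moves you describe, which stay entirely within $\Lambda'_j$, cannot by themselves push a single $+2$ toward the boundary---Lemma~\ref{lem: one 2} only reflects its position, and the slide moves you propose reintroduce extra nonzero entries that do not obviously cancel.

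Your identification of ``excluding $a_{8j}=-2$'' as the main obstacle is also off target. Once you reduce to a single nonzero entry and normalize its sign to $+2$ via (the sign-reversed) Lemma~\ref{lem: one 2}, the case $a_{8j}=-2$ simply does not arise; no separate argument or appeal to the orthogonality condition $t'\cdot x'=0$ is needed at this stage. The real work is the rightward transport of the $+2$, and that hinges on the edge $[v_{8j-1},v_{8j+5}]$.
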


\begin{proof}
Suppose $t'$ represents and element of $\mathcal{L}(\Gamma'_{j}, [t])$. By Lemmas \ref{lem: sparse vector} and \ref{lem: one 2}, it suffices to consider the case when: 
\begin{align*}
    t'_{\Lambda'_{j}} = (\underbrace{0,\ldots, 0}_{\ell\ copies}, 2, \underbrace{0,\ldots, 0}_{8j-1-\ell\ copies})
\end{align*}
for some $0\leq \ell\leq 8j-2$. Furthermore, by considering the linear subgraph of $\Gamma'_{j}$ whose endpoints are $v_{\ell+1}$ and $v_{8j+5}$, it follows from Lemma \ref{lem: alternating} that $d_{8j+5}\in \{0, -2\}$.

\underline{Case 1:} Suppose $d_{8j+5} = -2$ and $\ell = 8j-2$. If we add $-2PD[v_{8j+5}]$, then the $\Lambda'_{j}$-subvector of the resulting vector is zero, so we are done. 

\underline{Case 2:} Suppose $d_{8j+5} = -2$ and $\ell \leq 8j-3$. Consider the following sequence of moves:
\begin{enumerate}
    \item Add $-2PD[v_{8j+5}]$:
    \begin{align*}
    (\underbrace{0,\ldots, 0}_{\ell\ copies}, 2, \underbrace{0,\ldots, 0}_{8j-3-\ell\ copies},-2, 0, b_{8j+1}, c_{8j+2}, c_{8j+3}, c_{8j+4}, 2)
\end{align*}

\item Rightward $(2,-2)$-slide:
\begin{align*}
    (\underbrace{0,\ldots, 0}_{\ell+1\ copies}, 2, \underbrace{0,\ldots, 0}_{8j-3-\ell\ copies},-2, b_{8j+1}, c_{8j+2}, c_{8j+3}, c_{8j+4}, 0)
\end{align*}
Note, the rightmost entry of the vector changes from $2$ to $0$.

\item Type $-2$ move on the leftmost $-2$:
\begin{align*}
\begin{cases}
        (\underbrace{0,\ldots, 0}_{8j-1\ copies}, 2, b_{8j+1}-2, c_{8j+2}, c_{8j+3}, c_{8j+4}, 0)&\text{ if }\ell = 8j-3\\
      (\underbrace{0,\ldots, 0}_{\ell+1\ copies}, 2, \underbrace{0,\ldots, 0}_{8j-4-\ell\ copies},-2,2, b_{8j+1}-2, c_{8j+2}, c_{8j+3}, c_{8j+4}, 0)&\text{ if }\ell \leq 8j-4
\end{cases}
\end{align*}
If $\ell = 8j-3$ we are done. If $\ell = 8j-4$, then by applying a type $-2$ move on the leftmost $-2$, we get:
\begin{align*}
    (\underbrace{0,\ldots, 0}_{8j-2\ copies},2,0, b_{8j+1}-2, c_{8j+2}, c_{8j+3}, c_{8j+4}, -2)
\end{align*}
Hence, we are back to case 1. Therefore, we may assume $\ell\leq 8j-5$. We now continue as follows:

\item Leftward $(-2,2)$-slide:
\begin{align*}
    (\underbrace{0,\ldots, 0}_{\ell+1\ copies}, 2,-2,2 \underbrace{0,\ldots, 0}_{8j-4-\ell\ copies}, b_{8j+1}-2, c_{8j+2}, c_{8j+3}, c_{8j+4}, -2)
\end{align*}
Note, the rightmost entry of the vector now changes back to $-2$. 

\item Type $-2$ move on the leftmost $-2$:
\begin{align*}
    (\underbrace{0,\ldots, 0}_{\ell+2\ copies},2, \underbrace{0,\ldots, 0}_{8j-3-\ell\ copies}, b_{8j+1}-2, c_{8j+2}, c_{8j+3}, c_{8j+4}, -2)
\end{align*}
\end{enumerate}

We are now back to the vector we started with at the beginning of case 2, except that the $+2$ entry of the $\Lambda'_{j}$-subvector has shifted two positions to the right. Therefore, we can iterate this process until $\ell=8j-3$ or $8j-4$, and we have already dealt with both of those cases.  

\underline{Case 3:} Suppose $d_{8j+5}=0$ and $\ell = 8j-2$. Add $2PD[v_{8j-1}]$ to get the equivalent vector:
\begin{align*}
    (\underbrace{0,\ldots, 0}_{8j-3\ copies}, 2,-2,2,b_{8j+1}, c_{8j+2}, c_{8j+3}, c_{8j+4}, 2)
\end{align*}
Now add $2PD[v_{8j+5}]$ to get:
\begin{align*}
    (\underbrace{0,\ldots, 0}_{8j-3\ copies}, 2,0,2,b_{8j+1}, c_{8j+2}, c_{8j+3}, c_{8j+4}, -2)
\end{align*}
This vector violates Lemma \ref{lem: alternating} and hence cannot be a representative of $\mathcal{L}(\Gamma'_{j}, [t])$. 

\underline{Case 4:} Suppose $d_{8j+5}=0$ and $\ell \leq 8j-3$, so that we start with a vector of the form:
\begin{align*}
    (\underbrace{0,\ldots, 0}_{\ell\ copies},2,\underbrace{0,\ldots, 0}_{8j-1-\ell\ copies},b_{8j+1}, c_{8j+2}, c_{8j+3}, c_{8j+4}, 0)
\end{align*}
Now consider the following sequence of moves:
\begin{enumerate}
    \item Type $+2$ move:
    \begin{align*}
        (\underbrace{0,\ldots, 0}_{\ell-1\ copies},2,-2,2\underbrace{0,\ldots, 0}_{8j-2-\ell\ copies},b_{8j+1}, c_{8j+2}, c_{8j+3}, c_{8j+4}, 0)
    \end{align*}

\item Rightward $(-2,2)$-slide:
\begin{align*}
    (\underbrace{0,\ldots, 0}_{\ell-1\ copies}, 2,\underbrace{0,\ldots, 0}_{8j-3-\ell\ copies}, -2,2,0,b_{8j+1}, c_{8j+2}, c_{8j+3}, c_{8j+4}, 0)
\end{align*}

\item Add $2PD[v_{8j-1}]$:
\begin{align*}
    (\underbrace{0,\ldots, 0}_{\ell-1\ copies}, 2,\underbrace{0,\ldots, 0}_{8j-2-\ell\ copies},-2,2, c_{8j+2}, c_{8j+3}, c_{8j+4}, 2)
\end{align*}
\item Add $2PD[v_{8j+5}]$:
\begin{align*}
    (\underbrace{0,\ldots, 0}_{\ell-1\ copies}, 2,\underbrace{0,\ldots, 0}_{8j-1-\ell\ copies},2, c_{8j+2}, c_{8j+3}, c_{8j+4}, -2)
\end{align*}
\end{enumerate}
Again, this vector violates Lemma \ref{lem: alternating} and hence cannot be a representative of $\mathcal{L}(\Gamma'_{j}, [t])$. 
\end{proof} 

\begin{prop}
If $t'\in [t]$ represents an element of $\mathcal{L}(\Gamma'_{j}, [t])$, then $t'$ is equivalent to
\begin{align*}
    t_{1}=(0,\ldots, 0, -1, 0, 2, 0, 0)\hspace{2em}\text{or}\hspace{2em}t_{2}=(0, \ldots, 0, 2, -1, 0, 0,0, -2)
\end{align*}
\end{prop}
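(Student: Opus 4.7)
The plan is to follow the same strategy as the proof of Proposition \ref{prop: -N reps}: combine the reduced form supplied by the preceding lemma with the characteristic condition \hyperref[star]{$\star$} and the torsion relation $t' \cdot x' = 0$, then enumerate the finitely many surviving candidates.

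By the preceding lemma, we may assume
\begin{align*}
t' = (0,\ldots,0,\, a_{8j},\, b_{8j+1},\, c_{8j+2},\, c_{8j+3},\, c_{8j+4},\, d_{8j+5})
\end{align*}
with $a_{8j}\in\{0,2\}$; moreover the proof of that lemma forces $d_{8j+5}\in\{0,-2\}$. Condition \hyperref[star]{$\star$} constrains $b_{8j+1}$ to be odd with $|b_{8j+1}|\le 2j+1$ and $c_{8j+2},c_{8j+3},c_{8j+4}\in\{-2,0,2\}$, and applying Lemma \ref{lem: alternating} to the $-2$-subchain $v_{8j+2}\text{--}v_{8j+3}\text{--}v_{8j+4}$ forces the triple $(c_{8j+2},c_{8j+3},c_{8j+4})$ to have alternating non-zero entries.

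Using the explicit kernel generator $x'$, the equation $t'\cdot x'=0$ reduces to
\begin{align*}
a_{8j}(8j+1) + 4b_{8j+1} + 3c_{8j+2} + 2c_{8j+3} + c_{8j+4} + (8j-1)d_{8j+5} = 0.
\end{align*}
I would then analyze the four cases $(a_{8j},d_{8j+5})\in\{0,2\}\times\{0,-2\}$. In the mixed cases $(2,0)$ and $(0,-2)$, the $j$-dependent term $\pm(8j\pm 1)$ must be balanced by $4b_{8j+1}$, but the bound $|b_{8j+1}|\le 2j+1$ leaves no solutions for $j\ge 2$ and only a handful of sporadic candidates when $j=1$; for each such sporadic candidate I would exhibit the type $\pm 2$ move through $v_{8j+1}$ that is available because $b_{8j+1}=\pm m(v_{8j+1})$, and observe that it drives one of the adjacent $c$-entries outside $\{-2,0,2\}$, so Proposition \ref{prop:algorithm} rules the candidate out. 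In the diagonal cases $(0,0)$ and $(2,-2)$, the equation collapses to $4b_{8j+1}+S=0$ and $4b_{8j+1}+S=-4$ respectively, where $S := 3c_{8j+2}+2c_{8j+3}+c_{8j+4}$ takes only the seven values $\{0,\pm 2,\pm 4,\pm 6\}$ under the alternation constraint. Solving these and enumerating the admissible $c$-triples, one checks via the type $\pm 2$ moves and $(\pm 2,\mp 2)$-slides introduced at the beginning of this subsection that every resulting vector is equivalent to $t_1$ in the $(0,0)$ case and to $t_2$ in the $(2,-2)$ case.

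The main technical obstacle is the uniform reduction of the several $c$-triple candidates in the diagonal cases back to $t_1$ and $t_2$. Because the vertex $v_{8j+1}$ carries weight $-2j-1$ rather than $-2$, the slide moves cannot be passed through it directly; one must, as in the proof of Proposition \ref{prop: -N reps}, route the reduction moves through $v_{8j}$ into $\Lambda'_j$ and back, tracking the propagation of $\pm 2$ entries and verifying \hyperref[star]{$\star$} at each step. Once these move sequences are written out, the statement follows.
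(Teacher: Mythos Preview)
Your overall strategy matches the paper's—reduce via the preceding lemma, impose \hyperref[star]{$\star$} and the orthogonality relation $t'\cdot x'=0$, then enumerate—but you miss the key simplification that makes the paper's case analysis clean. Rather than invoking only Lemma~\ref{lem: alternating} on the triple $(c_{8j+2},c_{8j+3},c_{8j+4})$, the paper applies Lemmas~\ref{lem: sparse vector} and~\ref{lem: one 2} to that three-vertex $-2$-chain, normalizing the triple to be zero or to have a single nonzero entry equal to $+2$; this forces $S=3c_{8j+2}+2c_{8j+3}+c_{8j+4}\in\{0,2,4,6\}$ rather than your seven values. The paper then separately argues that $b_{8j+1}\le 2j-1$ (the cases $b_{8j+1}=2j+1$ are eliminated by a single move at $v_{8j+1}$). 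With these two reductions in hand, each of the six cases for $(a_{8j},d_{8j+5})\in\{0,2\}\times\{-2,0,2\}$ either yields an immediate inequality contradiction or produces exactly one solution, namely $t_1$ or $t_2$, with no further move sequences required. By contrast, your weaker normalization leaves several $c$-triple candidates in the diagonal cases and sporadic $j=1$ candidates in the mixed cases, all of which you then propose to reduce by hand; this is workable but is precisely the labor the paper's normalization eliminates.

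One minor correction: the previous lemma does not by itself force $d_{8j+5}\in\{0,-2\}$. The paper takes $d_{8j+5}\in\{-2,0,2\}$ from \hyperref[star]{$\star$} and disposes of the remaining cases (including $(a_{8j},d_{8j+5})=(2,2)$ via Lemma~\ref{lem: alternating}) within the proof of this proposition.
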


\begin{proof}
Suppose $t'\in [k]$ represents an element of $\mathcal{L}(\Gamma'_{j}, [t])$. By the previous lemma, we can assume 
\begin{align*}
    t' = (0, \ldots, 0, a_{8j}, b_{8j+1}, c_{8j+2}, c_{8j+3}, c_{8j+4}, d_{8j+5})
\end{align*}
where $a_{8j}\in \{0,2\}$, $b_{8j+1}\in \{-2j-1, -2j+1, \ldots, 2j-1, 2j+1\}$, $c_{8j+2}, c_{8j+3}, c_{8j+4}\in \{-2, 0, 2\}$, $d_{8j+5}\in \{-2, 0, 2\}$. 

Since we are assuming $t'$ represents an element of $\mathcal{L}(\Gamma'_{j}, [t])$, we must have: 
\begin{align}
    0 = t'\cdot x' = (8j+1)a_{8j} + (8j-1)d_{8j+5}+ 4b_{8j+1}+3c_{8j+2}+2c_{8j+3} + c_{8j+4}\label{eq: orthogonal condition}
\end{align}

By Lemmas \ref{lem: sparse vector} and \ref{lem: one 2}, we can assume $(c_{8j+2}, c_{8j+3}, c_{8j+4})$ is the zero vector or has exactly one non-zero entry equal to $+2$. In particular, we can assume 
\begin{align*}
    3c_{8j+2}+2c_{8j+3}+c_{8j+4}\in \{0, 2, 4, 6\}
\end{align*}
Note the moves required to put the subvector $(c_{8j+2}, c_{8j+3}, c_{8j+4})$ into this form only effect the entry $b_{8j+1}$ and leave all of the others unchanged. 

Now suppose $a_{8j}=2$ and $b_{8j+1}= 2j+1$. Then by adding $2PD[v_{8j+1}]$ we would obtain an equivalent vector with $a_{8j}=4$. But this violates \hyperref[star]{$\star$}. Hence, if $a_{8j}=2$, we can assume $b_{8j+1}\leq 2j-1$. 

Now suppose $a_{8j}=0$ and $b_{8j+1}= 2j+1$. If $(c_{8j+2}, c_{8j+3}, c_{8j+4})$ is not the zero vector, but rather a vector with precisely one non-zero entry equal to $+2$, then by applying the move in Lemma \ref{lem: one 2} and taking into account Remark \ref{rem:adj vertex change}, we would obtain an equivalent vector with $b_{8j+1}= 2j+3$, which violates $\star$. Therefore, if $a_{8j}=0$ and $b_{8j+1}= 2j+1$, we must have that $(c_{8j+2}, c_{8j+3}, c_{8j+4})$ is the zero vector. Plugging this into equation \ref{eq: orthogonal condition} yields:
\begin{align*}
    (8j-1)d_{8j+5} = -8j-4
\end{align*}
This clearly has no solutions with the given constraints. Therefore, we can assume $b_{8j+1}\leq 2j-1$, regardless of whether $a_{8j}=0$ or $2$. In particular, 
\begin{align*}
    -8j-4\leq 4b_{8j+1}+3c_{8j+2}+2c_{8j+3}+c_{8j+4}\leq 8j+2
\end{align*}

\noindent \underline{Case 1:} Suppose $a_{8j}=0$ and $d_{8j+5}=-2$. Then:
\begin{align*}
    0=t'\cdot x' = -16j+2+ 4b_{8j+1}+3c_{8j+2}+2c_{8j+3} + c_{8j+4}\leq -8j+4<0
\end{align*}
which is a contradiction. 
\\

\noindent \underline{Case 2:} Suppose $a_{8j}=0$ and $d_{8j+5}=0$. Then:
\begin{align*}
    0=t'\cdot x' = 4b_{8j+1}+3c_{8j+2}+2c_{8j+3} + c_{8j+4}
\end{align*}
The only solution to this equation given the constraints we have established is
\begin{align*}
    (b_{8j+1}, c_{8j+2}, c_{8j+3}, c_{8j+4}) = (-1, 0,2,0)
\end{align*}
which corresponds to $t_{1}$. 
\\

\noindent \underline{Case 3:} Suppose $a_{8j}=0$ and $d_{8j+5}=2$. Then:
\begin{align*}
    0=t'\cdot x' = 16j-2+ 4b_{8j+1}+3c_{8j+2}+2c_{8j+3} + c_{8j+4}\geq 8j-6>0 
\end{align*}
which again is a contradiction. 
\\

\noindent \underline{Case 4:} Suppose $a_{8j}=2$ and $d_{8j+5}=-2$. Then: 
\begin{align*}
    0=t'\cdot x' = 4+4b_{8j+1}+3c_{8j+2}+2c_{8j+3} + c_{8j+4}
\end{align*}
The only solution to this equation given the constraints we have established is 
\begin{align*}
    (b_{8j+1}, c_{8j+2}, c_{8j+3}, c_{8j+4}) = (-1, 0,0,0)
\end{align*}
which corresponds to $t_{2}$.
\\

\noindent \underline{Case 5:} Suppose $a_{8j}=2$ and $d_{8j+5}=0$. Then: 
\begin{align*}
   0=t'\cdot x' = 16j+2+4b_{8j+1}+3c_{8j+2}+2c_{8j+3} + c_{8j+4}\geq 8j-2>0
\end{align*}
which again is a contradiction. Finally, 
\\

\noindent \underline{Case 6:} Suppose $a_{8j}=2$ and $d_{8j+5}=2$. This case is ruled out by Lemma \ref{lem: alternating}.
\end{proof}

Again, we have not yet proved that $t_{1}$ and $t_{2}$ represent different elements of $\mathcal{L}(\Gamma'_{j}, [t])$, however, we do have:
\begin{cor}\label{cor: d inv 2}
$d_{1/2}(N_{j},\mathfrak{s}_{0}) = -2j+\frac{1}{2}$
\end{cor}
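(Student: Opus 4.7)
The plan is to parallel the proof of Corollary~\ref{cor: d inv 1} with $\Gamma_{j}$ replaced by $\Gamma'_{j}$. Because $\Gamma'_{j}$ is a negative semi-definite plumbing with at most one bad vertex and $b_{1}(Y(\Gamma'_{j}))=1$ that represents $-N_{j}$, Rustamov's theorem (Theorem~\ref{thm: rustamov}) together with Corollary~\ref{cor: HF and lattice identification} applied to $\Gamma'_{j}$ yield a graded $\F[U]$-isomorphism $HF^+_{odd}(N_{j},\mathfrak{s}_{0})\cong \mathrm{H}^+(\Gamma'_{j},[t])$, with grading shift $\frac{t^{2}+|\mathcal{V}(\Gamma'_{j})|-3}{4}$. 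The value of $d_{1/2}(N_{j},\mathfrak{s}_{0})$ therefore reads off from the minimum grading on the left-hand side, which is controlled by the characteristic vectors $t_{1},t_{2}$ identified in the preceding proposition together with the corresponding values of $t_{i}^{2}$.

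To compute $t_{i}^{2}$, I will exhibit explicit rational vectors $\beta_{i}$ satisfying $\beta_{i} B'_{j}=t_{i}$ and then set $t_{i}^{2}=\beta_{i}\cdot t_{i}$. Since $B'_{j}$ has one-dimensional kernel spanned by $x'$ and $t_{i}\cdot x'=0$ (verified in the preceding proof), such a $\beta_{i}$ exists, and the value $t_{i}\cdot\beta_{i}$ is independent of the choice. The key observation is that both $t_{1}$ and $t_{2}$ are supported only on the last six coordinates of the plumbing, which suggests that $\beta_{i}$ with similarly small support should work: for $t_{1}$ take $\beta_{1}=(0,\ldots,0,-1,-2,-1,0)$ supported at positions $8j+2,\,8j+3,\,8j+4$, and for $t_{2}$ take $\beta_{2}=(0,\ldots,0,-1,0,0,0,0,1)$ supported at positions $8j$ and $8j+5$. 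A direct row-by-row check against the explicit form of $B'_{j}$ verifies $\beta_{i}B'_{j}=t_{i}$ for $i=1,2$, whereupon $t_{1}^{2}=(2)(-2)=-4$ and $t_{2}^{2}=(2)(-1)+(-2)(1)=-4$.

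Plugging into the grading formula with $|\mathcal{V}(\Gamma'_{j})|=8j+5$ then gives
\[
\gr(t_{i}) \;=\; -\frac{t_{i}^{2}+|\mathcal{V}(\Gamma'_{j})|-3}{4} \;=\; -\frac{-4+8j+5-3}{4} \;=\; -2j+\frac{1}{2}.
\]
Under the identification with $HF^+_{odd}(N_{j},\mathfrak{s}_{0})$, both $t_{1}$ and $t_{2}$ correspond to elements at grading $-2j+\frac{1}{2}$, and this equals $d_{1/2}(N_{j},\mathfrak{s}_{0})$ by the same implicit structural argument invoked in the proof of Corollary~\ref{cor: d inv 1}. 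The main technical step is spotting the sparse $\beta_{i}$: the degeneracy of $B'_{j}$ admits a one-parameter family of pre-images, but the tail-support of $t_{1},t_{2}$ combined with the block structure of $B'_{j}$ makes a compact, tail-supported choice possible, after which only elementary matrix arithmetic remains.
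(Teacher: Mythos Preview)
Your proof is correct and follows the same approach as the paper: exhibit explicit preimages $\beta_{i}$ with $\beta_{i}B'_{j}=t_{i}$, compute $t_{i}^{2}=\beta_{i}\cdot t_{i}=-4$, and plug into the grading formula. Your $\beta_{2}$ is identical to the paper's, while your $\beta_{1}=(0,\ldots,0,-1,-2,-1,0)$ is simply the paper's choice $\beta_{1}=(2,4,6,\ldots,16j-2,8j+1,4,2,0,0,8j-1)$ minus the kernel vector $x'$, yielding a sparser but equivalent preimage.
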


\begin{proof}
Let
\begin{align*}
    \beta_{1} &= (2, 4, 6, \ldots, 16j-2, 8j+1, 4,2,0, 0,8j-1)\\
    \beta_{2} &= (0, \ldots, 0, -1,0, 0,0,0,1)
\end{align*}
Then, $\beta_{1}B'_{j}= t_{1}$ and $\beta_{2}B'_{j}= t_{2}$. Thus, 
\begin{align*}
    t_{1}^2 &= t_{1}\cdot \beta_{1} = -4\\
    t_{2}^2 &= t_{2}\cdot \beta_{2} = -4
\end{align*}
Hence, under the isomorphism in Corollary \ref{cor: HF and lattice identification}, the elements of $HF^+(N_{j}, \mathfrak{s}_{0})$ corresponding to $t_{1}$ and $t_{2}$ have gradings:
\begin{align*}
    \gr(t_{1}) = \gr(t_{2}) &= -\frac{t_{2}^2 + |\mathcal{V}(\Gamma'_{j})|-3}{4}\\
               &= -\frac{-4+8j+5-3}{4}\\
               &= -2j+\frac{1}{2}
\end{align*}
\end{proof}

Now combining Corollaries \ref{cor: d inv 1}, \ref{cor: d inv 2}, and the basic fact that $d_{\pm 1/2}(-Y)=-d_{\mp 1/2}(Y)$ (see \cite[Proposition 4.10]{MR1957829}), we have:
\begin{align*}
    d_{1/2}(-N_{j}) = \frac{1}{2}\hspace{2em}\text{and}\hspace{2em}d_{-1/2}(-N_{j}) = 2j-\frac{1}{2}
\end{align*}
In particular, by Theorem \ref{thm: rustamov}, $HF^+_{even}(-N_{j}, \mathfrak{s}_{0}) = \mathcal{T}^+_{2j-1/2}$. 

\newpage 

We have yet to completely determine $HF^+_{odd}(-N_{j},\mathfrak{s}_{0})$. So far, from Proposition \ref{prop: -N reps}, we know that $\dim_{\F}[\ker(U)\cap HF^+_{odd}(-N_{j},\mathfrak{s}_{0})] = 1$ or $2$ depending on whether $k_{1}$ and $k_{2}$ represent the same element or not in $\mathcal{L}(\Gamma_{j},[k])$. Therefore, as graded $\F[U]$-modules, we have:
\begin{figure}[h]
  \centering
  \includegraphics[scale=.95]{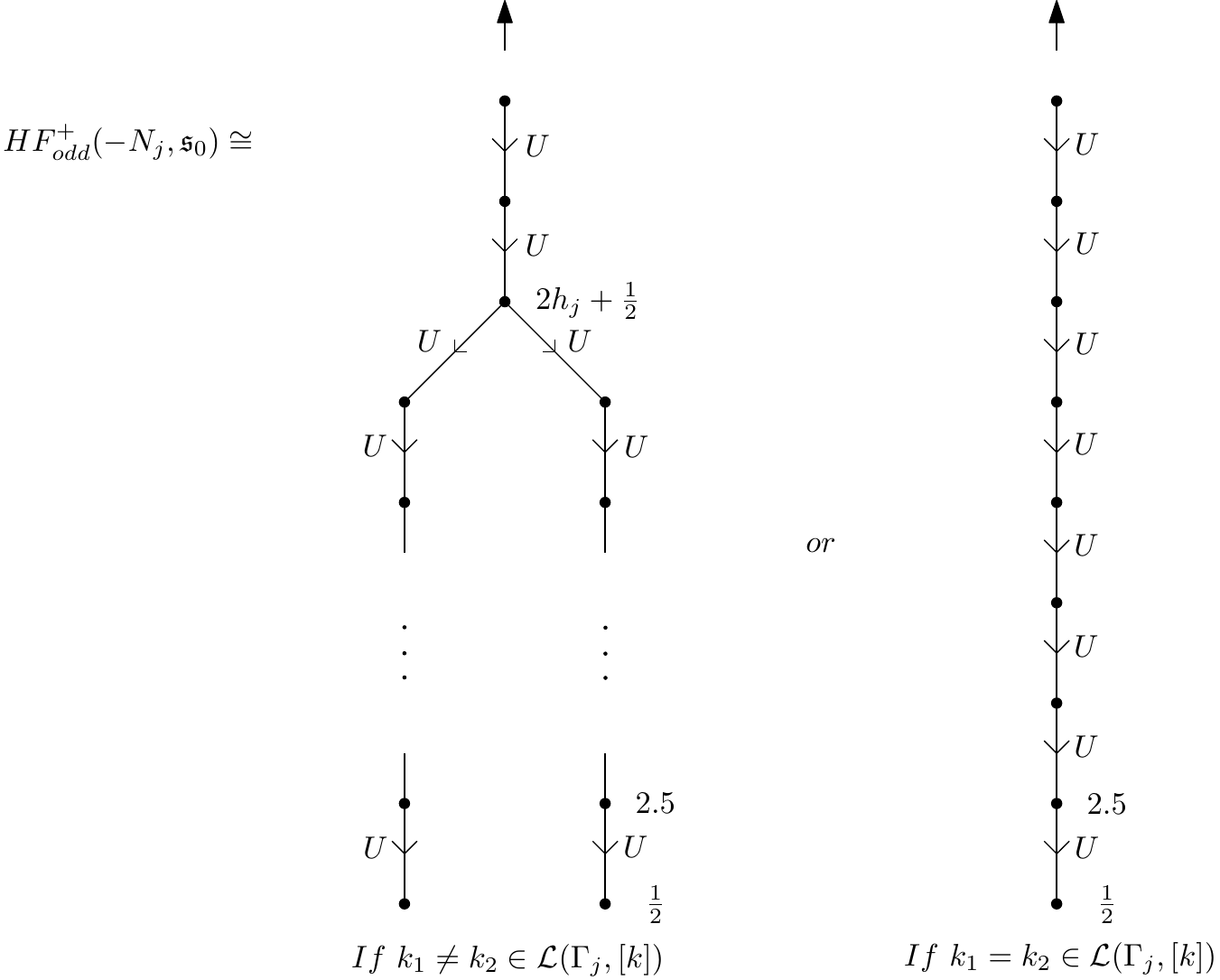}
  \end{figure}

\noindent Here, $h_{j}$ is some positive integer depending on $j$ which we have not yet determined. 

A word of explanation is in order since on the left side of the above isomorphism we have an $\F[U]$-module and on the right we have one of two possible graphs. The right side is to be interpreted as follows: 
\begin{itemize}
    \item Each vertex at grading $r$ corresponds to a basis element of the $\F$-vector space $HF_{r}^+(-N_{j},\mathfrak{s}_{0})$.
    \item If the edges emanating from a vertex $y$ are of the form:
    \begin{figure}[h]
  \centering
  \includegraphics[scale=1]{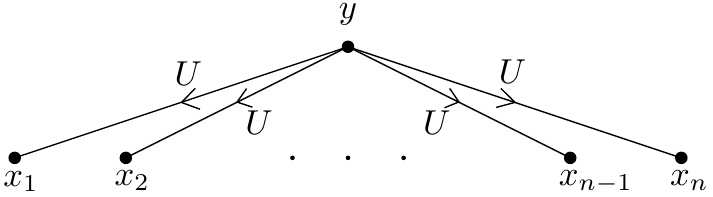}
  \end{figure}
  
  \noindent then $Uy = x_{1}+x_{2}+\cdots +x_{n-1}+x_{n}$. In particular, if there are no edges emanating from $y$, then $Uy = 0$. 
\end{itemize}

\newpage

We now utilize Turaev Torsion to complete step (\ref{step 1}). Combining our computations thus far with \cite[Theorem 10.17]{MR2113020}, we see that
\begin{align*}
    T_{N_{j}}(\mathfrak{s}_{0}) = \begin{cases}
    h_{j}+j&\text{ if }k_{1}\neq k_{2}\in \mathcal{L}(\Gamma_{j}, [k])\\
    j&\text{ otherwise}
    \end{cases}
\end{align*}
where $T_{N_{j}}$ is the Turaev torsion function associated to $N_{j}$ (see \cite[p. 119]{MR1958479}). Therefore, to precisely determine $HF^+_{odd}(-N_{j},\mathfrak{s}_{0})$, it suffices to compute $T_{N_{j}}(\mathfrak{s}_{0})$. 

There are many standard ways to compute $T_{N_{j}}(\mathfrak{s}_{0})$. For example, in \cite{MR1958479}, Turaev provides a formula in terms of a surgery description. We will now give a brief outline of how to carry out the calculation using this method, but we leave the details to the reader. 

\begin{enumerate}
    \item Let $H = H_{1}(N_{j};\Z)$. Consider the group ring $\Z[H]$. Since $H\cong \Z$, we can think of $\Z[H]$ as a $\Z[t, t^{-1}]$, the ring of Laurent polynomials in the indeterminate $t$. Let $Q(H)$ denote the field of fractions of $\Z[H]$. The first step is to compute the Turaev torsion $\tau(N_{j}, \mathfrak{s}_{0})\in Q(H)$. For this, we use the formula given in \cite[VII.2, Theorem 2.2]{MR1958479}. To apply this formula, we need to choose a surgery diagram for $N_{j}$ and orient the underlying link. We use the following surgery diagram with underlying link $L_{j}$ oriented as indicated by the arrows:
    \begin{figure}[h]
  \centering
  \includegraphics[scale=.75]{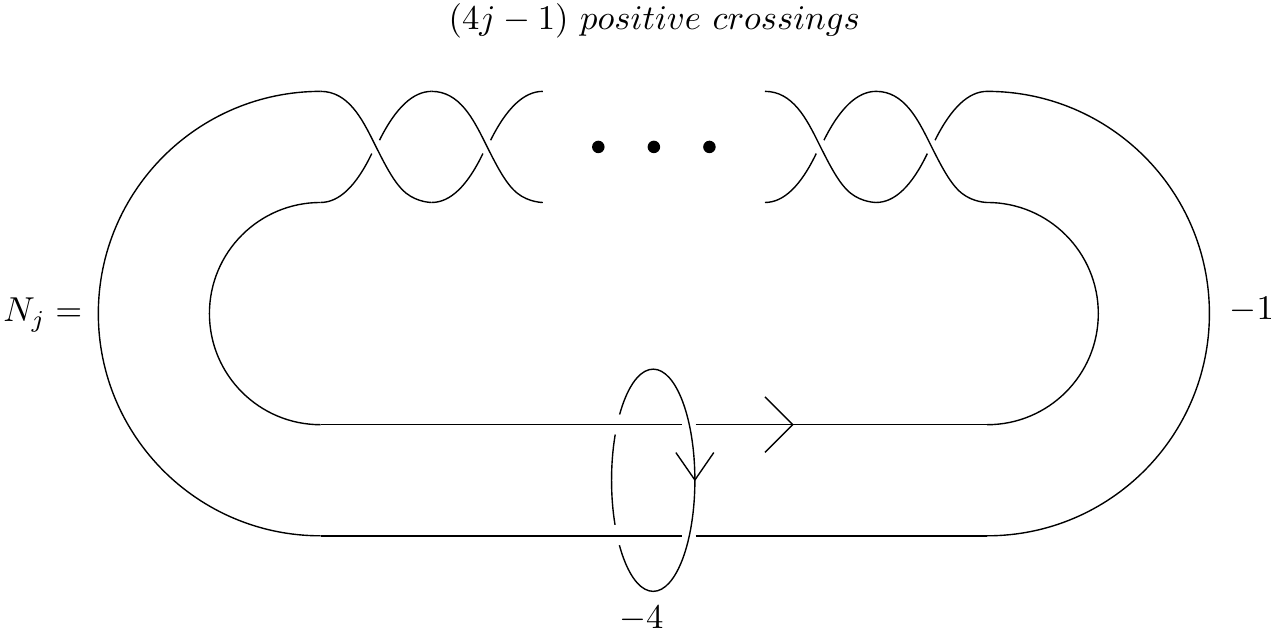}
  \end{figure}
    
    \noindent The bulk of the work in computing $\tau(N_{j}, \mathfrak{s}_{0})$ using \cite[VII.2, Theorem 2.2]{MR1958479} is calculating the multivariable Alexander-Conway function $\nabla(L_{j})$. Again, there are various approaches to computing $\nabla(L_{j})$. For example, in \cite{MR1197048} Murakami provides a skein formula for $\nabla$. Using this formula, we find that $\nabla(L_{j}) = yx^{4j-1}+y^{-1}x^{-4j+1}$ where the variable $x$ corresponds to the torus knot component and the variable $y$ corresponds to the unknot component. Plugging this into the formula for $\tau(N_{j}, \mathfrak{s}_{0})$, we get:
    \begin{align*}
        \tau(N_{j}, \mathfrak{s}_{0})=\frac{t^{8j-1}+1}{t^{4j-2}(t-1)^2(t+1)}
    \end{align*}
    
    \item Next, we compute $[\tau(N_{j}, \mathfrak{s}_{0})]$ which is a Laurent polynomial obtained by truncating $\tau(N_{j}, \mathfrak{s}_{0})$ in a certain way (see \cite[p.22]{MR1958479}). We find that:
    \begin{align*}
        [\tau(N_{j}, \mathfrak{s}_{0})] &= \frac{t^{8j-1}+1}{t^{4j-2}(t-1)^2(t+1)} - \frac{t}{(t-1)^2}\\
        &=\left(\sum\limits_{i=0}^{4j-4}t^{i-4j+2}\right)\left(\sum\limits_{i=1}^{2j-1}t^{2i}\right) + \sum\limits_{i=0}^{8j-4}t^{i-4j+2}\\
        &=2j + \text{non-constant terms}
    \end{align*}
    \item By definition, $T_{N_{j}}(\mathfrak{s}_{0})$ is the constant term of $[\tau(N_{j}, \mathfrak{s}_{0})]$. Hence, $T_{N_{j}}(\mathfrak{s}_{0}) = 2j$. 
\end{enumerate}
\vspace{1em}

Thus, we have the following isomorphism of graded $\F[U]$-modules:
\begin{figure}[h]
  \centering
  \includegraphics[scale=1]{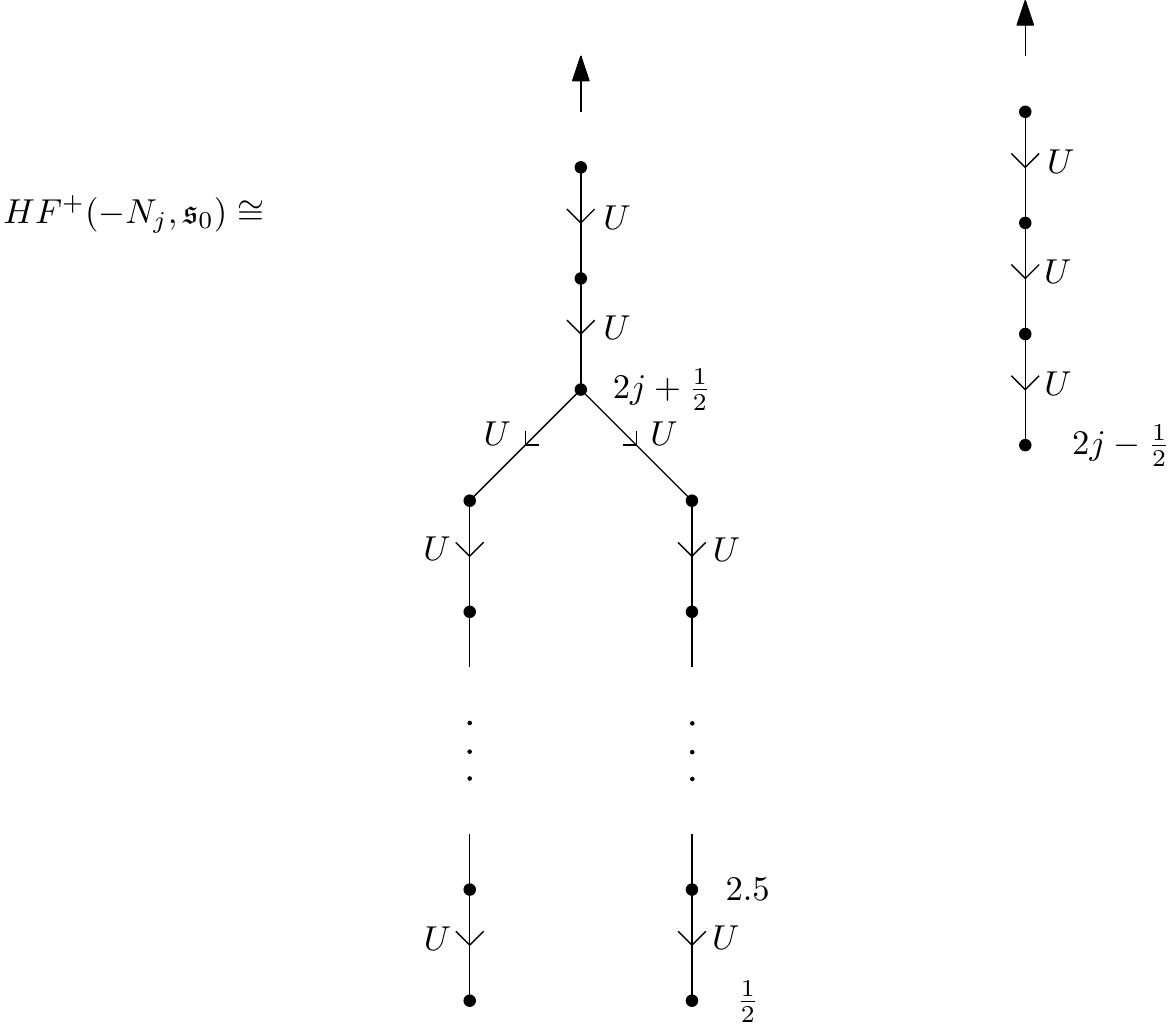}
  \end{figure}
  
\subsubsection{Step 2} We now compute the involution $\iota_{*}$ on homology. This amounts to determining whether $-k_{1}$ is equivalent to $k_{1}$ or $k_{2}$. If $-k_{1}$ is equivalent to $k_{2}$, then the involution swaps the two legs of the left-hand graph of the above figure and leaves the right-hand graph fixed. If $-k_{1}$ is equivalent to $k_{1}$, then $\iota_*$ is the identity. We know show that, in fact, $-k_{1}$ is equivalent to $k_{2}$. 

Recall, $-k_{1} = (1, 0, -5+4j, -3,0,\ldots, 0, 3)$ and $k_{2} = (-1, 0, 3-4j,3, 0,\ldots, 0, 1)$. Consider the following sequence of moves from $-k_{1}$ to $k_{2}$:
\begin{enumerate}
    \item Add $2PD[v_{4}]$:
    \begin{align*}
    \begin{cases}
          (-1, 0, -1, 3, 1) = k_{2}&\text{ if } j =1\\
          (-1,0,-5+4j,3,-2,\underbrace{0,\ldots, 0}_{2j-3}, 3)&\text{ if }j\geq 2
    \end{cases}
    \end{align*}
    So we can assume for the subsequent moves that $j\geq 2$. 
    \item Apply Lemma \ref{lem: one 2} and Remark \ref{rem:adj vertex change}:
    \begin{align*}
        (-1, 0, -5+4j, 1, \underbrace{0, \ldots, 0}_{2j-3}, 2, 1)
    \end{align*}
    \item Add $-2PD[v_{1}]$:
    \begin{align*}
        (1, -2, -5+4j-2, -1, \underbrace{0, \ldots, 0}_{2j-3}, 2, 1)
    \end{align*}
    \item Add $-2PD[v_{2}]$:
    \begin{align*}
        (-1, 2, -5+4j-2, -1, \underbrace{0, \ldots, 0}_{2j-3}, 2, 1)
    \end{align*}
    \item Add $2PD[v_{1}]$:
    \begin{align*}
        (1, 0, -5+4j-4, -3, \underbrace{0, \ldots, 0}_{2j-3}, 2, 1)
    \end{align*}
    \item Add $-2PD[v_{4}]$:
    \begin{align*}
         (-1,0, -5 + 4j -4, 3, -2, \underbrace{0, \ldots, 0}_{2j-4}, 2, 1)
    \end{align*}
    \item Add $-2PD[v_{5}]$:
    \begin{align*}
         (-1,0, -5 + 4j -4, 1, 2,-2 \underbrace{0, \ldots, 0}_{2j-5}, 2, 1)
    \end{align*}
    
    \item Rightward $(2,-2)$-slide:
    \begin{align*}
         (-1,0, -5 + 4j -4, 1, \underbrace{0, \ldots, 0}_{2j-5},2,-2 2, 1)
    \end{align*}
    
    \item Type $-2$ move:
    \begin{align*}
        (-1, 0, -5+4j-4, 1, \underbrace{0, \ldots, 0}_{2j-4}, 2, 0, 1)
    \end{align*}
\end{enumerate}
Now notice that we are back to the same vector as in (2), except we have decreased the 3rd entry by 4 and shifted the $+2$ entry one slot to the left. Therefore, if we iterate this sequence of moves $(2j-4)$-more times, we get the vector:
\begin{align*}
    (-1, 0, 7-4j, 1,2,\underbrace{0, \ldots, 0}_{2j-3},1)
\end{align*}
\newpage

Now consider the sequence of moves:
\begin{enumerate}
    \item Add $-2PD[v_{1}]$:
    \begin{align*}
        (1, -2, 5-4j, -1,2,\underbrace{0, \ldots, 0}_{2j-3},1)
    \end{align*}
    \item Add $-2PD[v_{2}]$:
    \begin{align*}
        (-1, 2, 5-4j, -1,2,\underbrace{0, \ldots, 0}_{2j-3},1)
    \end{align*}
    \item Add $-2PD[v_{1}]$:
    \begin{align*}
        (1, 0, 3-4j, -3,2,\underbrace{0, \ldots, 0}_{2j-3},1)
    \end{align*}
    \item Add $-2PD[v_{4}]$:
    \begin{align*}
        (-1, 0, 3-4j, 3, 0,\ldots, 0, 1) = k_{2}
    \end{align*}
\end{enumerate}

\subsubsection{Step 3}

\begin{thm}\label{thm: HFI N}
We have the following isomorphism of graded $\F[U,Q]/(Q^2)$-modules:
\end{thm}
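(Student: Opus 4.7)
We carry out the three-step procedure laid out in Section~\ref{subsection: comp strategy}. Step~(1) has been completed in the preceding subsection: $HF^+(-N_j,\mathfrak{s}_0)$ is the direct sum of a graded root with two length-$j$ legs merging at grading $2j+\tfrac12$ (giving $d_{1/2}(-N_j)=\tfrac12$) together with the even-part tower $\mathcal{T}^+_{2j-1/2}$. Step~(2) has also been done: Theorem~\ref{thm: involution identification} combined with the explicit chain of moves from $-k_1$ to $k_2$ produced above identifies $\iota_*$ as swapping the two pre-merge legs and acting as the identity on both the merged stem and the even tower. It remains to execute Step~(3) by applying the exact triangle of Proposition~\ref{prop: HFI exact triangle zero}, which at each grading $r$ yields
\begin{align*}
0\to \coker\!\bigl(Q(1+\iota_*)_r\bigr)\to HFI^+_r(-N_j,\mathfrak{s}_0)\to \ker\!\bigl(Q(1+\iota_*)_{r-1}\bigr)\to 0.
\end{align*}

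On the $\iota_*$-invariant subspaces (the merged stem and the entire even tower) the map $Q(1+\iota_*)$ is identically zero, so each invariant generator $x$ contributes a cokernel class $[Qx]$ and a kernel-lift $\widetilde x$ placed one grading higher. On each pre-merge pair $\{\alpha_k,\beta_k\}$ the map has rank one with image $\langle Q(\alpha_k+\beta_k)\rangle$, producing a single cokernel class $[Q\alpha_k]$ at grading $\tfrac12+2k$ and a single kernel class $\alpha_k+\beta_k$ lifted to grading $\tfrac32+2k$. The $Q$-action on the resulting generators is forced: it annihilates every cokernel class (since $Q^2=0$) and sends each kernel-lift $\widetilde x$ to $[Qx]$. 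The $U$-action on cokernel classes is inherited from $U$ on $Q\cdot HF^+$, so the merged-stem, even, and pre-merge cokernels assemble respectively into $\mathcal{T}^+_{2j+1/2}$, $\mathcal{T}^+_{2j-1/2}$, and a truncated tower $\F[U]/(U^j)$ with top generator at grading $2j-\tfrac32$ (using that $U[Q\gamma_j]=[Q(\alpha_{j-1}+\beta_{j-1})]=0$ in the cokernel).

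The main technical point is the $U$-action on the kernel-lifts: each $U\widetilde x$ matches the $U$-action on the underlying $HF^+$-cycle only up to a possible off-diagonal correction landing in the cokernel two gradings below, e.g.\ $U\widetilde\gamma_j=(\alpha_{j-1}+\beta_{j-1})^{\mathrm{lift}}+\mu[Qy_0]$ and $U\widetilde y_m=\widetilde y_{m-1}+\nu_m[Q\gamma_{j+m-1}]$. My plan to dispose of these corrections is to observe that they are choices of splitting rather than invariants: redefining the kernel-lifts by adding cokernel elements at the same grading transforms the coefficients via an explicit recursion of the form $c_m^{\mathrm{new}}=c_m+\lambda_{m+1}-\lambda_m$, which admits a simultaneous solution zeroing out every off-diagonal term. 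With this normalization the pre-merge and merged kernel-lifts concatenate into a single tower $\mathcal{T}^+_{3/2}$, the even kernel-lifts form $\mathcal{T}^+_{2j+1/2}$, and combining these $\F[U]$-towers with the $Q$-action described above yields the stated $\F[Q,U]/(Q^2)$-module identification.
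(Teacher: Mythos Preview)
Your approach via the mapping-cone exact triangle and renormalizing kernel-lifts is sound in broad outline, but there is a genuine gap at the bottom of the even tower that your change-of-basis argument cannot close.

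Concretely, the bottom even kernel-lift $\widetilde{y}_{0}$ sits in grading $2j+\tfrac{1}{2}$, so $U\widetilde{y}_{0}$ lands in grading $2j-\tfrac{3}{2}$. Since $h(U\widetilde{y}_{0})=Uy_{0}=0$, the element $U\widetilde{y}_{0}$ lies in the cokernel at that grading, and the only cokernel class there is $[Q\alpha_{j-1}]$. Thus a priori
\[
U\widetilde{y}_{0}=\epsilon\,[Q\alpha_{j-1}],\qquad \epsilon\in\{0,1\}.
\]
Your normalization freedom for $\widetilde{y}_{0}$ is to add a cokernel element in the \emph{same} grading $2j+\tfrac{1}{2}$; the only such element is $[Q\gamma_{j}]$. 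But you yourself observe that $U[Q\gamma_{j}]=[Q(\alpha_{j-1}+\beta_{j-1})]=0$ in the cokernel, so this adjustment does not touch $\epsilon$. The recursion you wrote, $c_{m}^{\mathrm{new}}=c_{m}+\lambda_{m+1}-\lambda_{m}$, only propagates corrections \emph{within} a tower; it has no ``$\lambda_{-1}$'' term to absorb a cross-term into the pre-merge cokernel, and no other cokernel class is available at grading $2j+\tfrac{1}{2}$. Hence $\epsilon$ is a genuine invariant of the extension, not a choice of splitting: the two values give non-isomorphic $\F[U]$-modules (for $j=1$ one has $\dim_{\F}\ker U$ equal to $5$ versus $4$).

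This is exactly the ambiguity the paper isolates (in the notation there, whether $U\cdot[c+Qe]$ equals $0$ or $[Qa]$), and it is resolved not by any basis manipulation but by an external count: one computes $\dim_{\F}\widehat{HFI}(-N_{j},\mathfrak{s}_{0})$ directly from $\widehat{HF}$ and $\iota_{*}$ via \cite[Corollary~4.7]{MR3649355}, then compares with the two possible values of $\dim_{\F}\ker U+\dim_{\F}\coker U$ on $HFI^{+}$ using \cite[Proposition~4.1]{MR3649355}. Only $\epsilon=0$ is consistent. You need to supply this (or an equivalent) argument; the renormalization plan alone does not determine the module.
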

\begin{figure}[h]
  \centering
  \includegraphics[scale=1]{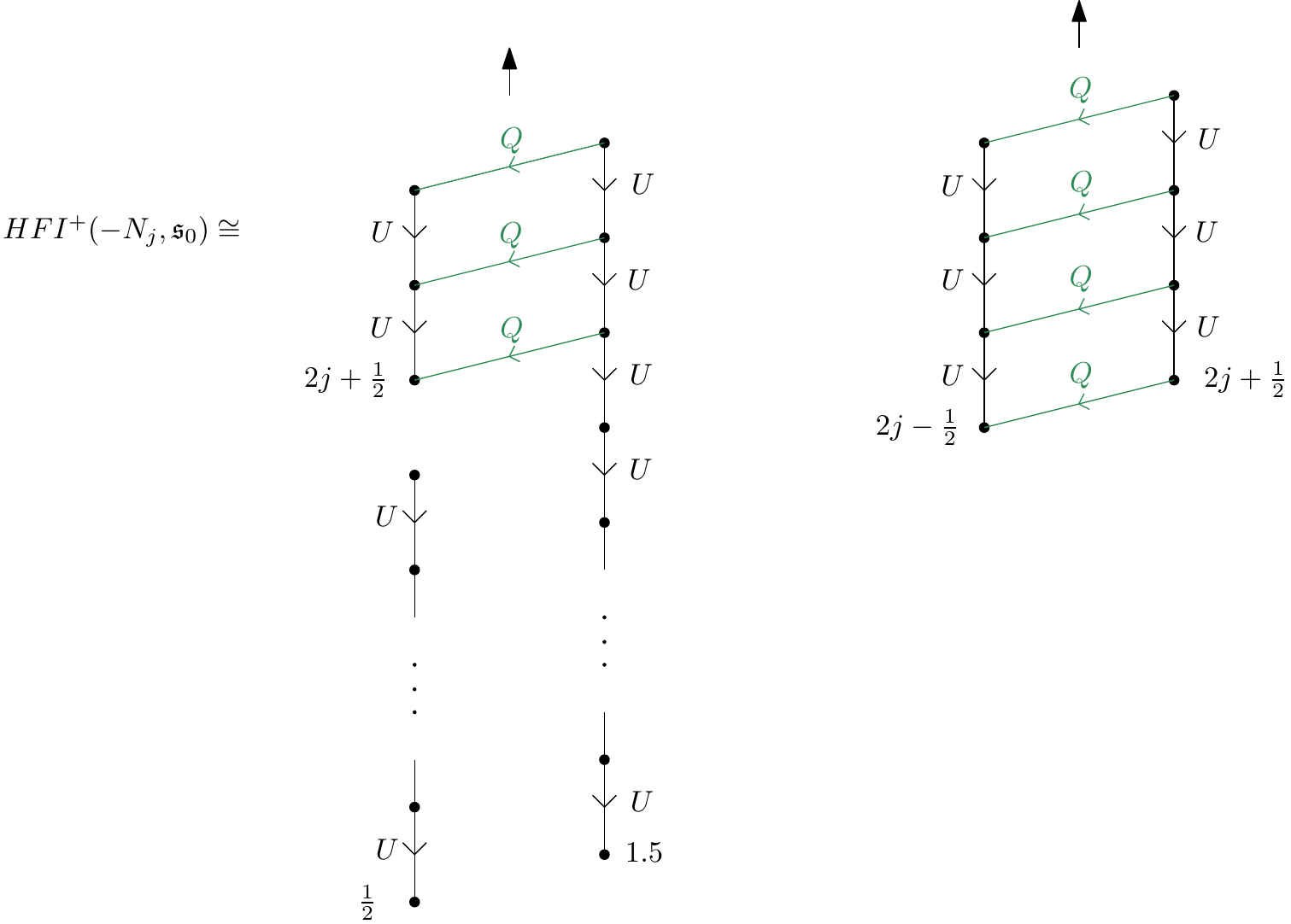}
  \end{figure}

\begin{rem}
The graph on the right-hand side of the above isomorphism should be interpreted as a graded $\F[U,Q]/(Q^2)$-module in a manner similar to what was described earlier in the context of $\F[U]$-modules, except now there are additional arrows labeled with $Q$ to indicate the action of $Q$. 
\end{rem}

\begin{proof}
For simplicity of exposition, we prove the statement for $j=1$. The proof for $j\geq 2$ is completely analogous and is left to the reader. 

Fix an admissible Heegaard pair $\mathcal{H} = (H, J)$ for $(-N_{1}, \mathfrak{s}_{0})$. We can choose representative cycles $a,b,c\in CF^+(\mathcal{H}, \mathfrak{s}_{0})$ such that:
\begin{align*}
    [a+b], [c]\in \Image [\pi_{*}: HF^{\infty}(\mathcal{H}, \mathfrak{s}_{0})\to HF^+(\mathcal{H}, \mathfrak{s}_{0})]
\end{align*}
and the corresponding $HF^+$ homology generators are:

\begin{figure}[h]
  \centering
  \includegraphics[scale=1]{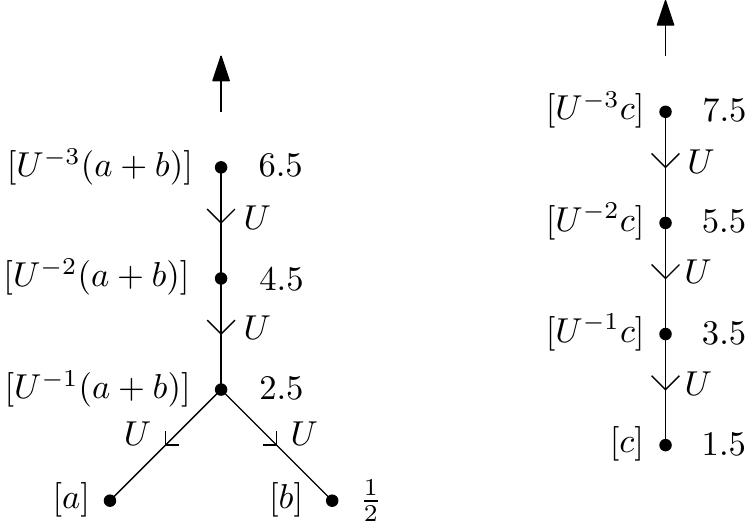}
  \end{figure}
  
\noindent Since $\iota_{*}([a]) = \iota_{*}([b])$, we have that $(1+\iota_{*})([a+b]) = 0$. Therefore, there exists some $d\in CF^+(\mathcal{H}, \mathfrak{s}_{0})$ such that $\partial d = a+b+\iota(a+b) $. Similarly, since $(1+\iota_{*})([c]) = 0$, there exists some $e\in CF^+(\mathcal{H}, \mathfrak{s}_{0})$ such that $\partial e = c+\iota(c)$. It then follows from Proposition \ref{prop: HFI exact triangle} and step 3 of section \ref{subsection: comp strategy}, that as graded $\F$-vector spaces we have $HFI^+(-N_{1};\mathfrak{s}_{0})\cong$

\begin{figure}[h]
  \centering
  \includegraphics[scale=1]{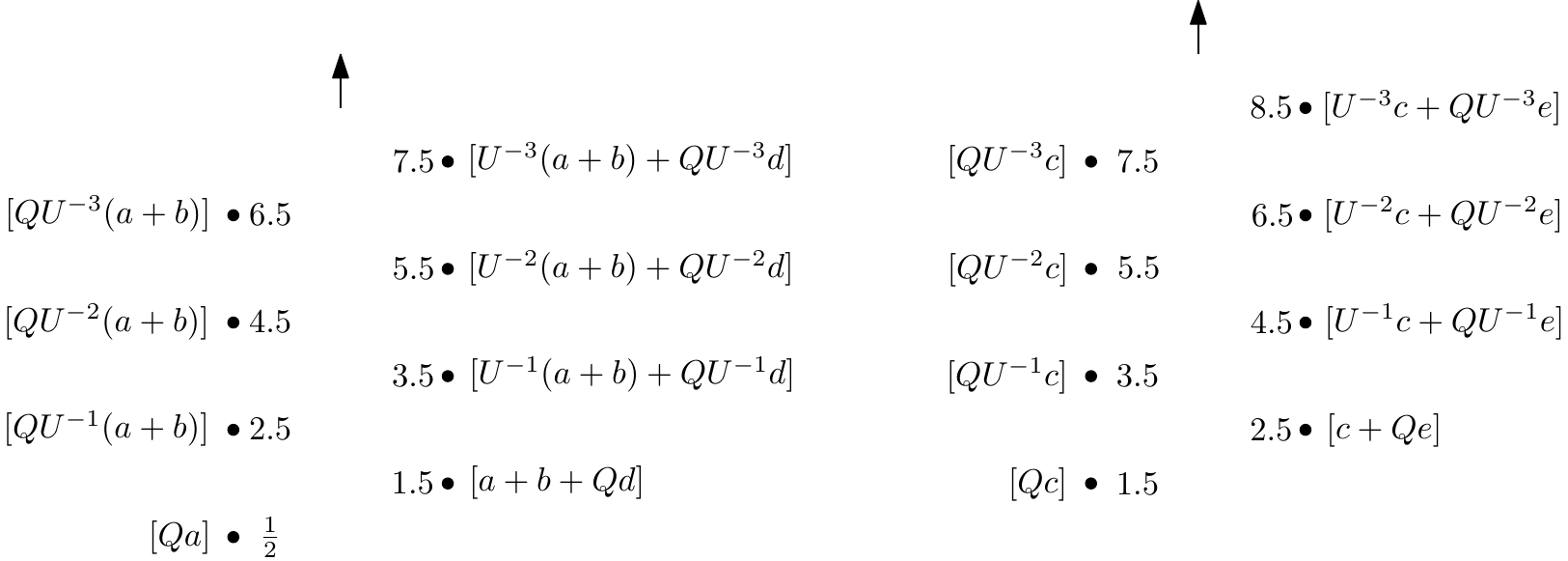}
  \end{figure}
  
\newpage
From this explicit description of generators, we see that for $n\geq 2$:
\begin{align*}
    U\cdot [QU^{-n}(a+b)] = [QU^{-n+1}(a+b)]
\end{align*}
and for $n\geq 1$: 
\begin{align*}
    U\cdot [U^{-n}(a+b) + QU^{-n}d] &= [U^{-n+1}(a+b) + QU^{-n+1}d]\\
    U\cdot [QU^{-n}c] &= [QU^{-n+1}c]\\
    U\cdot [U^{-n}c+QU^{-n}e]&=[U^{-n+1}c+QU^{-n+1}e]
\end{align*}
Next, we have:
\begin{align*}
    U\cdot [QU^{-1}(a+b)] = [Q(a+b)] = [\partial^I a] = 0
\end{align*}
Moreover, by grading considerations, we must have:
\begin{align*}
    U\cdot[Qa] &= 0\\
    U\cdot [a+b+Qd]&= 0\\
    U\cdot [Qc] &= 0
\end{align*}
Also, either $U\cdot [c+Qe] = 0$ or $U\cdot [c+Qe] = [Qa]$. In the former case, we would have:
\begin{align*}
    \dim_{\F}[\ker (U:HFI^+(-N_{j}, \mathfrak{s}_{0})\to HFI^+(-N_{j}, \mathfrak{s}_{0}))] &= 5\\
    \dim_{\F}[\coker (U:HFI^+(-N_{j}, \mathfrak{s}_{0})\to HFI^+(-N_{j}, \mathfrak{s}_{0}))] &= 1
\end{align*}
whereas in latter we would have:
\begin{align*}
    \dim_{\F}[\ker (U:HFI^+(-N_{j}, \mathfrak{s}_{0})\to HFI^+(-N_{j}, \mathfrak{s}_{0})] &= 4\\
    \dim_{\F}[\coker (U:HFI^+(-N_{j}, \mathfrak{s}_{0})\to HFI^+(-N_{j}, \mathfrak{s}_{0}))] &= 0
\end{align*}
Thus, by \cite[Proposition 4.1]{MR3649355}, we would have either:
\begin{align*}
    \dim_{\F}(\widehat{HFI}(-N_{j}, \mathfrak{s}_{0})) = 6\text{ or } 4
\end{align*}
But by \cite[Corollary 4.7]{MR3649355} we see that:
\begin{align*}
    \dim_{\F}(\widehat{HFI}(-N_{j}, \mathfrak{s}_{0})) &=\dim_{\F}[\ker(Q(1+\iota_{*}):\widehat{HF}(-N_{j}, \mathfrak{s}_{0})\to Q\cdot \widehat{HF}(-N_{j}, \mathfrak{s}_{0}))]\\
    &+\dim_{\F}[\coker(Q(1+\iota_{*}):\widehat{HF}(-N_{j}, \mathfrak{s}_{0})\to Q\cdot \widehat{HF}(-N_{j}, \mathfrak{s}_{0}))]\\
    &=3+3\\
    &=6
\end{align*}
Hence, we must have had $U\cdot[c+Qe]=0$. We have now completely determined the $U$-action on $HFI^+(-N_{j}, \mathfrak{s}_{0})$. 

Next, for the $Q$-action, it follows from the explicit description of the generators that for $n\geq 1$:
\begin{align*}
    Q\cdot [U^{-n}(a+b)+QU^{-1}d] &= [QU^{-n}(a+b)]
\end{align*}
and for $n\geq 0$:
\begin{align*}
    Q\cdot [U^{-n}c + QU^{-n}e] = [QU^{-n}c]
\end{align*}
Also, 
\begin{align*}
    Q\cdot [a+b+Qd] = [Q(a+b)] = [\partial^I a] = 0
\end{align*}
It is clear that the action of $Q$ on all of the other generators is zero. Thus, we have:

\begin{figure}[h]
  \centering
  \includegraphics[scale=1]{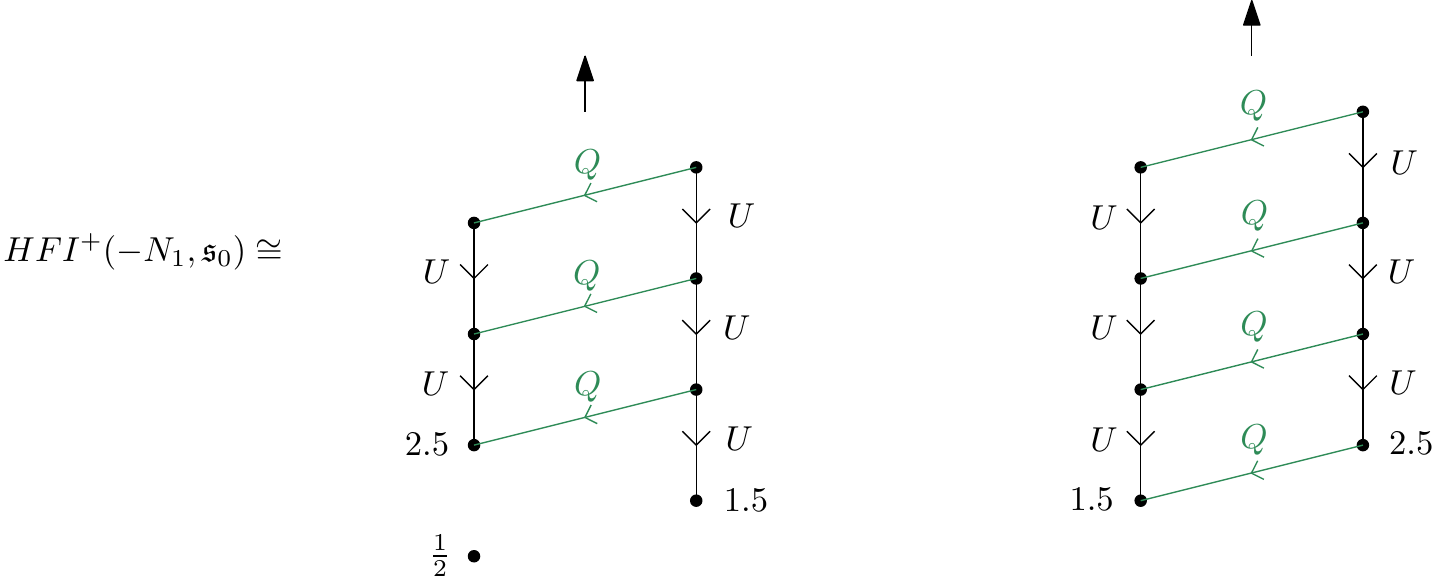}
  \end{figure}
\end{proof}

\begin{thm}\label{thm: nj not zero surgery}
For all positive integers $j$, $N_{j}$ cannot be obtained by $0$-surgery on a knot in $S^3$. In fact, $N_{j}$ is not the oriented boundary any smooth negative semi-definite spin 4-manifold.
\end{thm}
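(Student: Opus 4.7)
The plan is to apply Corollary \hyperref[cor: obstruction to any negative semi-definite]{(D)} to $N_j$, which requires computing the involutive correction terms $\underline{d}_{\pm 1/2}(N_j)$. Since the preceding subsection delivers $HFI^+(-N_j, \mathfrak{s}_0)$ rather than $HFI^+(N_j, \mathfrak{s}_0)$, I will use the identity $\underline{d}_{\pm 1/2}(N_j) = -\bar{d}_{\mp 1/2}(-N_j)$ from Proposition \ref{ref:involutive correction term properties}(2) and extract the values of $\bar{d}_{\pm 1/2}(-N_j)$ directly from Theorem \ref{thm: HFI N}.

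The key is to identify the four infinite towers $\mathrm{Im}(\pi_*^I) \subset HFI^+(-N_j, \mathfrak{s}_0)$ and locate their bottoms. The even summand $\mathcal{T}^+_{2j-1/2}$ of $HF^+(-N_j,\mathfrak{s}_0)$ contributes two of these: the non-$\mathrm{Im}(Q)$ tower based at $[c+Qe]$ at grading $2j-1/2$, and the $\mathrm{Im}(Q)$ tower based at $[Qc]$ at grading $2j-3/2$. This immediately gives $\bar{d}_{1/2}(-N_j) = 2j-3/2$. The odd summand $\mathcal{T}^+_{1/2}$ contributes the non-$\mathrm{Im}(Q)$ tower based at $[a+b+Qd]$ at grading $1/2$. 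The decisive step is the $\mathrm{Im}(Q)$ tower on the odd side: because $\iota$ swaps the two legs of the forked graded root, the chain-level identity $\partial^I \tilde{a}_k = Q(\tilde{a}_k + \iota\tilde{a}_k)$ shows that $[QU^{-k}(a+b)] = 0$ in $HFI^+$ for every $0 \leq k < j$. Hence this tower appears only from the merged portion of the root upward, with bottom $[QU^{-j}(a+b)]$ at grading $2j-1/2$, giving $\bar{d}_{-1/2}(-N_j) = 2j - 1/2$.

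Translating via Proposition \ref{ref:involutive correction term properties}(2), $\underline{d}_{-1/2}(N_j) = -2j + 3/2$ and $\underline{d}_{1/2}(N_j) = -2j + 1/2$. For every $j \geq 2$ these satisfy $\underline{d}_{-1/2}(N_j) \leq -5/2 < -1/2$ and $\underline{d}_{1/2}(N_j) \leq -3/2 < 1/2$, so Corollary \hyperref[cor: obstruction to any negative semi-definite]{(D)} immediately implies that $N_j$ is not the oriented boundary of any smooth negative semi-definite spin $4$-manifold; observing that the $0$-trace of any knot in $S^3$ is such a manifold then rules out the possibility that $N_j$ arises via $0$-surgery.

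The main obstacle will be the borderline case $j = 1$, where $\underline{d}_{-1/2}(N_1) = -1/2$ only meets the hypothesis of Corollary \hyperref[cor: obstruction to any negative semi-definite]{(D)} with equality rather than strict inequality. I would address this separately by combining Theorem \hyperref[thm: spin boundary constraints]{(A)}, which forces any hypothetical negative semi-definite spin filling $X$ of $N_1$ to have $b_2(X) = 1$ with trivial restriction $H^1(X;\Z) \to H^1(N_1;\Z)$ and hence trivial intersection form, with the Neumann-Siebenmann $\bar{\mu}$-invariant argument used by Hedden-Kim-Mark-Park in \cite{MR4029676} for odd $j$.
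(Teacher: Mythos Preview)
Your overall strategy matches the paper's: read off the involutive $d$-invariants from Theorem~\ref{thm: HFI N} via Proposition~\ref{ref:involutive correction term properties}(2), then invoke Corollaries~\ref{cor: obstruction to any negative semi-definite} and~\ref{cor:zero surgery obstruction}. However, your grading bookkeeping in the mapping cone is off by one, and this error is precisely what creates your ``borderline case $j=1$''.

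Recall from Proposition~\ref{prop: HFI exact triangle} that $g:Q\cdot HF^+_{r}[-1]\to HFI^+_{r}$ preserves grading while $h:HFI^+_{r}\to HF^+_{r-1}$ drops it by one. Concretely, if $x\in CF^+_{r}$ then as a domain element of the cone it sits in $CFI^+_{r+1}$, whereas $Qx$ sits in $CFI^+_{r}$. You have instead placed $x$ at grading $r$ and $Qx$ at $r-1$. With the correct convention:
\begin{itemize}
    \item $[Qc]$ lies in grading $2j-\tfrac12\equiv -\tfrac12\bmod 2$, so it determines $\bar d_{-1/2}(-N_{j})=2j-\tfrac12$ (not $\bar d_{1/2}$);
    \item $[QU^{-j}(a+b)]$ lies in grading $2j+\tfrac12\equiv \tfrac12\bmod 2$, so it determines $\bar d_{1/2}(-N_{j})=2j+\tfrac12$ (not $\bar d_{-1/2}$).
\end{itemize}
Your analysis that $[QU^{-k}(a+b)]=0$ for $0\le k<j$ is correct; only the resulting grading and parity labels need to shift.

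With the corrected values one gets
\[
\underline d_{-1/2}(N_{j})=-\bar d_{1/2}(-N_{j})=-2j-\tfrac12,\qquad
\underline d_{1/2}(N_{j})=-\bar d_{-1/2}(-N_{j})=-2j+\tfrac12,
\]
exactly as in the paper's proof. For every $j\ge 1$ these satisfy $\underline d_{-1/2}(N_{j})\le -\tfrac52<-\tfrac12$ and $\underline d_{1/2}(N_{j})\le -\tfrac32<\tfrac12$, so Corollary~\ref{cor: obstruction to any negative semi-definite} applies uniformly and no separate treatment of $j=1$ (in particular, no appeal to the $\bar\mu$-invariant argument of \cite{MR4029676}) is needed.
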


\begin{proof}
From previous theorem, we have: 
\begin{align*}
    \bar{d}_{1/2}(-N_{j}) = 2j+\frac{1}{2}&\hspace{2em}\bar{d}_{-1/2}(-N_{j}) = 2j-\frac{1}{2}\\
    \underline{d}_{1/2}(-N_{j}) = \frac{1}{2}&\hspace{2em}  \underline{d}_{-1/2}(-N_{j}) = 2j-\frac{1}{2}
\end{align*}
Equivalently, 
\begin{align*}
\underline{d}_{-1/2}(N_{j}) = -2j-1/2&\hspace{2em}\underline{d}_{1/2}(N_{j}) = -2j+\frac{1}{2}\\
\bar{d}_{-1/2}(N_{j}) =-\frac{1}{2}&\hspace{2em}\bar{d}_{1/2}(N_{j}) = -2j+\frac{1}{2}
\end{align*}
The conclusion now follows immediately from Corollaries \ref{cor: obstruction to any negative semi-definite} and \ref{cor:zero surgery obstruction}. 

\end{proof}

\subsection{$HFI^+(-S^3_{0}(K_{1}),\mathfrak{s}_{0})$}\label{subsection: comparison}
As mentioned in the introduction, Ichihara, Motegi, and Song discovered an infinite family of hyperbolic knots which admit small Seifert fibered $0$-surgery (see \cite{MR2499823}). In particular, for the knot $K_{1}$ in their family, they show that:
\begin{align*}
    S^3_{0}(K_{1}) = S^2\left(\frac{3}{2}, -\frac{5}{2}, -\frac{15}{4}\right)
\end{align*}
Since, by definition, $S^3_{0}(K_{1})$ is $0$-surgery on a knot in $S^3$, we know from Corollary \ref{cor:zero surgery obstruction} that:
\begin{align*}
    -\frac{1}{2}\leq \underline{d}_{-1/2}(S^3_{0}(K_{1}))\text{ and }\Bar{d}_{1/2}(S^3_{0}(K_{1}))\leq \frac{1}{2}
\end{align*}
We now verify these bounds directly by computing $HFI^+(-S^3_{0}(K_{1}), \mathfrak{s}_{0})$ and then we compare this to $HFI^+(-N_{1}, \mathfrak{s}_{0})$. In the interest of brevity, we are only going to give an outline of the calculation and leave the details to the reader.

\subsubsection{Step 1}
We use Kirby calculus and the fact that $S^3_{0}(K_{1})$ is a small Seifert fibered space to find negative-semi definite plumbing representations of $S^3_{0}(K_{1})$ and $-S^3_{0}(K_{1})$:

\begin{figure}[h]
    \centering
    \includegraphics{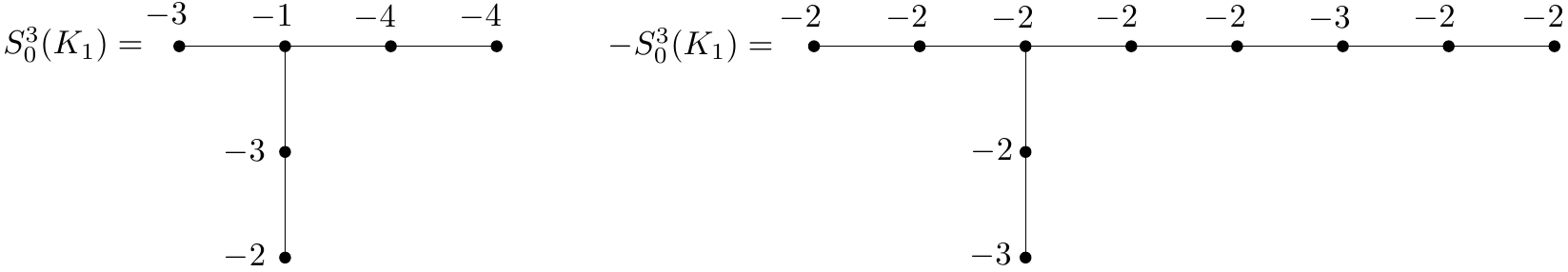}
\end{figure}

\noindent Label the vertices of the above left plumbing graph as:

\begin{figure}[h]
    \centering
    \includegraphics{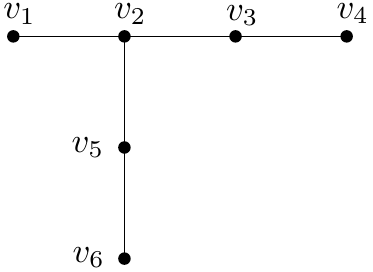}
\end{figure}

\noindent Using the methods of the previous section, one can show that as graded $\F[U]$-modules:

\begin{figure}[h]
    \centering
    \includegraphics{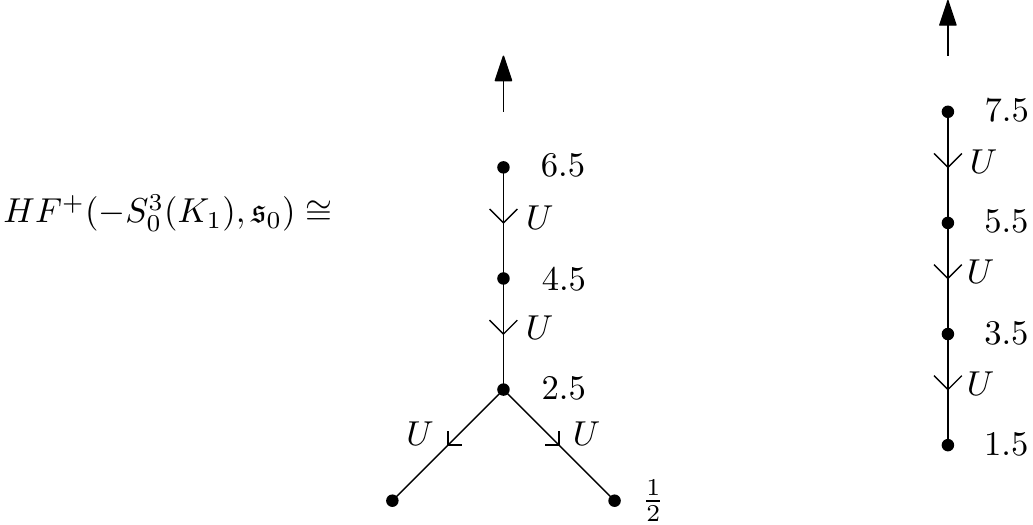}
\end{figure}
\noindent where the two leaves on the left graph correspond to the representative vectors:
\begin{align*}
    z_{1} = (-1,-1,4,-2,1,0)\text{ and }z_{2} = (1,-1,0,4,1,0)
\end{align*}
Note that $HF^+(-S_{0}(K_{1}),\mathfrak{s}_{0})\cong HF^+(-N_{1}, \mathfrak{s}_{0})$.

\subsubsection{Step 2}
To determine $\iota_{*}$, consider the following sequence of moves starting with the vector $-z_{1} = (1,1,-4,2,-1,0)$:
\begin{enumerate}
    \item Add $-2PD[v_{3}]$: $(1,-1,4,0,-1,0)$

    \item Add $-2PD[v_{2}]$: $(-1,1,2,0,-3,0)$
    
    \item Add $-2PD[v_{5}]$: $(-1,-1,2,0,3,-2)$
    
    \item Add $-2PD[v_{6}]$: $(-1,-1,2,0,1,2)$
    
    \item Add $-2PD[v_{2}]$: $(-3, 1, 0,0,-1,2)$
    
    \item Add $-2PD[v_{1}]$: $(3,-1,0,0,-1,2)$
    
    \item Add $-2PD[v_{2}]$: $(1,1,-2,0,-3,2)$
    
    \item Add $-2PD[v_{5}]$: $(1,-1,-2,0,3,0)$
    
    \item Add $-2PD[v_{2}]$: $(-1,1,-4,0,1,0)$
    
    \item Add $-2PD[v_{3}]$: $(-1,-1,4,-2,1,0)=z_{1}$
\end{enumerate}
Therefore, $\iota_{*}$ is the identity. In particular, unlike for $HF^+(-N_{1}, \mathfrak{s}_{0})$, $\iota_{*}$ is not a symmetric involution as defined in Definition \ref{def: symmetric involution}.

\subsubsection{Step 3}
Applying the same methods as in the proof of Theorem \ref{thm: HFI N}, we get:
\begin{thm}
As graded $\F[U,Q]/(Q^2)$-modules: 

\begin{figure}[h]
    \centering
    \includegraphics{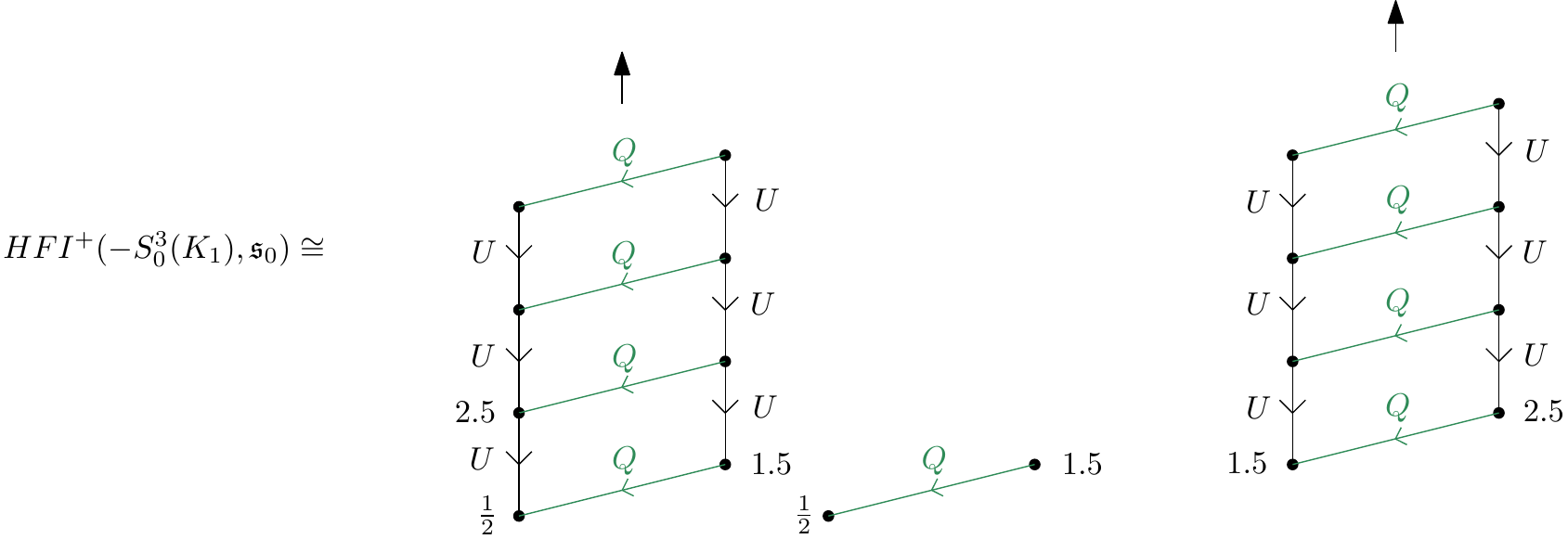}
\end{figure}
\noindent In particular,
\begin{align*}
    \bar{d}_{1/2}(-S_{0}^3(K_{1})) = \frac{1}{2}&\hspace{2em}\bar{d}_{-1/2}(-S_{0}^3(K_{1})) =1.5 \\
    \underline{d}_{1/2}(-S_{0}^3(K_{1})) = \frac{1}{2} &\hspace{2em}  \underline{d}_{-1/2}(-S_{0}^3(K_{1})) = 1.5
\end{align*}
\end{thm}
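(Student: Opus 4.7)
The plan is to execute Step 3 of the computational strategy of Section \ref{subsection: comp strategy}, using the data already assembled in Steps 1 and 2. Step 1 identified $HF^{+}(-S^3_0(K_1), \mathfrak{s}_0)$ as a graded $\F[U]$-module, exhibiting two leaf classes at grading $1/2$ with representative characteristic vectors $z_1, z_2$ and a single infinite tower $\mathcal{T}^{+}_{3/2}$ on the even side; Step 2 then showed via Theorem \ref{thm: involution identification} that $\iota_{*} = \mathrm{id}$ on $HF^{+}(-S^3_0(K_1), \mathfrak{s}_0)$.

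Because $\iota_{*} = \mathrm{id}$, the map $Q(1+\iota_{*})$ vanishes on homology, so by Proposition \ref{prop: HFI exact triangle zero} the exact triangle collapses into short exact sequences of graded $\F[U]$-modules
$$0 \longrightarrow Q \cdot HF^{+}_r(-S^3_0(K_1), \mathfrak{s}_0)[-1] \xrightarrow{g} HFI^{+}_r \xrightarrow{h} HF^{+}_{r-1}(-S^3_0(K_1), \mathfrak{s}_0) \longrightarrow 0,$$
which determines $HFI^{+}$ as a graded $\F$-vector space. To upgrade this to a genuine $\F[U, Q]/(Q^2)$-module identification, I would imitate the proof of Theorem \ref{thm: HFI N}: fix chain representatives $z_1, z_2, w \in CF^{+}(\mathcal{H}, \mathfrak{s}_0)$ of the three top generators. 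Since $\iota_{*}[z_i] = [z_i]$ and $\iota_{*}[w] = [w]$, choose chains $f_1, f_2, g$ with $\partial f_i = z_i + \iota(z_i)$ and $\partial g = w + \iota(w)$. Then $z_i + Q f_i$ and $w + Q g$ are $\partial^{I}$-cycles projecting to $[z_i]$ and $[w]$ under $h$, and they satisfy $Q[z_i + Q f_i] = [Q z_i]$ and $Q[w + Q g] = [Q w]$. Together with the $Q$-shifts $[Qz_i], [Qw]$, their $U^{-n}$-images, and analogous lifts of the reduced generators, these classes form an explicit $\F$-basis for $HFI^{+}$ on which the $U$ and $Q$ actions are transparent.

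The principal obstacle is verifying the full module structure rather than only the graded vector-space count — in particular, ruling out spurious $U$-relations among the listed generators (the same subtlety that appears in the $-N_1$ argument). The cleanest resolution is a total-dimension check on $\widehat{HFI}(-S^3_0(K_1), \mathfrak{s}_0)$ via \cite[Corollary 4.7]{MR3649355}: since $\iota_{*} = \mathrm{id}$ on $\widehat{HF}$ as well, this total dimension equals twice $\dim_{\F}\widehat{HF}(-S^3_0(K_1), \mathfrak{s}_0)$, and matching this count against the proposed graded structure locks down every ambiguous $U$-action, exactly as in the closing paragraph of the proof of Theorem \ref{thm: HFI N}.

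Once the module structure is in hand, the four correction terms are read off from Definitions \ref{defn: b1=1 inv correction terms} and Corollary \ref{cor: involutive H1=Z structure}. The image of $\pi^{I} \colon HFI^{\infty} \to HFI^{+}$ splits as two $Q$-divisible towers coming from the $Q \cdot HF^{+}[-1]$ summand of the mapping cone (with minimal gradings $d_{1/2} = \tfrac{1}{2}$ and $d_{-1/2} = \tfrac{3}{2}$, yielding $\bar d_{1/2} = \tfrac{1}{2}$ and $\bar d_{-1/2} = \tfrac{3}{2}$), together with two non-$Q$-divisible towers coming from the $HF^{+}$ summand, which appear one grading higher because $h$ decreases grading by $1$ (so $\underline d_{1/2} + 1 = \tfrac{3}{2}$ and $\underline d_{-1/2} + 1 = \tfrac{5}{2}$, yielding $\underline d_{1/2} = \tfrac{1}{2}$ and $\underline d_{-1/2} = \tfrac{3}{2}$). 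This simultaneously produces the stated $\F[U,Q]/(Q^{2})$-module and saturates the inequality $\underline d_{\pm 1/2} \leq d_{\pm 1/2} \leq \bar d_{\pm 1/2}$ of Proposition \ref{ref:involutive correction term properties}(1), in stark contrast with $-N_1$ where $\iota_{*}$ is not the identity.
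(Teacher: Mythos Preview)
Your proposal is correct and follows essentially the same approach as the paper, which simply says ``Applying the same methods as in the proof of Theorem \ref{thm: HFI N}.'' You have fleshed out precisely what those methods amount to here, with the welcome simplification that $\iota_{*}=\mathrm{id}$ forces $Q(1+\iota_{*})=0$ on $HF^{+}$ so the triangle of Proposition \ref{prop: HFI exact triangle} degenerates into short exact sequences. One small citation slip: you invoke Proposition \ref{prop: HFI exact triangle zero}, but that statement is specifically about $HF^{\infty}$; for the $+$ version you want Proposition \ref{prop: HFI exact triangle} together with the vanishing of $Q(1+\iota_{*})$ coming from $\iota_{*}=\mathrm{id}$.
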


\noindent In summary, even though 
\begin{align*}
    HF^+(-N_{1}, \mathfrak{s}_{0})\cong HF^+(-S_{0}^3(K_{1}),\mathfrak{s}_{0})
\end{align*}
we see that 
\begin{align*}
    HFI^+(-N_{1}, \mathfrak{s}_{0})\ncong HFI^+(-S_{0}^3(K_{1}),\mathfrak{s}_{0})
\end{align*}

\bibliographystyle{amsalpha}
\bibliography{Involutive}

\end{document}